\documentclass{article}
\usepackage{amsmath,amsthm,amsfonts,amssymb}
\usepackage{mathtools,mathrsfs}
\usepackage[inline]{enumitem}
\usepackage[hidelinks]{hyperref}
\usepackage{graphicx}
\usepackage{bussproofs}
\usepackage{setspace}
\usepackage{array,tabularx,caption}
\usepackage{tikz}
\usepackage{caption}
\usepackage{subcaption}
\usepackage{subfiles}
\usepackage{lipsum}
\usepackage{colonequals}
\usepackage{url}
\usepackage{here}

\usetikzlibrary{decorations.pathmorphing}

\tikzset{squig/.style={
  decorate,
  decoration={
    snake,
    pre=lineto,
    post=lineto,
    pre length=0.15cm,
    post length=0.15cm,
    amplitude=1
  }
}}

\tikzset{rel/.style={
  shorten >=4pt, shorten <=4pt,
}}

\theoremstyle{plain}
\newtheorem{theorem}{Theorem}[section]
\newtheorem*{theorem*}{Theorem}
\newtheorem{proposition}[theorem]{Proposition}
\newtheorem*{proposition*}{Proposition}
\newtheorem{lemma}[theorem]{Lemma}
\newtheorem*{lemma*}{Lemma}

\newtheorem*{claim*}{Claim}
\newtheorem{corollary}[theorem]{Corollary}
\newtheorem*{corollary*}{Corollary}

\newtheorem*{fact*}{Fact}

\newtheorem*{guide*}{Guide}

\theoremstyle{definition}
\newtheorem{definition}[theorem]{Definition}

\newtheorem{example}[theorem]{Example}
\newtheorem*{example*}{Example}
\newtheorem{remark}[theorem]{Remark}
\newtheorem*{remark*}{Remark}
\newtheorem{problem}[theorem]{Problem}

\newenvironment{subproof}[1][\proofname]{%
  \begin{proof}[#1]%
}{%
  \end{proof}%
}

\DeclarePairedDelimiterX{\set}[1]{\{}{\}}{\setargs{#1}}
\NewDocumentCommand{\setargs}{>{\SplitArgument{1}{;}}m}
{\setargsaux#1}
\NewDocumentCommand{\setargsaux}{mm}
{\IfNoValueTF{#2}{#1} {#1\nonscript\:\delimsize\vert\allowbreak\nonscript\:\mathopen{}#2}}%

\DeclareSymbolFont{symbolsC}{U}{txsyc}{m}{n}
\DeclareMathSymbol{\lewis}{\mathrel}{symbolsC}{74}

\newcommand*{\imp}{\mathrel{\Rightarrow}}
\newcommand*{\siff}{\mathrel{\Leftrightarrow}}

\newcommand*{\tiff}{\mathrel{\text{iff}}}
\newcommand*{\tand}{\mathrel{\text{\&}}}

\newcommand*{\tor}{\mathrel{\text{or}}}

\newcommand*{\defeq}{\coloneq}
\newcommand*{\defiff}{\mathrel{\;\ratio\Longleftrightarrow\;}}
\newcommand*{\defsiff}{\mathrel{\ratio\Leftrightarrow}}

\newcommand*{\name}[1]{\textsf{#1}}
\newcommand*{\Logic}[1]{\textbf{#1}}
\newcommand*{\Theory}[1]{\mathcal{#1}}
\newcommand*{\Model}[1]{\mathfrak{#1}}

\newcommand*{\FrameClass}[1]{\mathscr{F}_{#1}}

\newcommand*{\MRel}{\sqsubset}
\newcommand*{\MRelRev}{\sqsupset}
\newcommand*{\IRel}{\leq}
\newcommand*{\IRelRev}{\geq}

\newcommand*{\Lang}{\mathscr{L}}
\newcommand*{\PropVar}{\mathord{\mathrm{PropVar}}}
\newcommand*{\PF}{\Lang_p}
\newcommand*{\MFb}{\Lang_\Box}
\newcommand*{\MFd}{\Lang_\Diamond}
\newcommand*{\MF}{\Lang_{\Box\Diamond}}

\newcommand*{\Cl}{\Logic{Cl}}
\newcommand*{\Int}{\Logic{Int}}
\newcommand*{\K}{\Logic{K}}

\newcommand*{\SatE}{\Vdash^e}
\newcommand*{\SatI}{\Vdash^i}
\newcommand*{\SatCK}{\Vdash^{\text{c}}}
\newcommand*{\SatIK}{\Vdash^{\text{h}}}

\newcommand*{\ValidE}{\vDash^e}
\newcommand*{\ValidI}{\vDash^i}
\newcommand*{\ValidCK}{\vDash^{\text{c}}}
\newcommand*{\ValidIK}{\vDash^{\text{h}}}

\newcommand*{\Var}{\mathrm{V}}
\newcommand*{\VarPos}{\mathrm{V}^+}
\newcommand*{\VarNeg}{\mathrm{V}^-}
\newcommand*{\VarAny}{\mathrm{V}^\bullet}
\newcommand*{\VarAnother}{\mathrm{V}^\circ}

\usepackage{subfiles}

\title{Intuitionistic and Constructive Modal Logics for Classical Modal Logicians}
\author{Yuta Sato}
\date{}

\begin{document}

\maketitle

\begin{abstract}
  This paper gives a survey of propositional modal logic in an intuitionistic setting.
  Mainly intended for readers already familiar with classical modal logic,
  we discuss why there are two prominent families of such logics, intuitionistic modal logics (IMLs) and constructive modal logics (CMLs),
  what motivates their study, and how they compare with each other.
  Moreover, we take a deeper look at the logics lying between the CML and IML traditions,
  including a new logic $\Logic{KI}$ for which we prove completeness,
  and survey various extensions and $\Diamond$/$\Box$-free fragments.
  We further record several observations which seem to be implicit or absent in the literature;
  we point out a small defect in the original semantics for $\Logic{CK}$ and
  prove Lyndon interpolation theorems for $\Logic{CK}$ and $\Logic{WK}$.
\end{abstract}

\section{Introduction} \label{sec:intro}

Classical modal logicians may have a good command of \emph{Kripke semantics}.
It provides intuitive interpretations of modal operators $\Box$ and $\Diamond$,
which, in the classical setting, become naturally interdefinable by de Morgan duality,
and many normal extensions of classical propositional modal logic $\Logic{K}$ can be studied
by considering corresponding conditions on Kripke frames.
They may also feel comfortable with intuitionistic propositional logic $\Logic{Int}$;
its Kripke semantics can be seen as a slight modification of the modal one,
based on preordered Kripke frames and upward-closed valuations,
with the resulting satisfaction relation having a distinct property called \emph{persistency}.

Things become complicated once we consider modal operators in an intuitionistic base, however;
in an intuitionistic version of modal logic,
$\Box$ and $\Diamond$ are no longer interdefinable in general due to the failure of double negation elimination,
and even the apparently innocent question of how Kripke semantics should ensure persistency comes with more than one answer.
Also, in the literature, there seem to be two prominent families of such logics, \emph{intuitionistic} modal logics (IMLs) and \emph{constructive} modal logics (CMLs), which may further confuse newcomers to the subject;
the terms ``intuitionistic'' and ``constructive'' are distinguished here even though they may be used interchangeably in many fields of study,
and the two families adopt different notions of validity, both of which may at first glance seem unnatural from the classical viewpoint.

This survey is written primarily for readers familiar with classical modal logic who want an overview of intuitionistic and constructive modal logics,
answering questions such as how CMLs and IMLs are defined, what motivates their study, and how they compare with each other.
It then takes a look at borderline logics, which lie between the two traditions,
and $\Diamond$/$\Box$-free logics, which largely avoid the tension between them.
It also surveys intuitionistic variants of standard classical modal logics such as $\Logic{S4}$ and $\Logic{GL}$,
and intuitionistic logics with modal operators other than $\Box$ or $\Diamond$.

Although the paper is mainly expository, it also records several observations which seem to be implicit or absent in the literature
and some new results on the borderline logics between CMLs and IMLs.
Thus, readers already familiar with the topic may also find the paper useful.
In particular:
\begin{itemize}
  \item We point out a small defect in the original semantics for $\Logic{CK}$ due to Mendler and de Paiva \cite{mendler_constructive_2005}, and we provide a simple fix.
  \item We prove that $\Logic{CK}$ and $\Logic{WK}$ enjoy the Lyndon interpolation property by a straightforward adaptation of Wijesekera's \cite{wijesekera_constructive_1990} proof of Craig interpolation for $\Logic{WK}$.
  \item We collect a number of separation results for the $\Diamond$/$\Box$-free fragments of logics in the region between CMLs and IMLs.
  \item We propose a logic $\Logic{KI}$, whose semantics is natural from a point of view of classical modal logic,
        and prove its completeness by adapting the method used for the borderline logic $\Logic{FIK}$ by Balbiani et al.\ \cite{balbiani_natural_2024}.
\end{itemize}

This paper is organized as follows.
Section \ref{sec:prelim} is mainly devoted to preliminary definitions and results on modal and intuitionistic logics.
We also introduce Simpson's well-known requirements for a modal logic to be called ``intuitionistic''.
In Section \ref{sec:models}, we introduce birelational Kripke semantics for modal logic in an intuitionistic setting,
and we see that there are several ways to ensure persistency.
We then show that different ways of ensuring persistency may validate different sets of formulas,
which ultimately leads to the reason why CMLs and IMLs part company with each other.
We also take a look at the semantics of $\Diamond$/$\Box$-free logics and a truth-preserving operation that is only possible under the $\Diamond$/$\Box$-freeness.
In Sections \ref{sec:cml} and \ref{sec:iml}, we take a close look at CMLs and IMLs, respectively,
presenting the specialized semantics and several known results in both families of logics.
We will see their respective advantages and disadvantages
and the reasons why different authors have preferred one approach over the other.
In Section \ref{sec:other}, we introduce the logics between CMLs and IMLs, including the aforementioned logics $\Logic{FIK}$ and $\Logic{KI}$.
We take a look at recent studies on these borderline logics, and also provide some separation results on their $\Diamond$/$\Box$-free fragments.
In Section \ref{sec:extensions}, we consider the intuitionistic variants of
common classical modal logics such as $\Logic{KD}$, $\Logic{KT}$, $\Logic{K4}$, $\Logic{GL}$, $\Logic{S4}$, $\Logic{S5}$, and related systems,
and present several results on these extensions of CMLs, IMLs and $\Diamond$/$\Box$-free intuitionistic modal logics.
We will also introduce some related logics, namely Heyting--Lewis logic and propositional lax logic.

\section{Preliminaries} \label{sec:prelim}

Throughout this section, we refer to Chagrov and Zakharyaschev \cite{CZ97} for the basic definitions and properties on modal and intuitionistic logic.

\begin{definition} \label{def:languages-formulas}
  Let $\PropVar$ be the set of all propositional variables, and let:
  \begin{align*}
    \PF  &= \PropVar \cup \set{ \land, \lor, \to, \bot },
      & \MFb &= \PF \cup \set{ \Box }, \\
    \MFd &= \PF \cup \set{ \Diamond },
      & \MF  &= \PF \cup \set{ \Box, \Diamond },
  \end{align*}
  where $\bot$ is nullary, $\Box, \Diamond$ are unary, and $\land, \lor, \to$ are binary.
  We use $\top$ and $\neg \varphi$ as abbreviations for $\bot \to \bot$ and $\varphi \to \bot$, respectively.
  The formulas of each language are built from $\PropVar$ and $\bot$ in the usual inductive way using these connectives.
  For a language $\Lang$, by abuse of notation, we also write $\Lang$ for the set of all formulas in $\Lang$.
\end{definition}

\begin{definition} \label{def:logics-extensions}
  An \emph{$\Lang$-logic} is a set $L \subseteq \Lang$ closed under modus ponens and 
  uniform substitution. The prefix ``$\Lang$-'' may be omitted if no confusion can arise.
  For $\Lang \subseteq \Lang'$,
  an $\Lang'$-logic $L'$ is an \emph{extension} of an $\Lang$-logic $L$ if $L \subseteq L'$,
  and a \emph{conservative extension} if, in addition, $L' \cap \Lang = L$.
\end{definition}

\begin{definition} \label{def:logics-sums}
  For a logic $L$ and a set $X$ of schematic axioms and rules, $L + X$ is the least
  extension of $L$ containing every instance of the axioms in $X$ and closed under the rules in $X$.
  When $X = \set{ \name{X}_1, \dots, \name{X}_n }$ is finite, we may omit set braces to write $L + \name{X}_1 + \dots + \name{X}_n$.
\end{definition}

\begin{definition} \label{def:axiomatization}
  An extension $L'$ of $L$ is said to be \emph{(finitely) axiomatized over $L$} if $L' = L + X$ for some (finite) $X$.
  In particular, a logic $L$ is said to be \emph{(finitely) axiomatized} if it is (finitely) axiomatized over $\emptyset$,
  or, equivalently, a Hilbert-style calculus for $L$ is given by (finitely many) schematic axioms and rules.
\end{definition}

Let $\Int$ and $\Cl$ be intuitionistic and classical propositional logic, respectively,
with their standard finite Hilbert-style axiomatizations.

\begin{definition} \label{def:relations}
  A binary relation $R$ is said to be
  \emph{serial} if $\forall x\, \exists y\, (x R y)$,
  \emph{reflexive} if $\forall x\, (x R x)$,
  \emph{symmetric} if $\forall x,y\, (x R y \imp y R x)$,
  \emph{transitive} if $\forall x,y,z\, (x R y \tand y R z \imp x R z)$,
  and \emph{Euclidean} if $\forall x,y,z\, (x R y \tand x R z \imp y R z)$.
  We also say $R$ is \emph{preordered} if $R$ is both reflexive and transitive.
\end{definition}

\subsection{Modal logics and their semantics}

We shall first briefly summarize the basic definitions and results on classical modal logics.
We shall adopt the formulation of $\Logic{K}$ that is usually seen in the context of nonnormal modal logics,
the reason for which we will see momentarily.

\begin{table}[ht]
  \centering
  \onehalfspacing
  \caption{Basic modal axioms and rules. Names are borrowed from \cite{dalmonte_intuitionistic_2020}.}
  \label{tab:basic-modal-axioms}
  \begin{tabular}{rl|rl}
    $\name{M}_\Box$     & \AxiomC{$\varphi \to \psi$} \UnaryInfC{$\Box \varphi \to \Box \psi$} \DisplayProof &
    $\name{M}_\Diamond$ & \AxiomC{$\varphi \to \psi$} \UnaryInfC{$\Diamond \varphi \to \Diamond \psi$} \DisplayProof \\
    $\name{N}_\Box$      & $\Box \top$ & $\name{N}_\Diamond$ & $\neg \Diamond \bot$ \\
    $\name{C}_\Box$     & $(\Box \varphi \land \Box \psi) \to \Box (\varphi \land \psi)$ &
    $\name{C}_\Diamond$ & $\Diamond(\varphi \lor \psi) \to (\Diamond \varphi \lor \Diamond \psi)$ \\
    $\name{Dual}_\Box$ & $\Diamond \varphi \leftrightarrow \neg \Box \neg \varphi$ &
    $\name{Dual}_\Diamond$ & $\Box \varphi \leftrightarrow \neg \Diamond \neg \varphi$ \\
  \end{tabular}
  \doublespacing
\end{table}

\begin{definition}
  We let $\Logic{K}_\Box \defeq \Cl + \name{M}_\Box + \name{N}_\Box + \name{C}_\Box$
     and $\Logic{K}_\Diamond \defeq \Cl + \name{M}_\Diamond + \name{N}_\Diamond + \name{C}_\Diamond$
  (see Table \ref{tab:basic-modal-axioms} for the definition of axioms and rules).
\end{definition}

\begin{remark}
  The reader may now be wondering why we didn't use the standard rule $\name{Nec}_\Box: \mathord{\vdash}\, \varphi \imp \mathord{\vdash}\, \Box \varphi$
  and the standard axiom $\name{K}_\Box: \Box (\varphi \to \psi) \to (\Box \varphi \to \Box \psi)$.
  First, $(\name{Nec}_\Box, \name{K}_\Box)$ and $(\name{M}_\Box, \name{N}_\Box, \name{C}_\Box)$ can be derived from each other even in an intuitionistic setting (\cite[pp.219--220]{bozic_models_1984}).
  Second, $\name{C}_\Diamond$ and $\name{N}_\Diamond$, the duals of $\name{C}_\Box$ and $\name{N}_\Box$ respectively, become very important when we talk about the differences between IMLs and CMLs.
\end{remark}

\begin{definition}
  We say $\Model{F} = (W, \MRel)$ is a \emph{Kripke frame (k-frame)} if $W$ is a non-empty set and $\MRel$ is a binary relation on $W$,
  and $\Model{M} = (\Model{F}, V)$ is a \emph{Kripke model (k-model)} if $\Model{F}$ is a k-frame and $V: \PropVar \to \mathcal{P}(W)$.
  Such $\Model{M}$ is said to be \emph{based on} $\Model{F}$.
  For $\Model{M} = (W, \MRel, V)$, we define a \emph{satisfaction relation} $\Vdash_\Model{M}$ on $W \times \MF$ by:
  \begin{enumerate}
    \item $x \nVdash_\Model{M} \bot$
    \item $x \Vdash_\Model{M} p \defiff x \in V(p)$
    \item $x \Vdash_\Model{M} \psi_1 \land \psi_2 \defiff (x \Vdash_\Model{M} \psi_1 \tand x \Vdash_\Model{M} \psi_2)$
    \item $x \Vdash_\Model{M} \psi_1 \lor \psi_2 \defiff (x \Vdash_\Model{M} \psi_1 \tor x \Vdash_\Model{M} \psi_2)$
    \item $x \Vdash_\Model{M} \psi_1 \to \psi_2 \defiff (x \Vdash_\Model{M} \psi_1 \imp x \Vdash_\Model{M} \psi_2)$
    \item $x \Vdash_\Model{M} \Box \varphi \defiff \forall y \MRelRev x \, (y \Vdash_\Model{M} \varphi)$
    \item $x \Vdash_\Model{M} \Diamond \varphi \defiff \exists y \MRelRev x\, (y \Vdash_\Model{M} \varphi)$
  \end{enumerate}
\end{definition}

\begin{definition}
  Let $\varphi \in \MF$.
  We say $\varphi$ is \emph{valid in} a k-model $\Model{M}$, written as $\Model{M} \vDash \varphi$, if $x \Vdash_\Model{M} \varphi$ for every $x \in W$.
  We also say $\varphi$ is \emph{valid on} a k-frame $\Model{F}$, written as $\Model{F} \vDash \varphi$, if $\varphi$ is valid in every k-model $\Model{M}$ based on $\Model{F}$.
\end{definition}

\begin{theorem} \label{thm:k-completeness}
  Let $\Theory{K} = \set{ \varphi \in \MF; \forall \Model{F}: \text{k-frame}\ (\Model{F} \vDash \varphi) }$, then the following properties hold:
  \begin{enumerate}[label=(\arabic*)]
    \item $\Logic{K}_\Box = \Theory{K} \cap \MFb$ and $\Logic{K}_\Diamond = \Theory{K} \cap \MFd$.
    \item $\Logic{K}_\Box + \name{Dual}_\Box = \Theory{K}$ and $\Logic{K}_\Diamond + \name{Dual}_\Diamond = \Theory{K}$.
  \end{enumerate}
  \begin{proof}
    (1) Routine completeness proof. (2) Follows from (1) and the usual translations $\tau_\Box(\Diamond \varphi) = \neg \Box \neg \tau_\Box(\varphi)$ and $\tau_\Diamond(\Box \varphi) = \neg \Diamond \neg \tau_\Diamond(\varphi)$.
  \end{proof}
\end{theorem}

\begin{remark}
  The double negation elimination plays an important role in the proof of Theorem \ref{thm:k-completeness} (2).
  In particular, it uses the fact $\Logic{K}_\Box \vdash \neg \neg \Box \neg \neg \varphi \leftrightarrow \Box \varphi$.
  This would obviously fail in an intuitionistic setting, and IMLs and CMLs both treat $\Diamond$ as primitive and give up the full de Morgan duality between $\Box$ and $\Diamond$.
  The actual interaction between $\Box$ and $\Diamond$, however, varies between IMLs and CMLs, which we will see in later sections.
\end{remark}

By Theorem \ref{thm:k-completeness}, we obtain the following finite axiomatization of $\Theory{K}$:
\begin{definition}
  Let $\Logic{K} = \Logic{K}_\Box + \name{Dual}_\Box$.
\end{definition}

\subsection{Intuitionistic logic and its semantics}

Intuitionistic propositional logic $\Int$ also has a semantics based on Kripke frames and models.

\begin{definition}
  We say $\Model{F} = (W, \IRel)$ is an \emph{intuitionistic frame (i-frame)} if $W$ is a non-empty set and $\IRel$ is a preorder on $W$.
  We say $\Model{M} = (\Model{F}, V)$ is an \emph{intuitionistic model (i-model)} if $\Model{F}$ is an i-frame and $V: \PropVar \to \mathcal{P}(W)$,
  where each $V(p)$ is upward closed with respect to $\IRel$ (i.e. $\forall x, y\, (x \IRel y \tand x \in V(p) \imp y \in V(p))$).
  Such $\Model{M}$ is said to be \emph{based on} $\Model{F}$.
  For $\Model{M} = (W, \IRel, V)$, we define a \emph{satisfaction relation} $\Vdash_\Model{M}$ on $W \times \PF$ by:
  \begin{enumerate}
    \item $x \nVdash_\Model{M} \bot$
    \item $x \Vdash_\Model{M} p \defiff x \in V(p)$
    \item $x \Vdash_\Model{M} \psi_1 \land \psi_2 \defiff (x \Vdash_\Model{M} \psi_1 \tand x \Vdash_\Model{M} \psi_2)$
    \item $x \Vdash_\Model{M} \psi_1 \lor \psi_2 \defiff (x \Vdash_\Model{M} \psi_1 \tor x \Vdash_\Model{M} \psi_2)$
    \item $x \Vdash_\Model{M} \psi_1 \to \psi_2 \defiff \forall x' \IRelRev x\, (x' \Vdash_\Model{M} \psi_1 \imp x' \Vdash_\Model{M} \psi_2)$
  \end{enumerate}
  We write $\Model{M} \vDash \varphi$ (``$\varphi$ is \emph{valid in} $\Model{M}$'') to mean $\forall x \in W\,(x \Vdash_\Model{M} \varphi)$,
  and $\Model{F} \vDash \varphi$ (``$\varphi$ is \emph{valid on} $\Model{F}$'') to mean that $\Model{M} \vDash \varphi$ for every $\Model{M}$ based on $\Model{F}$.
\end{definition}

i-models possess a distinct property called \emph{persistency}.

\begin{proposition}[Persistency]
  Let $\Model{M} = (W, \IRel, V)$ be an i-model, and take any $\varphi \in \PF$.
  Then for any $x, x' \in W$, $x \Vdash_\Model{M} \varphi$ and $x \IRel x'$ imply $x' \Vdash_\Model{M} \varphi$.
  \begin{proof}
    Easy induction on the construction of $\varphi$.
  \end{proof}
\end{proposition}

\begin{theorem} \label{thm:int-completeness}
  $\Int = \set{ \varphi \in \PF; \forall \Model{F}: \text{i-frame}\ (\Model{F} \vDash \varphi) }$.
\end{theorem}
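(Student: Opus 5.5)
We prove the two inclusions separately: soundness ($\subseteq$) and completeness ($\supseteq$). Throughout, $\Int$ is taken in its standard Hilbert-style presentation, with finitely many axiom schemes and modus ponens as the only rule. Closure under uniform substitution then comes for free, since the axioms are schematic.

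\emph{Soundness.} Fix an i-frame $\Model{F}$ and an i-model $\Model{M}$ based on it. We check that every instance of each axiom scheme holds at every world, and that the set of formulas valid in $\Model{M}$ is closed under modus ponens. Modus ponens is immediate from the clause for $\to$ together with reflexivity of $\IRel$: if $x \Vdash \varphi \to \psi$ and $x \Vdash \varphi$, take $x' = x$. For the axioms, the only nontrivial cases are those in which an implication occurs in the consequent, such as $\varphi \to (\psi \to \varphi)$ and $(\varphi \to (\psi \to \chi)) \to ((\varphi \to \psi) \to (\varphi \to \chi))$. These use transitivity of $\IRel$ together with Persistency. For example, in $\varphi \to (\psi \to \varphi)$, once $\varphi$ holds at $x'$ it must still hold at every $x'' \IRelRev x'$. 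The axioms for $\land$, $\lor$ and $\bot$ are routine.

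\emph{Completeness.} We use the canonical model. Call a set $\Gamma \subseteq \PF$ \emph{prime} if it is closed under $\Int$-derivability, does not contain $\bot$, and satisfies $\varphi \lor \psi \in \Gamma \imp \varphi \in \Gamma \tor \psi \in \Gamma$. Let $W_c$ be the set of prime sets, let $\IRel$ be set inclusion, and put $V(p) = \set{\Gamma ; p \in \Gamma}$. Inclusion is a preorder and each $V(p)$ is upward closed, so this is an i-model $\Model{M}_c$. The key tool is the Prime Extension Lemma: if $\Gamma \nvdash \varphi$, then there is a prime $\Delta \supseteq \Gamma$ with $\varphi \notin \Delta$. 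To prove it, apply Zorn's lemma (or, in the countable case, an enumeration) to the family of deductively closed supersets of $\Gamma$ not deriving $\varphi$. A maximal member $\Delta$ is prime: if $\psi_1 \lor \psi_2 \in \Delta$ but neither disjunct is in $\Delta$, then by maximality $\Delta, \psi_i \vdash \varphi$ for $i = 1, 2$, and $\lor$-elimination gives $\Delta \vdash \varphi$, a contradiction. Closure of $\Delta$ uses the Deduction Theorem for $\Int$, which holds because modus ponens is the only rule.

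Next comes the Truth Lemma: $\Gamma \Vdash_{\Model{M}_c} \varphi \iff \varphi \in \Gamma$, proved by induction on $\varphi$. The cases $\bot$, $p$, $\land$ and $\lor$ follow directly from primeness and deductive closure. The $\to$ case is the main obstacle. Suppose $\varphi \to \psi \notin \Gamma$. By the Deduction Theorem, $\Gamma \cup \set{\varphi} \nvdash \psi$. Prime extension then gives $\Delta \supseteq \Gamma$ with $\varphi \in \Delta$ and $\psi \notin \Delta$, so $\Gamma \nVdash \varphi \to \psi$ by the induction hypothesis. Conversely, if $\varphi \to \psi \in \Gamma \subseteq \Delta$ and $\varphi \in \Delta$, then $\psi \in \Delta$ by closure under modus ponens.

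Finally, suppose $\varphi \notin \Int$. Applying prime extension to $\Gamma = \emptyset$ yields a prime $\Delta$ with $\varphi \notin \Delta$. By the Truth Lemma $\Delta \nVdash \varphi$, so $\varphi$ fails on the i-frame $(W_c, \subseteq)$. This establishes the inclusion $\supseteq$ and completes the proof.
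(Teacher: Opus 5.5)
Your proof is correct. The paper does not prove this theorem; it treats it as standard background from Chagrov and Zakharyaschev, and your soundness check together with the prime-set canonical model is the usual argument. It is also the propositional version of the paper's own canonical-model construction in Appendix~\ref{appendix:completeness}: a maximal consistent pair $(\Gamma,\Delta)$ there is exactly a prime set $\Gamma$ with $\Delta = \PF \setminus \Gamma$, the order is the same inclusion of first components, and your $\to$ case of the Truth Lemma is Proposition~\ref{prop:truth-lemma-imp-left}.
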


Here, i-frames are just preordered k-frames. On the other hand, i-models differ from k-models in that
(1) the satisfaction relation is only defined for $\PF$,
(2) the truth of $\psi_1 \to \psi_2$ is determined by $\IRel$, just like how the truth of $\Box(\psi_1 \to \psi_2)$ in k-model is determined by $\MRel$, and
(3) $V$ has an additional requirement with respect to $\IRel$.
As a matter of fact, we can embed $\Int$ into a modal logic that is complete with respect to preordered k-frames.
Let $\Logic{S4} \defeq \Logic{K} + \Box \varphi \to \varphi + \Box \varphi \to \Box \Box \varphi$.

\begin{proposition}
  $\Logic{S4} = \set{ \varphi \in \MF; \forall \Model{F}: \text{preordered k-frame}\ (\Model{F} \vDash \varphi) }$.
\end{proposition}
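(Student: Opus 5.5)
We prove the two inclusions separately, following the route the paper indicates for Theorem \ref{thm:k-completeness}.

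\emph{Soundness} ($\subseteq$). By Theorem \ref{thm:k-completeness}, every theorem of $\Logic{K}$ is valid on all k-frames, hence on preordered ones. The set of formulas valid on a fixed frame is closed under modus ponens and uniform substitution. It is also closed under the rule $\name{M}_\Box$, so by induction on derivations it suffices to check the two new axioms on a preordered frame $(W,\MRel)$. If $x \Vdash \Box\varphi$, reflexivity gives $x \MRel x$, hence $x \Vdash \varphi$. If $x \Vdash \Box\varphi$, $x \MRel y$ and $y \MRel z$, then transitivity gives $x \MRel z$, so $z \Vdash \varphi$; hence $y \Vdash \Box\varphi$ and $x \Vdash \Box\Box\varphi$.

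\emph{Completeness} ($\supseteq$). We use the canonical model. Let $W^c$ be the set of maximal $\Logic{S4}$-consistent sets of $\MF$-formulas. Put $\Gamma \MRel^c \Delta$ iff $\set{\varphi; \Box\varphi \in \Gamma} \subseteq \Delta$, and $V^c(p) = \set{\Gamma; p \in \Gamma}$. We then establish the truth lemma $\Gamma \Vdash \varphi \iff \varphi \in \Gamma$ by induction on $\varphi$.
\begin{itemize}
  \item The Boolean cases are standard.
  \item The $\Diamond$ case reduces to the $\Box$ case via $\name{Dual}_\Box$.
  \item The $\Box$ case is the existence lemma. If $\Box\varphi \notin \Gamma$, then $\set{\psi; \Box\psi \in \Gamma} \cup \set{\neg\varphi}$ is consistent. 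Otherwise some finite conjunction $\psi_1 \land \dots \land \psi_n$ would yield $\vdash \psi_1 \land \dots \land \psi_n \to \varphi$. Applying $\name{M}_\Box$ and $\name{C}_\Box$ (or $\name{N}_\Box$ when $n=0$) gives $\Box\varphi \in \Gamma$, a contradiction. Lindenbaum's lemma then extends this set to the required $\Delta$.
\end{itemize}
Since this is exactly the canonical model used for Theorem \ref{thm:k-completeness}(1), we may simply cite that construction.

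It remains to show that the canonical frame is preordered. For reflexivity, $\Box\varphi \in \Gamma$ together with the axiom $\Box\varphi \to \varphi$ gives $\varphi \in \Gamma$, so $\Gamma \MRel^c \Gamma$. For transitivity, suppose $\Gamma \MRel^c \Delta \MRel^c \Theta$ and $\Box\varphi \in \Gamma$. Then $\Box\Box\varphi \in \Gamma$, so $\Box\varphi \in \Delta$ and $\varphi \in \Theta$. Consequently any $\varphi \notin \Logic{S4}$ lies outside some maximal consistent set containing $\neg\varphi$. That set refutes $\varphi$ in the canonical model, which is based on a preordered k-frame.

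The only mildly delicate point is the existence lemma, because $\Logic{K}$ is presented via $\name{M}_\Box,\name{N}_\Box,\name{C}_\Box$ rather than $\name{Nec}_\Box$ and $\name{K}_\Box$. We handle this with the distributivity step described above, or by appealing to the interderivability remark. Everything else is routine.
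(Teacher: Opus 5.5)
Your proof is correct, and it is the standard soundness-plus-canonical-model argument; the paper states this proposition without proof, treating it as routine just as it does Theorem~\ref{thm:k-completeness}, so your route is the intended one. One thing to make explicit: your soundness induction and your existence lemma both use closure of $\Logic{S4}$ under $\name{M}_\Box$ for arbitrary $\MF$-formulas, which is the intended normal-logic reading of $\Logic{K} + (\Box\varphi \to \varphi) + (\Box\varphi \to \Box\Box\varphi)$ but does not follow from a literal reading of Definition~\ref{def:logics-sums}, since that definition only imposes closure under modus ponens and uniform substitution.
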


\begin{definition} \label{def:godel-translation}
  We inductively define the \emph{G\"odel translation} $(\cdot)^G: \PF \to \MF$:
  \begin{align*}
    \bot^G &\defeq \Box \bot, &
    p^G &\defeq \Box p, \\
    (\psi_1 \land \psi_2)^G &\defeq \psi_1^G \land \psi_2^G, &
    (\psi_1 \lor \psi_2)^G &\defeq \psi_1^G \lor \psi_2^G, \\
    (\psi_1 \to \psi_2)^G &\defeq \Box(\psi_1^G \to \psi_2^G).
  \end{align*}
\end{definition}

\begin{theorem}[{e.g.\ \cite[Thm.\,3.83]{CZ97}}]
  For any $\varphi \in \PF$, $\Int \vdash \varphi$ iff $\Logic{S4} \vdash \varphi^G$.
\end{theorem}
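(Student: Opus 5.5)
We prove both directions semantically, using the completeness results already stated: Theorem \ref{thm:int-completeness} for $\Int$ with respect to i-frames, and the completeness of $\Logic{S4}$ with respect to preordered k-frames. The bridge between them is a model-level lemma. Fix a preordered k-frame $\Model{F} = (W, \IRel)$, so $\Model{F}$ is also an i-frame. Let $V$ be an arbitrary valuation, and define the upward-closed valuation $V^\uparrow(p) \defeq \set{x; \forall y \IRelRev x\,(y \in V(p))}$. Write $\Model{M} = (W, \IRel, V)$ for the k-model and $\Model{M}^\uparrow = (W, \IRel, V^\uparrow)$ for the i-model. The \textbf{key lemma} is that for every $\varphi \in \PF$ and every $x \in W$,
\[
x \Vdash_{\Model{M}^\uparrow} \varphi \iff x \Vdash_{\Model{M}} \varphi^G .
\]

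We prove this by induction on $\varphi$. First we note that each $\varphi^G$ is upward persistent in $\Model{M}$. The cases $\bot$, $p$ and $\psi_1 \to \psi_2$ yield formulas of the form $\Box\chi$, and these persist by transitivity of $\IRel$. Conjunctions and disjunctions of persistent formulas are persistent. With this in hand, the induction cases run as follows:
\begin{itemize}
  \item $\bot$: we have $x \Vdash_{\Model{M}} \Box\bot$ iff $x$ has no successors, but $x \IRel x$ by reflexivity, so $\Box\bot$ fails everywhere, matching $\bot$.
  \item $p$: both sides unfold to the definition of $V^\uparrow(p)$.
  \item $\land, \lor$: these are immediate from the induction hypothesis.
  \item $\to$: $x \Vdash_{\Model{M}} \Box(\psi_1^G \to \psi_2^G)$ says that for every $x' \IRelRev x$, if $x' \Vdash \psi_1^G$ then $x' \Vdash \psi_2^G$. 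By the induction hypothesis this is exactly the intuitionistic clause at $x$ in $\Model{M}^\uparrow$.
\end{itemize}

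Given the lemma, both directions follow.

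\emph{($\Leftarrow$)} Suppose $\Int \nvdash \varphi$. By Theorem \ref{thm:int-completeness} there is an i-model $(W, \IRel, V)$ with $V$ upward closed and a point $x$ refuting $\varphi$. Since $V$ is upward closed and $\IRel$ is reflexive, $V^\uparrow = V$. The lemma then gives $x \nVdash \varphi^G$ in the k-model on the preordered frame $(W, \IRel)$. By soundness of $\Logic{S4}$ for preordered k-frames, $\Logic{S4} \nvdash \varphi^G$.

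\emph{($\Rightarrow$)} Suppose $\Logic{S4} \nvdash \varphi^G$. By completeness of $\Logic{S4}$ there is a k-model $\Model{M}$ on a preordered frame and a point $x$ with $x \nVdash_{\Model{M}} \varphi^G$. The lemma gives $x \nVdash_{\Model{M}^\uparrow} \varphi$, where $\Model{M}^\uparrow$ is a genuine i-model because $V^\uparrow(p)$ is upward closed by transitivity. Hence $\Int \nvdash \varphi$ by soundness.

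The main obstacle is only bookkeeping. In the ($\Rightarrow$) direction the original valuation $V$ need not be upward closed, and the point of the translation $p^G = \Box p$ is precisely that $\varphi^G$ sees only $V^\uparrow$. One has to check carefully that the persistency of translated formulas is never needed beyond what the $\Box$ clauses supply. In fact the induction above never invokes persistency directly, so the argument is clean. Alternatively, for ($\Rightarrow$) one could give a syntactic proof by induction on $\Int$-derivations, checking that translated axioms are $\Logic{S4}$-theorems and that modus ponens is preserved (via $\Box(\alpha \to \beta) \to (\alpha \to \beta)$). The semantic route avoids that case analysis, so we use it.
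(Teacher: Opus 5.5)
Your proof is correct. The key lemma holds case by case:
\begin{itemize}
  \item the $p$ case is literally the definition of $V^\uparrow$;
  \item the $\bot$ case uses exactly the reflexivity of $\IRel$;
  \item the $\to$ case is the intuitionistic clause read through the induction hypothesis.
\end{itemize}
Moreover, $V^\uparrow$ is upward closed by transitivity, and $V^\uparrow = V$ whenever $V$ is already upward closed. Both directions then reduce to soundness and completeness of $\Int$ for i-frames and of $\Logic{S4}$ for preordered k-frames, and the paper states both.

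The paper gives no proof of its own here; it only cites \cite{CZ97}, so there is nothing to diverge from. The one point worth making explicit is that your route uses the paper's convention that i-frames are arbitrary preorders. In treatments where intuitionistic frames must be partial orders, the $(\Rightarrow)$ direction cannot reuse an $\Logic{S4}$-countermodel on a preorder with nontrivial clusters directly. There one needs an extra step, such as collapsing clusters to the skeleton or invoking completeness of $\Logic{S4}$ for partial orders. Here it suffices to keep the frame and replace $V$ by $V^\uparrow$.

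You are also right that the persistency of $\varphi^G$ is never used in the semantic induction. It would reappear only in the syntactic alternative you mention: checking translated axioms such as $\varphi \to (\psi \to \varphi)$ there requires $\Logic{S4} \vdash \varphi^G \to \Box \varphi^G$.
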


As such, $\Logic{S4}$ is called a \emph{modal companion} of $\Int$.

\begin{definition}
  Let $\Int \subseteq L \subseteq \Cl$ be closed under modus ponens and substitution.
  Such $L$ is called an \emph{intermediate logic}.
  We say a modal logic $M$ is a \emph{modal companion} of $L$
  if $\varphi \in L \siff (\varphi)^G \in M$ for any $\varphi \in \PF$.
\end{definition}

We note that there are infinitely many modal companions of $\Int$, and there are also (infinitely many) modal companions of $\Cl$,
the most famous one being $\Logic{S5} \defeq \Logic{S4} + \Diamond \varphi \to \Box \Diamond \varphi$.

Although $\Int$ is provably weaker than $\Cl$, there are several embeddings of $\Cl$ into $\Int$.
The simplest example is Glivenko's theorem:
$\Cl \vdash \varphi \siff \Int \vdash \neg \neg \varphi$.
We shall use the following refinement.
\begin{definition} \label{def:negative-translation}
  We inductively define the \emph{(G\"odel--Gentzen) double negation translation}
  $(\cdot)^N: \PF \to \PF$:
  \begin{align*}
    \bot^N &\defeq \bot, & p^N &\defeq \neg\neg p, \\
    (\varphi \land \psi)^N &\defeq \varphi^N \land \psi^N, &
    (\varphi \lor \psi)^N &\defeq \neg(\neg\varphi^N \land \neg\psi^N), \\
    (\varphi \to \psi)^N &\defeq \varphi^N \to \psi^N.
  \end{align*}
\end{definition}

\begin{theorem}[{e.g.\ \cite[Thm.\,2.54]{CZ97}}] \label{thm:negative-embedding}
  For any $\varphi \in \PF$, $\Cl \vdash \varphi$ iff $\Int \vdash \varphi^N$.
\end{theorem}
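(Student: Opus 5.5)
We prove the two directions of Theorem \ref{thm:negative-embedding} separately, using two facts: $\Cl \vdash \varphi \leftrightarrow \varphi^N$ for every $\varphi \in \PF$, and every $\varphi^N$ is \emph{stable} in $\Int$, i.e.\ $\Int \vdash \neg\neg\varphi^N \to \varphi^N$.

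\textbf{Right-to-left.} We first show $\Cl \vdash \varphi \leftrightarrow \varphi^N$ by induction on $\varphi$. Each clause of Definition \ref{def:negative-translation} is classically equivalent to the corresponding connective:
\begin{itemize}
  \item $\neg\neg p \leftrightarrow p$;
  \item $\neg(\neg\varphi \land \neg\psi) \leftrightarrow \varphi \lor \psi$ (de Morgan);
  \item the clauses for $\bot$, $\land$, $\to$ are unchanged.
\end{itemize}
Since $\Int \subseteq \Cl$, $\Int \vdash \varphi^N$ gives $\Cl \vdash \varphi^N$, and hence $\Cl \vdash \varphi$.

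\textbf{Stability.} We prove $\Int \vdash \neg\neg\varphi^N \to \varphi^N$ by induction on $\varphi$, using standard intuitionistic facts.
\begin{itemize}
  \item $\bot$: we have $\neg\neg\bot \to \bot$.
  \item Atoms and disjunctions: their translations are negations, and $\neg\neg\neg\chi \to \neg\chi$ holds in $\Int$.
  \item Conjunction: $\neg\neg(A\land B) \to \neg\neg A \land \neg\neg B$ holds, and we then apply the induction hypothesis to each conjunct.
  \item Implication: $\neg\neg(A\to B) \to (A \to \neg\neg B)$ holds, and we then apply the induction hypothesis to $B$.
\end{itemize}
Each of these is a routine Hilbert-style derivation, or can be checked via Theorem \ref{thm:int-completeness}.

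\textbf{Left-to-right.} This is the main step. We show that if $\Cl \vdash \varphi$ then $\Int \vdash \varphi^N$, by induction on the length of a Hilbert derivation in a fixed standard axiomatization of $\Cl$. This axiomatization is that of $\Int$ together with either $\neg\neg\varphi \to \varphi$ or $\varphi \lor \neg\varphi$.

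First, the translation commutes with substitution up to provable equivalence. Writing $\sigma^N$ for the substitution $p \mapsto \theta^N$ when $\sigma$ is $p \mapsto \theta$, we have $\Int \vdash (\varphi\sigma)^N \leftrightarrow \varphi^N\sigma^N$. The atom case of this needs $\neg\neg\theta^N \leftrightarrow \theta^N$, which is exactly stability. So it suffices to check axiom schemes with metavariables replaced by translated formulas.

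Modus ponens is preserved because $(\varphi\to\psi)^N = \varphi^N \to \psi^N$.

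For the axioms:
\begin{itemize}
  \item The intuitionistic axioms not involving $\lor$ translate into instances of the same schemes, hence remain provable.
  \item The disjunction axioms translate to:
    \begin{itemize}
      \item $A \to \neg(\neg A\land\neg B)$, which is immediate;
      \item $(A\to C)\to((B\to C)\to(\neg(\neg A\land\neg B)\to C))$ for stable $C$. This follows because $A\to C$ and $B\to C$ give $\neg C \to \neg A\land\neg B$, hence $\neg(\neg A\land\neg B) \to \neg\neg C$, and then stability of $C$ gives $C$.
    \end{itemize}
  \item Ex falso translates to itself.
  \item The classical axiom becomes $\neg\neg A\to A$ for stable $A$, which holds by stability. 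Alternatively, $\neg(\neg A\land\neg\neg A)$ is intuitionistically provable outright.
\end{itemize}

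We expect the main obstacle to be the careful treatment of the disjunction-elimination axiom and of substitution. Both hinge on the stability lemma, so that lemma should be established first.
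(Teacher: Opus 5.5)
Your proof is correct and follows the route the paper itself indicates: here the paper only cites \cite{CZ97}, but its sketch of Theorem~\ref{thm:modal-negative-embedding} runs the same argument, namely stability of $\varphi^N$ by induction on $\varphi$, the forward direction by induction on a derivation, and the converse from $\Cl \vdash \varphi \leftrightarrow \varphi^N$. Your explicit treatment of uniform substitution, where stability is needed in the atom case of $(\varphi\sigma)^N \leftrightarrow \varphi^N\sigma^N$, is a detail the paper leaves implicit, and you handle it correctly.
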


\subsection{The criterion for intuitionistic modal logics}

When one says \emph{intuitionistic} modal logics, one would naturally expect modal logics that, roughly speaking,
share similar properties to $\Int$ than to $\Cl$, and
have similar relationship to $\Int$ as classical modal logics do to $\Cl$.
Simpson \cite{simpson_proof_1994} has suggested several requirements for a modal logic to be called intuitionistic,
and we shall briefly introduce the concepts that are needed to explain them.

\begin{definition}
  A logic $L$ with a language $\Lang \supseteq \PF$ is said to enjoy the \emph{disjunction property (DP)} if,
  for every $\varphi, \psi \in \Lang$, $\varphi \lor \psi \in L$ iff $\varphi \in L$ or $\psi \in L$.
\end{definition}

\begin{proposition}[{\cite[Thms.\,1.31 and 2.64]{CZ97}}]
  $\Int$ enjoys DP, while $\Cl$ and $\Logic{K}$ lacks DP.
\end{proposition}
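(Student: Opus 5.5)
The statement has three parts: $\Int$ enjoys DP, and $\Cl$ and $\Logic{K}$ lack it. I would treat the negative claims first, since they only need one witness each, and then prove the positive claim semantically via Theorem \ref{thm:int-completeness}.

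\emph{$\Cl$ and $\Logic{K}$ lack DP.} Take $p \lor \neg p$ with $p \in \PropVar$. It is a classical tautology, so $p \lor \neg p \in \Cl$. Since $\Logic{K} \supseteq \Cl$ by definition, also $p \lor \neg p \in \Logic{K}$. But $p \notin \Logic{K}$ and $\neg p \notin \Logic{K}$. To see this, use Theorem \ref{thm:k-completeness} on the one-point k-frame with empty $\MRel$: the valuation $V(p)=\emptyset$ refutes $p$, and $V(p)=W$ refutes $\neg p$. The same valuations are classical truth assignments, so neither disjunct is in $\Cl$ either. (Alternatively, $p \notin \Cl$ already gives $p \notin \Logic{K}$ once we know $\Logic{K}\cap\PF=\Cl$, but the direct countermodel avoids needing that.)

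\emph{$\Int$ enjoys DP.} The direction "if $\varphi\in\Int$ or $\psi\in\Int$ then $\varphi\lor\psi\in\Int$" is immediate from the axiom $\varphi\to\varphi\lor\psi$ and modus ponens. For the converse, suppose $\varphi\notin\Int$ and $\psi\notin\Int$. By Theorem \ref{thm:int-completeness} there are i-models $\Model{M}_1=(W_1,\IRel_1,V_1)$ and $\Model{M}_2=(W_2,\IRel_2,V_2)$ with points $x_1\nVdash\varphi$ and $x_2\nVdash\psi$.
\begin{enumerate}
  \item Restrict each $\Model{M}_i$ to the upset generated by $x_i$. The satisfaction clauses only quantify over $\IRel$-successors, so a routine induction shows truth is unchanged at every point of the upset. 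We may therefore assume $x_i$ is a root of $\Model{M}_i$.
  \item Build the glued model $\Model{M}$: take the disjoint union $W_1 \sqcup W_2$, add a new root $r$ below every point, and set $V(p)=V_1(p)\cup V_2(p)$, so that $r\notin V(p)$. Each $V(p)$ is still upward closed, and $\IRel$ is still a preorder.
  \item By induction on formulas, each point of $W_i$ satisfies exactly the same formulas in $\Model{M}$ as in $\Model{M}_i$. This holds because the upset of such a point lies inside $W_i$. Hence $x_1\nVdash\varphi$ and $x_2\nVdash\psi$ in $\Model{M}$.
  \item Since $r\IRel x_1$ and $r\IRel x_2$, persistency gives $r\nVdash\varphi$ and $r\nVdash\psi$. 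So $r\nVdash\varphi\lor\psi$, and by Theorem \ref{thm:int-completeness} we get $\varphi\lor\psi\notin\Int$.
\end{enumerate}

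The main obstacle is only the bookkeeping in the induction of steps 1 and 3, specifically the $\to$ clause, where one must check that the successors of a point of $W_i$ are the same in $\Model{M}$ as in $\Model{M}_i$. This holds by construction, since $r$ sits below everything and nothing is added above points of $W_i$.
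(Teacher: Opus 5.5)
Your proof is correct. The paper gives no argument here beyond citing Chagrov--Zakharyaschev. Your witness $p \lor \neg p$ for $\Cl$ and $\Logic{K}$ and your root-gluing argument for $\Int$ are the standard ones. The latter is exactly the non-modal case of the construction the paper itself uses in Propositions \ref{prop:wk-ck-dp} and \ref{prop:ik-enjoys-dp}. Your step~1 is harmless but unnecessary: putting $r$ below every point with $r \notin V(p)$ already works without first making $x_i$ a root.
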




\begin{definition}
  Let $\Lang_{\mathrm{ST}}$ be a first-order language (see, e.g., \cite{shoenfield_mathematical_2018}) with a binary predicate symbol $\MRel$ and unary predicate symbols $V_p$ for every $p \in \PropVar$.
  For $\varphi \in \MF$, and a free variable $x$, the \emph{standard translation} $\mathrm{ST}_x(\varphi)$ is defined inductively:
  \begin{itemize}
    \item $\mathrm{ST}_x(\bot) = \bot$, $\mathrm{ST}_x(p) = V_p(x)$;
    \item $\mathrm{ST}_x(\psi_1 \circledcirc \psi_2) = \mathrm{ST}_x(\psi_1) \circledcirc \mathrm{ST}_x(\psi_2)$, for $\circledcirc \in \set{ \land, \lor, \to }$;
    \item $\mathrm{ST}_x(\Box \psi) = \forall y\,(x \MRel y \to \mathrm{ST}_y(\psi))$, where $y$ is fresh;
    \item $\mathrm{ST}_x(\Diamond \psi) = \exists y\,(x \MRel y \land \mathrm{ST}_y(\psi))$, where $y$ is fresh.
  \end{itemize}
\end{definition}

\begin{proposition}[{\cite[Prop.\,4.18]{CZ97}}]
  For any classical first-order structure $\mathscr{M}$ for $\Lang_{\text{ST}}$,
  we obtain a k-model $\Model{M} = (W, \MRel, V)$,
  where $W$ is the universe of $\mathscr{M}$, $x \MRel y \defsiff \mathord{\MRel^\mathscr{M}}(x, y)$, and $x \in V(p) \defsiff V_p^\mathscr{M}(x)$.
  Conversely, for any k-model $\Model{M}$, we obtain a classical first-order structure $\mathscr{M}$ for $\Lang_{\text{ST}}$.
  We note that these translations form a bijection between k-models and classical first-order structures for $\Lang_{\text{ST}}$.
  Then for any $\varphi \in \MF$, $\Model{M} \vDash \varphi$ iff $\forall x\,\mathrm{ST}_x(\varphi)$ is valid in $\mathscr{M}$.
\end{proposition}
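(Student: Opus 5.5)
We first make the two translations between structures and k-models explicit. Given a first-order structure $\mathscr{M}$, we set $W$ to be its universe, $\MRel$ to be the interpretation of the binary symbol, and $V(p)$ to be the extension of $V_p$. Conversely, a k-model $\Model{M} = (W, \MRel, V)$ yields the structure with universe $W$ that interprets the binary symbol as $\MRel$ and each $V_p$ as $V(p)$. The universe of a first-order structure is nonempty, and so is $W$ in a k-frame, so both maps land in the right class. The two composites are literally the identity on the data $(W, \MRel, (V(p))_{p})$, so they form a bijection. The only thing to check here is that a k-model carries exactly the data of an $\Lang_{\mathrm{ST}}$-structure. It does, since $V$ is an arbitrary map $\PropVar \to \mathcal{P}(W)$, with no upward-closure requirement as there would be for i-models.

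The core is a pointwise lemma, proved by induction on $\varphi \in \MF$: for every $w \in W$,
\[ w \Vdash_{\Model{M}} \varphi \quad\text{iff}\quad \mathscr{M} \models \mathrm{ST}_x(\varphi)[x \mapsto w]. \]
The statement is quantified over all free variables $x$, and the induction hypothesis is applied with the fresh variable $y$. The cases are as follows.
\begin{itemize}
  \item For $\bot$ and $p$, the claim is immediate from the definitions of $V$ and $V_p$.
  \item For $\land$, $\lor$ and $\to$, the claim follows because k-model satisfaction uses the classical metalanguage clauses, matching Tarskian semantics.
  \item For $\Box\psi$, the clause $\forall y \MRelRev x$ means $\forall y\,(x \MRel y \imp y \Vdash \psi)$, which by the induction hypothesis is exactly $\mathscr{M} \models \forall y\,(x \MRel y \to \mathrm{ST}_y(\psi))[x \mapsto w]$.
  \item The $\Diamond$ case is dual to the $\Box$ case.
\end{itemize}

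A small point needs care in the modal cases. $\mathrm{ST}_y(\psi)$ has $y$ as its only free variable, because each quantifier step binds a fresh variable. Hence the assignment to other variables is irrelevant, and the substitution lemma of first-order logic lets us evaluate $\mathrm{ST}_y(\psi)$ under $y \mapsto v$.

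Finally, $\Model{M} \vDash \varphi$ means $w \Vdash \varphi$ for all $w$. By the lemma this says that $\mathrm{ST}_x(\varphi)$ holds under every assignment of $x$, which is exactly the validity of $\forall x\,\mathrm{ST}_x(\varphi)$ in $\mathscr{M}$.

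I expect no real obstacle. The only step that needs attention is the bookkeeping of variables: we must ensure that the freshness of $y$ gives a well-defined translation whose only free variable is $x$, so that the induction hypothesis can be applied at another point. To handle this, I would formulate the lemma for arbitrary variables and arbitrary assignments from the start.
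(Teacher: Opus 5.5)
Your proposal is correct. The paper gives no proof of its own and simply cites \cite[Prop.\,4.18]{CZ97}, where the result is proved by exactly the standard induction on $\varphi$ that you give, with the hypothesis stated for all variables and assignments.

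One small refinement: once you state the lemma in that general form, you need neither the free-variable observation nor a substitution lemma. In the modal step, freshness gives $y \neq x$, so $g[y \mapsto v](x) = g(x)$, and you can then apply the induction hypothesis directly to the variable $y$ and the assignment $g[y \mapsto v]$.
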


\begin{corollary}
  $\varphi \in \Logic{K}$ iff $\forall x\,\mathrm{ST}_x(\varphi)$ is valid in every classical first-order structure for $\Lang_{\text{ST}}$.
\end{corollary}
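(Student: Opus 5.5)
The corollary follows by combining Theorem \ref{thm:k-completeness} with the preceding proposition on the standard translation; no new semantic work is needed.

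For the forward direction, suppose $\varphi \in \Logic{K}$. By Theorem \ref{thm:k-completeness}(2), $\Logic{K} = \Theory{K}$, so $\varphi$ is valid on every k-frame, and hence in every k-model. Now take an arbitrary classical first-order structure $\mathscr{M}$ for $\Lang_{\text{ST}}$. The preceding proposition gives a k-model $\Model{M}$ with the same universe, with $\MRel$ read off $\MRel^{\mathscr{M}}$ and $V(p)$ read off $V_p^{\mathscr{M}}$. Since $\Model{M} \vDash \varphi$, the same proposition yields that $\forall x\,\mathrm{ST}_x(\varphi)$ is valid in $\mathscr{M}$.

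For the converse, assume $\forall x\,\mathrm{ST}_x(\varphi)$ is valid in every classical structure. Let $\Model{F}$ be any k-frame and $\Model{M}$ any k-model based on it. The proposition associates to $\Model{M}$ a structure $\mathscr{M}$, in which $\forall x\,\mathrm{ST}_x(\varphi)$ holds by assumption. The proposition then gives $\Model{M} \vDash \varphi$. As $\Model{M}$ was arbitrary, $\Model{F} \vDash \varphi$ for every k-frame, so $\varphi \in \Theory{K} = \Logic{K}$, again by Theorem \ref{thm:k-completeness}(2).

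The only point needing care is that the correspondence is total in both directions: every structure yields a k-model and every k-model yields a structure, which the proposition asserts. One also needs $W$ to be non-empty, which holds because first-order structures have non-empty universes. Apart from that, the argument is bookkeeping. The substantive content sits in the completeness result Theorem \ref{thm:k-completeness}, which identifies $\Logic{K}$ with the frame-validities $\Theory{K}$. If one wanted to avoid relying on part (2), part (1) together with the translation $\tau_\Box$ would serve the same purpose. I expect no step to be a real obstacle.
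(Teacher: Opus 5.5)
Your proposal is correct and follows the intended argument. The paper gives no separate proof and treats the corollary as immediate from two ingredients: the model--structure correspondence in the preceding proposition, and $\Logic{K} = \Theory{K}$ from Theorem~\ref{thm:k-completeness}(2). You combine exactly these, and you also correctly note that non-empty first-order universes match the non-emptiness of k-frames.
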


Now we shall introduce Simpson's six requirements \cite[Section 3.2]{simpson_proof_1994} for a modal logic $L$ to be called intuitionistic.

\begin{enumerate}[label=(\arabic*)]
  \item $L$ should be a conservative extension of $\Int$.
  \item $L$ should be closed under modus ponens, and should contain all substitution instances of theorems of $\Int$.
  \item $L + \varphi \lor \neg \varphi$ should collapse into a classical modal logic just as $\Int + \varphi \lor \neg \varphi$ does to $\Cl$.
  \item $L$ should enjoy DP as $\Int$ does.
  \item Considering the fact $\Box$ translates to $\forall$ and $\Diamond$ translates to $\exists$ via the standard translation,
        $\Box$ and $\Diamond$ should be independent in $L$ as $\forall$ and $\exists$ are in intuitionistic first-order logic.
  \item There should be an intuitionistically comprehensible explanation of the meaning of $\Box$ and $\Diamond$, relative to which $L$ is sound and complete.
\end{enumerate}

The preceding two requirements should be understood against earlier work on the relation between intuitionistic modal logics and intuitionistic/intermediate predicate logics.
Bull \cite{bull_modal_extension_1965,bull_mipc_1966} already related the intuitionistic variant of $\Logic{S5}$ to intuitionistic predicate logic in light of Kripke semantics,
and Ono \cite{ono_intuitionistic_1977} explicitly compared $\Box$ and $\Diamond$ with the universal and existential quantifiers.
Ono and Suzuki \cite{ono_suzuki_relations_1988} later developed this viewpoint systematically as a correspondence between intuitionistic modal logics and intermediate predicate logics.
The sixth requirement is also rather ambiguous, but Simpson has proposed a way to determine an intuitionistic modal logic with a given frame condition.
Let $T$ be a first-order theory expressing the frame condition, then the intuitionistic modal logic should be determined by a set $S_T$
where $\varphi \in S_T$ iff $\forall x\,\mathrm{ST}_x(\varphi)$ is intuitionistically valid in every first-order structure for $\Lang_{\text{ST}}$ that validates the theory $T$.
But as Simpson himself pointed out, there are arbitrarily many formulations of theory $T$ for the same frame condition, some of which may not be intuitionistically equivalent.
In our paper, we shall not emphasize the sixth requirement (except for the intuitionistic equivalent of $\Logic{K}$, where $T$ should be trivial and thus should have no freedom of choice)
and instead focus on the other five ones.

\begin{definition} \label{def:iml}
  We say a modal logic $L$ is \emph{intuitionistic} if the following holds:
  \begin{enumerate}[label=(\arabic*)]
    \item $\Int = L \cap \PF$.
    \item $L$ is axiomatized over $\Int$ by a set of schematic axioms and rules.
    \item $L + \varphi \lor \neg \varphi$ is a classical modal logic extending $\Logic{K}$ (or $\Logic{K}_\Box$ or $\Logic{K}_\Diamond$, if the language of $L$ is $\MFb$ or $\MFd$, respectively).
    \item $L$ enjoys DP.
    \item If the language of $L$ is $\MF$, then $\name{Dual}_\Box \notin L$ and $\name{Dual}_\Diamond \notin L$.
  \end{enumerate}
  We note that our (2) is a stronger requirement than Simpson's (2), as we also require $L$ to be closed under uniform substitution and every additional rule in it.
  We also list some nice-to-have properties for $L$:
  \begin{itemize}
    \item $L$ should be finitely axiomatized if $L + \varphi \lor \neg \varphi$ is finitely axiomatized.
    \item $L$ should be complete and sound with respect to some class of Kripke-like frames, which will be introduced in later sections.
    \item If $L$ is meant to be the intuitionistic equivalent of $\Logic{K}$, then for any $\varphi \in \MF$, it should be that $\varphi \in L$ iff $\forall x\,\mathrm{ST}_x(\varphi)$ is valid in every intuitionistic first-order structure for $\Lang_{\text{ST}}$.
    \item $L$ should have a cut-admissible sequent calculus if $L + \varphi \lor \neg \varphi$ has.
    \item It should be able to embed $L + \varphi \lor \neg \varphi$ into $L$ by a Glivenko/G\"odel--Gentzen-style translation.
    \item $L$ should enjoy Craig, Lyndon, and uniform interpolation properties if $L + \varphi \lor \neg \varphi$ does, respectively.
  \end{itemize}
\end{definition}

Here, the interpolation properties mentioned above are defined as follows.

\begin{definition}
  For a language $\mathscr{L}$ and a formula $\varphi \in \mathscr{L}$,
  let us denote $\VarPos(\varphi)$ and $\VarNeg(\varphi)$ the sets of all propositional variables occurring positively and negatively in $\varphi$, respectively.
  We also let $\Var(\varphi) = \VarPos(\varphi) \cup \VarNeg(\varphi)$.
\end{definition}

\begin{definition} \label{def:cip-lip}
  A logic $L$ with a language $\mathscr{L}$ is said to enjoy \emph{Lyndon interpolation property (LIP)}
  if for every $\varphi, \psi \in \mathscr{L}$ such that $\varphi \to \psi \in L$,
  there is $\chi \in \mathscr{L}$ that satisfies the following conditions:
  \begin{enumerate}
    \item $\VarAny(\chi) \subseteq \VarAny(\varphi) \cap \VarAny(\psi)$, for each $\bullet \in \set{+, -}$;
    \item $\set{ \varphi \to \chi, \chi \to \psi } \subseteq L$.
  \end{enumerate}
  A logic $L$ is said to enjoy \emph{Craig interpolation property (CIP)} if it satisfies the above,
  but with the first condition of $\chi$ replaced by $\Var(\chi) \subseteq \Var(\varphi) \cap \Var(\psi)$.
\end{definition}

\begin{definition} \label{def:ulip-uip}
  A logic $L$ with a language $\mathscr{L}$ is said to enjoy \emph{Uniform interpolation property (UIP)} if
  for any $\varphi \in \mathscr{L}$ and any finite $P \subseteq \PropVar$,
  there is $\chi \in \mathscr{L}$ such that:
  \begin{enumerate}
    \item $\Var(\chi) \subseteq \Var(\varphi) \setminus P$;
    \item $\varphi \to \chi \in L$;
    \item $\chi \to \psi \in L$ for any $\psi \in \mathscr{L}$ such that $\varphi \to \psi \in L$ and $\Var(\psi) \cap P = \emptyset$.
  \end{enumerate}
\end{definition}

CIP easily follows from LIP.
It is also easy to see that UIP implies CIP by taking any $\psi$ such that $\varphi \to \psi \in L$ and letting $P = \Var(\varphi) \setminus \Var(\psi)$.
For a comprehensive overview on interpolation as a subject and various ways to establish it,
we refer to Gabbay and Maksimova \cite{GM09} and van der Giessen et al.'s preprint \cite{giessen_interpolation_2026}.
The following fact is well known.

\begin{proposition}
  $\Logic{Cl}$, $\Logic{Int}$, and $\Logic{K}$ all enjoy CIP, LIP, and UIP.
\end{proposition}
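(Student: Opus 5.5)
The statement bundles nine known results: CIP, LIP and UIP for each of $\Cl$, $\Int$ and $\Logic{K}$. The plan is to minimize work by using the implications noted just before the statement. LIP implies CIP trivially, and UIP implies CIP. So for each logic it suffices to establish LIP and UIP. For both properties I will use proof theory: a cut-free sequent calculus together with a Maehara-style partition argument for LIP, and a Pitts-style construction for UIP.

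\textbf{Lyndon interpolation.} I will work with cut-free sequent calculi: $\mathbf{G3c}$ for $\Cl$, $\mathbf{G3i}$ (or $\mathbf{G3ip}$) for $\Int$, and $\mathbf{G3c}$ extended with the standard $\Logic{K}$-rule
\[
\frac{\Gamma \Rightarrow \varphi}{\Box\Gamma, \Delta \Rightarrow \Box\varphi, \Sigma}
\]
for $\Logic{K}$, where $\Diamond$ is treated via $\name{Dual}_\Box$ or given its own dual rule.

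The main step is Maehara's lemma. For a derivable sequent $\Gamma_1, \Gamma_2 \Rightarrow \Delta_1, \Delta_2$ I will find $\chi$ such that $\Gamma_1 \Rightarrow \Delta_1, \chi$ and $\chi, \Gamma_2 \Rightarrow \Delta_2$ are derivable. The positive variables of $\chi$ must occur positively in $\Gamma_1 \Rightarrow \Delta_1$ and positively in $\Gamma_2 \Rightarrow \Delta_2$, with polarity counted as right-positive and left-negative, and similarly for negative variables. The proof is by induction on the cut-free derivation, splitting cases by which side of the partition the principal formula lies on. Rules acting within one side keep the same interpolant. Rules that combine premises split across sides combine the premises' interpolants with $\land$ or $\lor$. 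Axioms $p \Rightarrow p$ that straddle the partition yield $p$ or $\neg p$, and these respect polarity. For the $\Box$ rule, the interpolant $\chi$ of the premise becomes $\Box\chi$ or $\Diamond\chi$ (that is, $\neg\Box\neg\chi$), depending on which side the boxed succedent formula lies; polarity is preserved because $\Box$ is monotone. For $\Int$ the single-succedent restriction only makes the partition bookkeeping simpler. Applying the lemma to $\varphi \Rightarrow \psi$ with $\Gamma_1 = \{\varphi\}$ and $\Delta_2 = \{\psi\}$ gives LIP. The routine input is cut-admissibility for these calculi, which I take as standard.

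\textbf{Uniform interpolation.} For $\Cl$ this is immediate: let $\chi$ be the disjunction of the substitution instances $\varphi[\top/p]$ and $\varphi[\bot/p]$ over all $p \in P$, iterated one variable at a time. This works because classical propositional quantification is definable. For $\Logic{K}$ I will use the bisimulation-quantifier method (Visser; Ghilardi) or, equivalently, a terminating-calculus construction. In either case I define $\exists P.\varphi$ by induction on modal depth. At depth $n$, take the disjunction over all complete $n$-depth types in the variables outside $P$ that are consistent with $\varphi$ after forgetting $P$. Correctness follows from finiteness of the types together with the fact that $n$-bisimilarity is preserved under changing valuations of $P$.

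For $\Int$ I will follow Pitts' theorem. Using the terminating calculus $\mathbf{G4ip}$, I define the two operators $\exists p$ and $\forall p$ on sequents by mutual recursion over $\mathbf{G4ip}$ rule instances, and then verify the three required properties by induction on the $\mathbf{G4ip}$ ordering. The main obstacle is this Pitts argument: the case analysis for the left-implication rules of $\mathbf{G4ip}$ is long, and termination of the definition depends on the multiset ordering. In a survey the appropriate move is to cite Pitts for $\Int$ and Visser/Ghilardi for $\Logic{K}$, as the paper's phrase ``well known'' suggests, after which CIP follows as noted above.
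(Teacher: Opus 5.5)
The paper gives no argument for this proposition; it records it as well known. Your final move is to reduce CIP to LIP and UIP and cite Pitts for $\Int$ and Visser/Ghilardi for $\Logic{K}$, which is in line with the paper. The easy pieces are fine:
\begin{itemize}
  \item iterating $\varphi[\top/p]\lor\varphi[\bot/p]$ over $P$ for $\Cl$;
  \item treating $\Diamond$ as $\neg\Box\neg$, which leaves $\VarPos$ and $\VarNeg$ unchanged.
\end{itemize}
However, two of your sketches would not go through as written.

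First, the polarity invariant in your Maehara lemma has the wrong sign on the first component. Take $p \Rightarrow p$ with $\Gamma_1=\{p\}$ and $\Delta_2=\{p\}$. Under your right-positive/left-negative convention, $p$ occurs only negatively in $p \Rightarrow {}$ and only positively in ${} \Rightarrow p$. Your condition then forces $\VarPos(\chi)=\VarNeg(\chi)=\emptyset$, so $\chi$ is equivalent to $\top$ or $\bot$, and neither interpolates $p \Rightarrow p$. Applied to $\varphi \Rightarrow \psi$, your invariant would also give $\VarPos(\chi)\subseteq\VarNeg(\varphi)\cap\VarPos(\psi)$, which is not the Lyndon condition. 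The correct invariant is this: positive variables of $\chi$ occur \emph{negatively} in $\Gamma_1 \Rightarrow \Delta_1$ (that is, positively in $\Gamma_1$ or negatively in $\Delta_1$) and positively in $\Gamma_2 \Rightarrow \Delta_2$, and dually for negative variables. This is condition (b) of Lemma~\ref{lemma:maehara}. With this fix, your axiom cases and your $\Box$-rule cases ($\Box\chi$, respectively $\neg\Box\neg\chi$) go through.

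Second, for UIP of $\Logic{K}$, your $\chi$ is the right formula: the disjunction of the depth-$n$ types over $Q=\Var(\varphi)\setminus P$ that are consistent with $\varphi$. But ``finiteness of types plus invariance of $n$-bisimilarity under changing $P$'' does not prove $\chi\to\psi$.
\begin{itemize}
  \item $\psi$ may have modal depth greater than $n$ and contain variables outside $\Var(\varphi)$. So knowing that $(M,w)$ is $n$-bisimilar over $Q$ to some model of $\varphi$ says nothing directly about $\psi$ at $(M,w)$.
  \item You also cannot simply re-value $P$ on $M$ itself. For $\varphi=p\land\Box\neg p$ and $P=\{p\}$ one gets $\chi\equiv\top$, yet on a single reflexive point no valuation of $p$ makes $\varphi$ true.
\end{itemize}
The missing step is the amalgamation lemma behind bisimulation quantifiers. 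From $(M,w)\sim^Q_n(M',w')$ with $M',w'\models\varphi$, build a model $(N,v)$ as follows:
\begin{itemize}
  \item the nodes at level $k\le n$ are pairs $(t,u)$ with $t\sim^Q_{n-k}u$;
  \item $P$ is read off from $u$, and all other variables from $t$;
  \item each level-$n$ node is continued by a copy of $M$ from $t$ onward.
\end{itemize}
Then $(N,v)$ is $n$-bisimilar to $(M',w')$ over $\Var(\varphi)$, so it satisfies $\varphi$ and hence $\psi$. It is also fully bisimilar to $(M,w)$ for all variables outside $P$, so $\psi$ transfers back to $(M,w)$. Without this construction, or an explicit appeal to it through the citation, the $\Logic{K}$ case is not established by your sketch.
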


\section{Kripke semantics in an intuitionistic setting} \label{sec:models}

The idea of modal logics in an intuitionistic setting is first introduced by Fischer Servi \cite{fischer_servi_modal_1977}.
In the paper, she introduced the intuitionistic variant of $\Logic{S5}$ and proposed that
it should be treated as a bimodal logic of $\Logic{S4}$-modality and $\Logic{S5}$-modality, the former of which handles the intuitionistic implication
through G\"{o}del translation. Based on her bimodal interpretation of intuitionistic $\Logic{S5}$, several Kripke-like birelational semantics for IML and CML have been proposed.
They share mostly the same definition of frames and models, which is just a \emph{fusion} of i-frames/models and k-frames/models.
However, they use different kinds of satisfaction relations, which aim to ensure persistency through different means.
We will focus on two dominant methods;
we may call one \emph{extrinsic}, where the interpretation of modal operators remains unchanged and persistency is asserted by extra frame conditions,
and the other one \emph{intrinsic}, where it is modified to ensure persistency without any frame conditions.

\begin{definition}
  We say $\Model{F} = (W, \IRel, \MRel)$ is a \emph{fusion frame (f-frame)}\footnote{
    We use ``fusion'' to remain neutral between the IML and CML traditions.
  } if $W$ is a non-empty set, $\IRel$ is a preorder on $W$, and $\MRel$ is a binary relation on $W$.
  We say $\Model{M} = (\Model{F}, V)$ is a \emph{fusion model (f-model)} if $\Model{F}$ is an f-frame and $V: \PropVar \to \mathcal{P}(W)$,
  where each $V(p)$ is upward closed with respect to $\IRel$.
  Such $\Model{M}$ is said to be \emph{based on} $\Model{F}$.
\end{definition}

\begin{definition}[see, e.g., \cite{bozic_models_1984}]
  For an f-model $\Model{M} = (W, \IRel, \MRel, V)$, we define an \emph{extrinsic satisfaction relation} $\SatE_\Model{M}$ on $W \times \MF$ by:
  \begin{enumerate}
    \item $x \SatE_\Model{M} \psi_1 \to \psi_2 \defiff \forall x' \IRelRev x\, (x' \SatE_\Model{M} \psi_1 \imp x' \SatE_\Model{M} \psi_2)$
    \item $x \SatE_\Model{M} \Box \varphi \defiff \forall y \MRelRev x \, (y \SatE_\Model{M} \varphi)$
    \item $x \SatE_\Model{M} \Diamond \varphi \defiff \exists y \MRelRev x\, (y \SatE_\Model{M} \varphi)$
    \item The other cases are identical to the classical $\Vdash_\Model{M}$.
  \end{enumerate}
  We write $\Model{M} \ValidE \varphi$ (``$\varphi$ is \emph{(e-)valid in} $\Model{M}$'') to mean $\forall x \in W\,(x \SatE_\Model{M} \varphi)$,
  and $\Model{F} \ValidE \varphi$ (``$\varphi$ is \emph{(e-)valid on} $\Model{F}$'') to mean that $\Model{M} \ValidE \varphi$ for every $\Model{M}$ based on $\Model{F}$.
\end{definition}

\begin{definition}[see, e.g., \cite{wijesekera_constructive_1990, simpson_proof_1994}]
  For an f-model $\Model{M} = (W, \IRel, \MRel, V)$, we define an \emph{intrinsic satisfaction relation} $\SatI_\Model{M}$ on $W \times \MF$ by:
  \begin{enumerate}
    \item $x \SatI_\Model{M} \psi_1 \to \psi_2 \defiff \forall x' \IRelRev x\, (x' \SatI_\Model{M} \psi_1 \imp x' \SatI_\Model{M} \psi_2)$
    \item $x \SatI_\Model{M} \Box \varphi     \defiff \forall x' \IRelRev x\, \forall y' \MRelRev x'\, (y' \SatI_\Model{M} \varphi)$
    \item $x \SatI_\Model{M} \Diamond \varphi \defiff \forall x' \IRelRev x\, \exists y' \MRelRev x'\, (y' \SatI_\Model{M} \varphi)$
    \item The other cases are identical to the classical $\Vdash_\Model{M}$.
  \end{enumerate}
  We write $\Model{M} \ValidI \varphi$ (``$\varphi$ is \emph{(i-)valid in} $\Model{M}$'') to mean $\forall x \in W\,(x \SatI_\Model{M} \varphi)$,
  and $\Model{F} \ValidI \varphi$ (``$\varphi$ is \emph{(i-)valid on} $\Model{F}$'') to mean that $\Model{M} \ValidI \varphi$ for every $\Model{M}$ based on $\Model{F}$.
\end{definition}

\begin{definition}
  Let $\Model{M} = (W, \IRel, \MRel, V)$ be an f-model and let $\Vdash^\bullet_\Model{M}$ ($\bullet \in \set{i, e}$).
  We say $\Vdash^\bullet_\Model{M}$ is \emph{persistent} if for every $\varphi \in \MF$ and every $x, x' \in W$,
  \begin{equation*}
    \left(x \Vdash^\bullet_\Model{M} \varphi \ \tand\  x \IRel x'\right) \implies x' \Vdash^\bullet_\Model{M} \varphi.
  \end{equation*}
  We also say $\Vdash^\bullet_\Model{M}$ is \emph{$\Box$-persistent} if the above condition holds for every $\varphi \in \MFb$ instead,
  and is \emph{$\Diamond$-persistent} if it holds for every $\varphi \in \MFd$ instead.
\end{definition}

\begin{proposition}[{\cite[Lem.\,1.1.6]{wijesekera_constructive_1990}}] \label{prop:intrinsic-persistency}
  $\SatI_\Model{M}$ is persistent for any f-model $\Model{M}$.
  \begin{proof}
    Let $\Model{M} = (W, \IRel, \MRel, V)$ and take any $\varphi \in \MF$.
    We use an induction on the construction of $\varphi$.
    We shall only prove the cases with modal operators.

    Suppose that $x \IRel x'$ and $x \SatI_\Model{M} \Box \psi$,
    and take any $x'', y''$ such that $x' \IRel x'' \MRel y''$.
    Here, $x \IRel x' \IRel x'' \MRel y''$ implies $x \IRel x'' \MRel y''$ by $\IRel$ being preorder,
    then $x \SatI_\Model{M} \Box \psi$ implies $y'' \SatI_\Model{M} \psi$,
    so $x' \SatI_\Model{M} \Box \psi$.

    Now suppose that $x \IRel x'$ and $x \SatI_\Model{M} \Diamond \psi$,
    and take any $x''$ such that $x' \IRel x''$, then $x \IRel x' \IRel x''$ implies $x \IRel x''$ by $\IRel$ being preorder.
    Since $x \SatI_\Model{M} \Diamond \psi$, there is $y'' \MRelRev x''$ such that $y'' \SatI_\Model{M} \psi$,
    so $x' \SatI_\Model{M} \Diamond \psi$.
  \end{proof}
\end{proposition}

Now it should be clear that there is an f-model $\Model{M}$ in which $\SatE_\Model{M}$ is not persistent.
Since the definition of $x \SatE_\Model{M} \Box \varphi$ does not assert anything about the truth of $\Box \varphi$ in $y \IRelRev x$,
we can easily construct a countermodel in which $x' \nVdash^e_\Model{M} \Box \varphi$.
To assert persistency, we need to pose extra frame conditions on the frame of $\Model{M}$.

\begin{definition}[{\cite[Defs.\,2\,(iv) and 13\,(iv)]{bozic_models_1984}}] \label{def:frame-conditions}
  Let $\Model{F} = (W, \IRel, \MRel)$ be an f-frame.
  \begin{itemize}
    \item $\Model{F}$ is said to be a \emph{$\Box$-p}\footnote{
            Also known by downward confluence \cite{balbiani_constructive_2021}, and forth-down confluence \cite{aguilera_gocalculus_2022}.
          } frame
          if $(\IRel \cdot \MRel) \subseteq (\MRel \cdot \IRel)$, that is, $\forall x,x',y'\, (x \IRel x' \MRel y' \imp \exists y\,(x \MRel y \IRel y'))$.
          Let us denote by $\FrameClass{\Box\text{-p}}$ the class of all $\Box$-p frames.
    \item $\Model{F}$ is said to be a \emph{$\Diamond$-p}\footnote{
            Also known by forward confluence \cite{balbiani_constructive_2021}, F1 \cite{simpson_proof_1994}, forth-up confluence \cite{aguilera_gocalculus_2022}, and $\name{C}_{\Diamond}$-strong \cite{groot_semantical_2025}.
          } frame
          if $(\IRel^{-1} \cdot \MRel) \subseteq (\MRel \cdot \IRel^{-1})$, that is, $\forall x,x',y\, (x' \IRelRev x \MRel y \imp \exists y'\,(x' \MRel y' \IRelRev y))$.
          Let us denote by $\FrameClass{\Diamond\text{-p}}$ the class of all $\Diamond$-p frames.
  \end{itemize}
\end{definition}

\begin{figure}[H]
  \centering

  \begin{subfigure}{.4\textwidth}
    \centering
    \begin{tikzpicture}
      \fill (0, 0) circle (2pt);
      \draw (0, 0) node[below]{$x$};

      \fill (0, 2) circle (2pt);
      \draw (0, 2) node[above]{$x'$};

      \fill (2, 0) circle (2pt);
      \draw (2, 0) node[below]{$\exists y$};

      \fill (2, 2) circle (2pt);
      \draw (2, 2) node[above]{$y'$};

      \draw [thick, ->, rel] (0,0) to (0,2);
      \draw (0,1) node[left]{$\IRel$};
      \draw [thick, ->, rel, squig, dashed] (0,0) to (2,0);
      \draw (1,0) node[below]{$\MRel$};
      \draw [thick, ->, rel, dashed] (2,0) to (2,2);
      \draw (2,1) node[right]{$\IRel$};
      \draw [thick, ->, rel, squig] (0,2) to (2,2);
      \draw (1,2) node[above]{$\MRel$};
    \end{tikzpicture}
    \caption*{The $\Box$-p condition}
  \end{subfigure}%
  \begin{subfigure}{.4\textwidth}
    \centering
    \begin{tikzpicture}
      \fill (0, 0) circle (2pt);
      \draw (0, 0) node[below]{$x$};

      \fill (0, 2) circle (2pt);
      \draw (0, 2) node[above]{$x'$};

      \fill (2, 0) circle (2pt);
      \draw (2, 0) node[below]{$y$};

      \fill (2, 2) circle (2pt);
      \draw (2, 2) node[above]{$\exists y'$};

      \draw [thick, ->, rel, squig] (0,0) to (2,0);
      \draw (0,1) node[left]{$\IRel$};
      \draw [thick, ->, rel] (0,0) to (0,2);
      \draw (1,0) node[below]{$\MRel$};
      \draw [thick, ->, rel, dashed] (2,0) to (2,2);
      \draw (2,1) node[right]{$\IRel$};
      \draw [thick, ->, rel, squig, dashed] (0,2) to (2,2);
      \draw (1,2) node[above]{$\MRel$};
    \end{tikzpicture}
    \caption*{The $\Diamond$-p condition}
  \end{subfigure}

  \caption{Visualization of Definition \ref{def:frame-conditions}}
  \label{fig:frame-conditions}
\end{figure}

\begin{proposition}[{\cite[Lems.\,2, 3, 16, and 17]{bozic_models_1984}}] \label{prop:extrinsic-persistency}
  Let $\Model{F} = (W, \IRel, \MRel)$ be an f-frame. Then:
  \begin{enumerate}[label=(\arabic*)]
    \item $\Model{F} \in \FrameClass{\Box\text{-p}}$ iff $\SatE_\Model{M}$ is $\Box$-persistent for every $\Model{M}$ based on $\Model{F}$.
    \item $\Model{F} \in \FrameClass{\Diamond\text{-p}}$ iff $\SatE_\Model{M}$ is $\Diamond$-persistent for every $\Model{M}$ based on $\Model{F}$.
  \end{enumerate}
  \begin{proof}
    (1 $\Rightarrow$) Take any $\Model{M} = (\Model{F}, V)$ and $\varphi \in \MFb$, then use an induction on the construction of $\varphi$.
    Consider the case when $\varphi = \Box \psi$. Take any $x, x'$ such that $x \IRel x'$ and $x \SatE_\Model{M} \Box \psi$.
    Take any $y'$ such that $x' \MRel y'$, then by the $\Box$-p condition, there is $y$ such that $x \MRel y \IRel y'$.
    Here, $x \SatE_\Model{M} \Box \psi$ implies $y \SatE_\Model{M} \psi$,
    and the induction hypothesis implies $y' \SatE_\Model{M} \psi$.
    Therefore, $x' \SatE_\Model{M} \Box \psi$.

    (1 $\Leftarrow$)  Suppose that $\Model{F} \notin \FrameClass{\Box\text{-p}}$, then there are $x, x', y'$ such that $x \IRel x' \MRel y'$ but no $y$ such that $x \MRel y \IRel y'$.
    Take a fresh variable $p^\ast \in \PropVar$, then we construct a countermodel $\Model{M} = (\Model{F}, V)$ by letting $w \in V(p) \defsiff (p \ne p^\ast \tor \exists v\,(x \MRel v \IRel w))$.
    Here, each $V(p)$ is upward closed with respect to $\IRel$.
    Then, $x \SatE_\Model{M} \Box p^\ast$ by $\forall z \MRelRev x\,(z \SatE_\Model{M} p^\ast)$, and $x' \nVdash^e_\Model{M} \Box p^\ast$ by $y' \nVdash^e_\Model{M} p^\ast$.

    (2 $\Rightarrow$) Take any $\Model{M} = (\Model{F}, V)$ and $\varphi \in \MFd$, then use an induction on the construction of $\varphi$.
    Consider the case when $\varphi = \Diamond \psi$. Take any $x, x'$ such that $x \IRel x'$ and $x \SatE_\Model{M} \Diamond \psi$,
    then there is $y$ such that $x \MRel y \SatE_\Model{M} \psi$.
    Now that $x' \IRelRev x \MRel y$, then by the $\Diamond$-p condition, there is $y'$ such that $x' \MRel y' \IRelRev y$.
    Here, $y \IRel y'$ implies $y' \SatE_\Model{M} \psi$ by the induction hypothesis.
    This and $x' \MRel y'$ imply $x' \SatE_\Model{M} \Diamond \psi$.

    (2 $\Leftarrow$)  Suppose that $\Model{F} \notin \FrameClass{\Diamond\text{-p}}$, then there are $x, x', y$ such that $x' \IRelRev x \MRel y$ but no $y'$ such that $x' \MRel y' \IRelRev y$.
    Take a fresh variable $p^\ast \in \PropVar$, then we construct a countermodel $\Model{M} = (\Model{F}, V)$ by letting $w \in V(p) \defsiff (p \ne p^\ast \tor w \IRelRev y)$.
    Here, each $V(p)$ is upward closed with respect to $\IRel$.
    Then, $x \SatE_\Model{M} \Diamond p^\ast$ by $y \SatE_\Model{M} p^\ast$, and $x' \nVdash^e_\Model{M} \Diamond p^\ast$ by $\forall z' \MRelRev x'\,(z' \nVdash^e_\Model{M} p^\ast)$.
  \end{proof}
\end{proposition}

\begin{proposition} \label{prop:combined-persistency}
  Let $\Model{F} = (W, \IRel, \MRel)$ be an f-frame.
  $\Model{F} \in \FrameClass{\Box\text{-p}} \cap \FrameClass{\Diamond\text{-p}}$ iff $\SatE_\Model{M}$ is persistent for every $\Model{M}$ based on $\Model{F}$.
  \begin{proof}
    ($\Rightarrow$)
    Take any $\Model{M} = (\Model{F}, V)$ and $\varphi \in \MF$,
    then we use an induction on the construction of $\varphi$.
    We shall only consider the cases with modal operators.
    The case when $\varphi = \Box \psi$ is proven similarly to Proposition \ref{prop:extrinsic-persistency} (1 $\Rightarrow$),
    and the case when $\varphi = \Diamond \psi$ is proven similarly to Proposition \ref{prop:extrinsic-persistency} (2 $\Rightarrow$).

    ($\Leftarrow$)
    For any $\Model{M} = (\Model{F}, V)$,
    $\SatE_\Model{M}$ is persistent by the assumption,
    and so both $\Box$-persistent and $\Diamond$-persistent.
    Therefore, $\Model{F} \in \FrameClass{\Box\text{-p}}$ and $\Model{F} \in \FrameClass{\Diamond\text{-p}}$ by Proposition \ref{prop:extrinsic-persistency} (1 $\Leftarrow$) and (2 $\Leftarrow$), respectively.
  \end{proof}
\end{proposition}

\begin{corollary}
  Let $\Model{F} = (W, \IRel, \MRel)$ be an f-frame.
  $\SatE_\Model{M}$ is persistent for every $\Model{M}$ based on $\Model{F}$
  iff
  $\SatE_\Model{M}$ is $\Box$-persistent and $\Diamond$-persistent for every $\Model{M}$ based on $\Model{F}$.
\end{corollary}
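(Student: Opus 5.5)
This corollary follows by chaining Proposition \ref{prop:combined-persistency} with Proposition \ref{prop:extrinsic-persistency}, with the frame conditions serving as the intermediary. No new induction on formulas is needed.

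For the left-to-right direction, suppose $\SatE_\Model{M}$ is persistent for every $\Model{M}$ based on $\Model{F}$. Since $\MFb \subseteq \MF$ and $\MFd \subseteq \MF$, persistency restricted to these sublanguages gives $\Box$-persistency and $\Diamond$-persistency directly from the definition. In fact this direction does not even need the frame conditions.

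For the right-to-left direction, assume that for every $\Model{M}$ based on $\Model{F}$, $\SatE_\Model{M}$ is both $\Box$-persistent and $\Diamond$-persistent. Proposition \ref{prop:extrinsic-persistency} (1 $\Leftarrow$) gives $\Model{F} \in \FrameClass{\Box\text{-p}}$, and Proposition \ref{prop:extrinsic-persistency} (2 $\Leftarrow$) gives $\Model{F} \in \FrameClass{\Diamond\text{-p}}$. Then Proposition \ref{prop:combined-persistency} ($\Rightarrow$) yields full persistency for every $\Model{M}$ based on $\Model{F}$.

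The one point needing care is the order of quantifiers. The hypotheses of Proposition \ref{prop:extrinsic-persistency} are quantified over all models on $\Model{F}$, so the statement must be read frame-wise and not model-wise. Model-wise, the claim could fail: a single model could be $\Box$- and $\Diamond$-persistent while some mixed formula such as $\Box\Diamond p$ is not. Read with the quantifier over all models as in the statement, the argument above goes through.
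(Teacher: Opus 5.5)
Your proof is correct and is exactly how the paper obtains this corollary: the paper gives no separate proof, and the result follows at once from the $\Leftarrow$ directions of Proposition~\ref{prop:extrinsic-persistency} and the $\Rightarrow$ direction of Proposition~\ref{prop:combined-persistency}, with left-to-right being mere restriction to $\MFb$ and $\MFd$. Your closing aside on the model-wise version is not needed, and the paper leaves that question open, but your guess is in fact right: take $W = \set{u, u', z, w, y, y'}$ with $\IRel$ the reflexive closure of $\set{(u,u'),(y,y')}$, $\mathord{\MRel} = \set{(z,u),(z,u'),(w,u),(y,z),(y',z),(y',w)}$, and $V(p) = \set{u'}$ (all other variables empty); then $z$ and $w$ agree on every formula of $\MFb$, so the model is $\Box$- and $\Diamond$-persistent, yet $y \SatE \Box\Diamond p$ while $y' \nVdash^e \Box\Diamond p$.
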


\begin{problem}
  Does the above corollary hold locally? In other words, take any f-model $\Model{M} = (W, \IRel, \MRel, V)$,
  then can we show that $\SatE_\Model{M}$ is persistent whenever $\SatE_\Model{M}$ is $\Box$-persistent and $\Diamond$-persistent?
\end{problem}

Now the reader may be curious to what extent these two versions of satisfaction relations coincide.
We first show that the two satisfaction relations coincide in $\Box$-p and $\Diamond$-p models.
We define the following sets of formulas for convenience.
\begin{align*}
  \Theory{V}^i &\defeq \set{ \varphi \in \MF; \forall \Model{F}: \text{f-frame}\, (\Model{F} \ValidI \varphi) }, \\
  \Theory{V}^e &\defeq \set{ \varphi \in \MF; \forall \Model{F} \in \FrameClass{\Box\text{-p}} \cap \FrameClass{\Diamond\text{-p}}\, (\Model{F} \ValidE \varphi) }, \\
  \Theory{V}^e_\Box &\defeq \set{ \varphi \in \MFb; \forall \Model{F} \in \FrameClass{\Box\text{-p}}\, (\Model{F} \ValidE \varphi) }, \\
  \Theory{V}^e_\Diamond &\defeq \set{ \varphi \in \MFd; \forall \Model{F} \in \FrameClass{\Diamond\text{-p}}\, (\Model{F} \ValidE \varphi) }. \\
\end{align*}

\begin{proposition} \label{prop:int-ext-coincides}
  Let $\Model{M} = (W, \IRel, \MRel, V)$ be a $\Box$-p and $\Diamond$-p model, then \\
  $x \SatI_\Model{M} \varphi \iff x \SatE_\Model{M} \varphi$ for any $\varphi \in \MF$ and $x \in W$.
  \begin{proof}
    We use an induction on the construction of $\varphi$.
    We shall consider the cases with modal operators, as the other cases are either trivial or easily proven with the induction hypothesis.
    We note that $\SatE_\Model{M}$ is persistent by the assumption and Proposition \ref{prop:combined-persistency}.

    ($\Box\Rightarrow$)
    Suppose $x \SatI_\Model{M} \Box \psi$, then $\forall x',y'\, ((x \IRel x' \MRel y') \imp y' \SatI_\Model{M} \psi)$.
    Now take any $y$ such that $x \IRel x \MRel y$, then it follows that $y \SatI_\Model{M} \psi$, so $y \SatE_\Model{M} \psi$ by the induction hypothesis.
    Therefore, $x \SatE_\Model{M} \Box \psi$.

    ($\Box\Leftarrow$)
    Suppose $x \SatE_\Model{M} \Box \psi$, then $\forall y\, (x \MRel y \imp y \SatE_\Model{M} \psi)$.
    Now take any $x', y'$ such that $x \IRel x' \MRel y'$,
    then $x \IRel x' \SatE_\Model{M} \Box \psi$ by persistency,
    then $y' \SatE_\Model{M} \psi$,
    so $y' \SatI_\Model{M} \psi$ by the induction hypothesis.
    Therefore, $x \SatI_\Model{M} \Box \psi$.

    ($\Diamond\Rightarrow$)
    Suppose $x \SatI_\Model{M} \Diamond \psi$, then $\forall x'\, (x \IRel x' \imp \exists y'\, (x' \MRel y' \tand y' \SatI_\Model{M} \varphi))$.
    This and $x \IRel x$ imply that there is $y$ such that $x \MRel y$ and $y \SatI_\Model{M} \psi$, then $y \SatE_\Model{M} \psi$ by the induction hypothesis.
    Therefore, $x \SatE_\Model{M} \Diamond \psi$.

    ($\Diamond\Leftarrow$)
    Suppose $x \SatE_\Model{M} \Diamond \psi$, then there is $y$ such that $x \MRel y$ and $y \SatE_\Model{M} \psi$.
    Now take any $x'$ such that $x \IRel x'$,
    then by $\Diamond$-p, $x' \IRelRev x \MRel y$ implies that there is $y'$ such that $x' \MRel y' \IRelRev y$.
    Here, $y \IRel y' \SatE_\Model{M} \psi$ by persistency,
    so $y' \SatI_\Model{M} \psi$ by the induction hypothesis.
    Therefore, $x \SatI_\Model{M} \Diamond \psi$.
  \end{proof}
\end{proposition}

\begin{corollary} \label{cor:i-subset-e}
  $\Theory{V}^i \subseteq \Theory{V}^e$.
  \begin{proof}
    Suppose that $\varphi \notin \Theory{V}^e$, then there is a $\Box$-p and $\Diamond$-p model $\Model{M} = (W, \IRel, \MRel, V)$ and $w \in W$ such that $w \nVdash^e_\Model{M} \varphi$.
    By Proposition \ref{prop:int-ext-coincides}, $w \nVdash^e_\Model{M} \varphi$ implies $w \nVdash^i_\Model{M} \varphi$, so $\varphi \notin \Theory{V}^i$.
  \end{proof}
\end{corollary}

Now we show that the other direction does not hold.

\begin{proposition} \label{prop:separation-cdia}
  (1) $\name{C}_\Diamond \in \Theory{V}^e$. (2) $\name{C}_\Diamond \notin \Theory{V}^i$.
  \begin{proof}
    (1) Take any f-model $\Model{M} = (W, \IRel, \MRel, V)$.
    Here, $\Model{M}$ does not need to satisfy the $\Box$-p and $\Diamond$-p conditions.
    Then take any $\varphi, \psi \in \MF$ and $x, x'$ such that $x \IRel x' \SatE_\Model{M} \Diamond(\varphi \lor \psi)$.
    Then there is $y'$ such that $x' \MRel y' \SatE_\Model{M} \varphi \lor \psi$.
    Here, either $y' \SatE_\Model{M} \varphi$ or $y' \SatE_\Model{M} \psi$,
    then either $x' \SatE_\Model{M} \Diamond \varphi$ or $x' \SatE_\Model{M} \Diamond \psi$, respectively,
    so $x' \SatE_\Model{M} \Diamond \varphi \lor \Diamond \psi$.
    Therefore, $x \SatE_\Model{M} \name{C}_\Diamond$.

    (2) Consider the model $\Model{M} = (W, \IRel, \MRel, V)$, where $W = \set{ x, x', y, y' }$,
    $\mathord{\IRel} = \set{ (x, x), (x, x'), (x', x'), (y, y), (y', y') }$,
    $\mathord{\MRel} = \set{ (x, y), (x', y'), (y, y), (y', y') }$,
    and $V(p) = \set{ y }$, $V(q) = \set{ y' }$.
    The model $\Model{M}$ is visualized in Figure \ref{fig:kdia-countermodel}.

    \begin{figure}[ht]
      \centering
      \begin{tikzpicture}
      \fill (0, 2) circle (2pt);
      \draw (0, 2) node[left]{$x$};

      \draw (2, 2) circle (2pt);
      \draw (2, 2) node[right]{$y \SatI p$};

      \draw [thick, ->, squig, rel] (0,2) to (2, 2);
      \draw (1, 2) node[above]{$\MRel$};

      \fill (0, 4) circle (2pt);
      \draw (0, 4) node[left]{$x'$};

      \draw [thick, ->, rel] (0, 2) to (0,4);
      \draw (0,3) node[left]{$\IRel$};

      \draw (2, 4) circle (2pt);
      \draw (2, 4) node[right]{$y' \SatI q$};

      \draw [thick, ->, squig, rel] (0,4) to (2, 4);
      \draw (1, 4) node[above]{$\MRel$};

      \end{tikzpicture}
      \caption{The model $\Model{M}$. $\circ$ denotes a $\MRel$-reflexive world.}
      \label{fig:kdia-countermodel}
    \end{figure}

    It is easy to see that $w \SatI_\Model{M} \Diamond (p \lor q)$ for every $w \in W$,
    but $x \nVdash^i_\Model{M} \Diamond p \lor \Diamond q$.
    Therefore, $x \nVdash^i_\Model{M} \name{C}_\Diamond$.
  \end{proof}
\end{proposition}

\begin{corollary}
  $\Theory{V}^i \subsetneq \Theory{V}^e$.
\end{corollary}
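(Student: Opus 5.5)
The corollary $\Theory{V}^i \subsetneq \Theory{V}^e$ has two parts: the inclusion, and a witness for strictness. The inclusion $\Theory{V}^i \subseteq \Theory{V}^e$ is exactly Corollary \ref{cor:i-subset-e}. Recall how that went: by Proposition \ref{prop:int-ext-coincides}, on any frame that is both $\Box$-p and $\Diamond$-p, the intrinsic and extrinsic satisfaction relations agree pointwise. Hence any failure of e-validity on such a frame is already a failure of i-validity on the same f-frame.

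For strictness we need a formula in $\Theory{V}^e \setminus \Theory{V}^i$, and Proposition \ref{prop:separation-cdia} supplies one: the axiom $\name{C}_\Diamond$. Part (1) of that proposition gives $\name{C}_\Diamond \in \Theory{V}^e$. In fact the argument there shows more: $\name{C}_\Diamond$ is e-valid on every f-frame, since the extrinsic $\Diamond$ is evaluated at a single $\MRel$-successor, and the disjunction at that successor simply splits.

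Part (2) gives $\name{C}_\Diamond \notin \Theory{V}^i$ via the four-world countermodel. The key checks there are as follows.
\begin{itemize}
  \item $x \SatI \Diamond(p \lor q)$, because each $\IRel$-successor of $x$ has an $\MRel$-successor satisfying $p$ or $q$.
  \item $x \nVdash^i \Diamond p$, since $x'$ sees only $y'$, where $p$ fails.
  \item $x \nVdash^i \Diamond q$, since $x$ sees only $y$, where $q$ fails.
\end{itemize}
Together these show $\name{C}_\Diamond$ fails at $x$.

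Combining the inclusion with this witness yields the proper inclusion. There is no real obstacle here: the substantive work was already done in Proposition \ref{prop:int-ext-coincides} and in verifying the countermodel of Proposition \ref{prop:separation-cdia}.
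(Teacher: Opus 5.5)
Your proposal is correct and matches the paper, which obtains this corollary directly from Corollary~\ref{cor:i-subset-e} for the inclusion and Proposition~\ref{prop:separation-cdia} for the witness $\name{C}_\Diamond \in \Theory{V}^e \setminus \Theory{V}^i$. Your checks of the four-world countermodel are also accurate: $\Diamond p$ fails at $x$ via its $\IRel$-successor $x'$, and $\Diamond q$ fails via $x$ itself, using reflexivity of $\IRel$.
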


In fact, we can show that even the $\Diamond$-free fragments of $\Theory{V}^i$ and $\Theory{V}^e$ do not coincide
because the presence of the $\Diamond$-p condition makes some $\Diamond$-free formulas e-valid on the f-frame.
The below example of such $\Diamond$-free formula is due to Das and Marin \cite[Prop.\,15]{das_intuitionistic_2023}.

\begin{proposition} \label{prop:separation-nnb}
  $\Theory{V}^e_\Box \subsetneq \Theory{V}^e \cap \MFb$. In particular:
  (1) $\neg \neg \Box \bot \to \Box \bot \in \Theory{V}^e$, and
  (2) $\neg \neg \Box \bot \to \Box \bot \notin \Theory{V}^e_\Box$.
  \begin{proof}
    (1) Take any $\Diamond$-p model $\Model{M} = (W, \IRel, \MRel, V)$,
    then take any $x, x'$ such that $x \IRel x' \SatE_\Model{M} \neg \neg \Box \bot$.
    Then $x' \IRel x'$ implies $x' \nVdash^e_\Model{M} \neg \Box \bot$, so there is $x''$ such that $x' \IRel x'' \SatE_\Model{M} \Box \bot$,
    which implies there is no $y''$ such that $x'' \MRel y''$.
    Now suppose, by way of contradiction, that there is $y'$ such that $x' \MRel y'$,
    then $x'' \IRelRev x' \MRel y'$, so by the $\Diamond$-p condition, there must be some $y''$ such that $x'' \MRel y'' \IRelRev y'$, which is a contradiction.
    Now that there is no $y'$ such that $x' \MRel y'$, we obtain $x' \SatE_\Model{M} \Box \bot$.
    Therefore, $x \SatE_\Model{M} \neg \neg \Box \bot \to \Box \bot$.

    (2) Consider the model $\Model{M} = (W, \IRel, \MRel, V)$, where $W = \set{ x, x', x'', y, y' }$,
    $\mathord{\IRel}$ is the reflexive and transitive closure of $\set{ (x, x'), (x', x''), (y, y') }$,
    $\mathord{\MRel} = \set{ (x, y), (x', y')}$,
    and $V$ is arbitrary.
    The model $\Model{M}$ is visualized in Figure \ref{fig:nnb-countermodel}.

    \begin{figure}[ht]
      \centering
      \begin{tikzpicture}
      \fill (0, 0) circle (2pt);
      \draw (0, 0) node[left]{$x$};

      \fill (2, 0) circle (2pt);
      \draw (2, 0) node[right]{$y$};

      \fill (0, 2) circle (2pt);
      \draw (0, 2) node[left]{$x'$};

      \fill (2, 2) circle (2pt);
      \draw (2, 2) node[right]{$y' \nVdash \bot$};

      \fill (0, 4) circle (2pt);
      \draw (0, 4) node[right]{$x'' \Vdash \Box \bot$};

      \draw [thick, ->, rel] (0,0) to (0,2);
      \draw (0,1) node[left]{$\IRel$};

      \draw [thick, ->, rel, squig] (0,0) to (2,0);
      \draw (1, 0) node[below]{$\MRel$};

      \draw [thick, ->, rel] (2,0) to (2,2);
      \draw (2,1) node[right]{$\IRel$};

      \draw [thick, ->, rel, squig] (0,2) to (2,2);
      \draw (1, 2) node[above]{$\MRel$};

      \draw [thick, ->, rel] (0,2) to (0,4);
      \draw (0,3) node[left]{$\IRel$};

      \end{tikzpicture}
      \caption{The model $\Model{M}$.}
      \label{fig:nnb-countermodel}
    \end{figure}

    It is easy to see that $\Model{M}$ is a $\Box$-p model.
    Also, we can easily check that $x' \SatE_\Model{M} \neg \neg \Box \bot$,
    $x' \nVdash^e_\Model{M} \Box \bot$,
    and so $x \nVdash^e_\Model{M} \neg \neg \Box \bot \to \Box \bot$.
  \end{proof}
\end{proposition}

\begin{proposition} \label{prop:diafree-vi-is-veb}
  $\Theory{V}^i \cap \MFb = \Theory{V}^e_\Box$.
  \begin{proof}
    ($\subseteq$)
    It suffices to show that for any $\Box$-p model $\Model{M} = (W, \IRel, \MRel, V)$, $\varphi \in \MFb$, and $w \in W$,
    $w \SatE_\Model{M} \varphi$ iff $w \SatI_\Model{M} \varphi$.
    This can easily be proven by induction on the construction of $\varphi$,
    reusing the proofs of Proposition \ref{prop:int-ext-coincides} ($\Box \Leftrightarrow$) by replacing persistency with $\Box$-persistency.

    ($\supseteq$) Suppose that there are $\varphi \in \MFb$, $\Model{M} = (W, \IRel, \MRel, V)$, and $w \in W$ such that $w \nVdash^i_\Model{M} \varphi$.
    We construct a new f-model $\Model{M'} = (W, \IRel, \MRel', V)$ by letting
    $\mathord{\MRel'} = \mathord{\MRel} \cup \set{ (x, y'); \exists x'\, (x \IRel x' \MRel y') }$,
    then $\Model{M'}$ is clearly a $\Box$-p model.
    \begin{claim*}
      $x \SatI_\Model{M} \psi \iff x \SatE_\Model{M'} \psi$ for every $\psi \in \MFb$.
      \begin{subproof}[Proof of claim]
        We use an induction on the construction of $\psi$.
        We shall only prove the case when $\psi = \Box \chi$.

        ($\Rightarrow$)
        Suppose $x \nVdash^e_\Model{M'} \Box \chi$, then
        there is $y$ such that $x \MRel' y \nVdash^e_\Model{M'} \chi$.
        Here, $y \nVdash^i_\Model{M} \chi$ by the induction hypothesis.
        Also, $x \MRel' y$ implies either $x \IRel x \MRel y$ or $x \IRel x' \MRel y$ for some $x'$.
        In any case, $x \nVdash^i_\Model{M} \Box \chi$ holds.

        ($\Leftarrow$)
        Suppose $x \nVdash^i_\Model{M} \Box \chi$, then
        there are $x', y'$ such that $x \IRel x' \MRel y' \nVdash^i_\Model{M} \chi$.
        Here, $y' \nVdash^e_\Model{M'} \chi$ by the induction hypothesis.
        Also, $x \IRel x' \MRel y'$ implies $x \MRel' y$ by definition.
        Therefore, $x \nVdash^e_\Model{M'} \Box \chi$ holds.
      \end{subproof}
    \end{claim*}
    By the claim, $w \nVdash^i_\Model{M} \varphi$ implies $w \nVdash^e_\Model{M'} \varphi$.
  \end{proof}
\end{proposition}

\begin{corollary}
  $\Theory{V}^i \cap \MFb \,\subsetneq\, \Theory{V}^e \cap \MFb$.
\end{corollary}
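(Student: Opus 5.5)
The corollary follows by chaining the two preceding propositions, so my plan is simply to compose them.

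First, Proposition \ref{prop:diafree-vi-is-veb} gives $\Theory{V}^i \cap \MFb = \Theory{V}^e_\Box$. Hence it suffices to show $\Theory{V}^e_\Box \subsetneq \Theory{V}^e \cap \MFb$, and this is exactly the content of Proposition \ref{prop:separation-nnb}.

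I would still make the two halves explicit rather than merely cite them.

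For the inclusion, I would take $\varphi \in \Theory{V}^i \cap \MFb$. By Corollary \ref{cor:i-subset-e}, $\varphi \in \Theory{V}^e$, and $\varphi \in \MFb$ by assumption. Alternatively, one can note that every frame in $\FrameClass{\Box\text{-p}} \cap \FrameClass{\Diamond\text{-p}}$ lies in $\FrameClass{\Box\text{-p}}$, so $\Theory{V}^e_\Box \subseteq \Theory{V}^e \cap \MFb$ directly.

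For strictness, the witness is $\neg\neg\Box\bot \to \Box\bot$, which lies in $\MFb$. By Proposition \ref{prop:separation-nnb}(1) it belongs to $\Theory{V}^e$. By part (2) of the same proposition it is not in $\Theory{V}^e_\Box$, and therefore, by Proposition \ref{prop:diafree-vi-is-veb}, it is not in $\Theory{V}^i \cap \MFb$.

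There is no real obstacle here. The only point to watch is that the equality in Proposition \ref{prop:diafree-vi-is-veb} is what transfers the non-membership from $\Theory{V}^e_\Box$ to $\Theory{V}^i$. One could instead check directly that the five-point countermodel of Proposition \ref{prop:separation-nnb}(2) refutes the formula at $x$ under $\SatI$, but that is unnecessary given the equality.
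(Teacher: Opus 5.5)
Your proposal is correct and is essentially the paper's own argument. The paper gives no separate proof; it states this corollary immediately after Proposition~\ref{prop:diafree-vi-is-veb} as the combination of that equality $\Theory{V}^i \cap \MFb = \Theory{V}^e_\Box$ with Proposition~\ref{prop:separation-nnb}, with $\neg\neg\Box\bot \to \Box\bot$ as the separating formula, exactly as you do.
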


\begin{remark}
  It is clear that $\Theory{V}^i \cap \MFd \subsetneq \Theory{V}^e_\Diamond$ by the proof of Proposition \ref{prop:separation-cdia}.
  The reader may wonder if $\Theory{V}^e_\Diamond \subsetneq \Theory{V}^e \cap \MFd$ also holds.
  We will answer it positively in Corollary \ref{cor:fik-ik-eik-diamond-fragment-separation}.
\end{remark}

We shall briefly check the i- and e-validity of the other basic modal axioms in the following proposition:

\begin{proposition} \label{prop:soundness-basis} \leavevmode
  \begin{enumerate}[label=(\arabic*)]
    \item $\Theory{V}^i$, $\Theory{V}^e$, and $\Theory{V}^e_\Box$ are closed under $\name{M}_\Box$.
          $\Theory{V}^i$, $\Theory{V}^e$, and $\Theory{V}^e_\Diamond$ are closed under $\name{M}_\Diamond$.
    \item $\name{N}_\Box$ is in $\Theory{V}^i$, $\Theory{V}^e$, and $\Theory{V}^e_\Box$.
          $\name{N}_\Diamond$ is in $\Theory{V}^i$, $\Theory{V}^e$ and $\Theory{V}^e_\Diamond$.
    \item $\name{C}_\Box$ is in $\Theory{V}^i$, $\Theory{V}^e$, and $\Theory{V}^e_\Box$.
          $\name{C}_\Diamond$ is in $\Theory{V}^e_\Diamond$ (and $\Theory{V}^e$).
    \item Both $\name{Dual}_\Box$ and $\name{Dual}_\Diamond$ are not in $\Theory{V}^i$ and $\Theory{V}^e$.
  \end{enumerate}
  \begin{proof}
    Exercise. These are also proven throughout the paper by giving an axiomatization to each logic.
  \end{proof}
\end{proposition}

Now we shall summarize the facts we have obtained so far:
\begin{enumerate}
  \item $\Theory{V}^i$ contains all the basic modal axioms and rules, except for $\name{C}_\Diamond$.
  \item $\Theory{V}^e_\Box$ and $\Theory{V}^e_\Diamond$ contains all the basic $\Diamond$/$\Box$-free modal axioms and rules, respectively.
  \item $\Theory{V}^e$ contains all the basic modal axioms and rules, but contains strictly more $\Diamond$-free formulas than $\Theory{V}^e_\Box$, namely $\neg \neg \Box \bot \to \Box \bot$.
\end{enumerate}

The above observation puts both $\Theory{V}^i$ and $\Theory{V}^e$ in an awkward position;
one is too weak to capture the expected behavior of $\Diamond$, and the other allows $\Box$ to behave too much classically.

Here is where IMLs and CMLs part company with each other.
Researchers such as Wijesekera, Mendler, and de Paiva considered that
the distribution of $\Diamond$ over $\lor$ should be questioned from an intuitionistic point of view (``The fact that a disjunction A $\lor$ B is satisfiable in a context does not warrant the conclusion that one of the disjuncts is satisfiable'' \cite[Section 3]{mendler_constructive_2005}),
developing a family of CMLs which may lack $\name{C}_\Diamond$ \cite{wijesekera_constructive_1990} and even $\name{N}_\Diamond$ \cite{mendler_constructive_2005}.
On the other hand, researchers such as Fischer Servi and Simpson considered that
$\Box$ and $\Diamond$ should be related to some extent for a logic to be ``the true intuitionistic analogue of $\Logic{K}$'' \cite[p.51]{simpson_proof_1994},
and thus required $\name{C}_\Diamond$ and some additional axioms in their IMLs \cite{fischer_axiomatizations_1984,simpson_proof_1994}.

In the following sections, we will introduce CMLs, where $\SatI$ is adopted as the standard satisfaction relation with a slight modification,
and then IMLs, where a certain \emph{hybrid} of $\SatI$ and $\SatE$ is commonly used;
although $\SatE$ would appear natural from a point of view of classical modal logic,
it is not used as the standard satisfaction relation by both parties,
the reason of which we will see later in Section \ref{sec:other}.

Before moving on to CMLs and IMLs,
we shall take a look at the the $\Diamond$/$\Box$-free logics $\Theory{V}^e_\Box$ and $\Theory{V}^e_\Diamond$,
which largely avoid the tension between CMLs and IMLs.
We first give the following axiomatizations.

\begin{definition}
  Let $\Logic{iK}_\Box \defeq \Int + \name{M}_\Box + \name{N}_\Box + \name{C}_\Box$, $\Logic{iK}_\Diamond \defeq \Int + \name{M}_\Diamond + \name{N}_\Diamond + \name{C}_\Diamond$.
\end{definition}

\begin{theorem}[{\cite[Thms.\,1 and 4]{bozic_models_1984}}] \label{thm:ikb-ikd-completeness}
  $\Logic{iK}_\Box = \Theory{V}^e_\Box$ and $\Logic{iK}_\Diamond = \Theory{V}^e_\Diamond$.
\end{theorem}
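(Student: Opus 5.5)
We prove each equality by soundness plus a canonical model construction; the $\Box$ and $\Diamond$ cases run in parallel.

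\emph{Soundness.} For $\Logic{iK}_\Box \subseteq \Theory{V}^e_\Box$:
\begin{itemize}
  \item The $\Int$-axioms and modus ponens are sound on any f-model in which $\SatE$ is persistent on the relevant fragment. Proposition \ref{prop:extrinsic-persistency}(1) gives this for $\Box$-p frames and $\MFb$.
  \item $\name{N}_\Box$ and $\name{C}_\Box$ are immediate from the clause for $\Box$.
  \item For $\name{M}_\Box$: if $\varphi \to \psi$ is valid on $\Model{F}$, then at every world truth of $\varphi$ implies truth of $\psi$. Hence $\Box\varphi \to \Box\psi$ holds at every world above any given world.
\end{itemize}
Uniform substitution is preserved because validity is frame validity. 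The $\Diamond$ side is analogous, using Proposition \ref{prop:extrinsic-persistency}(2). Here $\name{N}_\Diamond$ holds since no world satisfies $\bot$, and $\name{C}_\Diamond$ holds as in Proposition \ref{prop:separation-cdia}(1).

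\emph{Completeness for $\Box$.} Let $W$ be the set of prime theories of $\Logic{iK}_\Box$: consistent, deductively closed, and with the disjunction property. Put $\Gamma \IRel \Delta$ iff $\Gamma \subseteq \Delta$, and $\Gamma \MRel \Delta$ iff $\Box^{-1}\Gamma \subseteq \Delta$, where $\Box^{-1}\Gamma = \set{\varphi; \Box\varphi \in \Gamma}$. Let $V(p) = \set{\Gamma; p \in \Gamma}$.

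The $\Box$-p condition holds. Suppose $\Gamma \subseteq \Gamma' \MRel \Delta'$; then $\Delta'$ itself is a witness, since $\Box^{-1}\Gamma \subseteq \Box^{-1}\Gamma' \subseteq \Delta'$ and so $\Gamma \MRel \Delta' \IRel \Delta'$.

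The truth lemma is by induction.
\begin{itemize}
  \item The $\to$ case is the standard prime-extension (Lindenbaum) argument for $\Int$.
  \item For $\Box$: assume $\Box\varphi \notin \Gamma$. Then $\Box^{-1}\Gamma \nvdash \varphi$. Otherwise some finite conjunction $\bigwedge\chi_i \to \varphi$ would be provable, and $\name{M}_\Box$ with $\name{C}_\Box$ (or $\name{N}_\Box$ when the conjunction is empty) would give $\Box\varphi \in \Gamma$. Extend $\Box^{-1}\Gamma$ to a prime theory avoiding $\varphi$.
\end{itemize}

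\emph{Completeness for $\Diamond$.} This is the main obstacle: the naive canonical relation need not be $\Diamond$-p. I would try $\Gamma \MRel \Delta$ iff $\Diamond\Delta \subseteq \Gamma$, i.e.\ $\Diamond\psi \in \Gamma$ for all $\psi \in \Delta$.
\begin{itemize}
  \item \emph{Truth lemma for $\Diamond$.} Suppose $\Diamond\varphi \in \Gamma$. Let $F = \set{\psi; \Diamond\psi \notin \Gamma}$. Using $\name{C}_\Diamond$, $\name{M}_\Diamond$ and primeness of $\Gamma$, $F$ is closed under finite disjunctions, so it can serve as the "avoid" set. Extend $\set{\varphi}$ to a prime theory $\Delta$ disjoint from $F$. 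One must check that no element of $F$ is derivable from $\varphi$ alone: if $\varphi \vdash \psi$, then $\name{M}_\Diamond$ gives $\Diamond\psi \in \Gamma$. Primeness of $\Delta$ is exactly where $\name{C}_\Diamond$ is used. $\name{N}_\Diamond$ ensures $\bot \in F$, so $\Delta$ is consistent.
  \item \emph{$\Diamond$-p condition.} Suppose $\Gamma \subseteq \Gamma'$ and $\Gamma \MRel \Delta$. Then $\Diamond\Delta \subseteq \Gamma \subseteq \Gamma'$, so $\Gamma' \MRel \Delta$, and $\Delta$ itself is the witness with $\Delta \IRel \Delta$.
\end{itemize}
The choice "$\Diamond\Delta \subseteq \Gamma$" thus makes $\Diamond$-p automatic, while the dual choice makes $\Box$-p automatic. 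The only delicate step is the prime-extension argument for $\Diamond$.

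Finally, if $\varphi \notin \Logic{iK}_\bullet$, extend $\Logic{iK}_\bullet$ to a prime theory omitting $\varphi$. This is a countermodel on a $\bullet$-p frame, so $\varphi \notin \Theory{V}^e_\bullet$.
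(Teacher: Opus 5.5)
Your proposal is correct. The paper gives no proof of its own here (it only cites Bo\v{z}i\'c--Do\v{s}en), and your argument is the standard one: prime-theory canonical models with $\Gamma \MRel \Delta$ iff $\Box^{-1}\Gamma \subseteq \Delta$ (resp.\ $\Diamond\Delta \subseteq \Gamma$). This matches the single-modality case of the construction in Appendix~\ref{appendix:completeness}, whose condition $\Diamond^{-1}\Delta_x \subseteq \Delta_y$ is equivalent to yours for maximal consistent pairs.

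Two small precisions:
\begin{itemize}
  \item $\name{C}_\Diamond$ is really used to make $F$ closed under finite disjunctions, which is what lets the prime-extension lemma avoid all of $F$.
  \item Closure of frame validity under uniform substitution needs the relevant persistency (Proposition~\ref{prop:extrinsic-persistency}), so that the substituted valuation is upward closed.
\end{itemize}
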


As such, the $\Diamond$/$\Box$-free restrictions of $\SatE$ are commonly used among the researchers who focus on $\Diamond$/$\Box$-free intuitionistic modal logics.
Moreover, the $\Diamond$/$\Box$-freeness enables us some aggressive truth-preserving operations on f-frames.

\begin{definition}[{\cite[Defs.\,10 and 15]{bozic_models_1984}, \cite{iemhoff_properties_2005}}] \label{def:condensed-brilliant}
  Let $\Model{F} = (W, \IRel, \MRel)$ be an f-frame.
  \begin{itemize}
    \item We say $\Model{F}$ is \emph{$\Box$-condensed} if $(\IRel \cdot \MRel) \subseteq {\MRel}$,
          and is \emph{$\Box$-brilliant} (or \emph{strictly $\Box$-condensed}) if $(\MRel \cdot \IRel) \subseteq {\MRel}$.
    \item We say $\Model{F}$ is \emph{$\Diamond$-condensed} if $(\IRel^{-1} \cdot \MRel) \subseteq {\MRel}$,
          and is \emph{$\Diamond$-brilliant} (or \emph{strictly $\Diamond$-condensed}) if $(\MRel \cdot \IRel^{-1}) \subseteq {\MRel}$.
  \end{itemize}
\end{definition}

\begin{figure}[H]
  \centering
  \begin{subfigure}{.225\textwidth}
    \centering
    \begin{tikzpicture}
      \fill (0, 0) circle (2pt);
      \draw (0, 0) node[left]{$x$};

      \fill (0, 2) circle (2pt);
      \draw (0, 2) node[left]{$x'$};

      \fill (2, 2) circle (2pt);
      \draw (2, 2) node[right]{$y$};

      \draw [thick, ->, rel] (0,0) to (0,2);
      \draw (0,1) node[left]{$\IRel$};
      \draw [thick, ->, rel, squig] (0,2) to (2,2);
      \draw (1,1) node[below right]{$\MRel$};
      \draw [thick, ->, rel, squig, dashed] (0,0) to (2,2);
      \draw (1,2) node[above]{$\MRel$};

      \draw (1,0) node[below]{\vphantom{$\MRel$}};
    \end{tikzpicture}
    \caption*{$\Box$-condensed}
  \end{subfigure}%
  \begin{subfigure}{.25\textwidth}
    \centering
    \begin{tikzpicture}
      \fill (0, 0) circle (2pt);
      \draw (0, 0) node[left]{$x$};

      \fill (2, 0) circle (2pt);
      \draw (2, 0) node[right]{$y$};

      \fill (2, 2) circle (2pt);
      \draw (2, 2) node[right]{$y'$};

      \draw [thick, ->, rel, squig] (0,0) to (2,0);
      \draw (1,0) node[below]{$\MRel$};
      \draw [thick, ->, rel] (2,0) to (2,2);
      \draw (2,1) node[right]{$\IRel$};
      \draw [thick, ->, rel, squig, dashed] (0,0) to (2,2);
      \draw (1,1) node[above left]{$\MRel$};
    \end{tikzpicture}
    \caption*{$\Box$-brilliant}
  \end{subfigure}%
  \begin{subfigure}{.25\textwidth}
    \centering
    \begin{tikzpicture}
      \fill (0, 0) circle (2pt);
      \draw (0, 0) node[left]{$x$};

      \fill (0, 2) circle (2pt);
      \draw (0, 2) node[left]{$x'$};

      \fill (2, 0) circle (2pt);
      \draw (2, 0) node[right]{$y$};

      \draw [thick, ->, rel] (0,0) to (0,2);
      \draw (0,1) node[left]{$\IRel$};
      \draw [thick, ->, rel, squig, dashed] (0,2) to (2,0);
      \draw (1,1) node[above]{$\MRel$};
      \draw [thick, ->, rel, squig] (0,0) to (2,0);
      \draw (1,0) node[below]{$\MRel$};
    \end{tikzpicture}
    \caption*{$\Diamond$-condensed}
  \end{subfigure}%
  \begin{subfigure}{.225\textwidth}
    \centering
    \begin{tikzpicture}
      \fill (0, 2) circle (2pt);
      \draw (0, 2) node[left]{$x'$};

      \fill (2, 0) circle (2pt);
      \draw (2, 0) node[right]{$y$};

      \fill (2, 2) circle (2pt);
      \draw (2, 2) node[right]{$y'$};

      \draw [thick, ->, rel, squig, dashed] (0,2) to (2,0);
      \draw (1,1) node[below left]{$\MRel$};
      \draw [thick, ->, rel] (2,0) to (2,2);
      \draw (2,1) node[right]{$\IRel$};
      \draw [thick, ->, rel, squig] (0,2) to (2,2);
      \draw (1,2) node[above]{$\MRel$};

      \draw (1,0) node[below]{\vphantom{$\MRel$}};
    \end{tikzpicture}
    \caption*{$\Diamond$-brilliant}
  \end{subfigure}%
  \caption{Visualization of Definition \ref{def:condensed-brilliant}}
  \label{fig:condensed-brilliant}
\end{figure}

\begin{proposition} \leavevmode
  \begin{itemize}
    \item A $\Box$-condensed frame satisfies the $\Box$-p condition.
    \item A $\Box$-p and $\Box$-brilliant frame is $\Box$-condensed.
    \item A $\Diamond$-condensed frame satisfies the $\Diamond$-p condition.
    \item A $\Diamond$-p and $\Diamond$-brilliant frame is $\Diamond$-condensed.
  \end{itemize}
  \begin{proof}
    Trivial.
  \end{proof}
\end{proposition}

\begin{definition}[{Condensation, \cite{bozic_models_1984}}] \label{def:condensation} \leavevmode
  \begin{itemize}
    \item For a $\Box$-p frame $\Model{F} = (W, \IRel, \MRel)$,
          we define a new frame $\beta \Model{F} = (W, \IRel, \MRel_\beta)$,
          the \emph{$\Box$-condensation} of $\Model{F}$,
          by letting ${\MRel_\beta} = (\MRel \cdot \IRel)$.
          For a $\Box$-p model $\Model{M}$, we also define $\beta\Model{M}$ by $\Box$-condensing its frame.
    \item For a $\Diamond$-p frame $\Model{F} = (W, \IRel, \MRel)$,
          we define a new frame $\delta \Model{F} = (W, \IRel, \MRel_\delta)$,
          the \emph{$\Diamond$-condensation} of $\Model{F}$,
          by letting ${\MRel_\delta} = (\MRel \cdot \IRel^{-1})$.
          For a $\Diamond$-p model $\Model{M}$, we also define $\delta\Model{M}$ by $\Diamond$-condensing its frame.
  \end{itemize}
\end{definition}

\begin{proposition} \leavevmode
  \begin{enumerate}[label=(\arabic*)]
    \item For every $\Box$-p frame $\Model{F}$, $\beta \Model{F}$ is $\Box$-condensed and $\Box$-brilliant.
    \item For every $\Diamond$-p frame $\Model{F}$, $\delta \Model{F}$ is $\Diamond$-condensed and $\Diamond$-brilliant.
  \end{enumerate}
  \begin{proof}
    (1)
    ($\Box$-condensed)
    Take any $x \IRel x' \MRel_\beta y''$, then there is $y'$ such that $(x \IRel) x' \MRel y' \IRel y''$.
    By $\Box$-p of $\Model{F}$, there is $y$ such that $x \MRel y \IRel y' (\IRel y'')$,
    then $x \MRel y \IRel y''$, so $x \MRel_\beta y''$.
    ($\Box$-brilliant)
    Take any $x \MRel_\beta y' \IRel y''$, then there is $y$ such that $x \MRel y \IRel y' (\IRel y'')$.
    then $x \MRel y \IRel y''$, so $x \MRel_\beta y''$.

    (2)
    ($\Diamond$-condensed)
    Take any $x'' \IRelRev x \MRel_\delta y$, then there is $y'$ such that $x \MRel y' \IRelRev y$.
    By $\Diamond$-p of $\Model{F}$, there is $y''$ such that $x'' \MRel y''$ and $y'' \IRelRev y' (\IRelRev y)$,
    then $x'' \MRel y'' \IRelRev y$, so $x'' \MRel_\delta y$.
    ($\Diamond$-brilliant)
    Take any $x' \MRel_\delta y' \IRelRev y$, then there is $y''$ such that $x' \MRel y'' \IRelRev y' (\IRelRev y)$,
    then $x' \MRel y'' \IRelRev y$, so $x' \MRel_\delta y$.
  \end{proof}
\end{proposition}

\begin{proposition}[{\cite[Lems.\,4 and 18]{bozic_models_1984}}] \label{prop:condensation} \leavevmode
  \begin{enumerate}[label=(\arabic*)]
    \item Let $\Model{M} = (W, \IRel, \MRel, V)$ be a $\Box$-p model, then
          $x \SatE_\Model{M} \varphi \iff x \SatE_{\beta\Model{M}} \varphi$
          for any $\varphi \in \MFb$ and $x \in W$.
    \item Let $\Model{M} = (W, \IRel, \MRel, V)$ be a $\Diamond$-p model, then
          $x \SatE_\Model{M} \varphi \iff x \SatE_{\delta\Model{M}} \varphi$
          for any $\varphi \in \MFd$ and $x \in W$.
  \end{enumerate}
  \begin{proof}
    We first make the following claims.
    \begin{claim*}
      For any $w \in W$ and $\rho \in \MFb$, $w \SatE_\Model{M} \rho \iff \forall w' \IRelRev w\,(w' \SatE_\Model{M} \rho)$.
      \begin{subproof}[Proof of claim]
        ($\Leftarrow$) Trivial.
        ($\Rightarrow$) Follows from $\Box$-persistency.
      \end{subproof}
    \end{claim*}
    \begin{claim*}
      For any $w' \in W$ and $\rho \in \MFd$, $w' \SatE_\Model{M} \rho \iff \exists w \IRel w'\,(w \SatE_\Model{M} \rho)$.
      \begin{subproof}[Proof of claim]
        ($\Leftarrow$) Follows from $\Diamond$-persistency.
        ($\Rightarrow$) Trivial.
      \end{subproof}
    \end{claim*}
    Now we prove (1) and (2) separately.

    (1) Induction on the construction of $\varphi$. It suffices to consider the $\Box$ case.
    \begin{align*}
      x \SatE_\Model{M} \Box \psi &\iff \forall y \MRelRev x\, (y \SatE_\Model{M} \psi) & \\
                                  &\iff \forall y \MRelRev x\, \forall y' \IRelRev y\, (y' \SatE_\Model{M} \psi) & (\because \text{claim}) \\
                                  &\iff \forall y \MRelRev x\, \forall y' \IRelRev y\, (y' \SatE_{\beta \Model{M}} \psi) & (\because \text{induction hypothesis})\\
                                  &\iff \forall y' \MRelRev_\beta x\, (y' \SatE_{\beta \Model{M}} \psi) & (\because x \MRel y \IRel y') \\
                                  &\iff x \SatE_{\beta \Model{M}} \Box \psi. &
    \end{align*}

    (2) Induction on the construction of $\varphi$. It suffices to consider the $\Diamond$ case:
    \begin{align*}
      x \SatE_\Model{M} \Diamond \psi &\iff \exists y' \MRelRev x\, (y' \SatE_\Model{M} \psi) & \\
                                      &\iff \exists y' \MRelRev x\, \exists y \IRel y'\, (y \SatE_\Model{M} \psi) & (\because \text{claim}) \\
                                      &\iff \exists y' \MRelRev x\, \exists y \IRel y'\, (y \SatE_{\delta\Model{M}} \psi) & (\because \text{induction hypothesis}) \\
                                      &\iff \exists y \MRelRev_\delta x\, (y \SatE_{\delta\Model{M}} \psi) & (\because x \MRel y' \IRelRev y) \\
                                      &\iff x \SatE_{\delta \Model{M}} \Diamond \psi. & \qedhere
    \end{align*}%
  \end{proof}
\end{proposition}

\begin{corollary}
  (1) For a $\Box$-p frame $\Model{F}$ and $\varphi \in \MFb$, $\Model{F} \ValidE \varphi$ iff $\beta \Model{F} \ValidE \varphi$.\\
  (2) For a $\Diamond$-p frame $\Model{F}$ and $\varphi \in \MFd$, $\Model{F} \ValidE \varphi$ iff $\delta \Model{F} \ValidE \varphi$.
\end{corollary}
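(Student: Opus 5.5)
The corollary follows directly from Proposition \ref{prop:condensation}; the only work is to pass from pointwise truth to frame validity. The key observation is that $\Model{F}$ and $\beta\Model{F}$ (resp.\ $\delta\Model{F}$) share the same carrier $W$ and the same preorder $\IRel$. Hence they admit exactly the same valuations: a map $V:\PropVar \to \mathcal{P}(W)$ is admissible, i.e.\ each $V(p)$ is $\IRel$-upward closed, for one frame iff it is for the other. So the models based on $\beta\Model{F}$ are precisely the $\beta\Model{M}$ for $\Model{M}$ based on $\Model{F}$, and every model based on $\Model{F}$ is a $\Box$-p model because $\Model{F}$ is a $\Box$-p frame.

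For (1), left to right: assume $\Model{F} \ValidE \varphi$ and take any model $(\beta\Model{F}, V)$. Then $(\Model{F}, V)$ is a $\Box$-p model $\Model{M}$ with $\beta\Model{M} = (\beta\Model{F}, V)$. For each $x \in W$ we have $x \SatE_\Model{M} \varphi$, so Proposition \ref{prop:condensation}(1) gives $x \SatE_{\beta\Model{M}} \varphi$. Right to left: take any $\Model{M} = (\Model{F}, V)$ and apply the proposition in the other direction to the model $\beta\Model{M}$, which is based on $\beta\Model{F}$.

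Part (2) is verbatim the same, with $\delta$, the $\Diamond$-p condition, $\MFd$, and Proposition \ref{prop:condensation}(2) in place of their $\Box$ counterparts.

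There is no real obstacle. The one point to state explicitly is the bijection between models on the two frames. It rests on condensation changing only $\MRel$, so the upward-closure requirement on valuations is unaffected.
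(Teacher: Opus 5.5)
Your proof is correct and is the argument the paper intends: the paper states the corollary without proof, as an immediate consequence of Proposition~\ref{prop:condensation}. You also make explicit the one step the paper leaves implicit. Since $\Model{F}$ and $\beta\Model{F}$ (resp.\ $\delta\Model{F}$) share $W$ and $\IRel$, they admit exactly the same valuations, and this is what lifts the pointwise equivalence to frame validity in both directions.
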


\begin{theorem}[{\cite[Thms.\,2 and 5]{bozic_models_1984}}] \label{cor:condensed-completeness}
  Let
  $\FrameClass{\Box\text{-con}}$, $\FrameClass{\Box\text{-bri}}$,
  $\FrameClass{\Diamond\text{-con}}$, and $\FrameClass{\Diamond\text{-bri}}$
  be the classes of all $\Box$-condensed frames, $\Box$-brilliant frames, $\Diamond$-condensed frames, and $\Diamond$-brilliant frames, respectively.
  Then,
  \begin{align*}
    \Logic{iK}_\Box &= \set{ \varphi \in \MFb; \forall \Model{F} \in \FrameClass{\Box\text{-con}} \cap \FrameClass{\Box\text{-bri}}\,\, (\Model{F} \ValidE \varphi) }, \\
    \Logic{iK}_\Diamond &= \set{ \varphi \in \MFd; \forall \Model{F} \in \FrameClass{\Diamond\text{-con}} \cap \FrameClass{\Diamond\text{-bri}}\,\, (\Model{F} \ValidE \varphi) }.
  \end{align*}
  Consequently,
  as $\FrameClass{\Box{\text{-p}}} \supseteq \FrameClass{\Box\text{-con}} \supseteq \FrameClass{\Box\text{-con}} \cap \FrameClass{\Box\text{-bri}}$,
  we have $\Logic{iK}_\Box = \set{ \varphi \in \MFb; \forall \Model{F} \in \FrameClass{\Box\text{-con}}\,\, (\Model{F} \ValidE \varphi) }$,
  and
  as $\FrameClass{\Diamond{\text{-p}}} \supseteq \FrameClass{\Diamond\text{-con}} \supseteq \FrameClass{\Diamond\text{-con}} \cap \FrameClass{\Diamond\text{-bri}}$,
  we have $\Logic{iK}_\Diamond = \set{ \varphi \in \MFd; \forall \Model{F} \in \FrameClass{\Diamond\text{-con}}\,\, (\Model{F} \ValidE \varphi) }$.
  \begin{proof}
    For the convenience, let $\heartsuit \in \set{\Box, \Diamond}$.
    ($\subseteq$)
    Trivial from Theorem \ref{thm:ikb-ikd-completeness} and $\FrameClass{\heartsuit\text{-p}} \supseteq \FrameClass{\heartsuit\text{-con}} \cap \FrameClass{\heartsuit\text{-bri}}$.
    ($\supseteq$)
    If $\Logic{iK}_\heartsuit \nvdash \varphi$,
    then by Theorem \ref{thm:ikb-ikd-completeness}, we have a $\heartsuit$-p model that falsifies $\varphi$.
    We condense it to obtain a $\heartsuit$-condensed and $\heartsuit$-brilliant model,
    which also falsifies $\varphi$ by Proposition \ref{prop:condensation}.
  \end{proof}
\end{theorem}

$\Logic{iK}_\Box$ and $\Logic{iK}_\Diamond$ are indeed intuitionistic according to Definition \ref{def:iml}.

\begin{proposition}
  $\Logic{iK}_\Box + \varphi \lor \neg \varphi = \Logic{K}_\Box$
  and $\Logic{iK}_\Diamond + \varphi \lor \neg \varphi = \Logic{K}_\Diamond$.
  \begin{proof}
    Trivial from the axiomatizations of the logics.
  \end{proof}
\end{proposition}

\begin{proposition}[{\cite[Lems.\,1 and 15]{bozic_models_1984}}]
  Both $\Logic{iK}_\Box$ and $\Logic{iK}_\Diamond$ enjoy DP.
\end{proposition}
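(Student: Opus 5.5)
We prove DP semantically, using the completeness results of Theorem \ref{thm:ikb-ikd-completeness} and Theorem \ref{cor:condensed-completeness}, via the standard \emph{gluing} (Aczel-slash style) argument. One direction is trivial: if $\varphi \in L$ then $\varphi \lor \psi \in L$. For the other, let $\heartsuit \in \set{\Box, \Diamond}$ and $L = \Logic{iK}_\heartsuit$, and suppose $\varphi \notin L$ and $\psi \notin L$. By completeness there are pointed $\heartsuit$-p models $(\Model{M}_1, w_1)$ and $(\Model{M}_2, w_2)$ with $w_1 \nVdash^e \varphi$ and $w_2 \nVdash^e \psi$. We may take their underlying sets disjoint and restrict each to the worlds reachable from $w_i$ by $\IRel \cup \MRel$; truth at $w_i$ is unchanged, since $\SatE$ only looks along these relations. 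We then build a new model $\Model{M}$ whose worlds are those of $\Model{M}_1$ and $\Model{M}_2$ together with a fresh root $r$ below both. We set $r \IRel w$ for every $w$, keep the old $\IRel$ on the two copies, and put $r \in V(p)$ iff $p$ holds at every world of both copies, so that each $V(p)$ is upward closed.

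The main obstacle is defining $\MRel$ at $r$ so that the $\heartsuit$-p condition holds at the new root; the old worlds are unaffected, since no new $\IRel$- or $\MRel$-edges leave them.
\begin{itemize}
  \item For $\heartsuit = \Box$, we need: whenever $r \IRel x' \MRel y'$, some $y$ satisfies $r \MRel y \IRel y'$. The simplest choice is to let $r \MRel y'$ for every $y'$ with $x' \MRel y'$ for some old world $x'$. This is the $\Box$-condensing idea, and then $y = y'$ works.
  \item For $\heartsuit = \Diamond$, we need: whenever $x' \IRelRev r \MRel y$, some $y'$ satisfies $x' \MRel y' \IRelRev y$. The safe choice is to give $r$ no $\MRel$-successors at all, so the condition holds vacuously.
\end{itemize}

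In either case $\Model{M}$ is a $\heartsuit$-p model. By $\heartsuit$-persistency (Proposition \ref{prop:extrinsic-persistency}), the old worlds keep their truth values: each copy is $\IRel$- and $\MRel$-closed inside $\Model{M}$, so an induction on formulas shows that for every old world $w$, $w \SatE_{\Model{M}} \chi$ iff $w \SatE_{\Model{M}_i} \chi$. Now if $r \SatE_{\Model{M}} \varphi \lor \psi$, then either $r \SatE \varphi$ or $r \SatE \psi$. By persistency along $r \IRel w_1$ or $r \IRel w_2$, this gives $w_1 \SatE \varphi$ or $w_2 \SatE \psi$, contradicting the choice of the countermodels. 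Hence $\varphi \lor \psi$ is refuted in a $\heartsuit$-p model, so $\varphi \lor \psi \notin L$ by soundness. I expect the step needing the most care to be verifying the frame condition at $r$ in the $\Box$ case. The truth values at $r$ itself never need to be computed, which is what makes the argument go through.
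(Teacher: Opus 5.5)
Your proof is correct. The paper gives no argument of its own here (it only cites Bo\v{z}i\'c--Do\v{s}en), but your construction is the root-gluing argument the paper uses for the DP of $\Logic{CK}$/$\Logic{WK}$ (Proposition~\ref{prop:wk-ck-dp}) and of $\Logic{IK}$ (Proposition~\ref{prop:ik-enjoys-dp}). Your $\Diamond$ case, with a root that has no $\MRel$-successors, is essentially that construction. In the $\Box$ case you rightly notice that such a root would break $\Box$-p as soon as some world above it has an $\MRel$-successor. Making every $\MRel$-successor of an old world an $\MRel$-successor of $r$ is the correct repair.

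An alternative uses later results of the paper and avoids frame conditions entirely. We have $\Logic{iK}_\Box = \Logic{WK} \cap \MFb$ (Corollary~\ref{cor:wk-ck-restriction}) and $\Logic{iK}_\Diamond = \Logic{FIK} \cap \MFd$ (Section~\ref{sec:other}). Moreover, DP of a logic passes to any fragment $L \cap \Lang'$ with $\Lang'$ closed under $\lor$. So the claim follows from the DP of $\Logic{WK}$ and of $\Logic{FIK}$. Your direct argument has the advantage of being self-contained at the point where the proposition is stated.

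One small correction: old worlds keep their truth values because each copy is closed under $\IRel$ and $\MRel$ (a generated-submodel argument), not because of persistency. You only need persistency for the final step from $r$ to $w_1$ and $w_2$.
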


A cut-admissible sequent calculus for $\Logic{iK}_\Box$ is obtained from the intuitionistic propositional calculus $\Logic{LJ}$ by
adding a rule $\frac{\Gamma \imp \varphi}{\Box \Gamma \imp \Box \varphi}$ (see Appendix \ref{appendix:lip}).
G3 and G4-style sequent calculi for $\Logic{iK}_\Box$ are also developed by Iemhoff \cite{iemhoff_uniform_2019},
establishing CIP and UIP of it. Later in Appendix \ref{appendix:lip}, we will show that $\Logic{iK}_\Box$ also enjoys LIP.
As far as the author is aware, not much is known about $\Logic{iK}_\Diamond$ in terms of sequent calculi and interpolation properties.

\begin{proposition}[{\cite[Thm.\,34]{iemhoff_uniform_2019}}]
  $\Logic{iK}_\Box$ enjoys UIP, and, consequently, CIP.
\end{proposition}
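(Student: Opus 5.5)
The proposition combines two claims: $\Logic{iK}_\Box$ enjoys UIP, and hence CIP. The second claim is the easy part. It follows from the general observation made after Definition \ref{def:ulip-uip}. Given $\varphi \to \psi \in \Logic{iK}_\Box$, I would take $P = \Var(\varphi) \setminus \Var(\psi)$ and let $\chi$ be the uniform interpolant of $\varphi$ with respect to $P$. Then $\Var(\chi) \subseteq \Var(\varphi) \cap \Var(\psi)$ and $\varphi \to \chi \in \Logic{iK}_\Box$. Moreover, $\chi \to \psi \in \Logic{iK}_\Box$, because $\psi$ contains no variable from $P$. So the real work is UIP, and for that I would follow Pitts' proof-theoretic method as adapted by Iemhoff.

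First, I fix a terminating, contraction-free G4-style sequent calculus for $\Logic{iK}_\Box$. This is G4ip (Dyckhoff's calculus) extended with the modal rule $\frac{\Gamma \Rightarrow \varphi}{\Pi, \Box\Gamma \Rightarrow \Box\varphi}$. I also need an extra left-implication rule for antecedents of the form $\Box\varphi \to \psi$, so that proof search on it still terminates. Before anything else, I must show that this calculus is sound and complete for $\Logic{iK}_\Box$ and that cut is admissible in it. Equivalence with the LJ-plus-box calculus is the natural route, via admissibility of weakening, contraction and cut in the G4 system.

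Second, for each sequent $S = (\Gamma \Rightarrow \Delta)$ and each variable $p$, I define by simultaneous recursion on a well-founded sequent ordering two $p$-free formulas: $\mathsf{E}_p(\Gamma)$, the left interpolant, and $\mathsf{A}_p(\Gamma;\Delta)$, the right interpolant. The clauses correspond to rule instances read backwards, exactly as in Pitts' table. I add one new clause for the modal rule: $\mathsf{E}_p$ of $\Box\Gamma'$ contributes $\Box \mathsf{E}_p(\Gamma')$, and $\mathsf{A}_p$ for a succedent $\Box\delta$ contributes $\Box \mathsf{A}_p(\Gamma';\delta)$ for each boxed part $\Box\Gamma'$ of the context. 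Termination of the recursion rests on the G4 ordering: every premise is smaller than its conclusion, and the modal rule strictly lowers modal depth.

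Third, I verify the two defining properties by induction on that ordering. The first is the independence property: $\Gamma \Rightarrow \mathsf{E}_p(\Gamma)$ and $\Gamma, \mathsf{E}_p(\Gamma) \Rightarrow \mathsf{A}_p(\Gamma;\Delta)$ are derivable. The second is the uniformity property: whenever $\Pi, \Gamma \Rightarrow \Delta$ is derivable and $\Pi$ is $p$-free, then $\Pi, \mathsf{E}_p(\Gamma) \Rightarrow \mathsf{A}_p(\Gamma;\Delta)$ is derivable. Iterating over the finitely many variables in $P$ then yields the interpolant $\chi = \mathsf{A}_P(\emptyset;\varphi)$ required by Definition \ref{def:ulip-uip}, because $\mathsf{A}$ for an empty context behaves like a post-interpolant.

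I expect the main obstacle to be the uniformity property at the modal rule. The last step may split its context between the $p$-free part $\Pi$ and $\Gamma$, and the boxed formulas may come from either side. I would handle this by distinguishing whether the principal $\Box$-formulas come from $\Pi$ or from $\Gamma$, and applying the induction hypothesis to the premise $\Pi', \Gamma' \Rightarrow \delta$. Re-boxing is then justified by $\name{M}_\Box$ and $\name{C}_\Box$. Since the paper assumes earlier results, I could alternatively cite \cite[Thm.\,34]{iemhoff_uniform_2019} directly; the plan above reconstructs that proof.
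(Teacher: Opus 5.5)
Your derivation of CIP from UIP is exactly the paper's argument (the remark after Definition~\ref{def:ulip-uip}). For UIP the paper gives nothing beyond the citation \cite[Thm.\,34]{iemhoff_uniform_2019}, so your fallback of citing it directly is what the paper does. Your Pitts-style reconstruction over a G4 calculus follows the cited source's strategy, but as written it does not go through.

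First, your second ``independence'' property is misstated. Your three conditions are $\Gamma \Rightarrow \mathsf{E}_p(\Gamma)$, $\Gamma, \mathsf{E}_p(\Gamma) \Rightarrow \mathsf{A}_p(\Gamma;\Delta)$, and your uniformity clause. All three are satisfied by the trivial choice $\mathsf{E}_p(\Gamma) = \mathsf{A}_p(\Gamma;\Delta) = \top$, so verifying them establishes nothing. Pitts' method needs the following instead:
\begin{itemize}
  \item $\Gamma, \mathsf{A}_p(\Gamma;\Delta) \Rightarrow \Delta$, i.e.\ the right interpolant together with $\Gamma$ yields $\Delta$;
  \item your uniformity clause for $\mathsf{A}_p$;
  \item a separate uniformity clause for $\mathsf{E}_p$: if $\Pi, \Gamma \Rightarrow \psi$ is derivable with $\Pi$ and $\psi$ both $p$-free, then $\Pi, \mathsf{E}_p(\Gamma) \Rightarrow \psi$ is derivable.
\end{itemize}
These must be proved simultaneously with $\Gamma \Rightarrow \mathsf{E}_p(\Gamma)$, because the recursive clauses for $\mathsf{E}_p$ refer to $\mathsf{A}_p$ (e.g.\ at left implication).

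Second, the extraction step picks the wrong interpolant. Definition~\ref{def:ulip-uip} asks for the \emph{strongest} $P$-free consequence of $\varphi$. That is $\chi = \mathsf{E}_{p_1}(\cdots \mathsf{E}_{p_n}(\varphi)\cdots)$, computed from the one-element context $\varphi$. Here $\varphi \to \chi$ comes from $\Gamma \Rightarrow \mathsf{E}_p(\Gamma)$, and $\chi \to \psi$ from the $\mathsf{E}_p$-uniformity clause with $\Pi = \emptyset$. By contrast, $\mathsf{A}_P(\emptyset;\varphi)$ is the \emph{weakest} $P$-free formula implying $\varphi$, which is the dual notion and fails the definition. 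Concretely, take $\varphi = p$ and $P = \{p\}$. Any $p$-free $\chi$ with $\chi \to p \in \Logic{iK}_\Box$ gives $\chi \to \bot \in \Logic{iK}_\Box$ by substituting $\bot$ for $p$. Hence $\mathsf{A}_p(\emptyset;p)$ is equivalent to $\bot$, yet $p \to \bot \notin \Logic{iK}_\Box$.

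A smaller omission: your calculus has an extra left rule for $\Box\varphi \to \psi$, so the recursion also needs $\mathsf{E}_p$- and $\mathsf{A}_p$-clauses for that rule, not only for the right modal rule.
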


\begin{proposition}
  $\Logic{iK}_\Box$ enjoys LIP.
  \begin{proof}
    See Corollary \ref{cor:ikb-enjoys-lip}.
  \end{proof}
\end{proposition}

\begin{problem}
  Does $\Logic{iK}_\Diamond$ enjoy CIP, LIP, and UIP?
\end{problem}

\section{Constructive modal logics} \label{sec:cml}

\emph{Constructive modal logics (CMLs)}, which do not admit the distribution of $\Diamond$ over $\lor$,
were originally introduced by Wijesekera \cite{wijesekera_constructive_1990}
and later developed further by Mendler and de Paiva \cite{mendler_constructive_2005}.

\begin{table}[H]
  \centering
  \caption{Basic modal axioms for CMLs.}
  \label{tab:basic-cml-axioms}
  \begin{tabular}{rl|rl}
    $\name{M}_\Box$     & \AxiomC{$\varphi \to \psi$} \UnaryInfC{$\Box \varphi \to \Box \psi$} \DisplayProof &
    $\name{M}_\Diamond$ & \AxiomC{$\varphi \to \psi$} \UnaryInfC{$\Diamond \varphi \to \Diamond \psi$} \DisplayProof \\
    $\name{N}_\Box$      & $\Box \top$ &
    $\name{N}_\Diamond$ & $\neg \Diamond \bot$ \\
    $\name{C}_\Box$     & $(\Box \varphi \land \Box \psi) \to \Box (\varphi \land \psi)$ & & \\
    $\name{K}_\Diamond$ & $\Box(\varphi \to \psi) \to (\Diamond \varphi \to \Diamond \psi)$ & & \\
    $\name{FS1}$ & $\Diamond (\varphi \to \psi) \to (\Box \varphi \to \Diamond \psi)$ & & \\
  \end{tabular}
\end{table}

\begin{definition}
  Let $\Logic{CK} \defeq \Logic{iK}_\Box + \name{K}_\Diamond$ and $\Logic{WK} \defeq \Logic{CK} + \name{N}_\Diamond$
  (see Table \ref{tab:basic-cml-axioms} for the definitions).
\end{definition}

The logic $\Logic{CK}$ has an alternative axiomatization $\Logic{CK}' \defeq \Logic{iK}_\Box + \name{M}_\Diamond + \name{FS1}$.

\begin{proposition} \label{prop:ck-alternative-axiomatization}
  $\Logic{CK} \dashv \vdash \Logic{CK}'$, that is,
  (1) $\Logic{CK} \vdash \name{FS1}$,
  (2) $\name{M}_\Diamond$ is admissible in $\Logic{CK}$, and
  (3) $\Logic{CK}' \vdash \name{K}_\Diamond$.
  \begin{proof}
    The following derivations prove (1), (2), and (3), respectively.

    \begin{center}
      \AxiomC{$ $}
      \UnaryInfC{$\varphi \to ((\varphi \to \psi) \to \psi)$}
      \RightLabel{\scriptsize{$\name{M}_\Box$}}
      \UnaryInfC{$\Box \varphi \to \Box((\varphi \to \psi) \to \psi)$}

      \AxiomC{$ $}
      \RightLabel{\scriptsize{$\name{K}_\Diamond$}}
      \UnaryInfC{$\Box((\varphi \to \psi) \to \psi) \to (\Diamond(\varphi \to \psi) \to \Diamond \psi)$}

      \RightLabel{\scriptsize{Syll.}}
      \BinaryInfC{$\Box \varphi \to (\Diamond(\varphi \to \psi) \to \Diamond \psi)$}
      \UnaryInfC{$\Diamond (\varphi \to \psi) \to (\Box \varphi \to \Diamond \psi)$}
      \DisplayProof
    \end{center}

    \begin{center}
      \AxiomC{$ $}
      \RightLabel{\scriptsize{$\name{N}_\Box$}}
      \UnaryInfC{$\Box \top$}

      \AxiomC{$\varphi \to \psi$}
      \UnaryInfC{$\top \to (\varphi \to \psi)$}
      \RightLabel{\scriptsize{$\name{M}_\Box$}}
      \UnaryInfC{$\Box \top \to \Box (\varphi \to \psi)$}

      \RightLabel{\scriptsize{MP}}
      \BinaryInfC{$\Box (\varphi \to \psi)$}

      \AxiomC{$ $}
      \RightLabel{\scriptsize{$\name{K}_\Diamond$}}
      \UnaryInfC{$\Box (\varphi \to \psi) \to (\Diamond \varphi \to \Diamond \psi)$}

      \RightLabel{\scriptsize{MP}}
      \BinaryInfC{$\Diamond \varphi \to \Diamond \psi$}
      \DisplayProof
    \end{center}

    \begin{center}
      \AxiomC{$ $}
      \UnaryInfC{$\varphi \to ((\varphi \to \psi) \to \psi)$}
      \RightLabel{\scriptsize{$\name{M}_\Diamond$}}
      \UnaryInfC{$\Diamond \varphi \to \Diamond((\varphi \to \psi) \to \psi)$}

      \AxiomC{$ $}
      \RightLabel{\scriptsize{$\name{FS1}$}}
      \UnaryInfC{$\Diamond((\varphi \to \psi) \to \psi) \to (\Box(\varphi \to \psi) \to \Diamond \psi)$}

      \RightLabel{\scriptsize{Syll.}}
      \BinaryInfC{$\Diamond \varphi \to (\Box(\varphi \to \psi) \to \Diamond \psi)$}
      \UnaryInfC{$\Box (\varphi \to \psi) \to (\Diamond \varphi \to \Diamond \psi)$}
      \DisplayProof
      \qedhere
    \end{center}
  \end{proof}
\end{proposition}

Fischer Servi first introduced the axiom $\name{FS1}$ as a modest bridge between $\Box$ and $\Diamond$,
aiming to obtain what she regarded as the ``true'' intuitionistic analogue of $\Logic{K}$.
Although her original motivation lay in the development of what we would now call an IML,
the same axiom continues to perform its intended role on the constructive side:
with the presence of $\name{N}_\Diamond$, $\name{FS1}$ behaves just as well in CMLs.

\begin{proposition} \label{prop:wk-partial-duality}
  (1) $\Logic{WK} \vdash \Box \neg \varphi \to \neg \Diamond \varphi$
  and (2) $\Logic{WK} \vdash \Diamond \neg \varphi \to \neg \Box \varphi$.
  \begin{proof}
    The following derivations prove (1) and (2), respectively.

    \begin{center}
      \AxiomC{$ $}
      \RightLabel{\scriptsize{$\name{K}_\Diamond$}}
      \UnaryInfC{$\Box \neg \varphi \to (\Diamond \varphi \to \Diamond \bot)$}

      \AxiomC{$ $}
      \RightLabel{\scriptsize{$\name{N}_\Diamond$}}
      \UnaryInfC{$\Diamond \bot \to \bot$}

      \RightLabel{\scriptsize{Syll.}}
      \BinaryInfC{$\Box \neg \varphi \to \neg \Diamond \varphi$}
      \DisplayProof
    \end{center}
    \smallskip

    \begin{center}
      \AxiomC{$ $}
      \RightLabel{\scriptsize{$\name{FS1}$}}
      \UnaryInfC{$\Diamond \neg \varphi \to (\Box \varphi \to \Diamond \bot)$}

      \AxiomC{$ $}
      \RightLabel{\scriptsize{$\name{N}_\Diamond$}}
      \UnaryInfC{$\Diamond \bot \to \bot$}

      \RightLabel{\scriptsize{Syll.}}
      \BinaryInfC{$\Diamond \neg \varphi \to \neg \Box \varphi$}
      \DisplayProof
    \end{center}
  \end{proof}
\end{proposition}

With Propositions \ref{prop:ck-alternative-axiomatization} and \ref{prop:wk-partial-duality} in mind, the author suggests that it would make more sense to rather consider $\name{K}_\Diamond$ as the derived axiom from $\name{M}_\Diamond$ and $\name{FS1}$,
one for the monotonicity of $\Diamond$ and another for the weak interaction between $\Box$ and $\Diamond$.
Now we shall present the semantics for CMLs.

\begin{definition}
  We say $\Model{F} = (W, W_\bot, \IRel, \MRel)$ is a \emph{CK-frame} if $(W, \IRel, \MRel)$ is an f-frame and $W_\bot \subseteq W$,
  where $W_\bot$ is upward closed with respect to both $\IRel$ and $\MRel$ (i.e. $\forall x,y\, (x \in W_\bot \tand (x \IRel y \tor x \MRel y) \imp y \in W_\bot)$)
  and is $\MRel$-serial (i.e. $\forall x\, (x \in W_\bot \imp \exists y\, (y \in W_\bot \tand x \MRel y))$).
  Each $x \in W_\bot$ is called a \emph{fallible} world.
\end{definition}

In the original paper \cite{mendler_constructive_2005}, Mendler and de Paiva did not require $\MRel$-seriality on $W_\bot$,
which is problematic. We will discuss it later in Remark \ref{rem:original-paper-problem}.

\begin{definition}
  We say $\Model{M} = (\Model{F}, V)$ is a \emph{CK-model} if $\Model{F}$ is a CK-frame and $V: \PropVar \to \mathcal{P}(W)$,
  where each $V(p)$ is upward closed with respect to $\IRel$, and $x \in V(p)$ for every $x \in W_\bot$ and $p \in \PropVar$.
  Such $\Model{M}$ is said to be \emph{based on} $\Model{F}$.
\end{definition}

\begin{definition}
  For a CK-model $\Model{M} = (W, W_\bot, \IRel, \MRel, V)$, we define a \emph{constructive satisfaction relation} $\SatCK_\Model{M}$ on $W \times \MF$ by:
  \begin{enumerate}
    \item $x \SatCK_\Model{M} \bot \defiff x \in W_\bot$
    \item $x \SatCK_\Model{M} \psi_1 \to \psi_2 \defiff \forall x' \IRelRev x\, (x' \SatCK_\Model{M} \psi_1 \imp x' \SatCK_\Model{M} \psi_2)$
    \item $x \SatCK_\Model{M} \Box \varphi     \defiff \forall x' \IRelRev x\, \forall y' \MRelRev x'\, (y' \SatCK_\Model{M} \varphi)$
    \item $x \SatCK_\Model{M} \Diamond \varphi \defiff \forall x' \IRelRev x\, \exists y' \MRelRev x'\, (y' \SatCK_\Model{M} \varphi)$
    \item The other cases are identical to the classical $\Vdash_\Model{M}$.
  \end{enumerate}
  We write $\Model{M} \ValidCK \varphi$ (``$\varphi$ is \emph{(c-)valid in} $\Model{M}$'') to mean $\forall x \in W\,(x \SatCK_\Model{M} \varphi)$,
  and $\Model{F} \ValidCK \varphi$ (``$\varphi$ is \emph{(c-)valid on} $\Model{F}$'') to mean that $\Model{M} \ValidCK \varphi$ for every $\Model{M}$ based on $\Model{F}$.
\end{definition}

Here, we can see that $\SatCK$ is a variant of the intrinsic satisfaction relation $\SatI$.
Consequently, $\name{C}_\Diamond$ need not be c-valid without any further frame conditions.
In addition to that, CK frames also have special worlds, fallible worlds, at which $\bot$ is true.
With the presence of fallible worlds, $\name{N}_\Diamond$ also fails to be valid in a CK-frame
if there are $w \in W \setminus W_\bot$ and $w' \in W_\bot$ such that $w \MRel w'$.

\begin{proposition}[All is fair in a fallible world] \label{prop:all-is-fair}
  Let $\Model{M} = (W, W_\bot, \IRel, \MRel, V)$ be a CK-model. Then $x \SatCK_\Model{M} \varphi$ for every $x \in W_\bot$ and $\varphi \in \MF$.
  \begin{proof}
    We use an induction on the construction of $\varphi$.

    It is trivial when $\varphi = \bot$ or $\varphi = p$ for some $p \in \PropVar$.

    Suppose $\varphi = \psi_1 \circledcirc \psi_2$, for $\circledcirc \in \set{\land, \lor}$, then it is easily proven with the induction hypothesis.

    Suppose $\varphi = \psi_1 \to \psi_2$. Take any $x'$ such that $x \IRel x' \SatCK_\Model{M} \psi_1$, then $x' \in W_\bot$ by $W_\bot$ being upward closed with respect to $\IRel$,
    then $x' \SatCK_\Model{M} \psi_2$ by the induction hypothesis. Therefore, $x \SatCK_\Model{M} \psi_1 \to \psi_2$ holds.

    Suppose $\varphi = \Box \psi$. Take any $x', y'$ such that $x \IRel x' \MRel y'$, then $x', y' \in W_\bot$ by $W_\bot$ being upward closed with respect to $\IRel$ and $\MRel$,
    then $y' \SatCK_\Model{M} \psi$ by the induction hypothesis. Therefore, $x \SatCK_\Model{M} \Box \psi$.

    Suppose $\varphi = \Diamond \psi$. Take any $x'$ such that $x \IRel x'$, then $x' \in W_\bot$ by $W_\bot$ being upward closed with respect to $\IRel$.
    By $W_\bot$ being $\MRel$-serial, there is $y'$ such that $x' \MRel y'$,
    then $y' \SatCK_\Model{M} \psi$ by the induction hypothesis. Therefore, $x \SatCK_\Model{M} \Diamond \psi$.
  \end{proof}
\end{proposition}

\begin{remark} \label{rem:original-paper-problem}
  We note that the $\MRel$-seriality on $W_\bot$ is crucial to prove the $\Diamond$ case of Proposition \ref{prop:all-is-fair}.
  It is also crucial for all the intuitionistically valid principles to be c-valid on CK-frames;
  the formula $\bot \to \Diamond p$, which is an instance of the axiom $\bot \to \varphi$ in $\Int$,
  would not be c-valid on a frame $(W, \set{ x }, \IRel, \MRel)$ where $x \not \MRel w$ for any $w \in W$.
  This means that the original semantics by Mendler and de Paiva \cite{mendler_constructive_2005}, which lacks the $\MRel$-seriality on $W_\bot$, is not sound.
\end{remark}

\begin{proposition}[Persistency] \label{prop:constructive-persistency}
  Let $\Model{M} = (W, W_\bot, \IRel, \MRel, V)$ be a CK-model, and take any $\varphi \in \MF$.
  Then for any $x, x' \in W$, $x \SatCK_\Model{M} \varphi$ and $x \IRel x'$ implies $x' \SatCK_\Model{M} \varphi$.
  \begin{proof}
    Suppose that $x' \in W_\bot$, then it trivially holds since $x' \SatCK_\Model{M} \varphi$ by Proposition \ref{prop:all-is-fair}.
    Now suppose that $x' \notin W_\bot$, then $x \notin W_\bot$ by $W_\bot$ being upward closed with respect to $\IRel$.
    The rest is proven similarly to Proposition \ref{prop:intrinsic-persistency}.
  \end{proof}
\end{proposition}

\begin{definition}
  Let us say a CK-frame $(W, W_\bot, \IRel, \MRel)$ is \emph{infallible} if $W_\bot = \emptyset$.
\end{definition}

\begin{theorem}[{\cite[Thm.\,1]{mendler_constructive_2005},  \cite[Thm.\,1.4.5]{wijesekera_constructive_1990}, \cite[Thm.\,IV.9]{groot_semantical_2025}}] \label{thm:ck-wk-completeness} \leavevmode
  \begin{itemize}
    \item $\Logic{CK} = \set{ \varphi \in \MF; \forall \Model{F}: \text{CK-frame}\ (\Model{F} \ValidCK \varphi) }$.
    \item $\Logic{WK} = \set{ \varphi \in \MF; \forall \Model{F}: \text{infallible CK-frame}\ (\Model{F} \ValidCK \varphi) }$.
  \end{itemize}
\end{theorem}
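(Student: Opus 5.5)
We prove both items by the standard soundness-plus-canonical-model argument, treating $\Logic{CK}$ first and obtaining $\Logic{WK}$ by specialisation.

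\textbf{Soundness.} We check each axiom of $\Logic{CK}$ on arbitrary CK-frames, using the alternative axiomatization $\Logic{CK}'$ of Proposition \ref{prop:ck-alternative-axiomatization} when convenient. Modus ponens preserves validity in the usual way. The intuitionistic axioms are c-valid by persistency (Proposition \ref{prop:constructive-persistency}) and by the fact that fallible worlds force everything (Proposition \ref{prop:all-is-fair}). The latter is needed for $\bot \to \varphi$, and it is here that $\MRel$-seriality on $W_\bot$ enters, as noted in Remark \ref{rem:original-paper-problem}.

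For the modal principles:
\begin{itemize}
  \item $\name{M}_\Box$, $\name{N}_\Box$ and $\name{C}_\Box$ are immediate from the intrinsic clause for $\Box$.
  \item For $\name{K}_\Diamond$, suppose $x \SatCK \Box(\varphi\to\psi)$ and take $x \IRel x'$ with $x' \SatCK \Diamond\varphi$. For each $x'' \IRelRev x'$ there is $y''$ with $x'' \MRel y'' \SatCK \varphi$. Since $x \IRel x''$, we get $y'' \SatCK \varphi \to \psi$, hence $y'' \SatCK \psi$.
  \item For $\Logic{WK}$, $\name{N}_\Diamond$ holds on infallible frames because $\bot$ is forced nowhere.
\end{itemize}

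\textbf{Completeness for $\Logic{CK}$.} We build a canonical model whose worlds are pairs $(\Gamma, \Delta)$, where $\Gamma$ is a prime $\Logic{CK}$-theory and $\Delta$ is a (possibly empty) set of formulas; the second component records which $\Diamond$-formulas are refuted. This follows Wijesekera's and Mendler--de Paiva's ``Diamond-sets'' approach.

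\begin{itemize}
  \item The order is $(\Gamma,\Delta) \IRel (\Gamma',\Delta')$ iff $\Gamma \subseteq \Gamma'$.
  \item The modal relation is $(\Gamma,\Delta)\MRel(\Gamma',\Delta')$ iff $\Box^{-1}\Gamma \subseteq \Gamma'$ and $\Gamma' \cap \Diamond^{-1}\Delta$-type conditions hold. Concretely: $\Box\chi\in\Gamma \Rightarrow \chi\in\Gamma'$, and $\Diamond\chi\notin\Gamma \Rightarrow \chi\notin\Gamma'$. In the simplest version, $\Delta$ is not needed and the relation depends only on the $\Gamma$-components.
  \item The fallible worlds are the inconsistent theory (the set of all formulas). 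This set is upward closed, and it is $\MRel$-serial because it relates to itself.
  \item The valuation is membership of $p$ in $\Gamma$.
\end{itemize}

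The truth lemma for $\to$ and $\Box$ is standard, via prime extension. For $\Box$, if $\Box\chi\notin\Gamma$ we extend $\Gamma$ to a $\IRel$-successor whose $\Box$-content $\{\theta;\Box\theta\in\Gamma'\}$ together with the relevant $\Diamond$-constraints does not prove $\chi$. $\name{K}_\Diamond$ is used to show that the constraint $\{\theta;\Diamond\theta\notin\Gamma'\}$ is compatible with the construction.

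For $\Diamond$: if $\Diamond\chi\in\Gamma$, then for every $\Gamma' \supseteq \Gamma$ we must produce a prime $\Sigma$ with $\Box^{-1}\Gamma' \cup\{\chi\}\subseteq\Sigma$ and $\Sigma$ avoiding every $\theta$ with $\Diamond\theta\notin\Gamma'$. This works because, if it failed, $\Box^{-1}\Gamma' \vdash \chi \to \theta_1\lor\dots\lor\theta_n$, and $\name{K}_\Diamond$ gives $\Diamond\chi\to\Diamond(\theta_1\lor\dots\lor\theta_n)$. The main obstacle is that, lacking $\name{C}_\Diamond$, we cannot split this disjunction. We therefore plan to avoid the disjunction entirely by taking the avoided set at each stage to be a single formula $\theta$ paired with the world, that is, using the $\Delta$-component as a single refuted formula. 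When $\Diamond\chi\notin\Gamma$, we instead use the world $(\Gamma,\{\chi\})$ itself: any successor must avoid $\chi$. This works precisely because $\Diamond$ is interpreted intrinsically, so only one witness per $\IRel$-successor is needed and the relation may be chosen relative to $\Delta$.

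\textbf{Completeness for $\Logic{WK}$.} We repeat the construction with $\Logic{WK}$-theories but drop the inconsistent theory. Seriality issues then disappear: if the consistent world $\Gamma'$ needs a $\Diamond$-witness, $\name{N}_\Diamond$ ensures $\Diamond\bot\notin\Gamma'$, so the witness set $\Box^{-1}\Gamma'\cup\{\chi\}$ is consistent. Hence the resulting canonical frame is infallible.
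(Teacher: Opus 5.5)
The soundness half is fine, including your point that $\MRel$-seriality of $W_\bot$ is what makes $\bot\to\varphi$ valid. The completeness half has a gap, and it is larger than the one you flag.

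\textbf{The stated relation fails for $\Box$ too.} The relation you state is $\Box^{-1}\Gamma\subseteq\Gamma'$ and ($\Diamond\chi\notin\Gamma\Rightarrow\chi\notin\Gamma'$), with $\Delta$ playing no role. This is the $\Logic{IK}$-style canonical relation, and with it the truth lemma already fails for $\Box$, not only for $\Diamond$. Your claim that $\name{K}_\Diamond$ makes the constraint $\{\theta;\Diamond\theta\notin\Gamma'\}$ compatible is false. You would need $\Box^{-1}\Gamma'\nvdash\chi\lor\theta_1\lor\dots\lor\theta_n$, and that is exactly where the paper's appendix needs $\name{wCD}$ and $\name{C}_\Diamond$ (for $\Logic{FIK}$).

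Here is a concrete failure. $\Logic{CK}\nvdash(\Box(p\lor q)\land\neg\Diamond p)\to\Box q$. An infallible countermodel: take the preorder generated by $x\IRel x_1\IRel x_2$, with $x_1\MRel z$, $V(p)=\{z\}$ and $V(q)=\emptyset$. So some prime theory $\Gamma$ contains $\Box(p\lor q)$ and $\neg\Diamond p$ but not $\Box q$. But every consistent $\Gamma'\supseteq\Gamma$ omits $\Diamond p$, so each of its successors omits $p$ and therefore contains $q$. The inconsistent theory sees only itself. Hence your canonical model forces $\Box q$ at $\Gamma$, although $\Box q\notin\Gamma$.

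\textbf{The repair.} Your last paragraph has the right idea, but making it precise requires changing the relation, not just the worlds.
\begin{itemize}
\item Worlds are pairs $(\Gamma,\Delta)$ with $\Gamma$ a prime theory (possibly all of $\MF$) and $\Delta$ either empty or $\{\theta\}$ with $\Diamond\theta\notin\Gamma$.
\item $(\Gamma,\Delta)\IRel(\Gamma',\Delta')$ iff $\Gamma\subseteq\Gamma'$.
\item $(\Gamma,\Delta)\MRel(\Sigma,\Xi)$ iff $\Box^{-1}\Gamma\subseteq\Sigma$ and $\Delta\cap\Sigma=\emptyset$, with no $\Diamond^{-1}$-clause at all.
\end{itemize}
The side condition $\Diamond\theta\notin\Gamma$ is essential. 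Without it, $(\Gamma,\{\chi\})$ with $\Diamond\chi\in\Gamma$ would refute $\Diamond\chi$ at $(\Gamma,\emptyset)$. Also, $(\MF,\{\theta\})$ would be a fallible world with no successor.

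With this definition every case goes through:
\begin{itemize}
\item $\Box$ case: no $\name{K}_\Diamond$ is needed. Pass to $(\Gamma,\emptyset)$ and extend $\Box^{-1}\Gamma$ to a prime theory avoiding $\chi$.
\item Forward $\Diamond$ case: this is exactly your single-$\theta$ argument via $\name{K}_\Diamond$. When $\Delta'=\emptyset$, use $(\MF,\emptyset)$ as the witness; for $\Logic{WK}$, use $\name{N}_\Diamond$ instead, as you say.
\item Backward $\Diamond$ case: use $(\Gamma,\{\chi\})$.
\item Fallible worlds: $W_\bot=\{(\MF,\emptyset)\}$ is upward closed and $\MRel$-reflexive.
\end{itemize}
This is the Mendler--de Paiva-style construction the paper alludes to: a theory paired with formulas refuted at all its successors, plus a single reflexive fallible world. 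The paper itself gives no proof of this theorem and only cites the literature.
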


We mentioned in Remark \ref{rem:original-paper-problem} that the original completeness proof for $\Logic{CK}$ by Mendler and Paiva \cite{mendler_constructive_2005} contains a flaw in the soundness direction ($\subseteq$),
which can easily be fixed by requiring $\MRel$-seriality on $W_\bot$.
The completeness direction ($\supseteq$) is unaffected, however, because their canonical model for $\Logic{CK}$ has only one fallible world that is $\MRel$-reflexive, and so it is indeed a CK-model in our sense.

\begin{corollary} \label{cor:wk-completeness}
  $\Logic{WK} = \Theory{V}^i$.
  \begin{proof}
    It is easy to see that an infallible CK-frame is just an f-frame,
    and the constructive satisfaction relation $\SatCK$ for infallible CK-models is just the intrinsic satisfaction relation $\SatI$ for f-models.
  \end{proof}
\end{corollary}

\begin{corollary} \label{cor:wk-ck-restriction}
  (1) $\Logic{WK} \cap \MFb = \Logic{iK}_\Box$ and (2) $\Logic{CK} \cap \MFb = \Logic{iK}_\Box$.
  \begin{proof}
    (1) follows from Corollary \ref{cor:wk-completeness}, Proposition \ref{prop:diafree-vi-is-veb}, and Theorem \ref{thm:ikb-ikd-completeness}.
    (2 $\supseteq$) is obvious. (2 $\subseteq$) follows from (1) and $\Logic{CK} \subseteq \Logic{WK}$.
  \end{proof}
\end{corollary}

\begin{proposition} \label{prop:wk-ck-dp}
  Both $\Logic{WK}$ and $\Logic{CK}$ enjoy DP.
  \begin{proof}
    We shall only prove the case when $L = \Logic{CK}$, as the other case is proven similarly.
    Suppose that $\Logic{CK} \nvdash \varphi_1$ and $\Logic{CK} \nvdash \varphi_2$,
    then there are $\Model{M}_1 = (W_1, W_{\bot1}, \IRel_1, \MRel_1, V_1)$, $\Model{M}_2 = (W_2, W_{\bot2}, \IRel_2, \MRel_2, V_2)$,
    $w_1 \in W_1$, and $w_2 \in W_2$ such that $w_1 \not\SatCK_{\Model{M}_1} \varphi_1$ and $w_2 \not\SatCK_{\Model{M}_2} \varphi_2$.
    By replacing the two models with isomorphic copies if necessary, we may assume that $W_1 \cap W_2 = \emptyset$.
    We construct a new model $\Model{M} = (W, W_\bot, \IRel, \MRel, V)$ by the following:
    \begin{itemize}
      \item $W \defeq W_1 \cup W_2 \cup \set{w_0}$, where $w_0$ is fresh;
      \item $W_\bot = W_{\bot1} \cup W_{\bot2}$;
      \item $\IRel$ is the reflexive and transitive closure of $\set{ (w_0, w_1), (w_0, w_2) } \cup \mathord{\IRel_1} \cup \mathord{\IRel_2}$;
      \item $\mathord{\MRel} \defeq \mathord{\MRel_1} \cup \mathord{\MRel_2}$;
      \item $V(p) \defeq V_1(p) \cup V_2(p) \cup V_0(p)$, where $V_0(p) \defeq \set{w_0}$ if $w_1 \in V_1(p)$ and $w_2 \in V_2(p)$, and $V_0(p) \defeq \emptyset$ otherwise.
    \end{itemize}
    Here, $\Model{M}$ is clearly a CK-model since each $V(p)$ is upward closed.
    Also, it is easy to see that for any $i \in \set{1,2}$, $x \in W_i$, and $\psi \in \MF$, $x \SatCK_\Model{M} \psi$ iff $x \SatCK_{\Model{M}_i} \psi$.
    Then $w_0 \not\SatCK_\Model{M} \varphi_1$ and $w_0 \not\SatCK_\Model{M} \varphi_2$ by persistency,
    so $w_0 \not\SatCK_\Model{M} \varphi_1 \lor \varphi_2$. Therefore, $\Logic{CK} \nvdash \varphi_1 \lor \varphi_2$.
    The other direction is trivial.
  \end{proof}
\end{proposition}

Interestingly, neither CML collapses into $\K$ just by adding $\varphi \lor \neg \varphi$.

\begin{proposition}[{\cite{simpson_proof_1994}}]
  $\Logic{WK} + \varphi \lor \neg \varphi \subsetneq \Logic{K}$. Consequently, $\Logic{CK} + \varphi \lor \neg \varphi \subsetneq \Logic{K}$.
  \begin{proof}
    It suffices to prove it for $\Logic{WK}$.
    ($\subseteq$) Trivial from the axiomatizations.
    ($\neq$) It suffices to construct an infallible CK-model in which $\varphi \lor \neg \varphi$ is valid for every $\varphi \in \MF$ but $\neg \Box \neg p \to \Diamond p$ is not.
    We let $\Model{M} = (W, \IRel, \MRel, V)$, where:
    \begin{itemize}
      \item $W \defeq \set{ x_i, y_i; i \in \mathbb{N} }$;
      \item $\IRel$ is the reflexive and transitive closure of $\set*{ (x_i, x_{i+1}), (y_i, y_{i+1}); i \in \mathbb{N} }$;
      \item $\mathord{\MRel} \defeq \set*{ (x_{2i}, y_{2i}), (y_{2i}, x_{2i}), (x_{2i+1}, x_{2i+1}), (y_{2i+1}, y_{2i+1}); i \in \mathbb{N} }$;
      \item $V(p) \defeq \set{ y_i; i \in \mathbb{N} }$.
    \end{itemize}
    The model is due to Simpson \cite[pp.48--49]{simpson_proof_1994} and is visualized in Figure \ref{fig:wk-em-dual-countermodel}.
    \begin{figure}[ht]
      \centering
      \begin{tikzpicture}
      \fill (0,2) circle (2pt);
      \draw (0,2) node[left]{$x_0$};

      \fill (2,2) circle (2pt);
      \draw (2,2) node[right]{$y_0 \Vdash p$};

      \draw [thick, <->, squig, rel] (0,2) to (2,2);
      \draw (1,2) node[above]{\tiny $\MRel$};

      \draw (0,3) circle (2pt);
      \draw (0,3) node[left]{$x_1$};
      \draw [thick, ->, rel] (0,2) to (0,3);
      \draw (0,2.5) node[left]{\tiny $\IRel$};

      \draw (2,3) circle (2pt);
      \draw (2,3) node[right]{$y_1 \Vdash p$};
      \draw [thick, ->, rel] (2,2) to (2,3);
      \draw (2,2.5) node[right]{\tiny $\IRel$};

      \fill (0,4) circle (2pt);
      \draw (0,4) node[left]{$x_2$};
      \draw [thick, ->, rel] (0,3) to (0,4);
      \draw (0,3.5) node[left]{\tiny $\IRel$};

      \fill (2,4) circle (2pt);
      \draw (2,4) node[right]{$y_2 \Vdash p$};
      \draw [thick, ->, rel] (2,3) to (2,4);
      \draw (2,3.5) node[right]{\tiny $\IRel$};

      \draw [thick, <->, squig, rel] (0,4) to (2,4);
      \draw (1,4) node[above]{\tiny $\MRel$};

      \draw (0,5) circle (2pt);
      \draw (0,5) node[left]{$x_3$};
      \draw [thick, ->, rel] (0,4) to (0,5);
      \draw (0,4.5) node[left]{\tiny $\IRel$};

      \draw [thick, dashed, rel] (0,5) to (0,6);

      \draw (2,5) circle (2pt);
      \draw (2,5) node[right]{$y_3 \Vdash p$};
      \draw [thick, ->, rel] (2,4) to (2,5);
      \draw (2,4.5) node[right]{\tiny $\IRel$};

      \draw [thick, dashed, rel] (2,5) to (2,6);

      \end{tikzpicture}
      \caption{The model $\Model{M}$. $\circ$ denotes a $\MRel$-reflexive world.}
      \label{fig:wk-em-dual-countermodel}
    \end{figure}

    Here, $\Model{M}$ is clearly an infallible CK-model.
    Also, it is easy to see that $x_0 \SatCK_\Model{M} \neg \Box \neg p$ but $x_0 \not\SatCK_\Model{M} \Diamond p$ since there is no $y$ such that $x_0 \IRel x_1 \MRel y \SatCK_\Model{M} p$.
    Now it suffices to show that $w \SatCK_\Model{M} \varphi \lor \neg \varphi$ for any $w \in W$ and $\varphi \in \MF$,
    which is proven by an easy induction on the construction of $\varphi$.
  \end{proof}
\end{proposition}

This means that they do not meet Simpson's criterion and our modified ones for a logic to be called ``intuitionistic''.
Simpson \cite{simpson_proof_1994} himself criticized $\Logic{WK}$ for the lack of this property, stating that $\Box$ and $\Diamond$ in $\Logic{WK}$ are ``hardly related at all.''
However, CMLs have independent motivations in computer science.
Wijesekera and Nerode \cite{wijesekera_tableaux_2005} use $\Logic{WK}$ as the propositional basis
of a constructive concurrent dynamic logic intended for the specification and modular verification of concurrent programs.
Mendler and de Paiva \cite{mendler_constructive_2005} motivate $\Logic{CK}$ by its potential application
to reasoning about contexts in artificial intelligence.
Mendler and Scheele \cite{mendler_computational_2014} develop a typed lambda calculus $\lambda \mathsf{CK}_n$
for contextual information processing.
CMLs also have elegant sequent calculi, which are obtained from the standard Gentzen-style sequent calculus $\mathrm{G}_\Logic{K}$ for $\Logic{K}$ by just restricting the number of formulas of succedents.
The admissibility of cut in these sequent calculi is thus proven easily as one may do for $\mathrm{G}_\Logic{K}$.

\begin{definition}[\cite{wijesekera_constructive_1990}, \cite{dalmonte_minimal_2025}]
  A sequent calculus $\mathrm{G}_\Logic{WK}$ is obtained from $\mathrm{G}_\Logic{K}$ by allowing only empty or single succedents.
  A sequent calculus $\mathrm{G}_\Logic{CK}$ is obtained similarly to $\mathrm{G}_\Logic{WK}$,
  but also disallowing the usage of the $\Diamond$-rule with empty succedents (i.e. $\frac{\Gamma, \varphi \imp {}}{\Box \Gamma, \Diamond \varphi \imp {}}$).
\end{definition}

\begin{remark}
  It is mentioned in \cite{das_intuitionistic_2023} that a sequent calculus for $\Logic{CK}$ can also be obtained
  from $\mathrm{G}_\Logic{K}$ by allowing only single (nonempty) succedents.
\end{remark}

\begin{proposition}[{\cite[Lem.\,1.5.1]{wijesekera_constructive_1990}, \cite[Thm.\,6.3]{dalmonte_minimal_2025}}]
  For $L \in \set{ \Logic{WK}, \Logic{CK} }$,
  $\mathrm{G}_L \vdash \Gamma \imp \varphi$ iff $L \vdash \bigwedge \Gamma \to \varphi$.
\end{proposition}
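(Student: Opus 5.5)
The plan is to prove both directions separately, for $L \in \set{\Logic{WK}, \Logic{CK}}$, by induction on derivations. I take $\mathrm{G}_\Logic{K}$ in its standard form: initial sequents $p, \Gamma \imp p$ and $\bot, \Gamma \imp \Delta$; the usual propositional rules; weakening and contraction; cut; and the modal rules $\frac{\Gamma \imp \varphi}{\Box \Gamma, \Sigma \imp \Box \varphi, \Delta}$ and $\frac{\Gamma, \varphi \imp \Delta}{\Box \Gamma, \Diamond \varphi, \Sigma \imp \Diamond \Delta, \Pi}$. In the $\Diamond$-rule $\Delta$ has at most one formula, and for $\mathrm{G}_\Logic{CK}$ exactly one.

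\textbf{Soundness.} I would show, by induction on the height of a derivation of $\Gamma \imp \Delta$, that $L \vdash \bigwedge \Gamma \to \bigvee \Delta$, reading an empty succedent as $\bot$. The propositional rules with at most one succedent formula are exactly the rules of $\Logic{LJ}$, so their cases are intuitionistically valid. Cut is handled by syllogism. For the $\Box$-rule, apply $\name{M}_\Box$ to $\bigwedge\Gamma \to \varphi$ and combine with $\name{C}_\Box$, using $\name{N}_\Box$ when $\Gamma$ is empty.

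For the $\Diamond$-rule with a single succedent $\psi$, start from $\bigwedge\Gamma \to (\varphi \to \psi)$. Applying $\name{M}_\Box$ and $\name{C}_\Box$ gives $\bigwedge\Box\Gamma \to \Box(\varphi\to\psi)$, and $\name{K}_\Diamond$ then gives $\Diamond\varphi \to \Diamond\psi$. For the empty-succedent $\Diamond$-rule, which occurs only in $\mathrm{G}_\Logic{WK}$, take $\psi = \bot$ and finish with $\name{N}_\Diamond$. This is the one place where $\name{N}_\Diamond$ is needed, and it is why $\mathrm{G}_\Logic{CK}$ forbids that rule.

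\textbf{Completeness.} I would show that every axiom of $L$ is derivable as a sequent $\imp \chi$, and that derivable sequents are closed under the rules of $L$. Modus ponens is simulated by cut. The $\Int$ axioms are derivable because the calculus contains $\Logic{LJ}$.

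The remaining axioms and rules are handled as follows.
\begin{itemize}
  \item $\name{N}_\Box$ comes from the $\Box$-rule with $\Gamma$ empty applied to $\imp \top$.
  \item $\name{C}_\Box$ comes from $\varphi, \psi \imp \varphi \land \psi$ by the $\Box$-rule, followed by the left $\land$-rule.
  \item $\name{M}_\Box$: from a derivation of $\imp \varphi \to \psi$, I would first obtain $\varphi \imp \psi$ by cutting with $\varphi, \varphi\to\psi \imp \psi$, then apply the $\Box$-rule and the right $\to$-rule.
  \item $\name{K}_\Diamond$: from $\varphi, \varphi\to\psi \imp \psi$, apply the $\Diamond$-rule with $\Gamma = \set{\varphi\to\psi}$ to get $\Box(\varphi\to\psi), \Diamond\varphi \imp \Diamond\psi$, then apply the right $\to$-rule twice.
  \item $\name{N}_\Diamond$ (for $\Logic{WK}$ only): from $\bot \imp {}$, apply the empty-succedent $\Diamond$-rule to get $\Diamond\bot \imp {}$, then right $\neg$ gives $\imp \neg\Diamond\bot$.
\end{itemize}

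Uniform substitution needs no separate argument. $L$ is the closure of the schematic axioms under modus ponens and the schematic rules, and each case above is schematic in its formulas.

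The main obstacle I expect is bookkeeping about which precise variant of $\mathrm{G}_\Logic{K}$ is meant, in particular whether cut is primitive. If it is not, completeness needs cut admissibility, which the paper already asserts is proven ``as one may do for $\mathrm{G}_\Logic{K}$''. I would cite it, or sketch the standard induction on cut-formula complexity and height, and note that the only modal principal case is $\Box$-rule against $\Diamond$-rule. That case reduces to a smaller cut that stays within single succedents, and, for $\mathrm{G}_\Logic{CK}$, it never produces an empty-succedent $\Diamond$-rule.
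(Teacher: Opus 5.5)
Your proof is correct. The paper gives no argument of its own for this proposition; it defers to Wijesekera's Lemma~1.5.1 and Dalmonte's Theorem~6.3. Your route is the standard one:
\begin{itemize}
  \item soundness by induction on derivations, using $\name{M}_\Box$, $\name{N}_\Box$, $\name{C}_\Box$ and $\name{K}_\Diamond$, with $\name{N}_\Diamond$ needed only for the empty-succedent $\Diamond$-rule;
  \item completeness by deriving each axiom and simulating modus ponens and $\name{M}_\Box$ with cut.
\end{itemize}

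There are two bookkeeping points you should tidy.

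First, you take only atomic initial sequents $p, \Gamma \imp p$. So you should record that $\varphi \imp \varphi$ is derivable for every $\varphi \in \MF$. The modal steps use $(\Box)$ and the single-succedent $\Diamond$-rule with empty boxed context, which $\mathrm{G}_{\Logic{CK}}$ allows. This is what lets the $\Logic{LJ}$ part derive the modal substitution instances of the $\Int$ axioms, and it is also what makes your uses of cut go through.

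Second, your cut-elimination aside undercounts the modal principal cases. There are three: $\Box$ against $\Box$, $\Box$ against $\Diamond$, and $\Diamond$ against $\Diamond$. Each reduces to a cut on the immediate subformula, followed by the same modal rule that ended the right premise. So single succedents are preserved, and no application of $(\Diamond_0)$ is introduced in $\mathrm{G}_{\Logic{CK}}$.

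That aside is not actually needed here. In the paper's formulation (Appendix~\ref{appendix:lip}), $\mathrm{G}_L$ is $\Logic{LJ}$ plus the modal rules with cut as a rule, and cut elimination is stated as a separate result. Your completeness direction therefore goes through without appealing to cut admissibility.
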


\begin{proposition}[{\cite[Thm.\,2.1.4]{wijesekera_constructive_1990}, \cite[Thm.\,6.2]{dalmonte_minimal_2025}}]
  Both $\mathrm{G}_\Logic{WK}$ and $\mathrm{G}_\Logic{CK}$ admit cut elimination.
\end{proposition}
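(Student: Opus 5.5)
We prove cut elimination for $\mathrm{G}_\Logic{WK}$ and $\mathrm{G}_\Logic{CK}$ simultaneously, adapting the standard Gentzen-style argument for $\mathrm{G}_\Logic{K}$. We take the cut rule in its single-succedent form
\[
\frac{\Gamma \imp \varphi \qquad \varphi, \Delta \imp \Theta}{\Gamma, \Delta \imp \Theta},
\]
where $\Theta$ is empty or a single formula. In the case of $\mathrm{G}_\Logic{CK}$ we track whether $\Theta$ is empty, since the empty-succedent $\Diamond$-rule is unavailable there.

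\textbf{Preliminary lemmas.} First we establish height-preserving admissibility of weakening on the left, and weakening on the right when the succedent is empty. We also establish contraction, unless it is built into the rules; if it is not, we replace cut by a multicut (mix) in Gentzen's manner.

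\textbf{Main induction.} We prove that cut is admissible by a lexicographic induction on the complexity of the cut formula $\varphi$ and then the sum of the heights of the two premises. We show that a topmost cut can be eliminated and iterate.

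The propositional cases (either premise an axiom, one premise ending with a rule where $\varphi$ is not principal, or both premises principal for a connective among $\land,\lor,\to$) go exactly as in $\Logic{LJ}$. The single-succedent restriction is what makes the left $\to$-rule behave, and the commutations never create multiple succedents. The one point to check is that permuting a cut upward into the right premise keeps the succedent shape. In $\mathrm{G}_\Logic{CK}$ this includes the case where the empty-succedent $\Diamond$-rule would be introduced, which cannot happen because the conclusion's succedent $\Theta$ is inherited from the right premise.

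\textbf{Modal case.} The key case is when both premises end with modal rules and $\varphi$ is principal in both. The modal rules of $\mathrm{G}_\Logic{K}$ restricted to single succedents are
\[
\frac{\Sigma \imp \psi}{\Box\Sigma \imp \Box\psi}
\qquad\text{and}\qquad
\frac{\Sigma, \chi \imp \theta}{\Box\Sigma, \Diamond\chi \imp \Diamond\theta},
\]
together with the empty-succedent variant of the second rule in $\mathrm{G}_\Logic{WK}$. Note that the right premise of a cut on $\Box\psi$ may also have $\Box\psi$ inside its boxed context $\Box\Sigma'$, with conclusion $\Box\psi,\Box\Sigma' \imp \Box\theta$ or $\Box\psi,\Box\Sigma',\Diamond\chi \imp \Diamond\theta$. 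There are three subcases.

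\begin{enumerate}
  \item Cut on $\Box\psi$ where the right premise uses $\Box\psi$ in its boxed context. We cut $\psi$ against the premise $\Sigma\imp\psi$ at the level of the premises. This is a smaller cut formula, and then we reapply the same modal rule.
  \item Cut on $\Diamond\theta$, with left premise $\Box\Sigma,\Diamond\chi\imp\Diamond\theta$ and right premise $\Box\Sigma',\Diamond\theta\imp\Delta$ obtained from $\Sigma',\theta\imp\Delta_0$. We cut $\theta$ at the premise level to get $\Sigma,\chi,\Sigma'\imp\Delta_0$, then reapply the $\Diamond$-rule. If $\Delta$ is empty, this uses the empty-succedent $\Diamond$-rule, which is legitimate because the right premise already used it.
  \item Cut on $\Box\psi$ where the right premise is the $\Diamond$-rule with $\Box\psi$ in $\Box\Sigma'$. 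This is handled as in subcase 1.
\end{enumerate}

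Non-principal modal cases do not arise: the context of a modal conclusion consists only of boxed formulas plus at most one $\Diamond$-formula, and all of these are principal.

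\textbf{Main obstacle.} I expect the main difficulty to be the bookkeeping that the single-succedent (and, for $\mathrm{G}_\Logic{CK}$, non-empty-for-$\Diamond$) discipline is preserved in every transformation, especially subcase 2 and the interaction with weakening. I would verify this first, case by case.
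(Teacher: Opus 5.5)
Your proposal is correct and takes the same route the paper indicates. The paper gives no separate proof: it cites Wijesekera and Dalmonte et al.\ and remarks that cut admissibility goes through as for $\mathrm{G}_\Logic{K}$, because the only new cases are principal modal cuts; you reduce these to cuts on the premises while preserving the single-succedent discipline (and, for $\mathrm{G}_\Logic{CK}$, a nonempty succedent in the $\Diamond$-rule). One bookkeeping point: if you use Gentzen's mix, the induction measure should be degree and rank rather than the sum of heights, and a premise-level mix may also delete occurrences of $\theta$ inside $\Sigma'$ or a diamond argument $\chi = \psi$, which you must restore by left weakening before reapplying the modal rule.
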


Moreover, both $\Logic{CK}$ and $\Logic{WK}$ enjoy CIP, LIP, and UIP.
The first two properties are easy consequences of cut elimination,
and van der Giessen and Shillito recently proved the remaining one in their preprint \cite{giessen_uniform_2026}.

\begin{corollary}
  Both $\Logic{CK}$ and $\Logic{WK}$ enjoy LIP.
  \begin{proof}
    Although Wijesekera \cite[Thm.\,2.1.6]{wijesekera_constructive_1990} does not explicitly state LIP of $\Logic{WK}$,
    this follows easily by a slight modification of his proof that $\Logic{WK}$ enjoys CIP.
    The same argument applies to $\Logic{CK}$.
    See Theorem \ref{thm:ck-and-wk-enjoy-lip}.
  \end{proof}
\end{corollary}

\begin{proposition}[{\cite[Cors.\,1 and 2]{giessen_uniform_2026}}]
  Both $\Logic{CK}$ and $\Logic{WK}$ enjoy UIP.
\end{proposition}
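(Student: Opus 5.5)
The plan is Pitts' proof-theoretic method, adapted as Iemhoff did for $\Logic{iK}_\Box$. The cut-free calculi $\mathrm{G}_\Logic{CK}$ and $\mathrm{G}_\Logic{WK}$ are not directly usable, because the left implication rule of a single-succedent calculus needs contraction and so proof search does not terminate. I would therefore first replace them by terminating G4-style (Dyckhoff) calculi $\mathrm{G4}_\Logic{CK}$ and $\mathrm{G4}_\Logic{WK}$. These are obtained from $\Logic{G4ip}$ by adding the modal rules of $\mathrm{G}_\Logic{CK}$:
\[
\frac{\Gamma \imp \psi}{\Sigma, \Box\Gamma \imp \Box\psi}
\qquad
\frac{\Gamma, \varphi \imp \psi}{\Sigma, \Box\Gamma, \Diamond\varphi \imp \Diamond\psi}
\]
For $\mathrm{G4}_\Logic{WK}$ one also adds the variant of the second rule with empty succedent. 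The left rules for $(\Box\chi \to \theta)$ and $(\Diamond\chi \to \theta)$ would be modified in Iemhoff's way: the modal premise is computed from the boxed and diamond part of the antecedent, so that every premise is smaller in a suitable multiset order. The first step is to prove that these calculi are equivalent to $\mathrm{G}_L$, via admissibility of weakening, contraction and cut, which is the usual Dyckhoff/Hudelmaier argument. Together with the cited correspondence $\mathrm{G}_L \vdash \Gamma \imp \varphi \iff L \vdash \bigwedge\Gamma \to \varphi$, this gives a terminating complete calculus.

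Next, for a finite $P$ I would define, by recursion on the well-founded order of the terminating calculus, formulas $E_P(\Gamma)$ and $A_P(\Gamma;\Delta)$ in which no variable of $P$ occurs ($\Delta$ is empty or a single formula). The propositional clauses are copied from Pitts' table for $\Logic{G4ip}$. The modal clauses follow the two modal rules:
\begin{itemize}
  \item $E_P(\Gamma)$ gets conjuncts $\Box E_P(\Gamma')$ and $\Diamond E_P(\Gamma',\varphi)$, where $\Box\Gamma' \subseteq \Gamma$ and $\Diamond\varphi \in \Gamma$.
  \item $A_P(\Gamma;\Box\psi)$ gets disjuncts $\Box A_P(\Gamma';\psi)$.
  \item $A_P(\Gamma;\Diamond\psi)$ gets disjuncts $\Diamond A_P(\Gamma',\varphi;\psi)$.
  \item Since a $\Box$-context may contain $p$-formulas that must be handed to the other side, $A_P(\Gamma;\Diamond\psi)$ also gets disjuncts $\Box E_P(\Gamma'') \to \Diamond A_P(\Gamma''',\varphi;\psi)$, where $\Gamma'''$ ranges over the remaining part of the boxed antecedent. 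This mirrors the $\name{FS1}$/$\name{K}_\Diamond$ interaction.
  \item For $\Logic{WK}$ only, $E_P(\Gamma)$ additionally gets a clause $\neg\Diamond(\ldots)$, or equivalently $A_P(\Gamma;\emptyset)$ uses $\Box E_P \to \neg\Diamond$, matching the empty-succedent rule and $\name{N}_\Diamond$.
\end{itemize}

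The third step is the two properties, each proved by induction along the same order. Correctness says that $\Gamma \imp E_P(\Gamma)$ and $\Gamma, E_P(\Gamma) \imp A_P(\Gamma;\Delta)$ are derivable; this is a routine check that each clause is an instance of a rule. Uniformity says that $\Gamma, \Pi \imp \Delta$ derivable with $\Pi$ free of $P$ implies $E_P(\Gamma), \Pi \imp A_P(\Gamma;\Delta)$ derivable. It is proved by a case analysis on the last rule of a $\mathrm{G4}$ derivation, according to whether the principal formula lies in $\Gamma$, in $\Pi$, or in $\Delta$. From these, UIP for $L$ follows by taking $\chi = A_P(\varphi;\emptyset)$ or $E_P(\varphi)$ in the form required by Definition \ref{def:ulip-uip}, with $P$ the variables to eliminate.

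I expect the main obstacle to be the modal case of uniformity in which the $\Diamond$-rule is applied and the boxed context is split between $\Gamma$ (which may contain $P$) and $\Pi$ (which is $P$-free). This is exactly where the mixed $\Box E \to \Diamond A$ clause is needed, and the induction must show that this clause reproduces the derivation. The other delicate point is choosing the modified implication-left rules for modal antecedents so that both termination and cut admissibility still hold. I would try Iemhoff's modularity criteria first: if the two modal rules are "balanced" in her sense, much of the work is inherited. The $\Logic{WK}$ case then differs only by the empty-succedent rule, which is handled by the extra negated-$\Diamond$ clause.
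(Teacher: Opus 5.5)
The paper gives no argument for this statement; it only cites van der Giessen and Shillito, so there is no in-text proof to compare your route with. Pitts' method over a terminating G4-style calculus is a reasonable plan. However, the modal clauses you propose for $A_P$ are wrong, and they fail exactly where the constructive $\Diamond$ needs care.

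First, correctness must read $\Gamma, A_P(\Gamma;\Delta) \imp \Delta$. Your version $\Gamma, E_P(\Gamma) \imp A_P(\Gamma;\Delta)$ makes $A_P(\Gamma;\Delta)$ a consequence of $\Gamma$, whereas it must be a $P$-free formula that together with $\Gamma$ yields $\Delta$. Now take your disjunct $\Diamond A_P(\Gamma',\varphi;\psi)$ with $\Diamond\varphi \in \Gamma$, for instance $\Gamma = \set{\Diamond p}$, $\psi = p \land q$, $P = \set{p}$.
\begin{itemize}
  \item Since $q, p \imp p \land q$, the uniformity you intend to prove gives $q, E_P(p) \imp A_P(p; p \land q)$.
  \item $E_P(p)$ is provable: substitute $\top$ for $p$ in $p \to E_P(p)$.
  \item So correctness of your disjunct would yield $\Diamond p, \Diamond q \imp \Diamond(p \land q)$. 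This fails already in $\Logic{K}$ (two distinct successors refute it); one application of the $\Diamond$-rule opens only one diamond.
\end{itemize}
Your mixed disjunct $\Box E_P(\Gamma'') \to \Diamond A_P(\Gamma''',\varphi;\psi)$ has the same defect, because its antecedent is already derivable from $\Gamma$ by correctness of $E_P$ and ($\Box$).

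The case distinction that matters is not how the boxed context is split. That split is harmless: it is absorbed by the conjunct $\Box E_P(\Box^{-1}\Gamma)$ of $E_P(\Gamma)$. What matters is on which side the principal $\Diamond\varphi$ of the $\Diamond$-rule lies.
\begin{itemize}
  \item If $\Diamond\varphi \in \Pi$, the premise is $\Pi_0, \varphi, \Box^{-1}\Gamma \imp \psi$ with $\Box\Pi_0 \subseteq \Pi$. The right disjunct is $\Diamond A_P(\Box^{-1}\Gamma;\psi)$, which does not mention $\varphi$.
  \item If $\Diamond\varphi \in \Gamma$, the $P$-free side contributes only $\Box\Pi_0$ to the premise, so the disjunct should be boxed, e.g.\ $\Box\bigl(E_P(\Box^{-1}\Gamma,\varphi) \to A_P(\Box^{-1}\Gamma,\varphi;\psi)\bigr)$. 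Uniformity follows by applying ($\Box$) to the induction hypothesis in the form $\Pi_0 \imp E_P(\ldots) \to A_P(\ldots)$. Correctness follows by applying ($\Diamond_1$) with principal formula $\Diamond\varphi$.
\end{itemize}
The same two-way split is needed in the $E_P$-clauses for your left rule on $(\Diamond\chi \to \theta) \in \Gamma$, whose left premise may open a diamond from either $\Gamma$ or $\Pi$.

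Finally, uniformity must also include the $E$-half: if $\Pi, \Gamma \imp \Delta$ with $\Pi$ and $\Delta$ both $P$-free, then $\Pi, E_P(\Gamma) \imp \Delta$. The version you state does not give $E_P(\varphi) \imp \psi$. It is this $E$-half, with $\chi = E_P(\varphi)$, that yields UIP in the form of Definition \ref{def:ulip-uip}; $A_P(\varphi;\emptyset)$ is not an interpolant at all. For $\Logic{WK}$ no special $\neg\Diamond$ clause is required: the $(\Diamond_0)$ cases of the $E$-half are already covered by the conjuncts $\Diamond E_P(\Box^{-1}\Gamma,\varphi)$ and $\Box E_P(\Box^{-1}\Gamma)$.
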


Before moving on to IMLs, let us see that $\Logic{WK}$ can also be characterized by a class of CK-frames in which $W_\bot$ is nonempty.
Here, a modal formula should not be able to distinguish if there is no fallible world or every fallible world is just inaccessible from ``perfect'' worlds.
Also, Proposition \ref{prop:all-is-fair} implies that all fallible worlds are indistinguishable from each other.
As such, de Groot et al.\ \cite{groot_semantical_2025} adopt a variation of CK-frames where there always is only one fallible world.

\begin{definition}
  Let us say a CK-frame $\Model{F} = (W, W_\bot, \IRel, \MRel)$ is \emph{singular}
  if $W_\bot = \set{ w_\bot }$ with $w_\bot$ being $\MRel$-reflexive.
\end{definition}

They identified the following frame condition for $\name{N}_\Diamond$ over singular frames:
\begin{equation*}
  \forall x\, \left( \left(\forall x'\, \left( x \IRel x' \imp x' \MRel w_\bot \right)\right) \imp x = w_\bot \right).
\end{equation*}
As in their paper, we shall call it the \emph{$\name{N}_\Diamond$-corr condition},
and a frame that satisfies it an \emph{$\name{N}_\Diamond$-corr frame}.
The following propositions show that infallible frames and $\name{N}_\Diamond$-corr singular frames are indeed equivalent.

\begin{proposition}
  For any infallible model $\Model{M} = (W, \emptyset, \IRel, \MRel, V)$,
  there is an $\name{N}_\Diamond$-corr singular model $\Model{M'}$ that validates the same set of formulas.
  \begin{proof}
    Take any $w_\bot \notin W$. We let $\Model{M'} = (W', \set{ w_\bot }, \IRel', \MRel', V')$,
    where $W' = W \cup \set{ w_\bot }$,
    ${\IRel'} = {\IRel} \cup \set{ (w_\bot, w_\bot) }$,
    ${\MRel'} = {\MRel} \cup \set{ (w_\bot, w_\bot) }$,
    and $V'(p) = V(p) \cup \set{ w_\bot }$.
    Here, $\Model{M'}$ trivially satisfies the $\name{N}_\Diamond$-corr condition.
    Since every formula is true at $w_\bot$ by Proposition \ref{prop:all-is-fair},
    it suffices to show that $w \SatCK_\Model{M} \varphi \siff w \SatCK_\Model{M'} \varphi$ for any $w \in W$ and $\varphi \in \MF$,
    which is routine.
  \end{proof}
\end{proposition}

\begin{proposition}
  For any $\name{N}_\Diamond$-corr singular model $\Model{M} = (W, \set{ w_\bot }, \IRel, \MRel, V)$,
  there is an infallible model $\Model{M'}$ that validates the same set of formulas.
  \begin{proof}
    We let $\Model{M'} = (W', \emptyset, \IRel', \MRel', V')$, where $W' = W \setminus \set{ w_\bot }$,
    ${\IRel'} = {\IRel \upharpoonright_{W'}}$,
    ${\MRel'} = (\MRel \upharpoonright_{W'}) \cup \set{ (x, y') \in W' \times W'; x \MRel w_\bot \tand \exists x' \in W'\,(x \IRel x' \MRel y') }$,
    and $V'(p) = V(p) \cap W'$ for any $p \in \PropVar$.
    Here, $\Model{M'}$ is clearly infallible.
    Since every formula is true at $w_\bot$ by Proposition \ref{prop:all-is-fair},
    it suffices to show that $w \SatCK_\Model{M'} \varphi \siff w \SatCK_\Model{M} \varphi$ for any $w \in W'$ and $\varphi \in \MF$.
    We use an induction on the construction of $\varphi$. We shall only prove the modal cases.

    ($\Box\Rightarrow$) Suppose $w \SatCK_\Model{M'} \Box \psi$ and take any $w', x' \in W$ such that $w \IRel w' \MRel x'$.
    If $x' = w_\bot$, then $w_\bot \SatCK_\Model{M} \psi$.
    Otherwise, $x' \ne w_\bot$, so $w' \ne w_\bot$ by $\set{ w_\bot }$ being upward closed with respect to $\MRel$.
    Here, $w \IRel w' \MRel x'$ and $w, w', x' \in W'$ imply $w \IRel' w' \MRel' x'$ by definition,
    then $w' \SatCK_\Model{M'} \psi$ by the assumption, so $w' \SatCK_\Model{M} \psi$ by the induction hypothesis.
    Therefore, $w \SatCK_\Model{M} \Box \psi$.

    ($\Box\Leftarrow$) Suppose $w \not \SatCK_\Model{M'} \Box \psi$, then there are $w', x'' \in W'$ such that $w \IRel' w' \MRel' x'' \not \SatCK_\Model{M'} \psi$.
    Here, $x'' \not \SatCK_\Model{M} \psi$ by the induction hypothesis.
    Also, $w' \MRel' x''$ implies either $(w \IRel)\, w' \MRel x'' \,(\not \SatCK_\Model{M} \psi)$,
    or there is $w'' \in W'$ such that $(w \IRel)\, w' \IRel w'' \MRel x'' \,(\not \SatCK_\Model{M} \psi)$.
    Therefore, $w \not \SatCK_\Model{M} \Box \psi$.

    ($\Diamond\Rightarrow$) Suppose $w \SatCK_\Model{M'} \Diamond \psi$ and take any $w' \in W$ such that $w \IRel w'$.
    If $w' = w_\bot$, then $w' \MRel w_\bot \SatCK_\Model{M} \psi$. 
    Otherwise, $w' \ne w_\bot$ implies $w' \in W'$, then $w \IRel' w'$ by definition,
    so by assumption, there is $x' \in W'$ such that $w' \MRel' x' \SatCK_\Model{M'} \psi$, then $x' \SatCK_\Model{M} \psi$ by the induction hypothesis.
    Here, $w' \MRel' x'$ implies either $w' \MRel x' \,(\SatCK_\Model{M} \psi)$
    or $w' \MRel w_\bot \,(\SatCK_\Model{M} \psi)$.
    Therefore, $w \SatCK_\Model{M} \Diamond \psi$.

    ($\Diamond\Leftarrow$) Suppose $w \not \SatCK_\Model{M'} \Diamond \psi$, then there is $w' \in W'$ such that $w \IRel' w'$ and $\forall x' \in W'\, (w' \MRel' x' \imp x' \not \SatCK_\Model{M'} \psi)$.
    Here, $w \IRel w'$ by definition.
    We distinguish the following cases:
    \begin{itemize}
      \item Suppose $w' \not \MRel w_\bot$.
            Take any $x' \in W$ such that $w' \MRel x'$, then $x' \ne w_\bot$, then $w' \MRel' x'$ by definition,
            then $x' \not \SatCK_\Model{M'} \psi$ by the assumption,
            so $x' \not \SatCK_\Model{M} \psi$ by the induction hypothesis.
            Therefore, $w \not \SatCK_\Model{M} \Diamond \psi$.
      \item Now suppose $w' \MRel w_\bot$.
            Here, $w' \in W'$ implies $w' \ne w_\bot$,
            so by the contrapositive of $\name{N}_\Diamond$-corr, there is $w'' \in W$ such that $w' \IRel w''$ and $w'' \not \MRel w_\bot$.
            Take any $x'' \in W$ such that $w'' \MRel x''$, then $x'' \ne w_\bot$, then $w'' \ne w_\bot$ by $\set{w_\bot}$ being upward closed with respect to $\MRel$.
            Now that $w', w'', x'' \in W'$, $w' \MRel w_\bot$, and $w' \IRel w'' \MRel x''$,
            then $w' \MRel' x''$ by definition,
            then $x'' \not \SatCK_\Model{M'} \psi$ by the assumption,
            so $x'' \not \SatCK_\Model{M} \psi$ by the induction hypothesis.
            Therefore, $w' \not \SatCK_\Model{M} \Diamond \psi$,
            which implies $w \not \SatCK_\Model{M} \Diamond \psi$ by persistency. \qedhere
    \end{itemize}
  \end{proof}
\end{proposition}

\begin{corollary}[{\cite[Thm.\,VI.1]{groot_semantical_2025}}]\leavevmode
  $$\Logic{WK} = \set{ \varphi \in \MF; \forall \Model{F}: \name{N}_\Diamond\text{-corr singular CK-frame}\ (\Model{F} \ValidCK \varphi) }.$$
\end{corollary}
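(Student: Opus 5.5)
The corollary follows by combining Theorem \ref{thm:ck-wk-completeness} with the two preceding propositions, which transfer countermodels in both directions. Write $\Theory{S}$ for the right-hand side, the set of formulas c-valid on every $\name{N}_\Diamond$-corr singular CK-frame. We prove $\Logic{WK} = \Theory{S}$ by two inclusions, each by contraposition.

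For $\Theory{S} \subseteq \Logic{WK}$, suppose $\Logic{WK} \nvdash \varphi$. By Theorem \ref{thm:ck-wk-completeness}, there is an infallible CK-model $\Model{M}$ and a world $w$ with $w \not\SatCK_\Model{M} \varphi$. The first of the two preceding propositions yields an $\name{N}_\Diamond$-corr singular model $\Model{M'}$. In its proof, the worlds of $\Model{M}$ keep their truth values, so $w \not\SatCK_{\Model{M'}} \varphi$. Hence $\varphi$ fails on the underlying frame of $\Model{M'}$, which is an $\name{N}_\Diamond$-corr singular CK-frame, and so $\varphi \notin \Theory{S}$.

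For $\Logic{WK} \subseteq \Theory{S}$, suppose $\varphi \notin \Theory{S}$. Then some model $\Model{M}$ on an $\name{N}_\Diamond$-corr singular CK-frame falsifies $\varphi$ at some world $w$. By Proposition \ref{prop:all-is-fair}, $w \neq w_\bot$. The second proposition produces an infallible $\Model{M'}$ on $W \setminus \set{w_\bot}$, and its proof shows that $w$ keeps its truth value, so $w \not\SatCK_{\Model{M'}} \varphi$. Since $\Model{M'}$ is infallible, Theorem \ref{thm:ck-wk-completeness} gives $\Logic{WK} \nvdash \varphi$.

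There is one small point to check. The propositions are phrased as ``validates the same set of formulas'', whereas we need a countermodel on a frame of the right class. For the first direction, the construction of $\Model{M'}$ keeps $\MRel$ and $\IRel$ on $W$ and only adds the reflexive point $w_\bot$. The resulting frame is a singular CK-frame: $\set{w_\bot}$ is upward closed under both relations, and it is $\MRel$-serial because $w_\bot \MRel' w_\bot$. It also satisfies $\name{N}_\Diamond$-corr, because no world of $W$ reaches $w_\bot$. For the second direction, the resulting frame has empty $W_\bot$, so it is infallible by definition. This frame-membership check is the only step needing any care, and it is routine.
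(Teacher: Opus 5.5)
Your proposal is correct and is exactly the argument the paper intends: it states the corollary without a separate proof, as an immediate consequence of Theorem \ref{thm:ck-wk-completeness} combined with the two preceding model-transfer propositions. The frame-membership check you flag is already part of those propositions' statements (``$\name{N}_\Diamond$-corr singular model'', ``infallible model''), and their model-level ``validates the same set of formulas'' is already enough to transfer countermodels, so your world-level bookkeeping is harmless but not needed.
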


We note that they actually gave a unified semantics for both CMLs and IMLs, as they even characterized $\Logic{IK}$, the smallest IML, over their semantics.
We will refer to their results from time to time to see the relationship between CMLs and IMLs from the semantical point of view.
We however would like to point out that, with their semantics, it is hard to apply well-known proof strategies we would see in classical modal logics
(e.g.\ taking a join of models as we did in Proposition \ref{prop:wk-ck-dp}, as we need to merge two singularities into one).
This is the reason why we adopt the original semantics rather than theirs.

\section{Intuitionistic modal logics} \label{sec:iml}

\emph{Intuitionistic modal logics (IMLs)}, which admit the distribution of $\Diamond$ over $\lor$,
are originally proposed by Fischer Servi \cite{fischer_servi_modal_1977, fischer_axiomatizations_1984}
and studied further by other researchers such as Ewald \cite{ewald_intuitionistic_1986}, Plotkin \& Stirling \cite{plotkin_framework_1986}, and Simpson \cite{simpson_proof_1994}.

\begin{table}[hbt]
  \centering
  \caption{Basic modal axioms for IMLs.}
  \label{tab:basic-iml-axioms}
  \begin{tabular}{rl|rl}
    $\name{M}_\Box$     & \AxiomC{$\varphi \to \psi$} \UnaryInfC{$\Box \varphi \to \Box \psi$} \DisplayProof &
    $\name{M}_\Diamond$ & \AxiomC{$\varphi \to \psi$} \UnaryInfC{$\Diamond \varphi \to \Diamond \psi$} \DisplayProof \\
    $\name{N}_\Box$      & $\Box \top$ &
    $\name{N}_\Diamond$ & $\neg \Diamond \bot$ \\
    $\name{C}_\Box$     & $(\Box \varphi \land \Box \psi) \to \Box (\varphi \land \psi)$ &
    $\name{C}_\Diamond$ & $\Diamond(\varphi \lor \psi) \to (\Diamond \varphi \lor \Diamond \psi)$ \\
     $\name{K}_\Diamond$ & $\Box(\varphi \to \psi) \to (\Diamond \varphi \to \Diamond \psi)$ & & \\
    $\name{FS1}$ & $\Diamond (\varphi \to \psi) \to (\Box \varphi \to \Diamond \psi)$ &
    $\name{FS2}$ & $(\Diamond \varphi \to \Box \psi) \to \Box (\varphi \to \psi)$ \\
  \end{tabular}
\end{table}

\begin{definition}
  Let $\Logic{IK} \defeq \Logic{WK} + \name{C}_\Diamond + \name{FS2} = \Logic{iK}_\Box + \name{K}_\Diamond + \name{N}_\Diamond + \name{C}_\Diamond + \name{FS2}$.
  Let also $\Logic{iK}_{\Box\Diamond} \defeq \Logic{iK}_\Box + \name{M}_\Diamond + \name{N}_\Diamond + \name{C}_\Diamond$ for later convenience.
\end{definition}

$\Logic{IK}$ has an alternative axiomatization, just as $\Logic{CK}$ and $\Logic{WK}$ do.

\begin{proposition}
  $\Logic{IK} \dashv \vdash \Logic{iK}_{\Box\Diamond} + \name{FS1} + \name{FS2}$.
  \begin{proof}
    It easily follows from Proposition \ref{prop:ck-alternative-axiomatization}.
  \end{proof}
\end{proposition}

$\Logic{IK}$ enjoys the weak interaction between $\Box$ and $\Diamond$ we have seen in $\Logic{WK}$.

\begin{proposition} \label{prop:ik-partial-duality}
  (1) $\Logic{IK} \vdash \Box \neg \varphi \to \neg \Diamond \varphi$
  and (2) $\Logic{IK} \vdash \Diamond \neg \varphi \to \neg \Box \varphi$.
  \begin{proof}
    It is a direct consequence of Proposition \ref{prop:wk-partial-duality} and $\Logic{WK} \subseteq \Logic{IK}$.
  \end{proof}
\end{proposition}

Moreover, the new axiom $\name{FS2}$ allows even stronger interaction.

\begin{proposition} \label{prop:ik-partial-duality-2}
  $\Logic{IK} \vdash \neg \Diamond \varphi \to \Box \neg \varphi$.
  \begin{proof}
    Follows from the following derivation.
    \begin{center}
      \AxiomC{$ $}
      \UnaryInfC{$(\Diamond \varphi \to \bot) \to (\bot \to \Box \bot) \to (\Diamond \varphi \to \Box \bot)$}
      \UnaryInfC{$(\Diamond \varphi \to \bot) \to (\Diamond \varphi \to \Box \bot)$}

      \AxiomC{$ $}
      \RightLabel{\scriptsize{$\name{FS2}$}}
      \UnaryInfC{$(\Diamond \varphi \to \Box \bot) \to \Box (\varphi \to \bot)$}

      \BinaryInfC{$(\Diamond \varphi \to \bot) \to \Box (\varphi \to \bot)$}
      \UnaryInfC{$\neg \Diamond \varphi \to \Box \neg \varphi$}
      \DisplayProof
    \end{center}
    We note that no modal principles are used except for $\name{FS2}$.
  \end{proof}
\end{proposition}

Therefore, $\Box \neg \varphi$ and $\neg \Diamond \varphi$ are equivalent in $\Logic{IK}$.
We still do not have the full duality between $\Box$ and $\Diamond$, however,
because $\neg \Box \varphi \to \Diamond \neg \varphi$, the last interaction, is not provable in $\Logic{IK}$.
We shall present the semantics for $\Logic{IK}$ to see that.

\begin{definition} \label{def:fs-frame-condition}
  An f-frame $\Model{F} = (W, \IRel, \MRel)$ is said to be an \emph{FS2}\footnote{Also known by backward confluence \cite{balbiani_constructive_2021}, F2 \cite{simpson_proof_1994}, back-up confluence \cite{aguilera_gocalculus_2022}, and $\name{I}_{\Diamond \Box}$-weak \cite{groot_semantical_2025}.} frame
  if $(\MRel \cdot \IRel) \subseteq (\IRel \cdot \MRel)$,
  that is, $\forall x,y,y'\, (x \MRel y \IRel y' \imp \exists x'\,(x \IRel x' \MRel y'))$.
  Let us denote by $\FrameClass{FS2}$ the class of all FS2 frames.
\end{definition}

\begin{definition} \label{def:ik-frame}
  An f-frame $\Model{F} = (W, \IRel, \MRel)$ is said to be an \emph{IK-frame} if $\Model{F} \in \FrameClass{\Diamond\text{-p}} \cap \FrameClass{FS2}$.
  We also say $\Model{M} = (\Model{F}, V)$ is an \emph{IK-model} if $\Model{F}$ is an IK-frame.
\end{definition}

\begin{figure}[h]
  \centering

  \begin{subfigure}{.4\textwidth}
    \centering
    \begin{tikzpicture}
      \fill (0, 0) circle (2pt);
      \draw (0, 0) node[below]{$x$};

      \fill (0, 2) circle (2pt);
      \draw (0, 2) node[above]{$x'$};

      \fill (2, 0) circle (2pt);
      \draw (2, 0) node[below]{$y$};

      \fill (2, 2) circle (2pt);
      \draw (2, 2) node[above]{$\exists y'$};

      \draw [thick, ->, rel, squig] (0,0) to (2,0);
      \draw (0,1) node[left]{$\IRel$};
      \draw [thick, ->, rel] (0,0) to (0,2);
      \draw (1,0) node[below]{$\MRel$};
      \draw [thick, ->, rel, dashed] (2,0) to (2,2);
      \draw (2,1) node[right]{$\IRel$};
      \draw [thick, ->, rel, squig, dashed] (0,2) to (2,2);
      \draw (1,2) node[above]{$\MRel$};
    \end{tikzpicture}
    \caption*{The $\Diamond$-p condition (recap)}
  \end{subfigure}%
  \begin{subfigure}{.4\textwidth}
    \centering
    \begin{tikzpicture}
      \fill (0, 0) circle (2pt);
      \draw (0, 0) node[below]{$x$};

      \fill (0, 2) circle (2pt);
      \draw (0, 2) node[above]{$\exists x'$};

      \fill (2, 0) circle (2pt);
      \draw (2, 0) node[below]{$y$};

      \fill (2, 2) circle (2pt);
      \draw (2, 2) node[above]{$y'$};

      \draw [thick, ->, rel, dashed] (0,0) to (0,2);
      \draw (0,1) node[left]{$\IRel$};
      \draw [thick, ->, rel, squig] (0,0) to (2,0);
      \draw (1,0) node[below]{$\MRel$};
      \draw [thick, ->, rel] (2,0) to (2,2);
      \draw (2,1) node[right]{$\IRel$};
      \draw [thick, ->, rel, squig, dashed] (0,2) to (2,2);
      \draw (1,2) node[above]{$\MRel$};
    \end{tikzpicture}
    \caption*{The FS2 condition}
  \end{subfigure}

  \caption{Visualization of the required frame conditions for IK-frames}
  \label{fig:ik-frame-conditions}
\end{figure}

\begin{definition}
  For an f-model $\Model{M} = (W, \IRel, \MRel, V)$, we define a \emph{hybrid satisfaction relation} $\SatIK_\Model{M}$ on $W \times \MF$ by:
  \begin{enumerate}
    \item $x \SatIK_\Model{M} \psi_1 \to \psi_2 \defiff \forall x' \IRelRev x\, (x' \SatIK_\Model{M} \psi_1 \imp x' \SatIK_\Model{M} \psi_2)$
    \item $x \SatIK_\Model{M} \Box \varphi     \defiff \forall x' \IRelRev x\, \forall y' \MRelRev x'\, (y' \SatIK_\Model{M} \varphi)$
    \item $x \SatIK_\Model{M} \Diamond \varphi \defiff \exists y \MRelRev x\, (y \SatIK_\Model{M} \varphi)$
    \item The other cases are identical to the classical $\Vdash_\Model{M}$.
  \end{enumerate}
  We write $\Model{M} \ValidIK \varphi$ (``$\varphi$ is \emph{(h-)valid in} $\Model{M}$'') to mean $\forall x \in W\,(x \SatIK_\Model{M} \varphi)$,
  and $\Model{F} \ValidIK \varphi$ (``$\varphi$ is \emph{(h-)valid on} $\Model{F}$'') to mean that $\Model{M} \ValidIK \varphi$ for every f-model $\Model{M}$ based on $\Model{F}$.
\end{definition}

Here, we can see that $\SatIK$ evaluates $\Box \varphi$ as $\SatI$ does and $\Diamond \varphi$ as $\SatE$ does, thus the name \emph{hybrid}.
This means that the presence of the $\Diamond$-p condition is essential for persistency as we saw in Proposition \ref{prop:extrinsic-persistency} (2).
This also makes $\name{C}_\Diamond$ h-valid in every IK-model, just as it is e-valid in every f-model by Proposition \ref{prop:separation-cdia} (1), without the need of any frame condition.
On the other hand, $\name{FS2}$ is h-valid in the presence of the FS2 condition.

\begin{proposition}[Persistency] \label{prop:hybrid-persistency}
  Let $\Model{M} = (W, \IRel, \MRel, V)$ be a $\Diamond$-p model, and take any $\varphi \in \MF$.
  For any $x, x' \in W$, $x \SatIK_\Model{M} \varphi$ and $x \IRel x'$ implies $x' \SatIK_\Model{M} \varphi$.
  \begin{proof}
    Induction on the construction of $\varphi$. If $\varphi = \Box \psi$, then it can be proven similarly to Proposition \ref{prop:intrinsic-persistency}.
    If $\varphi = \Diamond \psi$, then it can be proven similarly to Proposition \ref{prop:extrinsic-persistency} (2 $\Rightarrow$).
    The other cases are routine.
  \end{proof}
\end{proposition}

\begin{proposition}[{\cite[Thm.\,4]{fischer_axiomatizations_1984}}] \label{prop:ik-model-validates-cdia-and-fs2}
  Both $\name{C}_\Diamond$ and $\name{FS2}$ are valid in every IK-model.
  \begin{proof}
    The former is proven similarly to Proposition \ref{prop:separation-cdia} (1), without using any frame condition on $\Model{M}$.
    For the latter, take any IK-model $\Model{M} = (W, \IRel, \MRel, V)$, any $\varphi, \psi \in \MF$, and any $x, x'$ such that $x \IRel x' \SatIK_\Model{M} \Diamond \varphi \to \Box \psi$,
    and it suffices to show that $x' \SatIK_\Model{M} \Box (\varphi \to \psi)$.
    To do so, take any $x'', y'', y'''$ such that $x' \IRel x'' \MRel y'' \IRel y''' \SatIK_\Model{M} \varphi$,
    and we shall show that $y''' \SatIK_\Model{M} \psi$.
    By the FS2 condition, there is $x'''$ such that $x'' \IRel x''' \MRel y'''$,
    then $y''' \SatIK_\Model{M} \varphi$ implies $x''' \SatIK_\Model{M} \Diamond \varphi$.
    Here, $x'' \SatIK_\Model{M} \Diamond \varphi \to \Box \psi$ by persistency,
    so $x''' \SatIK_\Model{M} \Box \psi$. Therefore, $y''' \SatIK_\Model{M} \psi$.
  \end{proof}
\end{proposition}

The completeness of $\Logic{IK}$ with respect to all IK-frames is routinely proven by constructing the canonical model.

\begin{theorem}[{\cite[Cor.\,1]{fischer_axiomatizations_1984}, \cite[pp.52--53]{simpson_proof_1994}}]
  \begin{equation*}
    \Logic{IK} = \set{ \varphi \in \MF; \forall \Model{F}: \text{IK-frame}\ (\Model{F} \ValidIK \varphi) }.
  \end{equation*}
  \begin{proof}
    See Theorem \ref{thm:ik-completeness-detailed}.
  \end{proof}
\end{theorem}

$\Theory{V}^i = \Logic{WK} \subsetneq \Logic{IK}$ trivially holds from their axiomatizations.
We can also prove that their $\Diamond$-free fragments do not coincide, very similarly to Proposition \ref{prop:separation-nnb}.
We shall prove it semantically, but a fully syntactic proof is given by Das and Marin \cite[pp.288--289]{das_intuitionistic_2023}.

\begin{proposition}[{\cite{das_intuitionistic_2023}}] \label{prop:ik-proves-nnbb-to-bb}
  $\Logic{IK} \vdash \neg \neg \Box \bot \to \Box \bot$. Consequently, $\Logic{iK}_\Box \subsetneq \Logic{IK} \cap \MFb$.
  \begin{proof}
    By completeness, it suffices to show that $\neg \neg \Box \bot \to \Box \bot$ is valid in every IK-model.
    Take any $\Model{M} = (W, \IRel, \MRel, V)$ and any $x_0, x_1$ such that $x_0 \IRel x_1 \SatIK_\Model{M} \neg \neg \Box \bot$,
    then we shall show that $x_1 \SatIK_\Model{M} \Box \bot$.
    To do so, take any $x_2$ such that $x_1 \IRel x_2$,
    then $x_1 \SatIK_\Model{M} \neg \neg \Box \bot$ implies $x_2 \not \SatIK_\Model{M} \neg \Box \bot$,
    so there is $x_3$ such that $x_2 \IRel x_3 \SatIK_\Model{M} \Box \bot$,
    which implies that, in paticular, there is no $y_3$ such that $x_3 \MRel y_3$.
    Now suppose, by way of contradiction, that there is $y_2$ such that $x_2 \MRel y_2$,
    then $x_3 \IRelRev x_2 \MRel y_2$, so by the $\Diamond$-p condition, there must be some $y_3$ such that $x_3 \MRel y_3 \IRelRev y_2$, which is a contradiction.
    Now that $\forall x_2\, (x_1 \IRel x_2 \imp \neg \exists y_2\, (x_2 \MRel y_2))$, then we obtain $x_1 \SatIK_\Model{M} \Box \bot$.
  \end{proof}
\end{proposition}

Now we shall check if $\Logic{IK}$ is indeed intuitionistic according to Definition \ref{def:iml}.

\begin{proposition}[{\cite[Chapters 5 and 6]{simpson_proof_1994}}] \label{prop:ik-enjoys-dp}
  $\Logic{IK}$ enjoys DP.
  \begin{proof}
    Suppose that $\Logic{IK} \nvdash \varphi_1$ and $\Logic{IK} \nvdash \varphi_2$,
    then there are $\Model{M}_1 = (W_1, \IRel_1, \MRel_1, V_1)$, $\Model{M}_2 = (W_2, \IRel_2, \MRel_2, V_2)$,
    $w_1 \in W_1$, and $w_2 \in W_2$ such that $w_1 \not\SatIK_{\Model{M}_1} \varphi_1$ and $w_2 \not\SatIK_{\Model{M}_2} \varphi_2$.
    By replacing the two models with isomorphic copies if necessary, we may assume that $W_1 \cap W_2 = \emptyset$.
    We construct a new model $\Model{M} = (W, \IRel, \MRel, V)$ by the following:
    \begin{itemize}
      \item $W \defeq W_1 \cup W_2 \cup \set{w_0}$, where $w_0$ is fresh;
      \item $\IRel$ is the reflexive and transitive closure of $\set{ (w_0, w_1), (w_0, w_2) } \cup \mathord{\IRel_1} \cup \mathord{\IRel_2}$;
      \item $\mathord{\MRel} \defeq \mathord{\MRel_1} \cup \mathord{\MRel_2}$;
      \item $V(p) \defeq V_1(p) \cup V_2(p) \cup V_0(p)$, where $V_0(p) \defeq \set{w_0}$ if $w_1 \in V_1(p)$ and $w_2 \in V_2(p)$, and $V_0(p) \defeq \emptyset$ otherwise.
    \end{itemize}
    Here, it is easy to see that for any $i \in \set{1,2}$, $x \in W_i$, and $\psi \in \MF$, $x \SatIK_\Model{M} \psi$ iff $x \SatIK_{\Model{M}_i} \psi$.
    Then $w_0 \not\SatIK_\Model{M} \varphi_1$ and $w_0 \not\SatIK_\Model{M} \varphi_2$ by persistency,
    so $w_0 \not\SatIK_\Model{M} \varphi_1 \lor \varphi_2$.
    Now it remains to show that $\Model{M}$ is an IK-model,
    which is trivial since both $\Model{M}_1$ and $\Model{M}_2$ are IK-models and there is no $x$ such that $w_0 \MRel x$.
    Therefore, $\Logic{IK} \nvdash \varphi_1 \lor \varphi_2$ by completeness.
    The other direction is trivial.
  \end{proof}
\end{proposition}

\begin{proposition}[{\cite{simpson_proof_1994}}] \label{prop:ik-plus-em-is-k}
  $\Logic{IK} + \varphi \lor \neg \varphi = \Logic{K}$.
  \begin{proof}
    ($\subseteq$) Trivial.
    ($\supseteq$)
    We first show that $\Logic{IK} + \varphi \lor \neg \varphi \vdash \name{Dual}_\Box$.
    The following derivations prove it in both directions.
    \begin{center}
      \AxiomC{$ $}
      \RightLabel{\scriptsize{Prop.\ \ref{prop:ik-partial-duality-2}}}
      \UnaryInfC{$\neg \Diamond \neg \varphi \to \Box \neg \neg \varphi$}

      \AxiomC{$ $}
      \UnaryInfC{$\varphi \lor \neg \varphi$}
      \UnaryInfC{$\neg \neg \varphi \to \varphi$}
      \RightLabel{\scriptsize{$\name{M}_\Box$}}
      \UnaryInfC{$\Box \neg \neg \varphi \to \Box \varphi$}

      \RightLabel{\scriptsize{Syll.}}
      \BinaryInfC{$\neg \Diamond \neg \varphi \to \Box \varphi$}
      \DisplayProof
    \end{center}
    \begin{center}
      \AxiomC{$ $}
      \RightLabel{\scriptsize{Prop.\ \ref{prop:ik-partial-duality} (2)}}
      \UnaryInfC{$\Diamond \neg \varphi \to \neg \Box \varphi$}
      \UnaryInfC{$\neg \neg \Box \varphi \to \neg \Diamond \neg \varphi$}

      \AxiomC{$ $}
      \UnaryInfC{$\Box \varphi \to \neg \neg \Box \varphi$}

      \RightLabel{\scriptsize{Syll.}}
      \BinaryInfC{$\Box \varphi \to \neg \Diamond \neg \varphi$}
      \DisplayProof
    \end{center}
    We note that the second derivation does not use $\varphi \lor \neg \varphi$ and thus is possible in $\Logic{IK}$ and even in $\Logic{WK}$.
    Then, $\Logic{IK} + \varphi \lor \neg \varphi \supseteq \Logic{K}_\Box + \name{Dual}_\Box = \Logic{K}$.
  \end{proof}
\end{proposition}

\begin{proposition}[{\cite[pp.54--55]{simpson_proof_1994}}] \label{prop:ik-non-duality}
  (1) $\Logic{IK} \nvdash \name{Dual}_\Box$. (2) $\Logic{IK} \nvdash \name{Dual}_\Diamond$.
  \begin{proof}
    Let $\Model{M} = (\set{ x_0, x_1, y_0, y_1 }, \IRel, \MRel, V)$, where $\IRel$ is the reflexive closure of $\set{ (x_0, x_1), (y_0, y_1) }$,
    $\mathord{\MRel} = \set{ (x_0, y_0), (x_1, y_1) }$, and $V(p) = \set{ y_1 }$,
    then $\Model{M}$ is an IK-model.
    The model is visualized in Figure \ref{fig:ik-countermodels}.
    We use it to prove both.

    (1) $x_0 \not \SatIK_\Model{M} \Diamond \neg p$ and $x_1 \not \SatIK_\Model{M} \Diamond \neg p$ hold, which imply $x_0 \SatIK_\Model{M} \neg \Diamond \neg p$.
    Also, $x_0 \not \SatIK_\Model{M} \Box p$ holds.
    Therefore, $\Logic{IK} \nvdash \neg \Diamond \neg p \to \Box p$ by completeness.

    (2) $x_0 \not \SatIK_\Model{M} \Box \neg p$ and $x_1 \not \SatIK_\Model{M} \Box \neg p$ hold, which imply $x_0 \SatIK_\Model{M} \neg \Box \neg p$.
    Also, $x_0 \not \SatIK_\Model{M} \Diamond p$ holds.
    Therefore, $\Logic{IK} \nvdash \neg \Box \neg p \to \Diamond p$ by completeness.
  \end{proof}
\end{proposition}

\begin{figure}[h]
  \centering
  \begin{subfigure}{.5\textwidth}
    \centering
    \begin{tikzpicture}
      \fill (0, 0) circle (2pt);
      \draw (0, 0) node[left]{$x_0$};

      \fill (0, 2) circle (2pt);
      \draw (0, 2) node[left]{$x_1$};

      \fill (2, 0) circle (2pt);
      \draw (2, 0) node[right]{$y_0 \nVdash p$};

      \fill (2, 2) circle (2pt);
      \draw (2, 2) node[right]{$y_1 \Vdash p$};

      \draw [thick, ->, rel, squig] (0,0) to (2,0);
      \draw (1,0) node[below]{$\MRel$};
      \draw [thick, ->, rel] (0,0) to (0,2);
      \draw (0,1) node[left]{$\IRel$};
      \draw [thick, ->, rel] (2,0) to (2,2);
      \draw (2,1) node[right]{$\IRel$};
      \draw [thick, ->, rel, squig] (0,2) to (2,2);
      \draw (1,2) node[above]{$\MRel$};
    \end{tikzpicture}
    \caption*{Prop. \ref{prop:ik-non-duality}}
  \end{subfigure}%
  \begin{subfigure}{.5\textwidth}
    \centering
    \begin{tikzpicture}
      \fill (0, 0) circle (2pt);
      \draw (0, 0) node[left]{$x_0$};

      \fill (2, 0) circle (2pt);
      \draw (2, 0) node[right]{$y_0 \nVdash p$};

      \fill (2, 2) circle (2pt);
      \draw (2, 2) node[right]{$y_1 \Vdash p$};

      \draw [thick, ->, rel, squig] (0,0) to (2,0);
      \draw (1,0) node[below]{$\MRel$};
      \draw [thick, ->, rel] (2,0) to (2,2);
      \draw (2,1) node[right]{$\IRel$};
      \draw [thick, ->, rel, squig] (0,0) to (2,2);
      \draw (1,1) node[above left]{$\MRel$};
    \end{tikzpicture}
    \caption*{Prop. \ref{prop:ik-missing-interaction}}
  \end{subfigure}%

  \caption{Visualization of the countermodels for Propositions \ref{prop:ik-non-duality}, \ref{prop:ik-missing-interaction}}
  \label{fig:ik-countermodels}
\end{figure}

Therefore, $\Logic{IK}$ is indeed intuitionistic in our sense.
Moreover, $\Logic{IK}$ also satisfies Simpson's sixth requirement in a sense, that is, $\Logic{IK}$ corresponds to intuitionistic first-order logic via the standard translation.

\begin{proposition}[{\cite[Chapter 5]{simpson_proof_1994}}]
  $\Logic{IK} \vdash \varphi$ iff $\forall x\,\mathrm{ST}_x(\varphi)$ is valid in every intuitionistic first-order structure for $\Lang_{\text{ST}}$.
\end{proposition}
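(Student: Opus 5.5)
The plan is to reduce both directions to a correspondence between intuitionistic first-order Kripke structures for $\Lang_{\text{ST}}$ and IK-models, and then invoke the completeness of $\Logic{IK}$ with respect to IK-frames.

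\emph{Soundness ($\Rightarrow$).} Take an intuitionistic first-order Kripke structure $\mathscr{M}$. It has stages $(S, \preceq)$, increasing domains $D_s$, and relations $\MRel^s$ and $V_p^s$ that persist upward along $\preceq$. From it I build a birelational model $\Model{M}_\mathscr{M}$ as follows:
\begin{itemize}
  \item the worlds are the pairs $(s,d)$ with $d \in D_s$;
  \item $(s,d) \IRel (s',d')$ iff $s \preceq s'$ and $d = d'$;
  \item $(s,d) \MRel (s',e)$ iff $s = s'$ and $\mathscr{M} \models_s d \MRel e$;
  \item $(s,d) \in V(p)$ iff $\mathscr{M} \models_s V_p(d)$.
\end{itemize}
The $\Diamond$-p condition follows from upward persistence of $\MRel^s$ together with growth of the domains. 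The FS2 condition holds because $(s,d) \MRel (s,e) \IRel (s',e)$ gives $(s,d) \IRel (s',d) \MRel (s',e)$. So $\Model{M}_\mathscr{M}$ is an IK-model, and $V(p)$ is upward closed.

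A straightforward induction on $\varphi$ then shows that $(s,d) \SatIK \varphi$ iff $\mathscr{M} \models_s \mathrm{ST}_x(\varphi)[d]$. The $\Box$ clause matches the intuitionistic $\forall y\,(x \MRel y \to \cdots)$, which quantifies over all later stages. The $\Diamond$ clause matches the intuitionistic $\exists$, which is evaluated at the current stage only, and this is exactly why the hybrid relation $\SatIK$ is the right one. Consequently, if $\Logic{IK} \vdash \varphi$, then $\varphi$ holds in $\Model{M}_\mathscr{M}$ by soundness for IK-frames, so $\forall x\,\mathrm{ST}_x(\varphi)$ holds in $\mathscr{M}$.

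\emph{Completeness ($\Leftarrow$).} Suppose $\Logic{IK} \nvdash \varphi$. By the completeness theorem there is an IK-model $\Model{M}$ and a world $w$ with $w \not\SatIK \varphi$. I must turn $\Model{M}$ into a first-order Kripke structure, i.e.\ show that, up to $\SatIK$-equivalence, $\Model{M}$ has the ``product'' shape $\Model{M}_\mathscr{M}$ described above. The difficulty is that in an arbitrary IK-model the $\IRel$-successors of different worlds are not coordinated into global stages.

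My first attempt is to take stages to be \emph{choice functions}. A stage is a map $\sigma$ assigning to each element of a domain a world of $\Model{M}$, closed under $\MRel$-successors chosen along the way. Set $\sigma \preceq \tau$ iff $\sigma(d) \IRel \tau(d)$ pointwise and the domain of $\tau$ extends that of $\sigma$. Interpret $d \MRel e$ at $\sigma$ iff $\sigma(d) \MRel \sigma(e)$, and $V_p(d)$ iff $\sigma(d) \in V(p)$.

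\begin{itemize}
  \item The FS2 condition is used to show that moving a single element $e$ up along $\IRel$ can be matched by moving its $\MRel$-predecessors up.
  \item The $\Diamond$-p condition is used to show that when an element $d$ moves up, its existing $\MRel$-successors can be moved up so as to keep the relation. This is what makes $\MRel^\sigma$ persistent.
\end{itemize}

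To make these choices coherent, I would first unravel $\Model{M}$ into a tree-like IK-model rooted at $w$ (preserving $\SatIK$ via a bisimulation that respects both relations), so that each element is introduced exactly once along an $\MRel$-path. Then I prove, by induction on $\varphi$, that $\mathscr{M} \models_\sigma \mathrm{ST}_x(\varphi)[d]$ iff $\sigma(d) \SatIK \varphi$.

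I expect this construction to be the main obstacle, specifically the backward direction of the $\Box$ case. Given $\sigma(d) \IRel x' \MRel y'$ in $\Model{M}$, I must produce a stage $\tau \succeq \sigma$ with $\tau(d) = x'$ and a new element sent to $y'$, while raising all other elements consistently. Doing this requires repeatedly applying $\Diamond$-p along the unravelled tree, and that is where the unravelling has to be set up carefully.
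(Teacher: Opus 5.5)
Your soundness direction is correct. The $(s,d)$ construction yields a model satisfying the $\Diamond$-p and FS2 conditions, and the hybrid clauses match the Kripke clauses for the guarded quantifiers exactly. The paper gives no argument of its own for the converse (it only cites Simpson), and your converse has a genuine gap.

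First, you define $\sigma \preceq \tau$ as pointwise $\IRel$ plus domain extension, and you read $d \MRel e$ as $\sigma(d) \MRel \sigma(e)$. With these definitions the relation symbol is not persistent, so you do not obtain an intuitionistic first-order structure. In the model of Proposition~\ref{prop:ik-non-duality}, take $\sigma = \{d \mapsto x_0,\ e \mapsto y_0\}$ and $\tau = \{d \mapsto x_1,\ e \mapsto y_0,\ e' \mapsto y_1\}$. Both are closed under successors and $\sigma \preceq \tau$, yet $x_0 \MRel y_0$ while $x_1 \not\MRel y_0$. The $\Diamond$-p condition only shows that \emph{some} stage above $\sigma$ keeps the relation, so preservation must be built into $\preceq$.

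Once it is, everything rests on a lifting lemma: for every stage $\sigma$, element $d$ and $x' \IRelRev \sigma(d)$, there is $\tau \succeq \sigma$ with $\tau(d) = x'$. This lemma is needed already for the $\to$ case, not only for $\Box$, and it is exactly the step you leave open. With the induced reading of $\MRel$ it can actually fail, because $\Diamond$-p and FS2 witnesses are neither unique nor jointly compatible. Suppose saturation puts children $e_1, e_2$ of $d$ on worlds with $\sigma(e_1) \MRel \sigma(e_2)$. Raising $d$ to $x'$ forces $\tau(e_1)$ and $\tau(e_2)$ to be $\MRel$-successors of $x'$ lying above $\sigma(e_1)$ and $\sigma(e_2)$. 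There are finite IK-models in which no such pair is $\MRel$-related, so no $\tau$ exists and the truth lemma already breaks for $\neg p$. Your unravelling is meant to prevent this, but you do not specify it, and the naive tree unravelling of a birelational model satisfies neither $\Diamond$-p nor FS2.

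The missing idea is to put the tree structure on the \emph{elements} and record witness edges explicitly, rather than unravelling $\Model{M}$.
\begin{itemize}
  \item Take a stage to be $(D,P,f)$, where $P$ is a parent map making $D$ a forest of finite-depth trees, $f(P(e)) \MRel f(e)$, and every $\MRel$-successor of $f(d)$ equals $f(e)$ for some child $e$ of $d$.
  \item Interpret $d \MRel e$ as $P(e) = d$.
  \item Let $(D,P,f) \preceq (D',P',f')$ mean $D \subseteq D'$, $P'$ agrees with $P$ on $D$, and $f(a) \IRel f'(a)$ for all $a \in D$.
\end{itemize}
Lifting then works as follows. Raise the ancestor chain of $d$ one step at a time using FS2. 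Then raise every other node of that tree from its already-raised parent using $\Diamond$-p, by recursion on depth with choice. Leave the other trees fixed, and re-saturate with fresh children. This gives the $\to$ case, and the $\Box$ case by lifting and then taking the child that represents $y'$; the $\Diamond$ case follows directly from saturation. Starting from the saturated tree of finite $\MRel$-paths out of $w$ then gives the required countermodel.
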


Here, the fact that $\Logic{IK} \nvdash \neg \Box \varphi \to \Diamond \neg \varphi$ roughly corresponds to the fact that $\neg \forall x \varphi(x) \to \exists x \neg \varphi(x)$ is not intuitionistically valid.
\begin{proposition} \label{prop:ik-missing-interaction}
  $\Logic{IK} \nvdash \neg \Box \varphi \to \Diamond \neg \varphi$.
  \begin{proof}
    Let $\Model{M} = (\set{ x_0, y_0, y_1 }, \IRel, \MRel, V)$, where $\mathord{\IRel} = \set{ (x_0, x_0), (y_0, y_0), (y_1, y_1) }$,
    $\mathord{\MRel} = \set{ (x_0, y_0), (x_0, y_1) }$, and $V(p) = \set{ y_1 }$,
    then $\Model{M}$ is an IK-model.
    The model is visualized in Figure \ref{fig:ik-countermodels}.
    Here, it is clear that $x_0 \SatIK_\Model{M} \neg \Box p$ but $x_0 \not \SatIK_\Model{M} \Diamond \neg p$.
    Therefore, $\Logic{IK} \nvdash \neg \Box p \to \Diamond \neg p$ by completeness.
  \end{proof}
\end{proposition}

\begin{remark}
  The countermodel for $\neg \Box \varphi \to \Diamond \neg \varphi$, presented above, also happens to be a countermodel for $\neg \Diamond \neg p \to \Box p$.
  We shall note that, however, these two formulas are incomparable over $\Logic{IK}$ i.e. there is an IK-model that validates $\neg \Diamond \neg p \to \Box p$ but falsifies $\neg \Box \varphi \to \Diamond \neg \varphi$, and vice versa.
  The latter is the countermodel presented in Proposition \ref{prop:ik-non-duality}.
  The former is left for the reader as an exercise.
\end{remark}

Moreover, $\Logic{IK}$ enjoys an analogue of Theorem \ref{thm:negative-embedding}:
classical $\Logic{K}$ can be embedded into $\Logic{IK}$ by a modified double negation translation.

\begin{definition}[{\cite[Def.\,9]{das_intuitionistic_2023}}] \label{def:modal-gg-translation}
  We inductively define the \emph{modal (G\"odel--Gentzen) double negation translation}
  $(\cdot)^N: \MF \to \MF$:
  \begin{align*}
    \bot^N &\defeq \bot, &
    p^N    &\defeq \neg \neg p, \\
    (\varphi \land \psi)^N &\defeq \varphi^N \land \psi^N, &
    (\varphi \lor \psi)^N  &\defeq \neg(\neg \varphi^N \land \neg \psi^N), \\
    (\varphi \to \psi)^N &\defeq \varphi^N \to \psi^N, \\
    (\Box\varphi)^N &\defeq \Box\varphi^N, &
    (\Diamond\varphi)^N &\defeq \neg \Box \neg \varphi^N.
  \end{align*}
\end{definition}

\begin{proposition} \label{prop:ik-nnbox-to-boxnn}
  For every $\varphi \in \MF$,
  $\Logic{IK} \vdash \neg \neg \Box \varphi \to \Box \neg \neg \varphi$.
  \begin{proof}
    By completeness, it suffices to prove validity in every IK-model.
    Let $\Model{M} = (W, \IRel, \MRel, V)$ be an IK-model,
    and fix $x_0 \in W$ such that $x_0 \SatIK_\Model{M} \neg \neg \Box \varphi$.
    To prove $x_0 \SatIK_\Model{M} \Box \neg \neg \varphi$,
    fix $x_1, y_0 \in W$ such that $x_0 \IRel x_1$ and $x_1 \MRel y_0$.
    We show that $y_0 \SatIK_\Model{M} \neg \neg \varphi$.

    Fix any $y_1 \in W$ such that $y_0 \IRel y_1$.
    By FS2 applied to $x_1 \MRel y_0$ and $y_0 \IRel y_1$,
    there is $x_2 \in W$ such that $x_1 \IRel x_2$ and $x_2 \MRel y_1$.
    By persistency, $x_2 \SatIK_\Model{M} \neg \neg \Box \varphi$.
    By the $\IRel$-reflexivity of $x_2$ and the fact that no world forces $\bot$,
    it follows that
    $x_2 \not \SatIK_\Model{M} \neg \Box \varphi$.
    Hence there is $x_3 \in W$ such that $x_2 \IRel x_3$ and
    $x_3 \SatIK_\Model{M} \Box \varphi$.

    By the $\Diamond$-p condition applied to $x_2 \MRel y_1$ and $x_2 \IRel x_3$,
    there is $y_2 \in W$ such that $x_3 \MRel y_2$ and $y_1 \IRel y_2$.
    By the $\IRel$-reflexivity of $x_3$ and the forcing clause for $\Box$,
    we have $y_2 \SatIK_\Model{M} \varphi$.
    Therefore, $y_1 \not \SatIK_\Model{M} \neg \varphi$.
    Since $y_1$ was arbitrary, $y_0 \SatIK_\Model{M} \neg \neg \varphi$.
    Since $x_1$ and $y_0$ were arbitrary,
    $x_0 \SatIK_\Model{M} \Box \neg \neg \varphi$.
  \end{proof}
\end{proposition}

\begin{theorem}[{\cite[Thm.\,12]{das_intuitionistic_2023}}] \label{thm:modal-negative-embedding}
  For any $\varphi \in \MF$, $\Logic{K} \vdash \varphi$ iff $\Logic{IK} \vdash \varphi^N$.
  \begin{proof}[Proof (sketch)]
    An induction on $\psi$ shows that
    $\Logic{IK} \vdash \neg \neg \psi^N \to \psi^N$;
    the $\Box$ case uses Proposition \ref{prop:ik-nnbox-to-boxnn},
    followed by $\name{M}_\Box$ applied to the induction hypothesis.
    The forward implication then follows, as in Theorem \ref{thm:negative-embedding},
    by induction on a proof of $\varphi$ in $\Logic{K}$.
    Conversely, $\Logic{IK} \subseteq \Logic{K}$ and
    $\Logic{K} \vdash \varphi^N \leftrightarrow \varphi$.
  \end{proof}
\end{theorem}

\begin{remark}[{\cite{das_intuitionistic_2023}}]
  Let $\chi \defeq \neg \neg \Box \bot \to \Box \bot$.
  Then $\Logic{K} \vdash \chi$ and $\chi^N = \chi$, whereas
  $\chi \notin \Theory{V}^e_\Box = \Theory{V}^i \cap \MFb = \Logic{WK} \cap \MFb$
  by Propositions \ref{prop:separation-nnb}, \ref{prop:diafree-vi-is-veb},
  and Corollary \ref{cor:wk-completeness}.
  Therefore, the analogue of Theorem \ref{thm:modal-negative-embedding}
  does not hold for $\Logic{WK}$ or $\Logic{CK}$.
\end{remark}

So far, $\Logic{IK}$ seems like a perfect candidate for \emph{the} intuitionistic variant of $\Logic{K}$.
But it comes with a high price; Almeida et al.\ \cite{almeida_fischer-servi_2026} proved in their preprint that $\Logic{IK}$ loses all interpolation properties that are enjoyed by $\Logic{K}$, $\Logic{CK}$, and $\Logic{WK}$.

\begin{proposition}[{\cite[Thm.\,4.4]{almeida_fischer-servi_2026}}]
  $\Logic{IK}$ does not enjoy CIP, and, consequently, neither LIP nor UIP.
\end{proposition}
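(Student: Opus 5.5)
The plan is the standard semantic route to refuting interpolation: exhibit formulas $\varphi$ and $\psi$ with $\Logic{IK} \vdash \varphi \to \psi$, together with two IK-models that cannot be told apart in the common vocabulary. First I would fix the notion of indistinguishability. I would define an \emph{IK-bisimulation} relative to a set $P \subseteq \PropVar$ between IK-models $\Model{M}_1$ and $\Model{M}_2$. It is a relation $Z$ such that $x Z x'$ implies:
\begin{enumerate}[label=(\roman*)]
  \item $x$ and $x'$ agree on the variables in $P$;
  \item there are forth and back clauses for $\IRel$ (each $\IRel$-successor of one side has a $Z$-related $\IRel$-successor on the other);
  \item there are forth and back clauses for $\MRel$, which handle the extrinsic $\Diamond$ clause of $\SatIK$.
\end{enumerate}
The $\Box$ clause of $\SatIK$ quantifies over $\IRel \cdot \MRel$, so clauses (ii) and (iii) together cover it. A routine induction on formulas in the variables of $P$, in the style of Proposition \ref{prop:int-ext-coincides}, should then show that $Z$-related worlds satisfy the same such formulas under $\SatIK$. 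I would also keep in mind that by Proposition \ref{prop:hybrid-persistency} every formula is persistent in these models, which simplifies bookkeeping.

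Next I would choose the implication. Take $\varphi(p,q)$ and $\psi(q,r)$ with $\Logic{IK} \vdash \varphi \to \psi$, whose validity on IK-frames genuinely uses the FS2 or $\Diamond$-p condition to transport information from $p$-worlds to $r$-worlds through $\MRel$-steps followed by $\IRel$-steps. The natural source of such formulas is the interactions that distinguish $\Logic{IK}$ from $\Logic{WK}$: $\name{FS2}$ itself, and the derived principle $\neg\neg\Box\bot \to \Box\bot$ of Proposition \ref{prop:ik-proves-nnbb-to-bb}. My first attempt would be to split an instance of $\name{FS2}$, $(\Diamond q \to \Box r) \to \Box(q \to r)$, across an extra variable $p$ that controls the left-hand side. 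For example, I would take $\varphi = \Box(p \to q) \land (\Diamond q \to \Box r)$-like antecedents, choosing $\varphi$ and $\psi$ so that the only candidate interpolants in $q$ alone would need to express a $\Box$/$\Diamond$ duality that $\Logic{IK}$ lacks (compare Propositions \ref{prop:ik-non-duality} and \ref{prop:ik-missing-interaction}). I would verify $\Logic{IK} \vdash \varphi \to \psi$ semantically, using completeness and the FS2/$\Diamond$-p diagrams exactly as in the proof of Proposition \ref{prop:ik-model-validates-cdia-and-fs2}.

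Then I would build the counter-situation. I would construct an IK-model $\Model{M}_1$ with a world $w_1 \SatIK \varphi$ and an IK-model $\Model{M}_2$ with a world $w_2 \not\SatIK \psi$, together with a $\{q\}$-bisimulation linking $w_1$ and $w_2$. If $\chi$ were an interpolant with $\Var(\chi) \subseteq \{q\}$, then $w_1 \SatIK \chi$ would transfer to $w_2 \SatIK \chi$, and hence give $w_2 \SatIK \psi$, which is a contradiction. Conceptually, the two models should be such that their amalgamation over the $q$-reduct violates FS2. This mirrors why the superamalgamation argument that works for $\Logic{CK}$ and $\Logic{WK}$ breaks down here. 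CIP then fails, and the statements about LIP and UIP follow from the implications LIP $\Rightarrow$ CIP and UIP $\Rightarrow$ CIP noted after Definition \ref{def:ulip-uip}.

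The main obstacle is the second step: finding a concrete pair $\varphi$, $\psi$ that is provable in $\Logic{IK}$ and still admits bisimilar separating models. Both models must satisfy the $\Diamond$-p and FS2 conditions, and these conditions force many additional worlds, which tend to destroy the bisimulation. If the simple $\name{FS2}$-based attempt fails, I would fall back on the formulas used by Almeida et al.\ \cite{almeida_fischer-servi_2026}, or derive the candidate by looking for a $q$-only formula whose existence would make $\Logic{IK}$'s $\Diamond$-fragment amalgamate. I would check candidate models by hand on small finite frames such as the four-world frames of Figure \ref{fig:ik-countermodels}.
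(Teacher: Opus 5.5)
You have the outer frame right. A relation $Z$ with agreement on the shared variables and back-and-forth clauses for both $\IRel$ and $\MRel$ does preserve $\SatIK$. Consequently, an $\Logic{IK}$-valid $\varphi \to \psi$ together with IK-models containing $Z$-related worlds $w_1 \SatIK \varphi$ and $w_2 \not\SatIK \psi$ rules out every interpolant in the shared vocabulary. Your final step, deriving the failure of LIP and UIP from the failure of CIP, is also exactly what the paper uses.

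However, the proposal stops where the theorem begins. The whole content of the statement is the existence of a concrete $\Logic{IK}$-valid implication with no interpolant, and you supply neither the formulas nor the separating models.

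\begin{itemize}
  \item Your one explicit candidate, $\Box(p \to q) \land (\Diamond q \to \Box r)$, already contains $r$. It therefore does not fit your own split into $\varphi(p,q)$ and $\psi(q,r)$; with it, the shared vocabulary would be $\set{q, r}$, not $\set{q}$.
  \item The heuristic that a $q$-only interpolant ``would need to express a $\Box$/$\Diamond$ duality that $\Logic{IK}$ lacks'' is unsupported. Interpolants may freely use both $\Box$ and $\Diamond$. The fact that $\Logic{IK}$ does not prove $\name{Dual}_\Box$ or $\neg \Box \varphi \to \Diamond \neg \varphi$ says nothing about whether some other formula in $q$ does the job.
  \item Nothing shows that the FS2-based split, or any other, admits $\set{q}$-bisimilar separating models that satisfy both the $\Diamond$-p and FS2 conditions. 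You flag this yourself as the main obstacle, and it is in fact the entire proof.
\end{itemize}

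Your fallback, taking the formulas from Almeida et al.\ \cite{almeida_fischer-servi_2026}, is in effect what the paper does. The paper gives no argument of its own: it cites their Theorem~4.4 for the failure of CIP and then notes that LIP and UIP each imply CIP.

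So you should do one of two things. Either cite that result directly, as the paper does. Or actually produce:
\begin{itemize}
  \item the formulas $\varphi$ and $\psi$;
  \item a derivation or semantic verification of $\varphi \to \psi$ in $\Logic{IK}$;
  \item two IK-models with worlds that are bisimilar over $\Var(\varphi) \cap \Var(\psi)$, one forcing $\varphi$ and the other refuting $\psi$.
\end{itemize}
As written, the bisimulation machinery is scaffolding around a missing counterexample rather than a proof.
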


As such, there cannot be a ``good'' Gentzen-style sequent calculus for $\Logic{IK}$ that is cut-admissible,
since the cut elimination in such a calculus implies Craig interpolation property (see, e.g., \cite{tabatabai_universal_2025}).
Though, a cut-free labelled sequent calculus is given by Simpson \cite{simpson_proof_1994},
which is obtained from the labelled sequent calculus for $\Logic{K}$ by allowing only single succedents.

Finally, we shall present a folklore result\footnote{\scriptsize\url{https://prooftheory.blog/2022/08/19/brouwer-meets-kripke-constructivising-modal-logic/}} which says the $\Diamond$-free fragment of $\Logic{IK}$ is not finitely axiomatizable by $\Diamond$-free formulas.
Das and Marin \cite[Prop.\,17]{das_intuitionistic_2023} stated that the $\Diamond$-free flagment of $\Logic{WK} + \name{FS2}$ ($\Logic{CK} + \name{k}_4 + \name{k}_5$ in their paper) is finitely $\Diamond$-free axiomatizable,
and the $\Diamond$-free and $\lor$-free fragment of $\Logic{IK}$ is also finitely $\Diamond$-free $\lor$-free axiomatizable,
so the proof of this folklore, if it indeed holds, ``must make crucial use of $\lor$.''

\begin{problem}
  Show that $\Logic{IK} \cap \MFb$ is not finitely $\Diamond$-free axiomatizable.
\end{problem}

\section{Borderline logics between CMLs and IMLs} \label{sec:other}

So far, we have seen four different kinds of satisfaction relations.

\begin{definition}[recap]
  The satisfaction relations $\SatI$, $\SatCK$, $\SatIK$, $\SatE$ are defined as follows:
  \begin{itemize}
    \item For the $\bot$ case:
          \begin{itemize}
            \item $x \not \Vdash^\bullet \bot$, for $\bullet \in \set{ i, h, e }$.
            \item $x \SatCK \bot \defiff x \in W_\bot$, where $W_\bot$ is the set of fallible worlds.
          \end{itemize}
    \item For the $\Box$ case:
          \begin{itemize}
            \item $x \Vdash^\iota \Box \varphi \defiff \forall x' \IRelRev x \, \forall y' \MRelRev x' \, (y' \Vdash^\iota \varphi)$, for $\iota \in \set{ c, i, h }$.
            \item $x \SatE \Box \varphi \defiff \forall y \MRelRev x \, (y \SatE \varphi)$.
          \end{itemize}
    \item For the $\Diamond$ case:
          \begin{itemize}
            \item $x \Vdash^\iota \Diamond \varphi \defiff \forall x' \IRelRev x \, \exists y' \MRelRev x' \, (y' \Vdash^\iota \varphi)$, for $\iota \in \set{ c, i }$.
            \item $x \Vdash^\varepsilon \Diamond \varphi \defiff \exists y \MRelRev x \, (y \Vdash^\varepsilon \varphi)$, for $\varepsilon \in \set{ h, e }$.
          \end{itemize}
  \end{itemize}
\end{definition}

Here, $\SatCK$ and $\SatI$ are very similar; the only difference between them is in the case of $\bot$ i.e.\ whether fallible worlds are allowed or not.
Therefore, we can say that $\SatCK$ and $\SatI$ coincide in infallible CK-models, with respect to which $\Logic{WK}$ is sound and complete.
In this sense, we say $\Logic{WK}$ is the minimal logic in which $\SatCK$ and $\SatI$ coincide.
Similarly, as we have seen in Proposition \ref{prop:int-ext-coincides}, $\SatI$ and $\SatE$ coincide in $\Box$-p and $\Diamond$-p f-models.
Here, $\Theory{V}^e$ is the minimal logic in which $\SatI$ and $\SatE$ coincide.
Moreover, as $\SatIK$ is a ``hybrid'' of $\SatI$ and $\SatE$, we can even think of a minimal logic in which $\SatI$ and $\SatIK$ coincide,
which would sit right between $\Logic{WK}$ and $\Theory{V}^e$ (and between $\Logic{WK}$ and $\Logic{IK}$).

These logics would neither be traditional CMLs (logics that lack $\name{C}_\Diamond$) nor be traditional IMLs (logics over $\Logic{IK}$),
and are being studied by researchers such as Das and Marin \cite{das_intuitionistic_2023}, de Groot et al.\ \cite{groot_semantical_2025}, and Balbiani \cite{balbiani_natural_2024}.
The purpose of this section is to discuss these kinds of \emph{borderline} logics between CMLs and IMLs.

We shall first check under what frame condition $\SatI$ and $\SatIK$ coincide, and then give an axiomatization of the said logic with a completeness proof.
After that, we will give an axiomatization for $\Theory{V}^e$ and prove the completeness in a similar manner, then we also present a completeness proof of $\Logic{IK}$ as a bonus.
Finally, we will introduce some results by de Groot et al.\ \cite{groot_semantical_2025} on some other borderline logics.
The following is easily proven as we did in Proposition \ref{prop:int-ext-coincides}.

\begin{figure}[htb]
  \centering

  \begin{subfigure}{.33\textwidth}
    \centering
    \begin{tikzpicture}
      \fill (0, 0) circle (2pt);
      \draw (0, 0) node[below]{$x$};

      \fill (0, 2) circle (2pt);
      \draw (0, 2) node[above]{$x'$};

      \fill (2, 0) circle (2pt);
      \draw (2, 0) node[below]{$\vphantom{\exists}y$};

      \fill (2, 2) circle (2pt);
      \draw (2, 2) node[above]{$\exists y'$};

      \draw [thick, ->, rel, squig] (0,0) to (2,0);
      \draw (0,1) node[left]{$\IRel$};
      \draw [thick, ->, rel] (0,0) to (0,2);
      \draw (1,0) node[below]{$\MRel$};
      \draw [thick, ->, rel, dashed] (2,0) to (2,2);
      \draw (2,1) node[right]{$\IRel$};
      \draw [thick, ->, rel, squig, dashed] (0,2) to (2,2);
      \draw (1,2) node[above]{$\MRel$};
    \end{tikzpicture}
    \caption*{The $\Diamond$-p condition}
  \end{subfigure}%
  \begin{subfigure}{.33\textwidth}
    \centering
    \begin{tikzpicture}
      \fill (0, 0) circle (2pt);
      \draw (0, 0) node[below]{$x$};

      \fill (0, 2) circle (2pt);
      \draw (0, 2) node[above]{$x'$};

      \fill (2, 0) circle (2pt);
      \draw (2, 0) node[below]{$\exists y$};

      \fill (2, 2) circle (2pt);
      \draw (2, 2) node[above]{$\vphantom{\exists}y'$};

      \draw [thick, ->, rel] (0,0) to (0,2);
      \draw (0,1) node[left]{$\IRel$};
      \draw [thick, ->, rel, squig, dashed] (0,0) to (2,0);
      \draw (1,0) node[below]{$\MRel$};
      \draw [thick, ->, rel, dashed] (2,0) to (2,2);
      \draw (2,1) node[right]{$\IRel$};
      \draw [thick, ->, rel, squig] (0,2) to (2,2);
      \draw (1,2) node[above]{$\MRel$};
    \end{tikzpicture}
    \caption*{The $\Box$-p condition}
  \end{subfigure}%
  \begin{subfigure}{.33\textwidth}
    \centering
    \begin{tikzpicture}
      \fill (0, 0) circle (2pt);
      \draw (0, 0) node[below]{$x$};

      \fill (0, 2) circle (2pt);
      \draw (0, 2) node[above]{$\exists x'$};

      \fill (2, 0) circle (2pt);
      \draw (2, 0) node[below]{$\vphantom{\exists}y$};

      \fill (2, 2) circle (2pt);
      \draw (2, 2) node[above]{$y'$};

      \draw [thick, ->, rel, dashed] (0,0) to (0,2);
      \draw (0,1) node[left]{$\IRel$};
      \draw [thick, ->, rel, squig] (0,0) to (2,0);
      \draw (1,0) node[below]{$\MRel$};
      \draw [thick, ->, rel] (2,0) to (2,2);
      \draw (2,1) node[right]{$\IRel$};
      \draw [thick, ->, rel, squig, dashed] (0,2) to (2,2);
      \draw (1,2) node[above]{$\MRel$};
    \end{tikzpicture}
    \caption*{The FS2 condition}
  \end{subfigure}%

  \caption{Visualization of the various frame conditions (recap)}
  \label{fig:various-frame-conditions}
\end{figure}

\begin{proposition}\label{prop:int-ik-coincides}
  Let $\Model{M} = (W, \IRel, \MRel, V)$ be a $\Diamond$-p model, then
  $x \SatI_\Model{M} \varphi \iff x \SatIK_\Model{M} \varphi$ for any $\varphi \in \MF$ and $x \in W$.
  \begin{proof}
    We use an induction on the construction of $\varphi$.
    We shall only prove the modal cases.
    We note that $\SatIK_\Model{M}$ is persistent by the proof of Proposition \ref{prop:hybrid-persistency}.

    ($\Box\Leftrightarrow$)
    Easily proven by the induction hypothesis.

    ($\Diamond\Rightarrow$)
    Suppose $x \SatI_\Model{M} \Diamond \psi$, then $\forall x'\, (x \IRel x' \imp \exists y'\, (x' \MRel y' \tand y' \SatI_\Model{M} \varphi))$.
    This and $x \IRel x$ imply that there is $y$ such that $x \MRel y$ and $y \SatI_\Model{M} \psi$, then $y \SatIK_\Model{M} \psi$ by the induction hypothesis.
    Therefore, $x \SatIK_\Model{M} \Diamond \psi$.

    ($\Diamond\Leftarrow$)
    Suppose $x \SatIK_\Model{M} \Diamond \psi$, then there is $y$ such that $x \MRel y$ and $y \SatIK_\Model{M} \psi$.
    Now take any $x'$ such that $x \IRel x'$,
    then by the $\Diamond$-p condition, $x' \IRelRev x \MRel y$ implies that there is $y'$ such that $x' \MRel y' \IRelRev y$.
    Here, $y \IRel y' \SatIK_\Model{M} \psi$ by persistency,
    so $y' \SatI_\Model{M} \psi$ by the induction hypothesis.
    Therefore, $x \SatI_\Model{M} \Diamond \psi$.
  \end{proof}
\end{proposition}

Since $\Diamond$-p is the frame condition that is required for the persistency of $\SatIK$,
We can say that $\set{ \varphi \in \MF; \forall \Model{F} \in \FrameClass{\Diamond\text{-p}}\,(\Model{F} \ValidI \varphi) }$ is the minimal logic under which $\SatI$ and $\SatIK$ coincide.
One would simply expect that it is axiomatized by the below logic:
\begin{definition}
  Let $\Logic{IK}^- \defeq \Logic{WK} + \name{C}_\Diamond$.
\end{definition}

$\Logic{IK}^-$ is introduced by Das and Marin \cite{das_intuitionistic_2023} ($\Logic{CK} + \name{k}_3 + \name{k}_5$ in their paper)
to show that the presence of the $\name{C}_\Diamond$ and $\name{N}_\Diamond$ axioms do not force any $\Diamond$-free axioms over $\Logic{CK}$.
De Groot et al.\ \cite{groot_semantical_2025} gave a completeness proof of $\Logic{IK}^-$ ($\Logic{CK} + \name{C}_\Diamond + \name{N}_\Diamond$ in their paper)
with respect to a certain class of singular CK-frames, and used it to prove the same property semantically.

\begin{theorem}[{\cite[Thm.\ VI.1]{groot_semantical_2025}}]
  We say a singular CK-frame $\Model{F} = (W, \set{w_\bot}, \IRel, \MRel)$ is $\name{C}_\Diamond$-suff if it satisfies
  $\forall x_0\, \exists x_1\, (x_0 \IRel x_1 \tand \forall x_2\, \forall y_1\, (x_0 \IRel x_2 \tand x_1 \MRel y_1) \imp \exists y_2\, (x_2 \MRel y_2 \tand y_1 \IRel y_2))$.
  Then,
  $$\Logic{IK}^- = \set{ \varphi \in \MF; \forall \Model{F}: \name{N}_\Diamond\text{-corr}\ \name{C}_\Diamond\text{-suff singular CK-frame}\ (\Model{F} \ValidCK \varphi) }.$$
\end{theorem}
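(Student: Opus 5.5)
We prove the two inclusions separately. The ``$\subseteq$'' direction (soundness) is a frame check. The ``$\supseteq$'' direction (completeness) is a canonical-model argument in the style of Theorem~\ref{thm:ck-wk-completeness}, and that is where the real work lies.

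\emph{Soundness.} By the preceding corollary, $\Logic{WK}$ is already sound for $\name{N}_\Diamond$-corr singular CK-frames, so only $\name{C}_\Diamond$ remains. Fix a $\name{C}_\Diamond$-suff frame and suppose $x_0 \SatCK \Diamond(\varphi \lor \psi)$.
\begin{enumerate}[label=(\roman*)]
  \item Take the $x_1 \IRelRev x_0$ supplied by $\name{C}_\Diamond$-suff.
  \item By persistency (Proposition~\ref{prop:constructive-persistency}), $x_1 \SatCK \Diamond(\varphi\lor\psi)$. Reflexivity of $\IRel$ then gives $y_1 \MRelRev x_1$ with, say, $y_1 \SatCK \varphi$.
  \item For any $x_2 \IRelRev x_0$, the $\name{C}_\Diamond$-suff clause gives $y_2 \MRelRev x_2$ with $y_1 \IRel y_2$. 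Persistency gives $y_2 \SatCK \varphi$.
\end{enumerate}
Hence $x_0 \SatCK \Diamond\varphi$, and so $x_0 \SatCK \Diamond\varphi\lor\Diamond\psi$.

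\emph{Completeness: the canonical model.} Worlds are the consistent prime $\Logic{IK}^-$-theories, together with one fallible world $w_\bot = \MF$. Put $\IRel = \subseteq$, with $w_\bot$ on top. The modal relation is
\[
x \MRel y \defiff \set{\chi; \Box\chi\in x}\subseteq y \ \tand\ \forall\chi\,(\Diamond\chi\notin x \imp \chi\notin y),
\]
together with $w_\bot \MRel w_\bot$. Upward closure and seriality of $\set{w_\bot}$ are immediate, so this is a singular CK-frame.

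For the truth lemma the only modal cases are $\Box$ and $\Diamond$.
\begin{itemize}
  \item $\Diamond$ case, direction $\Leftarrow$: if $\Diamond\varphi\notin x$, then every successor of $x' = x$ omits $\varphi$ by the second clause of $\MRel$.
  \item $\Diamond$ case, direction $\Rightarrow$: given $\Diamond\varphi\in x'$, extend $\set{\chi;\Box\chi\in x'}\cup\set{\varphi}$ to a prime theory avoiding $D(x') = \set{\chi;\Diamond\chi\notin x'}$.
    \begin{itemize}
      \item $D(x')$ is closed under $\lor$ because of $\name{C}_\Diamond$. This is the only place $\name{C}_\Diamond$ enters.
      \item Disjointness from $D(x')$ follows from $\name{K}_\Diamond$ and $\name{M}_\Box$/$\name{C}_\Box$.
      \item Consistency holds because $\bot\in D(x')$ by $\name{N}_\Diamond$.
    \end{itemize}
\end{itemize}
After the truth lemma, we verify that the canonical frame is $\name{N}_\Diamond$-corr, using $\name{N}_\Diamond$: a consistent $x$ has $\Diamond\bot\notin x$, hence $x \not\MRel w_\bot$. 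We also verify that it is $\name{C}_\Diamond$-suff. Here the intended witness $x_1$ for $x_0$ is a prime extension of $x_0$ that decides as many $\Diamond$-formulas as possible, so that $D(x_1)\subseteq D(x_2)$ can be transported along $\IRel$.

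\emph{Main obstacle: the $\Box$ case, direction $\Leftarrow$.} Given $\Box\varphi\notin x$, we need $x'\supseteq x$ and a prime $y$ with $x' \MRel y$ and $\varphi\notin y$. This requires
\[
\set{\chi;\Box\chi\in x'} \nvdash \varphi\lor\chi' \quad\text{for every } \chi'\in D(x').
\]
Without $\name{FS2}$ we cannot simply take $x' = x$. I would choose $x'$ as a prime extension of $x$ that is maximal among those still omitting $\Box\varphi$, and argue by primeness and $\name{C}_\Box$ that a failure $\Box(\varphi\lor\chi')\in x'$ with $\Diamond\chi'\notin x'$ yields a contradiction with maximality.

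If this fails, I would fall back on Wijesekera-style pairs $(\Gamma,\Delta)$, where $\Delta$ records the excluded $\Diamond$-bodies. In that setup the second clause of $\MRel$ reads off $\Delta$ instead of $D(x)$, which decouples the $\Box$ witness from the $\Diamond$ constraint. The $\name{C}_\Diamond$-suff check would then need to be redone for that frame.
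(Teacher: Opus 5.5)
Your soundness half is correct. The completeness half has a genuine gap, and the $\Box$ step you flag is not merely hard: for your canonical model it is false.

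On consistent theories your model is the construction of Appendix~\ref{appendix:completeness} with $L = \Logic{IK}^-$. The proof of Proposition~\ref{prop:canonical-model-is-dia-p} uses only axioms of $\Logic{IK}^-$ ($\name{M}_\Diamond$, $\name{FS1}$, $\name{C}_\Box$, $\name{C}_\Diamond$, $\name{N}_\Diamond$), so your frame is $\Diamond$-p; the fallible top world does not disturb this. That is why your $\name{C}_\Diamond$-suff check works trivially with $x_1 = x_0$. But $\name{C}_\Diamond$-suff with $x_1 = x_0$ \emph{is} the $\Diamond$-p condition, which is too strong here. Concretely:
\begin{enumerate}
\item By Corollary~\ref{cor:ikminus-box-fragment}, Theorem~\ref{thm:ikb-ikd-completeness} and Proposition~\ref{prop:separation-nnb}, there is a prime theory $x$ with $\neg\neg\Box\bot \in x$ and $\Box\bot \notin x$.
\item Every consistent $x' \supseteq x$ omits $\neg\Box\bot$. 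So the argument of Proposition~\ref{prop:truth-lemma-imp-left}, which is purely propositional, gives a consistent $x'' \supseteq x'$ with $\Box\bot \in x''$.
\item If $x' \MRel y$, the $\Diamond$-p property gives $x'' \MRel y''$ with $\bot \in \Box^{-1}x'' \subseteq y''$, which is impossible.
\end{enumerate}
Hence no consistent extension of $x$ has an $\MRel$-successor. So $x \SatCK \Box\bot$ although $\Box\bot \notin x$, and no choice of $x'$ can rescue the truth lemma.

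Your maximality repair is exactly the $L$-final construction of Proposition~\ref{prop:truth-lemma-box-left}. Maximality only yields $\Diamond\chi' \to \Box\varphi \in x'$. Getting from this and $\Box(\varphi \lor \chi') \in x'$ to $\Box\varphi \in x'$ is an instance of $\name{wCD}$, which is unavailable since $\Logic{IK}^- \subsetneq \Logic{FIK}$.

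Your fallback is the right idea, but it is only named, and all the content of the theorem lives there. The paper gives no proof of this statement; it cites de Groot et al.\ and remarks after Proposition~\ref{prop:truth-lemma-box-left} that $\Logic{IK}^-$ needs a different canonical model. De Groot et al.\ use segments; what you describe is rather the Mendler--de Paiva-style pairing of Remark~\ref{rem:cml-canonical-models}.

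To make your version work, $\IRel$ must ignore the second component:
\begin{enumerate}
\item Worlds are pairs $(\Gamma,\Theta)$ with $\Gamma$ a consistent prime theory and $\Theta \subseteq D(\Gamma) = \set{\chi; \Diamond\chi \notin \Gamma}$, plus a fallible top world $w_\bot$ that sees only itself.
\item $(\Gamma,\Theta) \IRel (\Gamma',\Theta')$ iff $\Gamma \subseteq \Gamma'$.
\item $(\Gamma,\Theta) \MRel (\Gamma',\Theta')$ iff $\Box^{-1}\Gamma \subseteq \Gamma'$ and $\Theta \cap \Gamma' = \emptyset$.
\end{enumerate}
Then the truth lemma goes through:
\begin{enumerate}
\item $(\Gamma,\emptyset)$ witnesses the $\Box$ case with no $\Diamond$-constraint at all.
\item $(\Gamma,\set{\varphi})$ witnesses the case $\Diamond\varphi \notin \Gamma$.
\item Your $\name{K}_\Diamond$/$\name{C}_\Diamond$/$\name{N}_\Diamond$ argument handles $\Diamond\varphi \in \Gamma$ against an arbitrary $\Theta'$.
\end{enumerate}
$\name{N}_\Diamond$-corr is immediate, since no pair sees $w_\bot$.

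For $\name{C}_\Diamond$-suff, the witness is not a larger theory but $x_1 := (\Gamma_0, D(\Gamma_0))$. Take $x_2 = (\Gamma_2, \Theta_2)$ with $\Gamma_0 \subseteq \Gamma_2$ and $x_1 \MRel y_1$, and let $\Gamma_{y_1}$ be the first component of $y_1$. Run the consistency argument from the proof of Proposition~\ref{prop:canonical-model-is-dia-p} on $\Gamma_{y_1} \cup \Box^{-1}\Gamma_2$ against $\Theta_2$. This yields the required $y_2$, precisely because $y_1$ avoids all of $D(\Gamma_0)$. The resulting frame is $\name{C}_\Diamond$-suff without being $\Diamond$-p, which the $\neg\neg\Box\bot$ example shows is unavoidable.
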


\begin{problem}
  Identify the class of f-frames (i.e.\ infallible CK-frames) with respect to which $\Logic{IK}^-$ is sound and complete.
\end{problem}

\begin{theorem}[{\cite{das_intuitionistic_2023}}]
  $\Logic{IK}^-$ has a cut-free nested sequent calculus.
\end{theorem}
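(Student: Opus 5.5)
The plan is to take a known cut-free nested sequent calculus for $\Logic{CK}$ and enlarge it with rules that capture exactly $\name{N}_\Diamond$ and $\name{C}_\Diamond$. Concretely, I use the single-conclusion nested calculus in which a nested sequent is a tree of brackets, each node carrying input formulas, with at most one output formula in the whole tree. Its modal rules are:
\begin{itemize}
  \item $\Box$-right and $\Diamond$-left create a fresh bracket;
  \item $\Box$-left and $\Diamond$-right propagate a formula into an existing child bracket.
\end{itemize}
In the $\Logic{CK}$ version, $\Diamond$-left may only create a bracket that contains the output formula. I then add two relaxations.
\begin{itemize}
  \item An $\name{N}_\Diamond$-rule allows $\Diamond$-left also when the whole nested sequent has no output formula at all.
  \item A $\name{C}_\Diamond$-rule allows $\Diamond$-left when the output formula sits at the current node and is a disjunction of $\Diamond$-formulas. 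Equivalently, I allow the output $\Diamond$-formula to be ``pushed'' into an input-created bracket only after the disjunctive case split.
\end{itemize}
I expect to fix the exact side conditions on these rules by reading off what is needed to derive $\Diamond(\varphi \lor \psi) \to \Diamond\varphi \lor \Diamond\psi$ and $\neg\Diamond\bot$ cut-free.

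Soundness comes first. I interpret a nested sequent as a formula in the usual way: input formulas and brackets at a node form an antecedent, and a child bracket $[\Delta]$ is read as $\Box$ of its output-containing interpretation or $\Diamond$ of its input conjunction, according to where the output lies. I then check that each rule preserves $\Logic{IK}^-$-derivability. For the old rules this is inherited from $\Logic{CK}$. For the two new rules the check reduces to instances of $\name{N}_\Diamond$, $\name{C}_\Diamond$ and $\name{K}_\Diamond$, combined with $\name{M}_\Box$ and $\name{M}_\Diamond$ applied in context.

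Completeness with cut is next. Each axiom of $\Logic{IK}^-$, namely those of $\Logic{iK}_\Box$ together with $\name{K}_\Diamond$, $\name{N}_\Diamond$ and $\name{C}_\Diamond$, is shown derivable. Modus ponens and the monotonicity rules are simulated using a cut rule and the admissibility of weakening and contraction. That the modal rules $\name{M}_\Box$ and $\name{M}_\Diamond$ are admissible follows by wrapping a derivation in a bracket.

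The main step, and the expected obstacle, is cut admissibility. I would first prove height-preserving admissibility of weakening, contraction and the invertibility of the propositional rules. Then I would run the standard double induction on cut-formula rank and derivation height, with the cut formulation in which the cut formula occurs as output in one premise and as input at the same node of the other. The delicate cases are the principal $\Diamond$ cuts in which the left premise ends in $\Diamond$-right and the right premise ends in one of the new $\Diamond$-left rules. There the side condition about where the output lives, or whether it is absent, must be preserved after permuting the cut upward. I would try to formulate the $\name{C}_\Diamond$-rule so that its side condition is stable under substituting a derivation into the created bracket; if that fails, I would instead build $\name{C}_\Diamond$ into a generalized $\lor$-left rule acting across brackets, as in Das--Marin, and re-verify that permutation case.
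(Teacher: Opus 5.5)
The paper gives no proof of this theorem; it only cites Das and Marin. As a proof, your proposal has a genuine gap. The rules are never fixed, since their side conditions are to be read off from the axioms. Cut admissibility, which is the entire content of the theorem, is only announced, with the cited construction as a fallback. Worse, the one concrete $\name{C}_\Diamond$-rule you state cannot admit cut. Take
\[
  \Diamond(p \lor q),\ \Diamond p \to r,\ \Diamond q \to r \ \Rightarrow\ r .
\]
It is derivable with a cut on $\Diamond p \lor \Diamond q$: the left premise $\Diamond(p \lor q) \Rightarrow \Diamond p \lor \Diamond q$ is exactly what your rule provides. Cut-free, however, no rule applies:
\begin{itemize}
  \item There is no identity axiom to close it.
  \item $\to$-left on either implication leaves $\Diamond(p \lor q), \Diamond p \to r, \Diamond q \to r \Rightarrow \Diamond p$ (resp.\ $\Diamond q$). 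This fails in the IK-model with worlds $x, y$, $x \MRel y$, $\IRel$ the identity, $V(q)=\{y\}$, $V(p)=\emptyset$, $V(r)=\{x,y\}$ (swap $p$ and $q$ for the other case).
  \item Dropping the output gives a sequent refuted by the same model.
  \item No $\Diamond$-left rule fires. The $\Logic{CK}$ rule needs the output to enter the new bracket, so the output must be a $\Diamond$-formula. Your $\name{N}_\Diamond$-rule needs no output. Your $\name{C}_\Diamond$-rule needs a disjunction of $\Diamond$-formulas, but the output is the atom $r$.
\end{itemize}
So $\Diamond(p \lor q)$ must be decomposed while the output is arbitrary. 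Bracket creation by $\Diamond$-left cannot depend on the shape of the output; in effect it must be unconditional, as in the nested calculus for $\Logic{IK}$.

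Once input-only brackets are freely available, the real problem is keeping $\name{FS2}$ out, and your plan never mentions it. Soundness of the old rules is not ``inherited from $\Logic{CK}$'', because they now act on nested sequents that never occur in $\Logic{CK}$. Under your reading, bracket creation itself is trivially sound. $\name{C}_\Diamond$ and $\name{N}_\Diamond$ are needed precisely for $\lor$-left and $\bot$-left applied inside input-only brackets.

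Any step that lets the output leave a $\Box$-bracket proves $\name{FS2}$. Start from $(\Diamond p \to \Box q)^\bullet, [p^\bullet, q^\circ]$. A left-implication step placing $\Diamond p^\circ$ at the root turns $[p^\bullet]$ into an input-only bracket, and $\Diamond$-right then closes it. A cut on $\Box q$ at the root does the same, since its output premise must remove $q^\circ$ from the bracket. Yet $\name{FS2} \notin \Logic{IK}^-$, because $\Model{M}_{\name{FS2}}$ is a $\Diamond$-p model and $\Logic{IK}^- \subseteq \Logic{FIK}$.

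Relocating the output cannot simply be banned either. A cut-free derivation of $\Diamond(s \land (s \to (p \lor q))), \Diamond p \to r, \Diamond q \to r \Rightarrow r$ has to decompose $s \to (p \lor q)$ inside the input-only bracket while $r$ is still the output at the root. So the output must be allowed to move down into input-only brackets, which is sound by $\name{K}_\Diamond$. These constraints point to a calculus in which both the left-implication rule and the cut rule move the output only downward along input-only brackets, or place one where none exists. What is missing is to fix such a system and actually prove cut admissibility for it, including that this restriction survives when a cut is permuted upward.
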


\begin{corollary}[{\cite{das_intuitionistic_2023}, \cite[Prop.\ VII.2]{groot_semantical_2025}}] \label{cor:ikminus-box-fragment}
  $\Logic{IK}^- \cap \MFb = \Logic{iK}_\Box$.
\end{corollary}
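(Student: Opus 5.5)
The inclusion $\Logic{iK}_\Box \subseteq \Logic{IK}^- \cap \MFb$ is immediate, since $\Logic{IK}^- = \Logic{WK} + \name{C}_\Diamond$ extends $\Logic{iK}_\Box$. For the converse, I will argue semantically, using the completeness result of de Groot et al.\ for $\Logic{IK}^-$ just stated. Suppose $\varphi \in \MFb$ and $\Logic{iK}_\Box \nvdash \varphi$. By Theorem \ref{thm:ikb-ikd-completeness} together with the condensation result (Theorem \ref{cor:condensed-completeness}), there is a $\Box$-condensed and $\Box$-brilliant f-model $\Model{M} = (W, \IRel, \MRel, V)$ with $w \not\SatE_\Model{M} \varphi$. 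By the proof of Proposition \ref{prop:diafree-vi-is-veb}, $\SatE$ and $\SatI$ agree on $\MFb$ in $\Box$-p models, so $w \not\SatI_\Model{M} \varphi$ as well.

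The task is then to turn $\Model{M}$ into an $\name{N}_\Diamond$-corr, $\name{C}_\Diamond$-suff singular CK-model that still refutes $\varphi$. The natural construction adds a single fallible world $w_\bot$ that is $\MRel$-reflexive and $\IRel$-maximal, and makes it $\MRel$-accessible from every world. Concretely, set $W' = W \cup \set{w_\bot}$, $\IRel' = \IRel \cup \set{(v, w_\bot); v \in W'}$, and $\MRel' = \MRel \cup \set{(v, w_\bot); v \in W'}$, with $w_\bot$ added to every $V(p)$.

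Since every formula holds at $w_\bot$ (Proposition \ref{prop:all-is-fair}), $\Box$-clauses are unaffected by the new successor. A direct induction over $\MFb$ should therefore show that truth at worlds of $W$ is preserved. The one delicate case is $\to$, because $w_\bot$ is now an $\IRel$-successor of every world. This is harmless, since $w_\bot$ forces everything and so cannot refute an implication. Hence $w$ still refutes $\varphi$.

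Next I will check the frame conditions. For $\name{C}_\Diamond$-suff, taking $x_1 = x_0$, I need that whenever $x_0 \IRel' x_2$ and $x_0 \MRel' y_1$ there is $y_2$ with $x_2 \MRel' y_2$ and $y_1 \IRel' y_2$. Choosing $y_2 = w_\bot$ always works, since $y_1 \IRel' w_\bot$ and $x_2 \MRel' w_\bot$. The real obstacle is $\name{N}_\Diamond$-corr. Because every world now sees $w_\bot$, the antecedent $\forall x'(x \IRel x' \imp x' \MRel w_\bot)$ holds everywhere, while the conclusion $x = w_\bot$ fails. So making $w_\bot$ universally reachable is incompatible with $\name{N}_\Diamond$-corr.

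The fix I would try is to make $\MRel$-access to $w_\bot$ only partial, so that each world can $\IRel$-reach a world that does not see $w_\bot$. Plausibly this means duplicating $\Model{M}$: one copy $W^\circ$ sees $w_\bot$, and an $\IRel$-later copy $W^\bullet$ does not, with $v^\circ \IRel v^\bullet$, and $\MRel$ on $W^\bullet$ landing in $W^\bullet$ so that $\name{C}_\Diamond$-suff can be witnessed by $x_1 = x_0^\bullet$. The witness for $\name{C}_\Diamond$-suff then needs some $\Diamond$-p-like closure on the bullet copy. This is exactly where I expect the difficulty, and I would try to use $\Box$-condensation and brilliance of $\Model{M}$ to supply it.

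If the semantic route becomes too messy, the fallback is syntactic. Das and Marin's cut-free nested calculus for $\Logic{IK}^-$ would let me show directly that no $\Diamond$-rule is needed in a cut-free proof of a $\Diamond$-free sequent. The subformula property then reduces any such proof to one in the $\Box$-only calculus for $\Logic{iK}_\Box$.
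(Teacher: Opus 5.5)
There is a genuine gap: your semantic route is never completed, and the repair you sketch points the wrong way.

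Your reduction to an intrinsic countermodel $\Model{M}$ is fine, and you correctly diagnose why a universally accessible $w_\bot$ breaks $\name{N}_\Diamond$-corr. The trouble is the duplication plan. If the bullet copy carries a copy of $\MRel$ and you witness $\name{C}_\Diamond$-suff at $x_0 = v^\circ$ by $x_1 = v^\bullet$, then for $x_2 = u^\bullet$ with $v \IRel u$ you need an $\MRel$-successor of $u^\bullet$ above each $\MRel$-successor of $v^\bullet$. Under the natural copying, this is exactly the $\Diamond$-p condition for $\Model{M}$. $\Box$-condensation and $\Box$-brilliance only constrain $\IRel\cdot\MRel$ and $\MRel\cdot\IRel$, so they cannot supply it. Moreover, $\Model{M}$ cannot be chosen $\Diamond$-p in general: every $\Box$-p and $\Diamond$-p model validates $\neg\neg\Box\bot \to \Box\bot$ (Proposition~\ref{prop:separation-nnb}), which $\Logic{iK}_\Box$ does not prove.

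The missing idea is to make $\name{C}_\Diamond$ hold \emph{vacuously} rather than by tracking. Give $v^\bullet$ no $\MRel$-successors at all, give it the same atoms as $v$, and put it in the same $\IRel$-cluster as $v$ ($v \IRel v^\bullet \IRel v$). Then every world has an $\IRel$-successor with no $\MRel$-successor, so no $\Diamond$-formula is forced anywhere and $\name{C}_\Diamond$ is valid on the frame. An induction over $\MFb$ shows that $v$ and $v^\bullet$ both agree with $v$ in $\Model{M}$, because the $\to$- and $\Box$-clauses range over the same base worlds and $v^\bullet$ adds no $\MRel$-edges. Soundness of $\Logic{WK}$ on all f-frames (Corollary~\ref{cor:wk-completeness}), together with closure of frame validity under the rules, then finishes the argument. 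You need neither fallible worlds nor de Groot et al.'s completeness theorem. Syntactically this is just the translation $\Diamond\chi \mapsto \bot$: it sends $\name{K}_\Diamond$, $\name{N}_\Diamond$ and $\name{C}_\Diamond$ to intuitionistic tautologies, respects modus ponens, substitution and $\name{M}_\Box$, and is the identity on $\MFb$.

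Your fallback is the route the paper takes: the statement is recorded as a corollary of Das and Marin's cut-free nested calculus for $\Logic{IK}^-$, with de Groot et al.\ cited for a semantic proof. As written, however, it skips the only real step. The subformula property tells you that a cut-free proof of a $\Diamond$-free formula uses no $\Diamond$-rules. It does not tell you that the remaining $\Diamond$-free rules are sound for $\Logic{iK}_\Box$. The cut-free nested calculus for $\Logic{IK}$ \cite{strasburger_cut_2013} also has the subformula property, yet it proves $\neg\neg\Box\bot \to \Box\bot \notin \Logic{iK}_\Box$ (Proposition~\ref{prop:ik-proves-nnbb-to-bb}), necessarily using $\Diamond$-free rules only. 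So you must inspect Das and Marin's rules and check that everything beyond the $\Logic{CK}$ calculus is a genuine $\Diamond$-rule. Only then do the $\Diamond$-free rules derive nothing beyond $\Logic{CK} \cap \MFb = \Logic{iK}_\Box$ (Corollary~\ref{cor:wk-ck-restriction}).
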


We will later see that $\Logic{IK}^- \cap \MFd = \Logic{iK}_\Diamond$ also hold.

\begin{problem}
  It would be easy to prove that $\Logic{IK}^-$ has DP by a semantical argument.
  It is also clear that $\Logic{IK}^- \nvdash \name{Dual}_\Box$ and $\Logic{IK}^- \nvdash \name{Dual}_\Diamond$ since $\Logic{IK}^- \subseteq \Logic{IK}$.
  Now, does $\Logic{IK}^- + \varphi \lor \neg \varphi = \Logic{K}$ hold?
\end{problem}

$\Logic{IK}^-$ would seem as a perfect logic that fits right between $\Logic{WK}$ and $\Logic{IK}$.
Interestingly, however, it is not the logic we are looking for.
Balbiani et al.\ \cite{balbiani_natural_2024} gave the following axiomatization of the logic we are after.

\begin{definition}
  Let $\Logic{FIK} \defeq \Logic{IK}^- + \name{wCD}$,
  where $\name{wCD} \defeq \Box(\varphi \lor \psi) \to ((\Diamond \varphi \to \Box \psi) \to \Box \psi)$.
\end{definition}

\begin{theorem}[{\cite[Thms.\,7 and 15]{balbiani_natural_2024}}]
  \begin{equation*}
    \Logic{FIK} = \set{ \varphi \in \MF; \forall \Model{F} \in \FrameClass{\Diamond\text{-p}}\,(\Model{F} \ValidIK \varphi) }.
  \end{equation*}
  \begin{proof}
    See Theorem \ref{thm:fik-completeness}.
  \end{proof}
\end{theorem}

The $\name{wCD}$ axiom is a weakening of $\name{FS2}$, which means we have $\Logic{FIK} \subsetneq \Logic{IK}$.

\begin{proposition} \label{prop:fik-is-weaker-than-ik}
  $\Logic{FIK} \subsetneq \Logic{IK}$, that is,
  (1) $\Logic{IK} \vdash \name{wCD}$ and (2) $\Logic{FIK} \nvdash \name{FS2}$.
  \begin{proof}
    (1) Follows from the derivation below:
    \begin{center}
      \AxiomC{$ $}
      \RightLabel{\scriptsize{$\name{FS2}$}}
      \UnaryInfC{$(\Diamond \varphi \to \Box \psi) \to \Box (\varphi \to \psi)$}

      \AxiomC{$ $}
      \RightLabel{\scriptsize{}}
      \UnaryInfC{$(\varphi \to \psi) \to ((\varphi \lor \psi) \to \psi)$}
      \RightLabel{\scriptsize{$\name{M}_\Box$}}
      \UnaryInfC{$\Box(\varphi \to \psi) \to (\Box(\varphi \lor \psi) \to \Box\psi)$}

      \RightLabel{\scriptsize{Syll.}}
      \BinaryInfC{$(\Diamond \varphi \to \Box \psi) \to (\Box (\varphi \lor \psi) \to \Box \psi)$}
      \UnaryInfC{$\Box (\varphi \lor \psi) \to ((\Diamond \varphi \to \Box \psi) \to \Box \psi)$}
      \DisplayProof
    \end{center}
    (2) By soundness, it suffices to construct a $\Diamond$-p model that falsifies $\name{FS2}$.
    Let $\Model{M}_{\name{FS2}} = (\set{x, y, y'}, \IRel, \MRel, V)$, where
    $\mathord{\IRel} = \set{ (x,x), (y,y), (y,y'), (y',y') }$,
    $\mathord{\MRel} = \set{ (x,y) }$,
    $V(p) = \set{ y' }$, and $V(q) = \emptyset$,
    then $\Model{M}_{\name{FS2}}$ is clearly a $\Diamond$-p model.
    The model $\Model{M}_{\name{FS2}}$ is visualized in Figure \ref{fig:fs2-countermodel}.
    \begin{figure}[htb]
      \centering
      \begin{tikzpicture}
      \fill (0, 2) circle (2pt);
      \draw (0, 2) node[left]{$x$};

      \fill (2, 2) circle (2pt);
      \draw (2, 2) node[right]{$y \not \SatIK p, q$};

      \draw [thick, ->, squig, rel] (0,2) to (2, 2);
      \draw (1, 2) node[below]{$\MRel$};

      \draw [thick, ->, rel] (2, 2) to (2, 4);
      \draw (2, 3) node[right]{$\IRel$};

      \fill (2, 4) circle (2pt);
      \draw (2, 4) node[right]{$y' \SatIK p,\, \not \SatIK q$};

      \end{tikzpicture}
      \caption{The model $\Model{M}_{\name{FS2}}$.}
      \label{fig:fs2-countermodel}
    \end{figure}
    
    Here, $x \MRel y \not \SatIK p$ implies $x \not \SatIK \Diamond p$, so $x \SatIK \Diamond p \to \Box q$.
    On the other hand, $y \IRel y'$, $y' \SatIK p$, and $y' \not \SatIK q$ imply $y \not \SatIK p \to q$,
    so $x \not \SatIK \Box(p \to q)$. Therefore, $x \not \SatIK \name{FS2}$.
  \end{proof}
\end{proposition}

$\Logic{FIK}$ satisfies most of the properties that are required for a modal logic to be called ``intuitionistic''.

\begin{proposition}[{\cite[p.\,13:6]{balbiani_natural_2024}}] \label{prop:fik-is-mostly-intuitionistic} \leavevmode
  (1) $\Logic{FIK}$ enjoys DP.
  (2) $\Logic{FIK} + \varphi \lor \neg \varphi = \Logic{K}$.\\
  (3) $\Logic{FIK} \nvdash \name{Dual}_\Box$ and $\Logic{FIK} \nvdash \name{Dual}_\Diamond$.
  \begin{proof}
    (1) It is easily proven by a semantical argument, similarly to Proposition \ref{prop:ik-enjoys-dp}.
    (2) As $\Logic{WK} \vdash \Box \varphi \to \neg \Diamond \neg \varphi$,
    it suffices to show that $\Logic{FIK} + \varphi \lor \neg \varphi \vdash \neg \Diamond \neg \varphi \to \Box \varphi$ (see Proposition \ref{prop:ik-plus-em-is-k}).
    The following derivation proves it:
    \begin{center}
      \scalebox{0.75}{
        \AxiomC{$ $}
        \UnaryInfC{$\bot \to \Box \varphi$}
        \UnaryInfC{$(\Diamond \neg \varphi \to \bot) \to (\Diamond \neg \varphi \to \Box \varphi)$}

        \AxiomC{$ $}
        \UnaryInfC{$\neg \varphi \lor \varphi$}
        \RightLabel{\scriptsize{$\name{M}_\Box$, $\name{N}_\Box$}}
        \UnaryInfC{$\Box(\neg \varphi \lor \varphi)$}

        \AxiomC{$ $}
        \RightLabel{\scriptsize{$\name{wCD}$}}
        \UnaryInfC{$\Box(\neg \varphi \lor \varphi) \to ((\Diamond \neg \varphi \to \Box \varphi) \to \Box \varphi)$}

        \RightLabel{\scriptsize{MP}}
        \BinaryInfC{$(\Diamond \neg \varphi \to \Box \varphi) \to \Box \varphi$}

        \RightLabel{\scriptsize{Syll.}}
        \BinaryInfC{$(\Diamond \neg \varphi \to \bot) \to \Box \varphi$}
        \UnaryInfC{$\neg \Diamond \neg \varphi \to \Box \varphi$}
        \DisplayProof
      }
    \end{center}
    (3) Follows from Proposition \ref{prop:ik-non-duality} and $\Logic{FIK} \subsetneq \Logic{IK}$.
  \end{proof}
\end{proposition}

The only exception is Simpson's sixth requirement; by $\Logic{FIK} \subsetneq \Logic{IK}$, there is $\varphi \in \MF$
such that $\forall x\,\mathrm{ST}_x(\varphi)$ is valid in every intuitionistic first-order structure for $\Lang_{\text{ST}}$
but $\Logic{FIK} \nvdash \varphi$. An easy example is $\neg \Diamond \varphi \to \Box \neg \varphi$, which corresponds to an intuitionistically valid sentence $\neg \exists x \varphi(x) \to \forall x \neg \varphi(x)$.

\begin{proposition} \label{prop:fik-lacks-negdia-to-boxneg}
  $\Logic{FIK} \nvdash \neg \Diamond \varphi \to \Box \neg \varphi$.
  \begin{proof}
    One can construct a falsifying model that satisfies the $\Diamond$-p and $\Box$-p conditions but not FS2.
    The model is visualized in Figure \ref{fig:negdia-countermodel}.
    One can show that $x \SatIK \neg \Diamond p$ but $x \not \SatIK \Box \neg p$,
    the detail of which is left to the reader as an exercise.
    \begin{figure}[htb]
      \centering
      \begin{tikzpicture}
      \fill (0, 2) circle (2pt);
      \draw (0, 2) node[left]{$x$};

      \fill (2, 2) circle (2pt);
      \draw (2, 2) node[right]{$y \not \SatIK p$};

      \draw [thick, ->, squig, rel] (0,2) to (2, 2);
      \draw (1, 2) node[below]{$\MRel$};

      \fill (0, 4) circle (2pt);
      \draw (0, 4) node[left]{$x'$};

      \draw [thick, ->, rel] (0, 2) to (0, 4);
      \draw (0, 3) node[left]{$\IRel$};

      \draw [thick, ->, squig, rel] (0, 4) to (2, 4);
      \draw (1, 4) node[above]{$\MRel$};

      \draw [thick, ->, rel] (2, 2) to (2, 4);
      \draw (2, 3) node[left]{$\IRel$};

      \fill (2, 4) circle (2pt);
      \draw (2, 4) node[right]{$y_1' \not \SatIK p$};

      \draw [thick, ->, rel] (2, 2) to (4, 4);
      \draw (3, 3) node[right]{$\IRel$};

      \fill (4, 4) circle (2pt);
      \draw (4, 4) node[right]{$y_2' \SatIK p$};

      \end{tikzpicture}
      \caption{The countermodel for Proposition \ref{prop:fik-lacks-negdia-to-boxneg}.}
      \label{fig:negdia-countermodel}
    \end{figure}
  \end{proof}
\end{proposition}

Moreover, the $\Diamond$-free fragment of $\Logic{FIK}$ does not coincide with that of $\Logic{WK}$ and even that of $\Logic{IK}$.

\begin{proposition}[{\cite[Prop.\,VIII.7]{groot_semantical_2025}}] \label{prop:fik-box-fragment}
  $\Logic{iK}_\Box \subsetneq \Logic{FIK} \cap \MFb \subsetneq \Logic{IK} \cap \MFb$.
  \begin{proof}
    By the proof of Proposition \ref{prop:ik-proves-nnbb-to-bb} (which only uses the $\Diamond$-p condition of the frame),
    we have $\Logic{FIK} \vdash \neg \neg \Box \bot \to \Box \bot$, so $\Logic{iK}_\Box \subsetneq \Logic{FIK} \cap \MFb$.
    Proposition \ref{prop:ik-nnbox-to-boxnn} gives the second strict inclusion:
    the countermodel in Proposition \ref{prop:fik-lacks-negdia-to-boxneg}
    falsifies $\neg \neg \Box \neg p \to \Box \neg \neg \neg p$,
    so this formula is not provable in $\Logic{FIK}$ by soundness.
  \end{proof}
\end{proposition}

\begin{corollary}
  $\Logic{IK}^- \subsetneq \Logic{FIK}$.
  \begin{proof}
    Follows from Corollary \ref{cor:ikminus-box-fragment} and Proposition \ref{prop:fik-box-fragment}.
  \end{proof}
\end{corollary}

Finally, the $\Box$-free fragment of $\Logic{FIK}$ is exactly $\Logic{iK}_\Diamond$.

\begin{theorem}
  $\Logic{FIK} \cap \MFd = \Logic{iK}_\Diamond$.
  \begin{proof}
    ($\supseteq$) Trivial from the axiomatization. ($\subseteq$) Suppose $\Logic{iK}_\Diamond \nvdash \varphi$,
    then by Theorem \ref{thm:ikb-ikd-completeness},
    there are $\Model{M} = (W, \IRel, \MRel, V)$ and $w \in W$ such that $\Model{M}$ satisfies the $\Diamond$-p condition and $x \not \SatE \varphi$.
    Since $\varphi \in \MFd$, it is easy to see that $x \not \SatIK \varphi$. Therefore, $\Logic{FIK} \nvdash \varphi$ by soundness.
  \end{proof}
\end{theorem}

\begin{corollary}
  $\Logic{IK}^- \cap \MFd = \Logic{iK}_\Diamond$.
  \begin{proof}
    ($\supseteq$) Trivial from the axiomatization.
    ($\subseteq$) Follows from $\Logic{IK}^- \subseteq \Logic{FIK}$.
  \end{proof}
\end{corollary}

Now we shall consider $\Theory{V}^e$, the minimal logic in which $\SatI$ and $\SatE$ coincide.
We give the following axiomatization of the set $\set{ \varphi \in \MF; \forall \Model{F} \in \FrameClass{\Box\text{-p}} \cap \FrameClass{\Diamond\text{-p}}\, (\Model{F} \ValidIK \varphi) }$,
which is the same set as $\Theory{V}^e$ by Propositions \ref{prop:int-ext-coincides} ($\mathord{\ValidE} = \mathord{\ValidI}$) and \ref{prop:int-ik-coincides} ($\mathord{\ValidI} = \mathord{\ValidIK}$).

\begin{definition}
  Let $\Logic{KI} \defeq \Logic{IK}^- + \name{CD}$,
  where $\name{CD} \defeq \Box(\varphi \lor \psi) \to (\Diamond \varphi \lor \Box \psi)$.
\end{definition}

This logic, which the author believes is new, is obtained from Balbiani et al.'s $\Logic{S4I}$ \cite{balbiani_constructive_2021},
a variant of intuitionistic counterpart of $\Logic{S4}$,
by removing its characteristic axioms $\name{T}_\Box$, $\name{T}_\Diamond$, $\name{4}_\Box$, and $\name{4}_\Diamond$.

\begin{theorem}
  $\Logic{KI} = \set{ \varphi \in \MF; \forall \Model{F} \in \FrameClass{\Box\text{-p}} \cap \FrameClass{\Diamond\text{-p}}\, (\Model{F} \ValidIK \varphi) }.$
  \begin{proof}
    See Theorem \ref{thm:eikm-completeness}.
  \end{proof}
\end{theorem}

\begin{corollary} \label{cor:ve-axiomatization}
  $\Logic{KI} = \Theory{V}^e$.
  \begin{proof}
    Follows from Propositions \ref{prop:int-ext-coincides} and \ref{prop:int-ik-coincides}.
  \end{proof}
\end{corollary}

Here, $\name{CD}$ is another strengthening of $\name{wCD}$ that is incompatible with $\name{FS2}$,
which means we have $\Logic{FIK} \subsetneq \Logic{KI}$,
but neither $\Logic{KI} \subseteq \Logic{IK}$ nor $\Logic{KI} \supseteq \Logic{IK}$.

\begin{proposition} \label{prop:fik-is-weaker-than-eikm}
  $\Logic{FIK} \subsetneq \Logic{KI}$ i.e.\ 
  (1) $\Logic{KI} \vdash \name{wCD}$ and (2) $\Logic{FIK} \nvdash \name{CD}$.
  \begin{proof}
    (1) Follows from the derivation below:
    \begin{center}
      \AxiomC{$ $}
      \RightLabel{\scriptsize{$\name{CD}$}}
      \UnaryInfC{$\Box(\varphi \lor \psi) \to (\Diamond \varphi \lor \Box \psi)$}

      \AxiomC{$ $}
      \UnaryInfC{$(p \lor q) \to ((p \to q) \to q)$}
      \UnaryInfC{$(\Diamond \varphi \lor \Box \psi) \to ((\Diamond \varphi \to \Box \psi) \to \Box \psi)$}

      \RightLabel{\scriptsize{Syll.}}
      \BinaryInfC{$\Box (\varphi \lor \psi) \to ((\Diamond \varphi \to \Box \psi) \to \Box \psi)$}
      \DisplayProof
    \end{center}
    (2) By soundness, it suffices to construct a $\Diamond$-p model that falsifies $\name{CD}$.
    Let $\Model{M}_{\name{CD}} = (\set{x, x', y'}, \IRel, \MRel, V)$, where
    $\mathord{\IRel} = \set{ (x,x), (x,x'), (x',x'), (y',y') }$,
    $\mathord{\MRel} = \set{ (x',y') }$,
    $V(p) = \set{ y' }$, and $V(q) = \emptyset$,
    then $\Model{M}_{\name{CD}}$ is clearly a $\Diamond$-p model.
    The model $\Model{M}_{\name{CD}}$ is visualized in Figure \ref{fig:cd-countermodel}.
    \begin{figure}[htb]
      \centering
      \begin{tikzpicture}
      \fill (0, 2) circle (2pt);
      \draw (0, 2) node[left]{$x$};

      \fill (0, 4) circle (2pt);
      \draw (0, 4) node[left]{$x'$};

      \draw [thick, ->, rel] (0, 2) to (0, 4);
      \draw (0, 3) node[left]{$\IRel$};

      \fill (2, 4) circle (2pt);
      \draw (2, 4) node[right]{$y' \SatIK p,\,\not \SatIK q$};

      \draw [thick, ->, squig, rel] (0, 4) to (2, 4);
      \draw (1, 4) node[above]{$\MRel$};

      \end{tikzpicture}
      \caption{The model $\Model{M}_{\name{CD}}$.}
      \label{fig:cd-countermodel}
    \end{figure}

    Here, we have $x \SatIK \Box (p \lor q)$ since $x \IRel x' \MRel y' \SatIK p$ and there is no $y$ such that $x \MRel y$.
    Also, we have $x \not \SatIK \Box q$ by $x \IRel x' \MRel y' \not \SatIK q$.
    However, we have $x \not \SatIK \Diamond p$ since there is no $y$ such that $x \MRel y$.
    Therefore, $x \not \SatIK \name{CD}$.
  \end{proof}
\end{proposition}

\begin{corollary}
  $\Logic{IK} \nparallel \Logic{KI}$, that is, neither $\Logic{IK} \subseteq \Logic{KI}$ nor $\Logic{KI} \subseteq \Logic{IK}$ hold.
  In particular, (1) $\Logic{KI} \nvdash \name{FS2}$ and (2) $\Logic{IK} \nvdash \name{CD}$.
  \begin{proof}
    (1) The falsifying model $\Model{M}_{\name{FS2}}$ of $\name{FS2}$, which is given in Proposition \ref{prop:fik-is-weaker-than-ik} (2),
    trivially satisfies the $\Box$-p condition, so it follows from the soundness of $\Logic{KI}$.

    (2) The falsifying model $\Model{M}_{\name{CD}}$ of $\name{CD}$, which is given in Proposition \ref{prop:fik-is-weaker-than-eikm} (2),
    trivially satisfies the FS2 condition, so it follows from the soundness of $\Logic{IK}$.
  \end{proof}
\end{corollary}

$\Logic{KI}$ also satisfies most of the required properties for ``intuitionistic'' modal logics,
and also does not satisfy Simpson's sixth requirement,
as it does not prove $\neg \Diamond \varphi \to \Box \neg \varphi$,
which corresponds to $\neg \exists x \varphi(x) \to \forall x \neg \varphi(x)$.

\begin{proposition} \label{prop:eik-is-mostly-intuitionistic} \leavevmode
  (1) $\Logic{KI}$ enjoys DP.
  (2) $\Logic{KI} + \varphi \lor \neg \varphi = \Logic{K}$.\\
  (3) $\Logic{KI} \nvdash \name{Dual}_\Box$ and $\Logic{KI} \nvdash \name{Dual}_\Diamond$.
  (4) $\Logic{KI} \nvdash \neg \Diamond \varphi \to \Box \neg \varphi$.
  \begin{proof}
    (1) and (2) are proven similarly to Proposition \ref{prop:fik-is-mostly-intuitionistic}.
    (3) The countermodel for Proposition \ref{prop:ik-non-duality} satisfies the $\Box$-p and $\Diamond$-p conditions.
    (4) The countermodel for Proposition \ref{prop:fik-lacks-negdia-to-boxneg} satisfies the $\Box$-p and $\Diamond$-p conditions.
  \end{proof}
\end{proposition}

Also, even the $\Diamond$-free fragment of $\Logic{KI}$ differs to that of $\Logic{IK}$.

\begin{proposition} \label{prop:eik-box-fragment}
  $\Logic{IK} \cap \MFb \nparallel \Logic{KI} \cap \MFb$.
  \begin{proof}
    The formula in Proposition \ref{prop:ik-nnbox-to-boxnn} is not provable in $\Logic{KI}$:
    the countermodel in Proposition \ref{prop:fik-lacks-negdia-to-boxneg}
    satisfies both the $\Box$-p and $\Diamond$-p conditions and falsifies
    $\neg \neg \Box \neg p \to \Box \neg \neg \neg p$.
    One can also show that $\Box (\varphi \lor \psi) \to (\neg \Box \neg \varphi \lor \Box \psi)$ is provable in $\Logic{KI}$,
    which easily follows from $\name{CD}$ and $\Box \neg \varphi \to \neg \Diamond \varphi$,
    but not in $\Logic{IK}$, the countermodel for which is visualized in Figure \ref{fig:boxed-cd-countermodel}.
    \begin{figure}[htb]
      \centering
      \begin{tikzpicture}
        \fill (0, 0) circle (2pt);
        \draw (0, 0) node[left]{$x$};

        \fill (0, 2) circle (2pt);
        \draw (0, 2) node[left]{$x_1'$};
        \draw [thick, ->, rel] (0, 0) to (0, 2);
        \draw (0, 1) node[left]{$\IRel$};

        \fill (2, 2) circle (2pt);
        \draw (2, 2) node[left]{$x_2'$};
        \draw [thick, ->, rel] (0, 0) to (2, 2);
        \draw (1, 1) node[right]{$\IRel$};

        \fill (4, 2) circle (2pt);
        \draw (4, 2) node[right]{$y_2' \SatIK p,\,\not \SatIK q$};
        \draw [thick, ->, rel, squig] (2, 2) to (4, 2);
        \draw (3, 2) node[above]{$\MRel$};
      \end{tikzpicture}
      \caption{\centering The countermodel for Proposition \ref{prop:eik-box-fragment}.}
      \label{fig:boxed-cd-countermodel}
    \end{figure}%
  \end{proof}
\end{proposition}

Now we have a spectrum of logics:
\begin{equation*}
  \Logic{CK} \subsetneq \Logic{WK} \subsetneq \Logic{IK}^- \subsetneq \Logic{FIK} \subsetneq \Logic{IK},
\end{equation*}
with $\Logic{KI}$ being an outlier in that $\Logic{FIK} \subsetneq \Logic{KI}$ and $\Logic{IK} \nparallel \Logic{KI}$.
For the $\Diamond$-free fragments of the above logics, we have:
\begin{equation*}
  \Logic{iK}_\Box = \Logic{CK} \cap \MFb = \Logic{WK} \cap \MFb = \Logic{IK}^- \cap \MFb \subsetneq \Logic{FIK} \cap \MFb \subsetneq \Logic{IK} \cap \MFb,
\end{equation*}
again with $\Logic{FIK} \cap \MFb \subsetneq \Logic{KI} \cap \MFb$ and $\Logic{IK} \cap \MFb \nparallel \Logic{KI} \cap \MFb$.
For the $\Box$-free fragments, so far, we have:
\begin{equation*}
  \Logic{CK} \cap \MFd \subsetneq \Logic{WK} \cap \MFd \subsetneq \Logic{iK}_\Diamond = \Logic{IK}^- \cap \MFd = \Logic{FIK} \cap \MFd.
\end{equation*}
In the following propositions, We give the missing separations between $\Logic{FIK} \cap \MFd$, $\Logic{KI} \cap \MFd$, and $\Logic{IK} \cap \MFd$.

\begin{proposition}
  (1) $\Logic{KI} \vdash \neg \neg \Diamond \top \to \Diamond \top$.
  (2) $L \nvdash \neg \neg \Diamond \top \to \Diamond \top$ for $L \in \set{ \Logic{FIK}, \Logic{IK} }$.
  \begin{proof}
    (1) Take any $\Box$-p and $\Diamond$-p model $\Model{M} = (W, \IRel, \MRel, V)$,
    and any $x \in W$ such that $x \SatIK_\Model{M} \neg \neg \Diamond \top$.
    By the $\IRel$-reflexivity of $x$ and the fact that no world forces $\bot$,
    the assumption implies $x \not \SatIK_\Model{M} \neg \Diamond \top$.
    Hence there are $x', y' \in W$ such that $x \IRel x'$, $x' \MRel y'$, and $y' \SatIK_\Model{M} \top$.
    By the $\Box$-p condition, there is $y \in W$ such that $x \MRel y$ and $y \IRel y'$.
    Since $y \SatIK_\Model{M} \top$, we have $x \SatIK_\Model{M} \Diamond \top$.
    Therefore, $\Logic{KI} \vdash \neg \neg \Diamond \top \to \Diamond \top$ by completeness.

    (2) Consider the model $\Model{M}_{\name{CD}}$ from Proposition \ref{prop:fik-is-weaker-than-eikm} (2).
    The world $x$ has no $\MRel$-successor, so $x \not \SatIK \Diamond \top$.
    To see that $x \SatIK \neg \neg \Diamond \top$, take any $z \in W$ such that $x \IRel z$.
    If $z = x'$, then $z \SatIK \Diamond \top$ because $x' \MRel y'$.
    If $z = x$, then its $\IRel$-successor $x'$ forces $\Diamond \top$.
    Thus, in either case, $z \not \SatIK \neg \Diamond \top$.
    By $\Model{M}_{\name{CD}}$ satisfying the $\Diamond$-p and FS2 conditions,
    we have $L \nvdash \neg \neg \Diamond \top \to \Diamond \top$ for $L \in \set{ \Logic{FIK}, \Logic{IK} }$ by soundness.
  \end{proof}
\end{proposition}

\begin{proposition}
  (1) $\Logic{IK} \vdash \Diamond \neg \neg \varphi \to \neg \neg \Diamond \varphi$.
  (2) $L \nvdash \Diamond \neg \neg \varphi \to \neg \neg \Diamond \varphi$ for $L \in \set{ \Logic{FIK}, \Logic{KI} }$.
  \begin{proof}
    (1) Take any $\Diamond$-p and FS2 model $\Model{M} = (W, \IRel, \MRel, V)$,
    and any $x \in W$ such that $x \SatIK_\Model{M} \Diamond \neg \neg \varphi$.
    Then there is $y \in W$ such that $x \MRel y$ and $y \SatIK_\Model{M} \neg \neg \varphi$.
    To show that $x \SatIK_\Model{M} \neg \neg \Diamond \varphi$, take any $x' \in W$ such that $x \IRel x'$.
    By the $\Diamond$-p condition applied to $y \MRelRev x \IRel x'$, there is $u \in W$ such that $x' \MRel u$ and $y \IRel u$.
    By persistency, $u \SatIK_\Model{M} \neg \neg \varphi$, and hence $u \not \SatIK_\Model{M} \neg \varphi$.
    Therefore, there is $u' \in W$ such that $u \IRel u'$ and $u' \SatIK_\Model{M} \varphi$.
    By the FS2 condition applied to $x' \MRel u \IRel u'$, there is $x'' \in W$ such that $x' \IRel x''$ and $x'' \MRel u'$.
    Thus $x'' \SatIK_\Model{M} \Diamond \varphi$, and so $x' \not \SatIK_\Model{M} \neg \Diamond \varphi$.
    Since $x'$ was arbitrary, $x \SatIK_\Model{M} \neg \neg \Diamond \varphi$.
    Therefore, $\Logic{IK} \vdash \Diamond \neg \neg \varphi \to \neg \neg \Diamond \varphi$ by completeness.

    (2) Consider the model $\Model{M}_{\name{FS2}}$ from Proposition \ref{prop:fik-is-weaker-than-ik} (2).
    Here, $x \MRel y \IRel y' \SatIK p$ implies $y \SatIK \neg \neg p$, so $x \SatIK \Diamond \neg \neg p$.
    On the other hand, $y \not \SatIK p$ implies $x \not \SatIK \Diamond p$, so $x \not \SatIK \neg \neg \Diamond p$.
    By $\Model{M}_{\name{FS2}}$ satisfying the $\Diamond$-p and $\Box$-p conditions,
    we have $L \nvdash \Diamond \neg \neg \varphi \to \neg \neg \Diamond \varphi$ for $L \in \set{ \Logic{FIK}, \Logic{KI} }$ by soundness.
  \end{proof}
\end{proposition}

\begin{corollary} \label{cor:fik-ik-eik-diamond-fragment-separation}
  $\Logic{FIK} \cap \MFd \subsetneq \Logic{KI} \cap \MFd$,
  $\Logic{FIK} \cap \MFd \subsetneq \Logic{IK} \cap \MFd$,
  and $\Logic{KI} \cap \MFd \nparallel \Logic{IK} \cap \MFd$.
  In particular, $\Theory{V}_\Diamond^e \subsetneq \Theory{V}^e \cap \MFd$.
\end{corollary}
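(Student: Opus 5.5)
The corollary follows by assembling the two preceding propositions together with the inclusion $\Logic{FIK} \subseteq \Logic{KI}$, $\Logic{FIK} \subseteq \Logic{IK}$ (Propositions \ref{prop:fik-is-weaker-than-ik} and \ref{prop:fik-is-weaker-than-eikm}). Intersecting these inclusions with $\MFd$ gives the non-strict inclusions $\Logic{FIK} \cap \MFd \subseteq \Logic{KI} \cap \MFd$ and $\Logic{FIK} \cap \MFd \subseteq \Logic{IK} \cap \MFd$. For strictness I use $\Box$-free witnesses. The formula $\neg\neg\Diamond\top \to \Diamond\top$ lies in $\MFd$ (recall $\top = \bot \to \bot$ is $\Box$-free). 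By the first of the two propositions it is provable in $\Logic{KI}$ but not in $\Logic{FIK}$, which gives the first strict inclusion. The formula $\Diamond\neg\neg p \to \neg\neg\Diamond p$ also lies in $\MFd$. By the second proposition it is provable in $\Logic{IK}$ but not in $\Logic{FIK}$, which gives the second strict inclusion.

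For incomparability of $\Logic{KI} \cap \MFd$ and $\Logic{IK} \cap \MFd$, I use the same two witnesses crosswise. The first proposition (2) shows $\Logic{IK} \nvdash \neg\neg\Diamond\top \to \Diamond\top$, so this formula is in $\Logic{KI} \cap \MFd \setminus \Logic{IK}$. The second proposition (2) shows $\Logic{KI} \nvdash \Diamond\neg\neg p \to \neg\neg\Diamond p$, so this formula is in $\Logic{IK} \cap \MFd \setminus \Logic{KI}$. Hence neither fragment contains the other.

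For the final clause, I identify $\Theory{V}^e_\Diamond$ and $\Theory{V}^e \cap \MFd$ with fragments already treated. Theorem \ref{thm:ikb-ikd-completeness} gives $\Theory{V}^e_\Diamond = \Logic{iK}_\Diamond$, and the theorem just before this corollary gives $\Logic{iK}_\Diamond = \Logic{FIK} \cap \MFd$. Corollary \ref{cor:ve-axiomatization} gives $\Theory{V}^e = \Logic{KI}$, so $\Theory{V}^e \cap \MFd = \Logic{KI} \cap \MFd$. The claim $\Theory{V}^e_\Diamond \subsetneq \Theory{V}^e \cap \MFd$ is then exactly the first strict inclusion.

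There is no real obstacle: every ingredient is stated earlier. The only points to check are that both witness formulas are genuinely $\Box$-free, and that the countermodels in the two propositions satisfy the frame conditions needed for the soundness direction invoked there. Both checks are already carried out in those proofs.
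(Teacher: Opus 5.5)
Your proposal is correct and is essentially the argument the paper intends (it states the corollary without proof, directly after the two propositions supplying the witnesses $\neg\neg\Diamond\top \to \Diamond\top$ and $\Diamond\neg\neg p \to \neg\neg\Diamond p$). The final clause likewise follows, as you say, from $\Theory{V}^e_\Diamond = \Logic{iK}_\Diamond = \Logic{FIK} \cap \MFd$ together with $\Theory{V}^e = \Logic{KI}$.
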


The resulting spectra of the $\Diamond$/$\Box$-free fragments are visualized in
Figure~\ref{fig:basic-unary-fragments}.

\begin{figure}[H]
  \centering
  \begin{subfigure}[t]{.49\textwidth}
    \centering
    \begin{tikzpicture}[font=\scriptsize,>=stealth]
      \path[use as bounding box] (-2.75,-0.75) rectangle (2.75,4.55);
      \node[draw,rounded corners,align=center,text width=5.2cm,inner sep=4pt] (b0) at (0,0)
        {$\Logic{iK}_\Box$
         \\$= \Logic{IK}^- \cap \MFb$
         \\$= \Logic{WK} \cap \MFb$
         \\$= \Logic{CK} \cap \MFb$
         };
      \node[draw,rounded corners,align=center,text width=3.2cm,inner sep=4pt] (b1) at (0,2.1)
        {$\Logic{FIK} \cap \MFb$};
      \node[draw,rounded corners,align=center,text width=2.3cm,inner sep=4pt] (b2) at (-1.55,4.2)
        {$\Logic{KI} \cap \MFb$};
      \node[draw,rounded corners,align=center,text width=2.3cm,inner sep=4pt] (b3) at (1.55,4.2)
        {$\Logic{IK} \cap \MFb$};
      \draw[thick,->] (b0) -- (b1);
      \draw[thick,->] (b1) -- (b2);
      \draw[thick,->] (b1) -- (b3);
      \node at (0,4.2) {$\nparallel$};
    \end{tikzpicture}
    \caption{$\Diamond$-free (box-only) fragments.}
  \end{subfigure}%
  \begin{subfigure}[t]{.49\textwidth}
    \centering
    \begin{tikzpicture}[font=\scriptsize,>=stealth]
      \path[use as bounding box] (-2.75,-0.75) rectangle (2.75,4.55);
      \node[draw,rounded corners,align=center,text width=3.2cm,inner sep=4pt] (d0) at (0,0)
        {$\Logic{CK} \cap \MFd$};
      \node[draw,rounded corners,align=center,text width=3.2cm,inner sep=4pt] (d1) at (0,1.1)
        {$\Logic{WK} \cap \MFd$};
      \node[draw,rounded corners,align=center,text width=5.2cm,inner sep=4pt] (d2) at (0,2.55)
        {$\Logic{iK}_\Diamond$
         \\$= \Logic{FIK} \cap \MFd$
         \\$= \Logic{IK}^- \cap \MFd$
        };
      \node[draw,rounded corners,align=center,text width=2.3cm,inner sep=4pt] (d3) at (-1.55,4.2)
        {$\Logic{KI} \cap \MFd$};
      \node[draw,rounded corners,align=center,text width=2.3cm,inner sep=4pt] (d4) at (1.55,4.2)
        {$\Logic{IK} \cap \MFd$};
      \draw[thick,->] (d0) -- (d1);
      \draw[thick,->] (d1) -- (d2);
      \draw[thick,->] (d2) -- (d3);
      \draw[thick,->] (d2) -- (d4);
      \node at (0,4.2) {$\nparallel$};
    \end{tikzpicture}
    \caption{$\Box$-free (diamond-only) fragments.}
  \end{subfigure}
  \caption{Visualization of the separation results.
    Each arrow denotes strict inclusion; equal logics share a node, and the two top nodes in each panel are incomparable.}
  \label{fig:basic-unary-fragments}
\end{figure}

\begin{problem}
  Let $\Logic{IKI} \defeq \Logic{IK} + \name{CD}$, then we can show that $\Logic{IKI} = \set{ \varphi \in \MF; \forall \Model{F} \in \FrameClass{\Box\text{-p}} \cap \FrameClass{\Diamond\text{-p}} \cap \FrameClass{FS2}\,(\Model{F} \ValidE \varphi) }$ (see Theorem \ref{thm:eik-completeness}).
  Since $\Logic{IK} \subsetneq \Logic{IKI}$, there is $\varphi \in \MF$ such that $\Logic{IKI} \vdash \varphi$ but $\forall x\,\mathrm{ST}_x(\varphi)$ is not intuitionistically valid in every first-order structure for $\Lang_\mathrm{ST}$.
  Determine the first-order axiom that would recover the correspondence by considering every first-order structure that validates it.
\end{problem}

De Groot et al.\ \cite{groot_semantical_2025} treated a wide range of logics that may or may not fit
in this spectrum of logics.
They analyzed logics in the form $\Logic{CK} + \name{Ax}$, where $\name{Ax} \subseteq \set{ \name{N}_\Diamond, \name{C}_\Diamond, \name{FS2} }$,
proving the strong completeness of these logics and identifying the condition when the $\Diamond$-free fragment of $\Logic{CK} + \name{Ax}$ is exactly $\Logic{iK}_\Box$.

\begin{theorem}[{\cite[Thm.\,VII.3.]{groot_semantical_2025}}]
  $(\Logic{CK} + \name{Ax}) \cap \MFb = \Logic{iK}_\Box$ if and only if $\name{N}_\Diamond \notin \name{Ax}$ or $\name{FS2} \notin \name{Ax}$.
\end{theorem}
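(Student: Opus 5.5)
The plan is to prove the two directions separately: a short syntactic derivation for ``only if'', and for ``if'' a reduction to a single worst case plus one model construction.

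\emph{Only if.} Suppose both $\name{N}_\Diamond$ and $\name{FS2}$ are in $\name{Ax}$. I will show that $\chi \defeq \neg\neg\Box\bot \to \Box\bot$ is derivable in $\Logic{CK} + \name{N}_\Diamond + \name{FS2}$ by a purely syntactic argument.
\begin{itemize}
  \item By the derivation in Proposition \ref{prop:ik-partial-duality-2}, which uses only $\name{FS2}$, we get $\neg\Diamond\top \to \Box\neg\top$. Then $\name{M}_\Box$ gives $\neg\Diamond\top \to \Box\bot$.
  \item Conversely, $\name{K}_\Diamond$ gives $\Box(\top\to\bot) \to (\Diamond\top \to \Diamond\bot)$. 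Combined with $\name{N}_\Diamond$ and $\name{M}_\Box$, this yields $\Box\bot \to \neg\Diamond\top$.
  \item Hence $\neg\neg\Box\bot \to \neg\neg\neg\Diamond\top$ holds intuitionistically. Since $\neg\neg\neg\Diamond\top \to \neg\Diamond\top$ is an instance of $\Int$, we obtain $\neg\Diamond\top \to \Box\bot$ and so $\chi$.
\end{itemize}
On the other hand, $\chi \notin \Logic{iK}_\Box$ by Proposition \ref{prop:separation-nnb}(2) and Theorem \ref{thm:ikb-ikd-completeness}. So in this case the $\Diamond$-free fragment is strictly larger than $\Logic{iK}_\Box$.

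\emph{If.} The inclusion $\supseteq$ is trivial, so only $\subseteq$ needs work. I split according to which axiom is missing.
\begin{itemize}
  \item If $\name{FS2} \notin \name{Ax}$, then $\Logic{CK} + \name{Ax} \subseteq \Logic{CK} + \name{N}_\Diamond + \name{C}_\Diamond = \Logic{IK}^-$. Corollary \ref{cor:ikminus-box-fragment} finishes this case.
  \item If $\name{N}_\Diamond \notin \name{Ax}$, it suffices to treat the largest such logic, $L \defeq \Logic{CK} + \name{C}_\Diamond + \name{FS2}$.
\end{itemize}

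For $L$, take $\varphi \in \MFb$ with $\varphi \notin \Logic{iK}_\Box$. By Theorem \ref{thm:ikb-ikd-completeness} and the proof of Proposition \ref{prop:diafree-vi-is-veb}, there is an f-model $\Model{M} = (W,\IRel,\MRel,V)$ and $w \in W$ with $w \nVdash^i_\Model{M} \varphi$. Form the CK-model $\Model{M}^\ast$ by adding a fresh fallible world $w_\bot$ with the following data.
\begin{itemize}
  \item Set $W_\bot = \set{w_\bot}$.
  \item Let $w_\bot$ be $\IRel$-reflexive only.
  \item Add $x \MRel w_\bot$ for every $x \in W \cup \set{w_\bot}$.
  \item Put $w_\bot$ into every $V(p)$.
\end{itemize}
This is a CK-frame: $\set{w_\bot}$ is upward closed under both relations and is $\MRel$-serial.

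I then carry out three checks.
\begin{enumerate}[label=(\roman*)]
  \item For every $\psi \in \MFb$ and $x \in W$, $x \SatCK_{\Model{M}^\ast} \psi$ iff $x \SatI_\Model{M} \psi$. This goes by induction. In the $\Box$ case the extra successor $w_\bot$ forces everything by Proposition \ref{prop:all-is-fair}, and since no world of $W$ is $\IRel$-above $w_\bot$, the $\to$ case is unaffected.
  \item Every $\Diamond\psi$ is c-forced at every world, again because $w_\bot$ is an $\MRel$-successor of every world and forces everything. So every instance of $\name{C}_\Diamond$ is forced everywhere. For $\name{FS2}$, the antecedent $\Diamond\varphi \to \Box\psi$ then forces $\Box\psi$ at every $\IRel$-successor, hence $\Box(\varphi\to\psi)$.
  \item The set of formulas valid in $\Model{M}^\ast$ contains $L$. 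It contains every axiom instance: the $\Logic{CK}$ ones by soundness in Theorem \ref{thm:ck-wk-completeness}, and the others by (ii). It is closed under MP, and under $\name{M}_\Box$ and $\name{M}_\Diamond$, since these are model-level rules. Because the axioms are schematic, closure under substitution comes for free.
\end{enumerate}
Therefore $\varphi \notin L$, which gives $\subseteq$.

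The main care point is (iii): it uses validity in one fixed model rather than a frame class, which is legitimate precisely because every rule involved preserves validity in a single model. Another place to double-check is (i), in particular that the fallible world never lies $\IRel$-above an original world.
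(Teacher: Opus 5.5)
Your proof is correct. The paper gives no argument of its own for this theorem; it only cites de Groot et al. Your proposal is therefore a reconstruction from results already in the survey, and the pieces fit together properly.

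\emph{Only if.} The syntactic derivation of $\neg\neg\Box\bot\to\Box\bot$ uses only $\name{K}_\Diamond$, $\name{N}_\Diamond$, $\name{FS2}$ and $\name{M}_\Box$, and this is exactly what is needed. Proposition~\ref{prop:ik-proves-nnbb-to-bb} gets this formula only in $\Logic{IK}$, via a completeness theorem that brings in $\name{C}_\Diamond$. The survey has no semantics for $\Logic{CK}+\name{N}_\Diamond+\name{FS2}$. Without your derivation, the case $\name{Ax}=\{\name{N}_\Diamond,\name{FS2}\}$ would not be covered.

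\emph{If.} The $\name{FS2}$-free case is the genuinely hard half, and you import it wholesale from Corollary~\ref{cor:ikminus-box-fragment}, which the survey itself only cites. Your own work is the $\name{N}_\Diamond$-free case. There, the fallible world that is an $\MRel$-successor of every world forces every $\Diamond\psi$ everywhere, and checks (i)--(iii) are sound. This includes your observation that the set generated from schematic axioms under modus ponens and $\name{M}_\Box$ is automatically closed under substitution, so validity in a single model suffices.

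The same idea can be run purely syntactically. Take the translation that replaces every subformula $\Diamond\psi$ by $\top$ and commutes with the other connectives.
\begin{itemize}
  \item It sends instances of $\name{K}_\Diamond$ and $\name{C}_\Diamond$ to trivial theorems.
  \item It sends instances of $\name{FS2}$ to consequences of $\Box\psi\to\Box(\varphi\to\psi)$.
  \item It preserves modus ponens and $\name{M}_\Box$, and it fixes $\MFb$ pointwise.
\end{itemize}
This yields $(\Logic{CK}+\name{C}_\Diamond+\name{FS2})\cap\MFb\subseteq\Logic{iK}_\Box$ without appeal to Theorem~\ref{thm:ikb-ikd-completeness} or Proposition~\ref{prop:diafree-vi-is-veb}. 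Your model $\Model{M}^\ast$ is the semantic counterpart of this translation.

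One small slip in (i): you wrote that no world of $W$ is $\IRel$-above $w_\bot$, but the $\to$ and $\Box$ cases need the converse, namely that $w_\bot$ is not $\IRel$-above any world of $W$. Both hold here, because $w_\bot$ is $\IRel$-related only to itself. The point would be harmless anyway, since $w_\bot$ forces everything, and your closing remark states the correct direction.
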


\begin{problem}[{\cite[Q.\,VII.4.]{groot_semantical_2025}}]
  Does $(\Logic{CK} + \name{N}_\Diamond + \name{FS2}) \cap \MFb = \Logic{IK} \cap \MFb$ hold?
\end{problem}

\begin{problem}
  Perform a similar analysis for the $\Box$-free fragments of $\Logic{CK} + \name{Ax}$.
\end{problem}

They also analyzed the cases when $\name{FS2}$ is replaced with $\name{wCD}$,
and when $\name{N}_\Diamond$ is replaced with a weaker variant $\name{N}_{\Diamond\Box} \defeq \Diamond \bot \to \Box \bot$.

\begin{problem}
  Let $\name{Ax} \subseteq \set{ \name{N}_\Diamond, \name{N}_{\Diamond\Box}, \name{C}_\Diamond, \name{FS2}, \name{wCD}, \name{CD} }$,
  then when does $\Logic{CK} + \name{Ax} + \varphi \lor \neg \varphi$ collapse to $\Logic{K}$?.
  We note that $\name{Ax} = \set{ \name{N}_\Diamond, \name{wCD} }$ is one such example that indeed collapses (see Proposition \ref{prop:fik-is-mostly-intuitionistic} (2)).
\end{problem}

\begin{problem}
  Does $\Logic{IK}^-$, $\Logic{FIK}$, $\Logic{KI}$, and $\Logic{IKI}$ enjoy CIP, LIP, and UIP?
  The first two logics are particularly interesting, as $\Logic{WK}$ enjoys all of them and $\Logic{IK}$ enjoys none of them.
\end{problem}

\section{Extensions and related logics} \label{sec:extensions}

In this section, we shall take a look at several extensions of CMLs, IMLs, and the $\Diamond$/$\Box$-free logics $\Logic{iK}_\Box$ and $\Logic{iK}_\Diamond$, as well as some related logics discussed in the literature.
Recall that $\Box$ and $\Diamond$ are not interdefinable in an intuitionistic setting,
so many of the intuitionistic modal axioms below are defined in the form $\name{A} = (\name{A}_\Box \land \name{A}_\Diamond)$.

\begin{table}[H]
  \centering
  \setlength{\extrarowheight}{3pt}
  \caption{List of more modal axioms.}
  \label{tab:more-modal-axioms}
  \begin{tabular}{rl|rl|rl }
    $\name{D}_\Box$ & $\neg \Box \bot$ & $\name{D}_\Diamond$ & $\Diamond \top$ & $\name{D}$ & $\Box \varphi \to \Diamond \varphi$ \\
    $\name{T}_\Box$ & $\Box \varphi \to \varphi$ & $\name{T}_\Diamond$ & $\varphi \to \Diamond \varphi$ & $\name{T}$ & $\name{T}_\Box \land \name{T}_\Diamond$ \\
    $\name{B}_\Box$ & $\Diamond \Box \varphi \to \varphi$ & $\name{B}_\Diamond$ & $\varphi \to \Box \Diamond \varphi$ & $\name{B}$ & $\name{B}_\Diamond \land \name{B}_\Box$ \\
    $\name{4}_\Box$ & $\Box \varphi \to \Box \Box \varphi$ & $\name{4}_\Diamond$ & $\Diamond \Diamond \varphi \to \Diamond \varphi$ & $\name{4}$ & $\name{4}_\Box \land \name{4}_\Diamond$ \\
    $\name{5}_\Box$ & $\Diamond \Box \varphi \to \Box \varphi$ & $\name{5}_\Diamond$ & $\Diamond \varphi \to \Box \Diamond \varphi$ & $\name{5}$ & $\name{5}_\Diamond \land \name{5}_\Box$ \\
    $\name{L}_\Box$ & $\Box (\Box \varphi \to \varphi) \to \Box \varphi$ \\
    $\name{S}_\Box$ & $\varphi \to \Box \varphi$ \\
  \end{tabular}\\
\end{table}

\begin{definition} \label{def:modal-cube-extensions} Let us define the 17 modal axioms as in Table \ref{tab:more-modal-axioms}.
  \begin{itemize}
    \item For $L \in \set{ \Logic{C}, \Logic{W}, \Logic{I} }$ and $X \subseteq \set{ \name{D}, \name{T}, \name{B}, \name{4}, \name{5} }$,
          we denote by $L\Logic{K}X$ the logic obtained from $L$ by adding all the axioms in $X$.
          If $X = \set{\name{T}, \name{4}}$ or $X = \set{\name{T}, \name{5}}$, then we write $L\Logic{S4}$ and $L\Logic{S5}$ to mean $L\Logic{KT4}$ and $L\Logic{KT5}$, respectively.
          For example, $\Logic{CK45} = \Logic{CK} + \name{4} + \name{5}$ and $\Logic{IS5} = \Logic{IK} + \name{T} + \name{5}$\footnote{$\Logic{IS5}$ is sometimes referred to as $\Logic{MIPQ}$, which is a logic first appeared in Prior's book \cite{prior_1957} and later shown to be equivalent to $\Logic{IS5}$ by Fischer Servi \cite{fischer_servi_modal_1977}.}.
    \item For $X \subseteq \set{ \name{D}, \name{T}, \name{4}, \name{5} }$ and $\heartsuit \in \set{ \Box, \Diamond }$,
          we denote by $\Logic{iK}X_\heartsuit$ the logic obtained from $\Logic{iK}_\heartsuit$ by adding the axiom $A_\heartsuit$ for all $A \in X$.
          For example, $\Logic{iKD}_\Box = \Logic{iK}_\Box + \name{D}_\Box$ and $\Logic{iKT}_\Diamond = \Logic{iK}_\Diamond + \name{T}_\Diamond$.
          We also let $\Logic{iS4}_\Box = \Logic{iKT4}_\Box$ and $\Logic{iS4}_\Diamond = \Logic{iKT4}_\Diamond$.
  \end{itemize}
\end{definition}

The first two subsection are devoted to the logics defined above;
the remaining two axioms $\name{L}_\Box$ (stands for \emph{L\"ob}) and $\name{S}_\Box$ (stands for \emph{strong}) will be treated later.
Here, the deontic axiom $\name{D}$ is distinct in that it does not have the form $\name{D}_\Box \land \name{D}_\Diamond$.
We can easily make the following observations:

\begin{proposition} \label{prop:d-ddia-derivations}
  $\Logic{CKD} \dashv \vdash \Logic{CK} + \name{D}_\Diamond$.
  \begin{proof}
    The following derivations prove both directions.
    \begin{center}
      \AxiomC{$ $}
      \RightLabel{\scriptsize{$\name{N}_\Box$}}
      \UnaryInfC{$\Box \top$}

      \AxiomC{$ $}
      \RightLabel{\scriptsize{$\name{D}$}}
      \UnaryInfC{$\Box \top \to \Diamond \top$}

      \RightLabel{\scriptsize{MP}}
      \BinaryInfC{$\Diamond \top$}
      \DisplayProof
    \end{center}

    \begin{center}
      \AxiomC{$ $}
      \RightLabel{\scriptsize{$\name{D}_\Diamond$}}
      \UnaryInfC{$\Diamond \top$}

      \AxiomC{$ $}
      \UnaryInfC{$\top \to (\varphi \to \varphi)$}
      \RightLabel{\scriptsize{$\name{M}_\Diamond$}}
      \UnaryInfC{$\Diamond \top \to \Diamond (\varphi \to \varphi)$}

      \RightLabel{\scriptsize{MP}}
      \BinaryInfC{$\Diamond (\varphi \to \varphi)$}

      \AxiomC{$ $}
      \RightLabel{\scriptsize{$\name{FS1}$}}
      \UnaryInfC{$\Diamond (\varphi \to \varphi) \to (\Box \varphi \to \Diamond \varphi)$}

      \RightLabel{\scriptsize{MP}}
      \BinaryInfC{$\Box \varphi \to \Diamond \varphi$}
      \DisplayProof
    \end{center}
  \end{proof}
\end{proposition}

\begin{proposition} \label{prop:d-dbox-derivations}
  (1) $\Logic{WKD} \vdash \name{D}_\Box$.
  (2) $\Logic{KI} + \name{D}_\Box \vdash \name{D}$.
  (3) $\Logic{CKD} \nvdash \name{D}_\Box$.\\
  (4) $\Logic{IK} + \name{D}_\Box \nvdash \name{D}$.
  \begin{proof}
    The following derivations prove (1) and (2), respectively.
    \begin{center}
      \AxiomC{$ $}
      \RightLabel{\scriptsize{$\name{D}$}}
      \UnaryInfC{$\Box \bot \to \Diamond \bot$}

      \AxiomC{$ $}
      \RightLabel{\scriptsize{$\name{N}_\Diamond$}}
      \UnaryInfC{$\Diamond \bot \to \bot$}

      \RightLabel{\scriptsize{Syll.}}
      \BinaryInfC{$\Box \bot \to \bot$}
      \DisplayProof
    \end{center}

    \begin{center}
      \AxiomC{$ $}
      \UnaryInfC{$\varphi \to (\varphi \lor \bot)$}
      \RightLabel{\scriptsize{$\name{M}_\Box$}}
      \UnaryInfC{$\Box \varphi \to \Box (\varphi \lor \bot)$}

      \AxiomC{$ $}
      \RightLabel{\scriptsize{$\name{CD}$}}
      \UnaryInfC{$\Box(\varphi \lor \bot) \to (\Diamond \varphi \lor \Box \bot)$}

      \RightLabel{\scriptsize{MP}}
      \BinaryInfC{$\Box \varphi \to (\Diamond \varphi \lor \Box \bot)$}

      \AxiomC{$ $}
      \RightLabel{\scriptsize{$\name{D}_\Box$}}
      \UnaryInfC{$\Box \bot \to \bot$}
      \UnaryInfC{$\Box \bot \to \Diamond \varphi$}

      \RightLabel{\scriptsize{Syll.}}
      \BinaryInfC{$\Box \varphi \to \Diamond \varphi$}
      \DisplayProof
    \end{center}
    (3) and (4) are left for the reader as easy exercises. See also Figure \ref{fig:d-countermodels}.
  \end{proof}
\end{proposition}

\begin{figure}[h]
  \centering
  \begin{subfigure}{.5\textwidth}
    \centering
    \tikzset{
      cross/.pic = {
      \draw[rotate = 45] (-#1,0) -- (#1,0);
      \draw[rotate = 45] (0,-#1) -- (0, #1);
      }
    }
    \begin{tikzpicture}
    \fill (0, 0) circle (2pt);
    \draw (0, 0) node[right]{$x \not\SatCK \bot$};

    \draw (2, 2) circle (2pt);
    \draw (2, 2) node[right]{$y \SatCK \bot$};
    \draw (2, 2) pic {cross=2pt};

    \draw [thick, ->, rel, squig] (0,0) to (2,2);
    \draw (1, 1) node[above left]{$\MRel$};
    \end{tikzpicture}
    \caption*{The countermodel for (3).}
  \end{subfigure}%
  \begin{subfigure}{.5\textwidth}
    \centering
    \begin{tikzpicture}
    \fill (0, 0) circle (2pt);
    \draw (0, 0) node[right]{$x$};

    \draw (0, 2) circle (2pt);
    \draw (0, 2) node[right]{$x' \SatIK p, {} \not\SatIK \bot$};

    \draw [thick, ->, rel] (0,0) to (0,2);
    \draw (0, 1) node[left]{$\IRel$};
    \end{tikzpicture}
    \caption*{The countermodel for (4).}
  \end{subfigure}%

  \caption{
    Visualization of the countermodels for Proposition \ref{prop:d-dbox-derivations}.
    $\otimes$ denotes a $\MRel$-reflexive fallible world.
    $\circ$ denotes a $\MRel$-reflexive world.
  }
  \label{fig:d-countermodels}
\end{figure}

Thus, already over $\Logic{CK}$, $\name{D}_\Diamond$ and $\name{D}$ are interderivable,
while $\name{D}_\Box$ is much weaker than (or, in $\Logic{CK}$, even incomparable to) $\name{D}$.
Despite that, $\name{D}_\Box$ is conventionally used as the deontic axiom for $\Diamond$-free logics,
which we will see later in this section.

\subsection{The modal cubes and the collapse phenomena}

Let $X \subseteq \set{ \name{D}, \name{T}, \name{B}, \name{4}, \name{5} }$,
then, for each $L \in \set{ \Logic{C}, \Logic{W}, \Logic{I} }$, there are initially $2^5$ axiom sets, and hence $2^5$ presentations of the form $L\Logic{K}X$.
However, as in the case of classical modal logic, many of them coincide,
resulting in 15 different extensions for each of $\Logic{CK}$, $\Logic{WK}$, and $\Logic{IK}$,
forming the intuitionistic and constructive versions of the modal ``cube''.

\begin{proposition} \label{prop:ck-cube} \leavevmode
  (1) $\Logic{CKT} \vdash \name{D}$.
  (2) $\Logic{CKB4} \vdash \name{5}$.
  (3) $\Logic{CKB5} \vdash \name{4}$.
  (4) $\Logic{CKB4D} \vdash \name{T}$.
  (5) $\Logic{CS5} \vdash \name{B}$.
  \begin{proof}
    (1) $\name{D}_\Diamond$ easily follows from $\name{T}_\Diamond$, then use Proposition \ref{prop:d-ddia-derivations}.
    (2) Follows from the below derivations:
    \begin{center}
      \AxiomC{$ $}
      \RightLabel{\scriptsize{$\name{4}_\Box$}}
      \UnaryInfC{$\Box \varphi \to \Box \Box \varphi$}
      \RightLabel{\scriptsize{$\name{M}_\Diamond$}}
      \UnaryInfC{$\Diamond \Box \varphi \to \Diamond \Box \Box \varphi$}

      \AxiomC{$ $}
      \RightLabel{\scriptsize{$\name{B}_\Box$}}
      \UnaryInfC{$\Diamond \Box \Box \varphi \to \Box \varphi$}

      \RightLabel{\scriptsize{Syll.}}
      \BinaryInfC{$\Diamond \Box \varphi \to \Box \varphi$}
      \DisplayProof
    \end{center}
    \begin{center}
      \AxiomC{$ $}
      \RightLabel{\scriptsize{$\name{B}_\Diamond$}}
      \UnaryInfC{$\Diamond \varphi \to \Box \Diamond \Diamond \varphi$}

      \AxiomC{$ $}
      \RightLabel{\scriptsize{$\name{4}_\Diamond$}}
      \UnaryInfC{$\Diamond \Diamond \varphi \to \Diamond \varphi$}
      \RightLabel{\scriptsize{$\name{M}_\Box$}}
      \UnaryInfC{$\Box \Diamond \Diamond \varphi \to \Box \Diamond \varphi$}

      \RightLabel{\scriptsize{Syll.}}
      \BinaryInfC{$\Diamond \varphi \to \Box \Diamond \varphi$}
      \DisplayProof
    \end{center}
    (3) Follows from the below derivations:
    \begin{center}
      \AxiomC{$ $}
      \RightLabel{\scriptsize{$\name{B}_\Diamond$}}
      \UnaryInfC{$\Box \varphi \to \Box \Diamond \Box \varphi$}

      \AxiomC{$ $}
      \RightLabel{\scriptsize{$\name{5}_\Box$}}
      \UnaryInfC{$\Diamond \Box \varphi \to \Box \varphi$}
      \RightLabel{\scriptsize{$\name{M}_\Box$}}
      \UnaryInfC{$\Box \Diamond \Box \varphi \to \Box \Box \varphi$}

      \RightLabel{\scriptsize{Syll.}}
      \BinaryInfC{$\Box \varphi \to \Box \Box \varphi$}
      \DisplayProof
    \end{center}
    \begin{center}
      \AxiomC{$ $}
      \RightLabel{\scriptsize{$\name{5}_\Diamond$}}
      \UnaryInfC{$\Diamond \varphi \to \Box \Diamond \varphi$}
      \RightLabel{\scriptsize{$\name{M}_\Diamond$}}
      \UnaryInfC{$\Diamond \Diamond \varphi \to \Diamond \Box \Diamond \varphi$}

      \AxiomC{$ $}
      \RightLabel{\scriptsize{$\name{B}_\Box$}}
      \UnaryInfC{$\Diamond \Box \Diamond \varphi \to \Diamond \varphi$}

      \RightLabel{\scriptsize{Syll.}}
      \BinaryInfC{$\Diamond \Diamond \varphi \to \Diamond \varphi$}
      \DisplayProof
    \end{center}
    (4) One can easily derive $\name{T}_\Diamond$ from $\name{B}_\Diamond$ and $\name{T}_\Box$ by syllogism,
    so it suffices to show $\Logic{CKB4D} \vdash \name{T}_\Box$.
    The following derivation proves it:
    \begin{center}
      \AxiomC{$ $}
      \RightLabel{\scriptsize{$\name{4}_\Box$}}
      \UnaryInfC{$\Box \varphi \to \Box \Box \varphi$}

      \AxiomC{$ $}
      \RightLabel{\scriptsize{$\name{D}$}}
      \UnaryInfC{$\Box \Box \varphi \to \Diamond \Box \varphi$}

      \RightLabel{\scriptsize{Syll.}}
      \BinaryInfC{$\Box \varphi \to \Diamond \Box \varphi$}

      \AxiomC{$ $}
      \RightLabel{\scriptsize{$\name{B}_\Box$}}
      \UnaryInfC{$\Diamond \Box \varphi \to \varphi$}

      \RightLabel{\scriptsize{Syll.}}
      \BinaryInfC{$\Box \varphi \to \varphi$}
      \DisplayProof
    \end{center}
    (5) Follows from the below derivations:
    \begin{center}
      \AxiomC{$ $}
      \RightLabel{\scriptsize{$\name{5}_\Box$}}
      \UnaryInfC{$\Diamond \Box \varphi \to \Box \varphi$}

      \AxiomC{$ $}
      \RightLabel{\scriptsize{$\name{T}_\Box$}}
      \UnaryInfC{$\Box \varphi \to \varphi$}

      \RightLabel{\scriptsize{Syll.}}
      \BinaryInfC{$\Diamond \Box \varphi \to \varphi$}
      \DisplayProof
    \end{center}
    \begin{center}
      \AxiomC{$ $}
      \RightLabel{\scriptsize{$\name{T}_\Diamond$}}
      \UnaryInfC{$\varphi \to \Diamond \varphi$}

      \AxiomC{$ $}
      \RightLabel{\scriptsize{$\name{5}_\Diamond$}}
      \UnaryInfC{$\Diamond \varphi \to \Box \Diamond \varphi$}

      \RightLabel{\scriptsize{Syll.}}
      \BinaryInfC{$\varphi \to \Box \Diamond \varphi$}
      \DisplayProof
    \end{center}%
  \end{proof}
\end{proposition}

\begin{corollary}[{\cite{strasburger_cut_2013,strassburger_nested_2015}}] \label{cor:modal-cubes-fifteen}
  For $L \in \set{ \Logic{C}, \Logic{W}, \Logic{I} }$, there are exactly 15 different logics in the form $L\Logic{K}X$, where $X \subseteq \set{ \name{D}, \name{T}, \name{B}, \name{4}, \name{5} }$.
  \begin{proof}
    It is well known that there are exactly 15 different classical modal logics in the form $\Logic{K}X$ (cf.~\cite{CZ97}).
    So, by $\Logic{CK} \subseteq \Logic{K}$, there are at least 15 different logics in the form $\Logic{CK}X$.
    On the other hand, by Proposition \ref{prop:ck-cube}, there are at most 15 such logics.
    The rest follows from $\Logic{CK} \subseteq \Logic{WK} \subseteq \Logic{IK} \subseteq \Logic{K}$.
  \end{proof}
\end{corollary}

\begin{figure}[htb]
  \centering
  \tikzset{up/.style={
    shorten >=8pt, shorten <=8pt,
  }}
  \tikzset{upright/.style={
    shorten >=10pt, shorten <=10pt,
  }}
  \tikzset{right/.style={
    shorten >=12pt, shorten <=12pt,
  }}

  \begin{tikzpicture}[scale=0.6]
    \draw (0, 0) node{$\Logic{K}$};
    
    \draw (0, 3) node{$\Logic{KD}$};
    \draw [thick, up] (0, 0) to (0, 3);

    \draw (0, 6) node{$\Logic{KT}$};
    \draw [thick, up] (0, 3) to (0, 6);

    \draw (2, 2) node{$\Logic{K4}$};
    \draw [upright] (0, 0) to (2, 2);
    
    \draw (2, 5) node{$\Logic{KD4}$};
    \draw [upright] (0, 3) to (2, 5);
    \draw [dashed, up] (2, 2) to (2, 5);

    \draw (2, 8) node{$\Logic{S4}$};
    \draw [thick, upright] (0, 6) to (2, 8);
    \draw [dashed, up] (2, 5) to (2, 8);

    \draw (6, 0) node{$\Logic{KB}$};
    \draw [thick, right] (0, 0) to (6, 0);
    
    \draw (6, 3) node{$\Logic{KDB}$};
    \draw [thick, up] (6, 0) to (6, 3);
    \draw [thick, right, shorten >=16pt] (0, 3) to (6, 3);

    \draw (6, 6) node{$\Logic{KTB}$};
    \draw [thick, up] (6, 3) to (6, 6);
    \draw [thick, right, shorten >=16pt] (0, 6) to (6, 6);

    \draw (3, 1) node{$\Logic{K5}$};
    \draw [right] (0, 0) to (3, 1);

    \draw (4.5, 2) node{$\Logic{K45}$};
    \draw [dashed, right] (2, 2) to (4.5, 2);
    \draw [upright] (3, 1) to (4.5, 2);

    \draw (3, 4) node{$\Logic{KD5}$};
    \draw [up] (3, 1) to (3, 4);
    \draw [upright, shorten >=16pt] (0, 3) to (3, 4);

    \draw (4.5, 5) node{$\Logic{KD45}$};
    \draw [dashed, right, shorten <=16pt, shorten >=18pt] (2, 5) to (4.5, 5);
    \draw [upright] (3, 4) to (4.5, 5);
    \draw [dashed, up] (4.5, 2) to (4.5, 5);

    \draw (8, 2) node{$\Logic{KB5}$};
    \draw [thick, upright] (6, 0) to (8, 2);
    \draw [dashed, right, shorten <=14pt, shorten >=16pt] (4.5, 2) to (8, 2);
    
    \draw (8, 8) node{$\Logic{S5}$};
    \draw [thick, up] (8, 2) to (8, 8);
    \draw [thick, upright] (6, 6) to (8, 8);
    \draw [thick, right] (2, 8) to (8, 8);
    \draw [dashed, up, shorten >=14pt] (4.5, 5) to [out=75,in=195](8, 8);
  \end{tikzpicture}
  \caption{The ``cube'' of classical modal logics. The thin and dashed lines are purely for ease of viewing.}
  \label{fig:modal-cube}
\end{figure}

Moreover, Pacheco recently proved in his preprint \cite{pacheco_collapsing_2024} that the logics $\Logic{CKB}$ and $\Logic{IKB}$ coincide,
collapsing the right half of CML and IML cubes.
In particular, $\Logic{CKB}$ proves the $\name{C}_\Diamond$ axiom.
From the viewpoint adopted here, this may call into question the ``constructiveness'' of the $\name{B}$ axiom.

\begin{theorem}[The collapse phenomena, {\cite[Lem.\,20]{pacheco_collapsing_2024}}] \label{thm:ckb-collapse}
  If $\Logic{IKB} \vdash \varphi$, then $\Logic{CKB} \vdash \varphi$.
  Consequently, $\Logic{CK}X = \Logic{WK}X = \Logic{IK}X$ for $X \supseteq \set{ \name{B} }$.
\end{theorem}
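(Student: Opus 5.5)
The plan is to derive, inside $\Logic{CKB}$, the extra axioms that separate $\Logic{IK}$ from $\Logic{CK}$: $\name{N}_\Diamond$, $\name{C}_\Diamond$ and $\name{FS2}$. Since $\Logic{IK} = \Logic{CK} + \name{N}_\Diamond + \name{C}_\Diamond + \name{FS2}$, we have $\Logic{IKB} = \Logic{CK} + \name{N}_\Diamond + \name{C}_\Diamond + \name{FS2} + \name{B}$. So once these three axioms are theorems of $\Logic{CKB}$, every theorem of $\Logic{IKB}$ is one of $\Logic{CKB}$. The tools are $\name{B}_\Box\colon \Diamond\Box\varphi \to \varphi$, $\name{B}_\Diamond\colon \varphi \to \Box\Diamond\varphi$, the rules $\name{M}_\Box$ and $\name{M}_\Diamond$ (the latter admissible by Proposition \ref{prop:ck-alternative-axiomatization}), and $\name{FS1}$. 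The consequence $\Logic{CK}X = \Logic{WK}X = \Logic{IK}X$ for $X \ni \name{B}$ then follows from $\Logic{CK}X \subseteq \Logic{WK}X \subseteq \Logic{IK}X$.

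First, $\name{N}_\Diamond$. Apply $\name{M}_\Diamond$ to $\bot \to \Box\bot$ to get $\Diamond\bot \to \Diamond\Box\bot$. Then $\name{B}_\Box$ gives $\Diamond\Box\bot \to \bot$, and chaining yields $\neg\Diamond\bot$.

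Second, $\name{C}_\Diamond$. Put $\theta \defeq \Diamond\varphi \lor \Diamond\psi$. From $\name{B}_\Diamond$ we have $\varphi \to \Box\Diamond\varphi$, and $\name{M}_\Box$ applied to $\Diamond\varphi \to \theta$ gives $\Box\Diamond\varphi \to \Box\theta$. Hence $\varphi \to \Box\theta$, and symmetrically $\psi \to \Box\theta$, so $(\varphi\lor\psi) \to \Box\theta$. Now $\name{M}_\Diamond$ gives $\Diamond(\varphi\lor\psi) \to \Diamond\Box\theta$, and $\name{B}_\Box$ closes with $\Diamond\Box\theta \to \theta$.

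Third, $\name{FS2}$, which I expect to be the main obstacle, since it must transport an implication under $\Box$ using only $\name{FS1}$ as a bridge. Let $\chi \defeq \Diamond\varphi \to \Box\psi$. I will first establish the auxiliary theorem $\Diamond\chi \to (\varphi \to \psi)$:
\begin{itemize}
  \item From $\varphi$, $\name{B}_\Diamond$ gives $\Box\Diamond\varphi$.
  \item $\name{FS1}$ in the instance $\Diamond(\Diamond\varphi \to \Box\psi) \to (\Box\Diamond\varphi \to \Diamond\Box\psi)$ then gives $\Diamond\Box\psi$ from $\Diamond\chi$.
  \item $\name{B}_\Box$ turns $\Diamond\Box\psi$ into $\psi$.
\end{itemize}
Applying $\name{M}_\Box$ to this theorem yields $\Box\Diamond\chi \to \Box(\varphi\to\psi)$. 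Prefixing the instance $\chi \to \Box\Diamond\chi$ of $\name{B}_\Diamond$ gives exactly $\name{FS2}$. If this route fails, the fallback is to try the contrapositive-style route through Proposition \ref{prop:wk-partial-duality}, but I expect the $\name{FS1}$ route to go through.
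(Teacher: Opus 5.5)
Your proposal is correct: the derivations of $\name{N}_\Diamond$, $\name{C}_\Diamond$ and $\name{FS2}$ from $\name{B}_\Box$, $\name{B}_\Diamond$, $\name{FS1}$ and the monotonicity rules all go through in $\Logic{CKB}$. The rule $\name{M}_\Diamond$ is available there because the derivation in Proposition \ref{prop:ck-alternative-axiomatization} uses only $\name{M}_\Box$, $\name{N}_\Box$ and $\name{K}_\Diamond$. For the consequence, the reverse inclusion $\Logic{IK}X \subseteq \Logic{CK}X$ holds because these three theorems already lie in $\Logic{CKB} \subseteq \Logic{CK}X$.

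The paper gives no proof of its own and only cites Pacheco's lemma. Your direct Hilbert-style derivation of the missing $\Logic{IK}$ axioms inside $\Logic{CKB}$ is the standard syntactic argument for this collapse, and it is complete as written.
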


$\Logic{IK}X$ ($X \subseteq \set{ \name{D}, \name{T}, \name{B}, \name{4}, \name{5} }$) has been studied extensively by means of IK-frames and IK-models.
Fischer Servi \cite{fischer_axiomatizations_1984} introduced $\Logic{IK}$ and explicitly proved the completeness of $\Logic{IK}$, $\Logic{IKT}$, $\Logic{IKTB}$, $\Logic{IS4}$, and $\Logic{IS5}$.
Simpson \cite[Thm.\,3.3.4]{simpson_proof_1994} states the result for $\Logic{IK}X$ in general,
noting that these cases appear explicitly in Fischer Servi and that the remaining cases are straightforward.

\begin{theorem}[{\cite{fischer_axiomatizations_1984,simpson_proof_1994}}] \label{thm:ik-cube-completeness}
  Associate $\name{D}$, $\name{T}$, $\name{B}$, $\name{4}$, and $\name{5}$, respectively, with
  seriality, reflexivity, symmetry, transitivity, and Euclideanness of $\MRel$.
  Then, for $X \subseteq \set{ \name{D}, \name{T}, \name{B}, \name{4}, \name{5} }$,
  $\Logic{IK}X \vdash \varphi$ iff $\varphi$ is valid in every IK-model whose underlying frame satisfies the relational conditions associated with the members of $X$.
\end{theorem}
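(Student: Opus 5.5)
Soundness and completeness are handled separately for each $X \subseteq \set{ \name{D}, \name{T}, \name{B}, \name{4}, \name{5} }$, and each axiom in $X$ is treated independently, since the relational conditions are conjunctive. Write $\FrameClass{X}$ for the class of IK-frames whose $\MRel$ satisfies the conditions associated with the members of $X$.

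\emph{Soundness.} By the completeness of $\Logic{IK}$ with respect to IK-frames (proved via the canonical model, Theorem \ref{thm:ik-completeness-detailed}), only the new axioms need checking on frames in $\FrameClass{X}$. Persistency (Proposition \ref{prop:hybrid-persistency}) is used throughout. Each check is a short computation in $\SatIK$.
\begin{itemize}
  \item $\name{D}$: seriality gives a successor at which $\varphi$ holds, which witnesses $\Diamond\varphi$ under the extrinsic $\Diamond$-clause.
  \item $\name{T}$: for $\name{T}_\Box$, take $x \IRel x \MRel x$. For $\name{T}_\Diamond$, use $x \MRel x$ directly.
  \item $\name{4}_\Box$: this follows from transitivity together with persistency and $\Diamond$-p. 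For $x \IRel x' \MRel y \IRel y' \MRel z$, first move $y'$ back using FS2 to get $x'' \MRel y'$ with $x' \IRel x''$, and then use transitivity. $\name{4}_\Diamond$ is immediate.
  \item $\name{B}$ and $\name{5}$: these need the same kind of FS2/$\Diamond$-p "zig-zag" to transport accessibility along $\IRel$ before symmetry or Euclideanness applies.
\end{itemize}

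\emph{Completeness.} I would reuse the canonical IK-model of Theorem \ref{thm:ik-completeness-detailed}, now built over $\Logic{IK}X$. Its worlds are prime theories (or Simpson/Fischer Servi pairs $(\Gamma, \Delta)$), with $\IRel$ given by inclusion and $\Gamma \MRel \Delta$ iff $\Box^{-1}\Gamma \subseteq \Delta$ and $\Diamond\Delta \subseteq \Gamma$. Since that proof only uses the $\Logic{IK}$ axioms, it yields an IK-model with a truth lemma for any extension. It then remains to show that $\MRel$ satisfies the corresponding condition whenever the axiom is in $X$:
\begin{itemize}
  \item $\name{T}$: reflexivity is immediate from $\name{T}_\Box$ and $\name{T}_\Diamond$.
  \item $\name{4}$: transitivity follows by composing the defining inclusions via $\name{4}_\Box$ and $\name{4}_\Diamond$.
  \item $\name{B}$: symmetry follows since $\Gamma \MRel \Delta$ gives $\Box^{-1}\Delta \subseteq \Gamma$ via $\name{B}_\Box$ and $\Diamond\Gamma \subseteq \Delta$ via $\name{B}_\Diamond$.
  \item $\name{5}$: Euclideanness follows similarly.
  \item $\name{D}$: seriality requires a Lindenbaum-style construction of a successor. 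Since $\Diamond\top \in \Gamma$ (Proposition \ref{prop:d-ddia-derivations}), one extends $\Box^{-1}\Gamma$ to a prime theory avoiding $\set{\psi ; \Diamond\psi \notin \Gamma}$. This is exactly the existence lemma already used for $\Diamond$, now applied to $\top$.
\end{itemize}

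\emph{Main obstacle.} The hard part is the $\name{4}$ and $\name{5}$ cases. If the canonical $\MRel$ is defined only through the $\Box$-part, composition fails to respect the $\Diamond$-part, and soundness of $\name{5}_\Box$ likewise needs careful use of FS2. My first attempt would therefore keep the two-sided definition of $\MRel$ and verify each closure property directly from the axioms. If Euclideanness fails for the raw canonical relation, the fallback is to replace $\MRel$ by its Euclidean closure and re-verify $\Diamond$-p, FS2 and the truth lemma, as Simpson does for $\Logic{IS5}$.
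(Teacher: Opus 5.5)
Your proposal is correct. It is essentially the standard argument behind the cited result, carried out with the paper's canonical model from Appendix~\ref{appendix:completeness}: soundness comes from checking each new axiom on IK-frames with the matching property of $\MRel$, and completeness comes from the canonical model for $\Logic{IK}X$ still being $\Diamond$-p and FS2 while $\name{D}$, $\name{T}$, $\name{B}$, $\name{4}$, $\name{5}$ make $\MRel^*$ serial, reflexive, symmetric, transitive and Euclidean, respectively.

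The obstacle you anticipate does not arise, because the two-sided canonical relation is already Euclidean. Suppose $x \MRel^* y$ and $x \MRel^* z$. Then $\Box\psi \in \Gamma_y$ gives $\Diamond\Box\psi \in \Gamma_x$, hence $\Box\psi \in \Gamma_x$ by $\name{5}_\Box$, so $\psi \in \Gamma_z$. Likewise $\psi \in \Gamma_z$ gives $\Diamond\psi \in \Gamma_x$, hence $\Box\Diamond\psi \in \Gamma_x$ by $\name{5}_\Diamond$, so $\Diamond\psi \in \Gamma_y$. So no Euclidean closure is needed. A small correction: soundness of $\name{5}_\Box$ uses the $\Diamond$-p condition, not FS2.
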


We shall note that, unlike in the classical case, the frame conditions in Theorem \ref{thm:ik-cube-completeness} are sufficient but are not, in general, necessary to validate the respective axioms;
in particular, there is an IK-frame that is not $\MRel$-reflexive but still validates $\name{T}$.
The following example is due to Simpson \cite{simpson_proof_1994}.

\begin{example}[{\cite{simpson_proof_1994}}] \label{ex:non-reflexive-ik-frame-that-validates-t}
  Let $\Model{F} = (\set{x, y}, \IRel, \MRel)$ be an IK-frame,
  where ${\IRel}$ is the reflexive closure of $\set{ (x, y), (y, x) }$
  and ${\MRel} = \set{ (x, y), (y, x) }$,
  then $\Model{F} \ValidIK \name{T}$. Here, the frame $\Model{F}$, visualized in Figure \ref{fig:non-reflexive-ik-frame-that-validates-t}, is clearly nowhere $\MRel$-reflexive.
  \begin{figure}[H]
    \centering
    \begin{tikzpicture}
    \fill (0, 0) circle (2pt);
    \draw (0, 0) node[left]{$x$};

    \fill (2, 2) circle (2pt);
    \draw (2, 2) node[right]{$y$};

    \draw [thick, <->, rel] (0,0) to [out=90, in=180](2,2);
    \draw (0.3, 1.7) node{$\IRel$};

    \draw [thick, <->, decorate, decoration={
      snake,
      pre=lineto,
      post=lineto,
      pre length=0.5cm,
      post length=0.2cm,
      amplitude=1
    }, shorten <=4pt, shorten >=4pt] (0,0) to [out=0, in=270](2, 2);
    \draw (1.7, 0.3) node{$\MRel$};
      
    \end{tikzpicture}
    \caption{The frame $\Model{F}$ in Example \ref{ex:non-reflexive-ik-frame-that-validates-t}.}
    \label{fig:non-reflexive-ik-frame-that-validates-t}
  \end{figure}
\end{example}

Plotkin and Stirling \cite{plotkin_framework_1986} obtained the necessary and sufficient
frame condition on IK-frames for the so-called \emph{Geach axioms}.

\begin{theorem}[{\cite[Thm.\,2.1]{plotkin_framework_1986}}] \label{thm:ik-geach-frame-condition}
  Let $\name{G}(k, l, m, n) \defeq \Diamond^k \Box^l \varphi \to \Box^m \Diamond^n \varphi$.
  An IK-frame $\Model{F} = (W, \IRel, \MRel)$ validates $\name{G}(k, l, m, n)$ iff
  \begin{equation*}
    w \MRel^k x \ \tand\  w \MRel^m y \implies \exists x' \, \exists z \, (x \IRel x' \tand x' \MRel^l z \tand y \MRel^n z).
  \end{equation*}
  Here, $w_0 \MRel^n w_n$ is an abbreviation of $\exists w_1, \ldots, w_{n-1} \, (w_0 \MRel w_1 \MRel \ldots w_{n-1} \MRel w_n)$.
  In particular, $x \MRel^0 y \defsiff x = y$.
\end{theorem}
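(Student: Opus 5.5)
The plan is to prove the two directions separately, working throughout with the hybrid relation $\SatIK$ on an IK-frame $\Model{F}$. Write $\varphi$ for the propositional variable $p$; if the schema is valid, so is its instance with $p$.

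\emph{Soundness (the condition implies validity).} Assume the condition and fix a model on $\Model{F}$ and a world $w$ with $w \SatIK \Diamond^k \Box^l p$. Because $\SatIK$ is persistent (Proposition \ref{prop:hybrid-persistency}), it suffices to show that $w' \SatIK \Box^m \Diamond^n p$ for every $w' \IRelRev w$. Unfolding the extrinsic diamond clause gives an $x$ with $w \MRel^k x \SatIK \Box^l p$. The $\Diamond$-p condition, used $k$ times, pushes this chain up along $w \IRel w'$: it yields $x^\ast \IRelRev x$ with $w' \MRel^k x^\ast$, and $x^\ast \SatIK \Box^l p$ by persistency.

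Unfolding $\Box^m$ at $w'$ gives a zigzag path: $w' \IRel u_1 \MRel v_1 \IRel u_2 \MRel \dots$, ending at some $y$. The first obstacle is to straighten this zigzag into a pure $\MRel^m$-path. For this I use the FS2 condition, read as $(\MRel\cdot\IRel)\subseteq(\IRel\cdot\MRel)$, to commute each interior $\IRel$ step leftwards. This turns the path into $w' \IRel w'' \MRel^m y$.

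Next I transport $x^\ast$ up to $w''$, again by $\Diamond$-p and persistency. This gives $w'' \MRel^k x^{\ast\ast}$ with $x^{\ast\ast} \SatIK \Box^l p$, and $w'' \MRel^m y$. The frame condition applied at $w''$ now gives $x^{\ast\ast} \IRel x' \MRel^l z$ and $y \MRel^n z$. Since $\Box^l p$ is true at $x^{\ast\ast}$, we get $z \SatIK p$. Because the diamond clause is extrinsic, this yields $y \SatIK \Diamond^n p$.

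I expect the main difficulty in this direction to be bookkeeping. The $\Box$ clause quantifies over $\IRel$-successors at every level, so the claim needs an induction on $m$. In that induction, persistency lets us work at the upper world of each $\IRel$ step, and FS2 shows the pure-$\MRel$ witnesses suffice. I would state this as a lemma: $u \SatIK \Box^m\psi$ iff $\psi$ holds at all $v$ with $u (\IRel\cdot\MRel)^m v$, and in FS2 frames $(\IRel\cdot\MRel)^m = \IRel\cdot\MRel^m$.

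\emph{Correspondence (validity implies the condition).} Suppose $w \MRel^k x$ and $w \MRel^m y$. Define the valuation $V(p) \defeq \set{ z ; \exists x'\,(x \IRel x' \MRel^l z) }$. This set is upward closed: given $z \IRel z'$, FS2 applied $l$ times gives $x \IRel x'' \MRel^l z'$. By the $\Box^m$ lemma together with this FS2 straightening, $x \SatIK \Box^l p$, hence $w \SatIK \Diamond^k\Box^l p$.

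Validity then gives $w \SatIK \Box^m \Diamond^n p$, and since $w \MRel^m y$, $y \SatIK \Diamond^n p$. This produces $z$ with $y \MRel^n z \SatIK p$, i.e.\ $x \IRel x' \MRel^l z$, which is exactly the required condition. The upward closure of $V(p)$ is the step I expect to need the most care in this direction; if it fails, I would instead take the $\IRel$-upward closure of the set above. The two sets coincide anyway, by the same FS2 argument.
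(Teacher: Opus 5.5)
Your proof is correct. The paper gives no proof of its own here; it only cites Plotkin and Stirling. Your argument is a sound, self-contained version of the standard correspondence proof, and it uses each IK condition where it belongs:
\begin{itemize}
  \item FS2 and transitivity of $\IRel$ collapse the zigzag $(\IRel\cdot\MRel)^m$ into $\IRel\cdot\MRel^m$.
  \item $\Diamond$-p, together with persistency, transports the $\Diamond^k$-witness upward.
  \item The minimal valuation $V(p)=\{\, z \mid \exists x'\,(x \IRel x' \MRel^l z)\,\}$ is an upset by FS2, and it yields the converse direction.
\end{itemize}

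Some small points to tidy up:
\begin{itemize}
  \item The double transport in the soundness direction (first to $w'$, then to $w''$) is redundant. Fix a world $w$ satisfying the antecedent, unfold $\Box^m$ directly at $w$, straighten the path to $w \IRel w'' \MRel^m y$, and push the witness $x$ up to $w''$ once.
  \item The identity $(\IRel\cdot\MRel)^m = \IRel\cdot\MRel^m$ fails for $m=0$, since the left side is then the identity relation. State your lemma instead as: $u \SatIK \Box^m\psi$ iff $\psi$ holds at every $v$ with $u \IRel u' \MRel^m v$. This holds for all $m$ once persistency is invoked.
  \item The case $l=0$ likewise needs persistency to pass from $x^{\ast\ast}\SatIK\varphi$ to $z\SatIK\varphi$.
  \item Your reduction to the instance with $p$ only justifies the correspondence direction. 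For soundness, either run the argument for an arbitrary $\varphi$, which is harmless since it is uniform and persistency holds for all formulas, or note that frame validity is closed under substitution.
\end{itemize}
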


Compared with the IML cube, the corresponding extensions of $\Logic{CK}$ and $\Logic{WK}$ seem to have received less systematic semantic treatment.
Dalmonte \cite{dalmonte_wijesekera-style_2022} introduced Gentzen-style sequent calculi and neighborhood semantics for
$\Logic{WK}$, $\Logic{WKT}$, and $\Logic{WKD}$, and for their non-normal variants in which
the basic modal axioms and rules are weakened, and proved that the calculi and semantics characterize the same logics.
In addition, he proved that these logics are decidable and enjoy CIP.
Alechina et al.\ \cite{alechina_categorical_2001} proved soundness and completeness of $\Logic{CS4}$ with respect to
the class of all $\MRel$-preordered CK-frames. They also introduced algebraic and categorical semantics that are sound and complete for $\Logic{CS4}$,
and also discussed the relationship between $\Logic{CS4}$ and the propositional lax logic $\Logic{PLL}$ \cite{fairtlough_propositional_1997}, which we will see later in this section.
Recently, Balbiani et al.\ \cite{balbiani_constructive_2021} proved that $\Logic{CS4}$ is decidable by showing the finite model property of it.
They also proved the decidability of $\Logic{S4I} = \Logic{KI} + \name{T} + \name{4}$ and $\Logic{GS4} = \Logic{IS4} + \name{GD}: (\varphi \to \psi) \lor (\psi \to \varphi)$ by the same method.

\begin{problem} \label{prob:cml-cube-decidability}
  For $L \in \set{\Logic{C}, \Logic{W}}$,
  are $L\Logic{K4}$, $L\Logic{K5}$, $L\Logic{K45}$, $L\Logic{KD4}$, and $L\Logic{KD45}$ decidable?
  Note that the remaining logics in the CML cube collapse into IMLs.
\end{problem}

\begin{problem}[\cite{pacheco_collapsing_2024}] \label{prob:ck-geach-frame-condition}
  Determine a frame condition on CK-frames that is necessary and sufficient to validate $\name{G}(k, l, m, n)$.
\end{problem}

Much of the cited results for CMLs uses Gentzen-style calculi,
whereas the IML results discussed below use labelled sequent calculi.
Simpson \cite{simpson_proof_1994} introduced a cut-free labelled calculus for $\Logic{IK}X$ where $X \subseteq \set{ \name{D}, \name{T}, \name{B} }$,
obtaining the decidability of these logics as a consequence.
Girlando et al.\ \cite{girlando_intuitionistic_2023} claimed a proof of the decidability of $\Logic{IS4}$ based on
a labelled calculus for it \cite{marin_fully_2021},
but it was later announced that their proof contains a gap\footnote{https://filipendule.github.io, https://sites.google.com/site/kuznets/publications}.

\begin{problem}[{\cite{simpson_proof_1994, girlando_intuitionistic_2023}}] \label{prob:iml-cube-decidability}
  Are $\Logic{IK4}$, $\Logic{IK5}$, $\Logic{IK45}$, $\Logic{IKD4}$, $\Logic{IKD45}$, and $\Logic{IKB5}$ decidable?
  Girlando et al.\ \cite{girlando_intuitionistic_2023} claim that their method proves the decidability of $\Logic{IK4}$.
  Also, Piazza states in his preprint \cite{piazza_kruskal_2026} that $\Logic{IK4}$ is decidable.
  The decidability of $\Logic{IS5}$ ($= \Logic{MIPQ}$) is already proven by Mints \cite{minc_on_1971}.
\end{problem}

For nested sequent calculi, Straßburger \cite{strasburger_cut_2013} treats the 15 extensions of $\Logic{IK}$,
while Arisaka et al.\ \cite{strassburger_nested_2015} treat the corresponding constructive extensions over $\Logic{CK}$.
Lyon \cite{lyon_nested_2021} discusses a method called \emph{structural refinement} to obtain nested calculi from labelled ones for a large class of IMLs.
Gao et al.\ \cite{gao_bi-nested_2025} discuss bi-nested calculi for $\Logic{IK}$ that allows one to construct a countermodel directly from a failed derivation.

\begin{problem} \label{prob:modal-cube-local-tabularity}
  Let $L$ be a logic with a language $\mathscr{L}$.
  For $\varphi, \psi \in \mathscr{L}$, let us write $\varphi \equiv_L \psi$ to mean $\varphi \leftrightarrow \psi \in L$, then $\equiv_L$ is clearly an equivalence relation.
  Then, $L$ is said to be \emph{locally tabular} if for any finite $P \subseteq \PropVar$,
  the set $T(P) = \set*{ \varphi; \Var(\varphi) \subseteq P } / \mathord{\equiv_L}$ is finite.
  For example, $\Logic{Cl}$ is locally tabular, while $\Logic{Int}$ is not.
  Here, it is known by Diego's theorem \cite{diego_sur_1966} that the implicational fragment of $\Logic{Int}$ is locally tabular.
  It is also known that $\Logic{K}X$ ($X \supseteq \set{ \name{5} }$) is locally tabular (e.g.\ \cite{CZ97}).
  Now, is the implicational fragment of $L\Logic{K}X$ ($L \in \set{ \Logic{C}, \Logic{W}, \Logic{I} }$, $X \supseteq \set{ \name{5} }$) locally tabular?
\end{problem}

\subsection{The $\Diamond$/$\Box$-free modal squares}

Similarly to the modal ``cubes'' for IMLs and CMLs, the $\Diamond$/$\Box$-free extensions up to $\Logic{iS4}_\heartsuit$ ($\heartsuit \in \set{ \Box, \Diamond }$) are well studied.
For the sake of convenience, we shall refer to them as modal ``squares''.
Došen \cite{dosen_models_1985} proved the completeness of the logics in these $\Diamond$/$\Box$-free modal squares with respect to certain classes of f-frames.

\begin{definition}[{\cite[Defs.\,1 and 2]{dosen_models_1985}}] \label{def:modal-square-frame-conditions}
  Let $(\heartsuit, \alpha) \in \set{ (\Box, \beta), (\Diamond, \delta) }$.
  Let $\Model{F} = (W, \IRel, \MRel)$ be a $\heartsuit$-p f-frame,
  and $\alpha\Model{F} = (W, \IRel, \MRel_\alpha)$ be its $\heartsuit$-condensation.
  We say $\Model{F}$ is \emph{$\MRel_\alpha$-serial} if $\MRel_\alpha$ is serial.
  \emph{$\MRel_\alpha$-reflexivity} and \emph{$\MRel_\alpha$-transitivity} are defined similarly.
\end{definition}

\begin{proposition}[{\cite[Lems.\,5 and 15]{dosen_models_1985}}] \label{prop:serial-iff-condensed-is-serial}
  Let $\Model{F} = (W, \IRel, \MRel)$ be an f-frame.
  \begin{enumerate}[label=(\arabic*)]
    \item If $\Model{F}$ is $\Box$-p, then $\MRel$ is serial iff $\MRel_\beta$ is serial.
    \item If $\Model{F}$ is $\Diamond$-p, then $\MRel$ is serial iff $\MRel_\delta$ is serial.
  \end{enumerate}
  \begin{proof}
    Both clauses follow from the $\IRel$-reflexivity of $\Model{F}$.
  \end{proof}
\end{proposition}

We shall note that the analogue of Proposition \ref{prop:serial-iff-condensed-is-serial} for $\MRel$-reflexivity or $\MRel$-transitivity
doesn't hold (Exercise).

\begin{theorem}[{\cite[Thms.\,1 and 3]{dosen_models_1985}}] \label{thm:ik-modal-square-completeness}
  Let $(\heartsuit, \alpha) \in \set{ (\Box, \beta), (\Diamond, \delta) }$.
  \begin{itemize}
    \item $\Logic{iKD}_\heartsuit \vdash \varphi$ iff $\varphi$ is e-valid on every $\heartsuit$-p f-frame that is $\MRel_\alpha$-serial.
    \item $\Logic{iKT}_\heartsuit \vdash \varphi$ iff $\varphi$ is e-valid on every $\heartsuit$-p f-frame that is $\MRel_\alpha$-reflexive.
    \item $\Logic{iK4}_\heartsuit \vdash \varphi$ iff $\varphi$ is e-valid on every $\heartsuit$-p f-frame that is $\MRel_\alpha$-transitive.
    \item $\Logic{iKD4}_\heartsuit \vdash \varphi$ iff $\varphi$ is e-valid on every $\heartsuit$-p f-frame that is $\MRel_\alpha$-serial and $\MRel_\alpha$-transitive.
    \item $\Logic{iS4}_\heartsuit \vdash \varphi$ iff $\varphi$ is e-valid on every $\heartsuit$-p f-frame that is $\MRel_\alpha$-reflexive and $\MRel_\alpha$-transitive.
  \end{itemize}
\end{theorem}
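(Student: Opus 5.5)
The plan is to prove soundness and completeness separately for each of the five logics, using Theorem~\ref{thm:ikb-ikd-completeness} and the condensation machinery (Proposition~\ref{prop:condensation} and Theorem~\ref{cor:condensed-completeness}) so that most of the work happens on condensed and brilliant frames. The reason this works is that on a $\heartsuit$-condensed and $\heartsuit$-brilliant frame we have $\MRel = \MRel_\alpha$. The classical correspondence arguments therefore apply almost verbatim there, and Proposition~\ref{prop:condensation} lets us move back and forth between an arbitrary $\heartsuit$-p frame $\Model{F}$ and $\alpha\Model{F}$.

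\textbf{Soundness.} Given a $\heartsuit$-p frame $\Model{F}$ whose $\MRel_\alpha$ has the required property, the Corollary after Proposition~\ref{prop:condensation} shows that $\Model{F} \ValidE \varphi$ iff $\alpha\Model{F} \ValidE \varphi$ for $\varphi$ in the $\heartsuit$-only language. So it suffices to check the axioms on $\alpha\Model{F}$, whose accessibility relation is $\MRel_\alpha$ itself. There:
\begin{itemize}
  \item $\name{D}_\Box$ (resp.\ $\name{D}_\Diamond$) is valid when $\MRel_\alpha$ is serial, using $\IRel$-reflexivity;
  \item $\name{T}_\heartsuit$ follows from reflexivity together with persistency;
  \item $\name{4}_\heartsuit$ follows from transitivity exactly as in the classical case.
\end{itemize}
Closure under $\name{M}_\heartsuit$ and validity of $\name{N}_\heartsuit$, $\name{C}_\heartsuit$ are already covered by Theorem~\ref{thm:ikb-ikd-completeness}.

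\textbf{Completeness.} I would build the canonical model of $L \in \set{\Logic{iKD}_\heartsuit, \dots, \Logic{iS4}_\heartsuit}$ as in the proof of Theorem~\ref{thm:ikb-ikd-completeness}. Worlds are prime $L$-theories and $\IRel$ is inclusion. For $\Box$, set $\Gamma \MRel \Delta$ iff $\set{\psi; \Box\psi \in \Gamma} \subseteq \Delta$; for $\Diamond$, set $\Gamma \MRel \Delta$ iff $\set{\Diamond\psi; \psi \in \Delta} \subseteq \Gamma$. One then checks the following.
\begin{enumerate}[label=(\roman*)]
  \item This canonical $\MRel$ is already $\heartsuit$-condensed and $\heartsuit$-brilliant. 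Since $\Gamma \subseteq \Gamma'$ shrinks or enlarges the relevant sets monotonically, we get $\MRel = \MRel_\alpha$, so the condition only has to be verified for $\MRel$.
  \item Under $\name{T}_\heartsuit$, $\MRel$ is reflexive; under $\name{4}_\heartsuit$, it is transitive. Both are immediate from the definition, for example $\Box\psi \in \Gamma \Rightarrow \Box\Box\psi \in \Gamma$.
  \item Under $\name{D}_\heartsuit$, $\MRel$ is serial. For $\Box$: $\set{\psi; \Box\psi\in\Gamma}$ is consistent because $\neg\Box\bot$, and it extends to a prime theory by the prime extension lemma. For $\Diamond$: $\Diamond\top \in \Gamma$, and the usual $\Diamond$-existence lemma of the $\Logic{iK}_\Diamond$ canonical construction yields a successor.
\end{enumerate}
The truth lemma is the one already available for $\Logic{iK}_\heartsuit$.

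\textbf{Main obstacle.} I expect the hardest part to be the seriality and existence lemmas for the $\Diamond$ case. There a successor $\Delta$ must be a prime theory with $\psi \in \Delta$ whose every member $\chi$ satisfies $\Diamond\chi \in \Gamma$. Obtaining it requires a Lindenbaum-style argument that exploits $\name{C}_\Diamond$ to keep primeness and $\name{N}_\Diamond$ to avoid $\bot$. I would reuse the construction from the completeness proof of $\Logic{iK}_\Diamond$ (B\v{o}zi\'c--Do\v{s}en) rather than redo it, and only add the observation that $\Diamond\top\in\Gamma$ feeds that lemma with $\psi = \top$.
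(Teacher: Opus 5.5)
Your proposal is correct and follows the canonical-model route that the paper attributes to Do\v{s}en. There the canonical frames are $\heartsuit$-condensed and $\heartsuit$-brilliant, so $\MRel = \MRel_\alpha$, and each axiom yields the matching property directly; in particular transitivity needs only $\name{4}_\heartsuit$, which is why the paper says the $\Logic{iK4}_\heartsuit$ and $\Logic{iKD4}_\heartsuit$ cases follow from Do\v{s}en's $\Logic{iS4}_\heartsuit$ argument. Your soundness half, which reduces to $\alpha\Model{F}$ via the condensation corollary, is also fine, since the accessibility relation of $\alpha\Model{F}$ is literally $\MRel_\alpha$.
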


We note that Došen \cite{dosen_models_1985} did not explicitly prove the $\Logic{iK4}_\heartsuit$ and $\Logic{iKD4}_\heartsuit$ cases of Theorem \ref{thm:ik-modal-square-completeness},
but they easily follow from his proof, where he proved the $\MRel_\alpha$-transitivity of canonical f-frames for $\Logic{iS4}_\heartsuit$
without using its $\MRel_\alpha$-reflexivity.

Ono \cite{ono_intuitionistic_1977} proposed four different $\Diamond$-free weakenings of the $\name{5}$ axiom,
all of which are equivalent to $\name{5}$ over the classical $\Logic{S4}$.

\begin{definition}[{\cite{ono_intuitionistic_1977}}] \label{def:diamond-free-5s}
  Let
  $\name{5.1}_\Box \defeq \neg \Box \varphi \to \Box \neg \Box \varphi$,
  $\name{5.2}_\Box \defeq (\Box \varphi \to \Box \psi) \to \Box (\Box \varphi \to \Box \psi)$,
  $\name{5.3}_\Box \defeq \Box (\Box \varphi \lor \psi) \to (\Box \varphi \lor \Box \psi)$, and
  $\name{5.4}_\Box \defeq \Box \varphi \lor \Box \neg \Box \varphi$.
\end{definition}

He discussed the proper inclusion relationship between the extensions of $\Logic{iS4}_\Box$ with one or more axioms in Definition \ref{def:diamond-free-5s},
and proved the completeness and the finite model property of some of these logics, extending the work of Bull \cite{bull_modal_1965} on the finite model property of $\Logic{iS4}_\Box$.
Bull and Ono use the same semantics; Ono calls its structures \emph{M-frames} and \emph{M-models}.
This semantics is not the f-frame semantics used throughout this survey.
Moreover, Ono obtained the following result on the $\Diamond$-free fragment of $\Logic{IS5}$.

\begin{proposition}[{\cite[Lem.\,4.4]{ono_intuitionistic_1977}}] \label{prop:is5-box-fragment}
  $\Logic{IS5} \cap \Lang_\Box = \Logic{iS4}_\Box + \name{5.2}_\Box$.
\end{proposition}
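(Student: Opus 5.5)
The statement has two inclusions, and I would prove them separately. The inclusion $\Logic{iS4}_\Box + \name{5.2}_\Box \subseteq \Logic{IS5} \cap \MFb$ is syntactic. The axioms of $\Logic{iS4}_\Box$ are immediate, since $\Logic{IS5}$ contains $\Logic{iK}_\Box$, $\name{T}_\Box$, and $\name{4}_\Box$. Note that $\name{4}$ is derivable in $\Logic{IS5}$ by Proposition \ref{prop:ck-cube}~(3) and (5).

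For $\name{5.2}_\Box$, I would derive $\Box\Box\varphi$ from $\name{4}_\Box$ and $\Box\neg\Box\varphi$ from $\neg\Box\varphi$; the latter step needs the IML interaction axioms. Concretely:
\begin{itemize}
  \item From $\name{5}_\Box$ (i.e.\ $\Diamond\Box\varphi\to\Box\varphi$) we get $\neg\Box\varphi \to \neg\Diamond\Box\varphi$.
  \item By Proposition \ref{prop:ik-partial-duality-2}, $\neg\Diamond\Box\varphi \to \Box\neg\Box\varphi$.
  \item Now use $\name{FS2}$ in the form $(\Diamond\Box\varphi \to \Box\Box\psi)\to\Box(\Box\varphi\to\Box\psi)$. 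It then suffices to show $(\Box\varphi\to\Box\psi)\to(\Diamond\Box\varphi\to\Box\Box\psi)$.
  \item This follows because $\Diamond\Box\varphi\to\Box\varphi$ by $\name{5}_\Box$, and $\Box\psi\to\Box\Box\psi$ by $\name{4}_\Box$.
\end{itemize}
Alternatively, one can check soundness directly on IK-models with $\MRel$ an equivalence relation (Theorem \ref{thm:ik-cube-completeness}).

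The inclusion $\Logic{IS5} \cap \MFb \subseteq \Logic{iS4}_\Box + \name{5.2}_\Box$ is the hard one. I would argue it semantically, by contraposition. Suppose $\Logic{iS4}_\Box + \name{5.2}_\Box \nvdash \varphi$ with $\varphi \in \MFb$. Build the canonical f-model for this logic, in the style of Theorem \ref{thm:ik-modal-square-completeness}: worlds are prime theories, $\IRel$ is inclusion, and $\Gamma \MRel \Delta$ iff $\Box^{-1}\Gamma \subseteq \Delta$. The first step is the truth lemma for $\SatE$, which is standard, as in Došen's proof. By it, $\varphi$ fails at some world.

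The next step is to transform this model into an IK-model whose $\MRel$ is an equivalence relation, while preserving the truth of all $\Box$-formulas. The plan is as follows.
\begin{itemize}
  \item Restrict to the part generated from the refuting world.
  \item Replace $\MRel$ by an equivalence relation $\sim$ on suitable pairs, or clusters.
  \item Choose $\IRel$ so that the $\Diamond$-p and FS2 conditions hold.
  \item Show that the hybrid satisfaction $\SatIK$ of every $\Box$-formula agrees with the original $\SatE$. Since $\Box$ is evaluated intrinsically, this reduces to checking that the set of worlds reachable via $\IRel\cdot\MRel$ is preserved.
\end{itemize}

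The key lemma uses $\name{5.2}_\Box$. It should guarantee that the boxed formulas, together with the implications between boxed formulas, of a world are shared across its $\MRel$-cluster, and that every $\IRel$-extension of a world in the cluster can be matched by an extension of the whole cluster. This matching is exactly the FS2 condition together with $\Diamond$-p, which requires that $\IRel$-moves commute with $\MRel$-moves.

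I expect this frame-surgery step to be the main obstacle. Merely validating $\name{T}$, $\name{4}$ and $\name{5.2}$ does not obviously give a symmetric modal relation compatible with $\IRel$. My first attempt would be to model Ono's M-frames as IK-frames: take pairs $(w, C)$ with $C$ a cluster and $w\in C$, let $\MRel$ relate pairs within the same cluster, and let $\IRel$ move pairs upward cluster-wise. I would then verify the confluence conditions using the $\name{5.2}_\Box$-derived fact that $\Box\varphi\to\Box\psi$ is itself boxed. Should this fail, I would fall back on Ono's own M-model completeness for $\Logic{iS4}_\Box + \name{5.2}_\Box$ and translate M-models into IK-models with equivalence $\MRel$, where validity of $\name{T}$ and $\name{5}$ is immediate.
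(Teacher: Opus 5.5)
The inclusion $\Logic{iS4}_\Box+\name{5.2}_\Box\subseteq\Logic{IS5}\cap\MFb$ is fine. Your derivation of $\name{5.2}_\Box$ in $\Logic{IS5}$ (the $\name{FS2}$ instance $(\Diamond\Box\varphi\to\Box\Box\psi)\to\Box(\Box\varphi\to\Box\psi)$ combined with $\name{5}_\Box$ and $\name{4}_\Box$) is correct. Your first two bullets only yield $\name{5.1}_\Box$ and can be dropped.

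The converse inclusion, however, is not proved. The frame surgery is only a plan, which you yourself flag as the main obstacle. The fallback to a completeness theorem for $\Logic{iS4}_\Box+\name{5.2}_\Box$ over Ono's M-frames just imports the substance of the cited lemma. (The paper gives no proof of its own here, only the citation to Ono.) The pairs $(w,C)$ and the restriction to a generated submodel are not needed.

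The missing idea is a factorization of your canonical relation.
\begin{enumerate}
\item Keep the canonical worlds and ${\IRel}={\subseteq}$, and let $\Gamma\mathrel{E}\Delta$ iff $\Gamma$ and $\Delta$ contain the same boxed formulas. By $\name{4}_\Box$ and $\name{T}_\Box$ this is exactly ${\MRel}\cap{\MRel}^{-1}$. So sharing boxed formulas within a cluster is automatic and is not where $\name{5.2}_\Box$ enters.
\item What you actually need is ${\MRel}={\IRel}\cdot E$, and only ${\subseteq}$ is nontrivial. The key lemma is $\vdash\theta\to\Box\theta$ for every $\theta$ of the form $\bigwedge_i\Box\beta_i\to\bigvee_j\Box\gamma_j$.
\item $\name{5.2}_\Box$ covers only a single box on the right. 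For the disjunction you first need $\bigvee_j\Box\gamma_j\leftrightarrow\Box\bigvee_j\Box\gamma_j$, via $\name{4}_\Box$, $\name{M}_\Box$ and $\name{T}_\Box$; use $\bot\leftrightarrow\Box\bot$ when the disjunction is empty.
\item With the lemma, if $\Box^{-1}\Gamma\subseteq\Delta$ then $(\Gamma\cup(\Delta\cap\Box\MFb),\ \Box\MFb\setminus\Delta)$ is consistent. An inconsistency would put such a $\theta$ in $\Gamma$, hence $\Box\theta\in\Gamma$, hence $\theta\in\Delta$, forcing some $\Box\gamma_j\in\Delta$. A Lindenbaum extension then gives $\Gamma'\supseteq\Gamma$ with $\Gamma'\mathrel{E}\Delta$.
\item This gives the $\Box$-clause of the truth lemma for $\SatIK$ on $(W,\subseteq,E,V)$; the other half uses $\name{T}_\Box$.
\item It also gives $\Diamond$-p: if $\Gamma\subseteq\Gamma_1$ and $\Gamma\mathrel{E}\Delta$, then $\Box^{-1}\Delta\subseteq\Gamma_1$, so factorize. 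FS2 then follows by the symmetry of $E$.
\end{enumerate}
The result is an IK-model with $E$ an equivalence relation that refutes $\varphi$. Hence $\varphi\notin\Logic{IS5}$ by the soundness half of Theorem~\ref{thm:ik-cube-completeness}.
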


Došen \cite{dosen_models_1985} also discussed $\Logic{iS4}_\Box + \name{5.1}_\Box$ ($\Box\Logic{HS5}$ in his paper),
$\Logic{iS4}_\Box + \name{5.2}_\Box$ ($\Box\Logic{HS5.1}$),
$\Logic{iS4}_\Diamond + \name{5.1}_\Diamond: \Diamond \neg \Diamond \varphi \to \neg \Diamond \varphi$ ($\Diamond\Logic{HS5}$), and
$\Logic{iS4}_\Diamond + \name{5.2}_\Diamond: \Diamond (\Diamond \varphi \to \Diamond \psi) \to (\Diamond \varphi \to \Diamond \psi)$ ($\Diamond\Logic{HS5.1}$),
and proved the completeness of these logics with respect to certain classes of f-frames.

\begin{problem} \label{prob:is5-diamond-fragment}
  Does $\Logic{IS5} \cap \Lang_\Diamond = \Logic{iS4}_\Diamond + \name{5.2}_\Diamond$ hold?
\end{problem}

\begin{problem} \label{prob:unary-modal-cubes}
  What would be the weaker $\Diamond$/$\Box$-free variants of the $\name{B}$ axiom?
  Moreover, can we obtain the modal ``cubes'' of $\Diamond$/$\Box$-free logics with these weaker versions of $\name{5}$ and $\name{B}$ we have mentioned?
\end{problem}

Most of the recent results discussed next concern $\Diamond$-free extensions, especially provability logics.
For recent studies on the logics in the modal ``square'',
Van der Giessen \cite[Cor.\,3.4.3--3.4.4]{giessen_uniform_admissible_2022} proved the failure of UIP in $\Logic{iS4}_\Box$ and $\Logic{iK4}_\Box$,
De Groot and Shillito \cite{de_groot_intuitionistic_2025} gave a topological semantics that is sound and complete for $\Logic{iS4}_\Box$,
and Iemhoff \cite{iemhoff_terminating_2018,iemhoff_uniform_2019} proved that $\Logic{iKD}_\Box$ is decidable and enjoys CIP and UIP.

The logic $\Logic{iS4}_\Box$ also has type-theoretic applications through the Curry--Howard correspondence.
Davies and Pfenning \cite{davies_modal_2001} use a type system based on $\Logic{iS4}_\Box$ to analyze staged computation.
Nanevski, Pfenning, and Pientka \cite{nanevski_contextual_2008} develop contextual modal type theory,
a relativized extension of $\Logic{iS4}_\Box$ in which code may depend on an explicitly recorded context.

\begin{problem} \label{prob:ikd-box-lip}
  Does $\Logic{iKD}_\Box$ enjoy LIP?
\end{problem}

\begin{problem}[\cite{iemhoff_uniform_2019}] \label{prob:ik4-is4-interpolation}
  Do $\Logic{iK4}_\Box$ and $\Logic{iS4}_\Box$ enjoy CIP and LIP?
\end{problem}

\subsection{Provability logics}

\emph{Provability logic} studies the $\Box$-formulas that are valid
when $\Box$ is read as a \emph{provability predicate} $\operatorname{Pr}_T(x)$ for an arithmetical theory $T$.
Given a formula $\varphi \in \MFb$,
an \emph{arithmetical realization} replaces each propositional variable with an arithmetical sentence
and uses $\operatorname{Pr}_T(x)$ to interpret $\Box \psi$ as ``$\psi$ is provable in $T$.''
Then $\varphi$ belongs to the provability logic of $\operatorname{Pr}_T(x)$
if the resulting arithmetical sentence is provable in $T$
regardless of the choice of realization.
This is analogous to Kripke semantics:
an arithmetical realization plays the role of a valuation,
while a provability predicate provides an arithmetical ``implementation'' of the modality,
much as an accessibility relation provides a relational one.

\begin{definition}[e.g.\ \cite{artemov_provability_2005}] \label{def:provability-logic}
  Let $T$ be an arithmetical theory with a computable set of axioms
  and let $\operatorname{Pr}_T(x)$ be a provability predicate of $T$.
  An \emph{arithmetical realization} $*$ of $\MFb$ maps each propositional variable to an arithmetical sentence,
  commutes with the propositional connectives, and satisfies
  \[
    (\Box \varphi)^* \defeq \operatorname{Pr}_T(\ulcorner \varphi^* \urcorner).
  \]
  The \emph{provability logic of $\operatorname{Pr}_T(x)$} consists of the formulas $\varphi \in \MFb$
  such that $T \vdash \varphi^*$ for every arithmetical realization $*$.
\end{definition}

The best-known such logic is $\Logic{GL} = \Logic{K4} + \name{L}_\Box$,
the provability logic of the standard provability predicate for Peano arithmetic $\Logic{PA}$;
its characteristic axiom
$\name{L}_\Box: \Box(\Box \varphi \to \varphi) \to \Box \varphi$
is a modal form of L\"ob's theorem.
As $\Logic{PA}$ is based on classical first-order logic, it is natural to consider intuitionistic provability logics
when studying provability predicates in intuitionistic arithmetic.
In the standard arithmetical provability language considered here, $\Box$ is primitive
and $\Diamond$, when used, is defined as $\neg \Box \neg$ (\cite[Remark 5.1]{litak_lewis_2018}).
It is therefore natural to first consider the following $\Diamond$-free logics.

\begin{definition} \label{def:iglbox-islbox}
  We define
  \[
    \Logic{iGL}_\Box \defeq \Logic{iK4}_\Box + \name{L}_\Box
    \qquad\text{and}\qquad
    \Logic{iSL}_\Box \defeq \Logic{iGL}_\Box + \name{S}_\Box.
  \]
\end{definition}

There are two reasons to mention $\Logic{iSL}_\Box$.
The axiom $\name{S}_\Box: \varphi \to \Box \varphi$ is called the \emph{completeness principle};
under an arithmetical realization, its validity requires
$T \vdash A \to \operatorname{Pr}_T(\ulcorner A \urcorner)$ for every arithmetical sentence $A$.
Thus, following Visser \cite{visser_completenes_1982}, one may say that $T$ proves its own completeness.
As such, $\name{S}_\Box$ is sometimes referred to as the $\name{CP}$ axiom (e.g.\ \cite{litak_lewis_2018}).
Moreover, the classical logic $\Logic{GL} + \name{S}_\Box$ is ``degenerate'' in that it proves $\Box \bot$,
whereas $\Logic{iSL}_\Box$ does not.

As with the other $\Diamond$-free logics, $\Logic{iGL}_\Box$ and $\Logic{iSL}_\Box$
have a Kripke-like semantics based on f-frames, f-models, and the extrinsic satisfaction relation.

\begin{definition} \label{def:iglbox-frame-conditions}
  Let $\Model{F} = (W, \IRel, \MRel)$ be an f-frame, and put ${\MRel_\beta} \defeq (\MRel \cdot \IRel)$.
  When $\Model{F}$ is $\Box$-p, this is the modal relation of its $\Box$-condensation $\beta \Model{F}$.
  \begin{itemize}
    \item We write
          $\operatorname{Up}_{\IRel}(W) \defeq \set*{ A \subseteq W; \forall x,y\, (x \in A \tand x \IRel y \imp y \in A) }$
          for the set of all $\IRel$-upsets of $W$.
    \item We say $\Model{F}$ is \emph{$\MRel_\beta$-Noetherian}
          if for any $A \in \operatorname{Up}_{\IRel}(W)$ such that $A \ne W$,
          there is $w_A \in W \setminus A$ such that $\forall v \in W\, (w_A \MRel_\beta v \imp v \in A)$.
    \item We say $\Model{F}$ is \emph{$\MRel_\beta$-strong} if ${\MRel_\beta} \subseteq {\IRel}$,
          and is \emph{$\MRel$-strong} if ${\MRel} \subseteq {\IRel}$.
  \end{itemize}
\end{definition}

Here, the Noetherian condition on $\MRel_\beta$ says that every proper $\IRel$-upset has a $\MRel_\beta$-maximal point outside it,
which is an analogue of the converse well-foundedness condition for $\Logic{GL}$.
Also, much like seriality, the strongness conditions on $\MRel_\beta$ and $\MRel$ coincide.

\begin{proposition} \label{prop:sbox-frame-condition}
  Let $\Model{F} = (W, \IRel, \MRel)$ be an f-frame.
  \begin{enumerate}[label=(\arabic*)]
    \item $\Model{F}$ is $\MRel$-strong iff it is $\MRel_\beta$-strong.
    \item If $\Model{F}$ is $\Box$-p, then $\Model{F}$ is $\MRel$-strong iff
          $\Model{F} \ValidE \varphi \to \Box \varphi$ for every $\varphi \in \MFb$.
  \end{enumerate}
  \begin{proof}
    (1)
    ($\Rightarrow$)
    If $x \MRel_\beta y$, then there is $z \in W$ such that $x \MRel z \IRel y$.
    Then $x \IRel z \IRel y$ by $\MRel$-strongness, so $x \IRel y$ by the transitivity of $\IRel$.
    Thus ${\MRel_\beta} \subseteq {\IRel}$.
    ($\Leftarrow$) Trivial from the reflexivity of $\IRel$.

    (2)
    ($\Rightarrow$) Suppose that $\Model{F}$ is $\MRel$-strong, and let $\Model{M}$ be an f-model based on $\Model{F}$.
    Take $x, x' \in W$ and $\varphi \in \MFb$ such that $x \IRel x'$ and $x' \SatE_\Model{M} \varphi$.
    For every $y \in W$ such that $x' \MRel y$, $\MRel$-strongness gives $x' \IRel y$.
    Since $\Model{F}$ is $\Box$-p, $\SatE_\Model{M}$ is $\Box$-persistent by Proposition~\ref{prop:extrinsic-persistency};
    so $y \SatE_\Model{M} \varphi$.
    Therefore, $x' \SatE_\Model{M} \Box \varphi$, and so $x \SatE_\Model{M} \varphi \to \Box \varphi$.
    ($\Leftarrow$) Suppose that $\Model{F}$ is not $\MRel$-strong.
    Fix $x, y \in W$ such that $x \MRel y$ and $x \not \IRel y$, and fix $p \in \PropVar$.
    Define a valuation $V$ by
    $V(p) = \set*{ z \in W; x \IRel z }$ and $V(q) = \emptyset$ for every $q \ne p$,
    where each $V(r)$ is clearly upward closed with respect to $\IRel$,
    and let $\Model{M} = (\Model{F}, V)$.
    Then $x \SatE_\Model{M} p$ and $y \nVdash^e_\Model{M} p$.
    Since $x \MRel y$, we have $x \nVdash^e_\Model{M} \Box p$,
    and consequently $x \nVdash^e_\Model{M} p \to \Box p$.
  \end{proof}
\end{proposition}

\begin{theorem}[{\cite{ursini_modal_1979,litak_constructive_2014}}] \label{thm:iglbox-completeness}
  Let $\varphi \in \MFb$. Then:
  \begin{enumerate}[label=(\arabic*)]
    \item $\Logic{iGL}_\Box \vdash \varphi$ iff $\Model{F} \ValidE \varphi$ for every $\Box$-p, $\MRel_\beta$-transitive and $\MRel_\beta$-Noetherian f-frame $\Model{F}$.
    \item $\Logic{iSL}_\Box \vdash \varphi$ iff $\Model{F} \ValidE \varphi$ for every $\Box$-p, $\MRel_\beta$-transitive, $\MRel_\beta$-Noetherian, and $\MRel_\beta$-strong f-frame $\Model{F}$.
  \end{enumerate}
\end{theorem}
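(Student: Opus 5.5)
We prove both clauses with a soundness/completeness argument relative to the $\Box$-p f-frames, and we reduce everything to $\Box$-condensed, $\Box$-brilliant frames wherever possible.

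\textbf{Soundness.} By Proposition \ref{prop:condensation} and its corollary, $\Model{F} \ValidE \varphi$ iff $\beta\Model{F} \ValidE \varphi$ for $\varphi \in \MFb$. So it suffices to check the axioms on $\beta\Model{F}$, whose modal relation is exactly $\MRel_\beta$. The axioms of $\Logic{iK}_\Box$ are already covered by Theorem \ref{thm:ikb-ikd-completeness}.

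For $\name{4}_\Box$, transitivity of $\MRel_\beta$ gives the usual argument. For $\name{L}_\Box$, suppose $x \SatE \Box(\Box\varphi \to \varphi)$ but $x \nVdash^e \Box\varphi$. Let $A = \set{w; w \SatE \varphi}$. This is an $\IRel$-upset by $\Box$-persistency (Proposition \ref{prop:extrinsic-persistency}), and it is proper. Naively, Noetherianity yields $w_A \notin A$ with all $\MRel_\beta$-successors in $A$, but this $w_A$ need not be a successor of $x$. So we refine the choice: we apply Noetherianity to the upset
\[
A' = \set{w; w \SatE \varphi \text{ or } \neg(x \MRel_\beta w)},
\]
provided this set is an upset. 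That requires $\MRel_\beta$ to be upward closed in its second argument, which is exactly $\Box$-brilliance of $\beta\Model{F}$. The set $A'$ is proper, so we obtain $w \notin A'$, i.e.\ $x \MRel_\beta w$ and $w \nVdash^e \varphi$. Every $\MRel_\beta$-successor $v$ of $w$ satisfies $x \MRel_\beta v$ by transitivity, and lies in $A'$, hence $v \SatE \varphi$. Thus $w \SatE \Box\varphi$. Together with $w \SatE \Box\varphi \to \varphi$, this gives $w \SatE \varphi$, a contradiction.

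For $\name{S}_\Box$, use Proposition \ref{prop:sbox-frame-condition}.

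\textbf{Completeness.} We take the canonical f-model for $\Logic{iGL}_\Box$ (resp.\ $\Logic{iSL}_\Box$). Worlds are prime theories, $\IRel$ is inclusion, and $\Gamma \MRel \Delta$ iff $\Box^{-1}\Gamma \subseteq \Delta$. As in Theorem \ref{thm:ikb-ikd-completeness}, this is a $\Box$-p frame satisfying the truth lemma. Since this $\MRel$ is already upward closed in its second argument, $\MRel_\beta = \MRel$.

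Transitivity follows from $\name{4}_\Box$ as in Theorem \ref{thm:ik-modal-square-completeness}. Strongness follows from $\name{S}_\Box$: if $\varphi \in \Gamma$ then $\Box\varphi \in \Gamma$, so $\varphi \in \Delta$, which gives $\Gamma \subseteq \Delta$.

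\textbf{Main obstacle.} The real difficulty is Noetherianity, which fails on the full canonical frame because Löb logics are not compact. The standard remedy, following Ursini and Litak, is to avoid the full canonical model altogether. For a non-theorem $\varphi$, we build a finite model on prime theories relative to the finite subformula-closed set $\Sigma$ generated by $\varphi$ (a filtration). We define
\[
\Gamma \MRel \Delta \iff \set{\psi, \Box\psi; \Box\psi \in \Gamma \cap \Sigma} \subseteq \Delta \text{ and some } \Box\chi \in \Delta \setminus \Gamma,
\]
which is the usual strict GL-filtration.

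On a finite frame, Noetherianity reduces to converse well-foundedness of $\MRel_\beta$, i.e.\ irreflexivity plus transitivity. Both follow from the "new box" clause together with $\name{4}_\Box$. The key lemma is the $\Box$ case of the truth lemma: if $\Box\psi \notin \Gamma$, then there is a prime $\Delta$ with $\Gamma \MRel \Delta$ and $\psi \notin \Delta$. This is where $\name{L}_\Box$ is used: we choose $\Delta \ni \Box\psi$, which is consistent with $\neg\psi$ because otherwise $\Box(\Box\psi \to \psi)$ would be derivable in $\Gamma$. We also verify that $\Box$-p still holds, or instead pass directly to brilliant form via $\beta$. For $\Logic{iSL}_\Box$, we additionally check that $\name{S}_\Box$ forces $\MRel \subseteq \IRel$ in the filtered model.
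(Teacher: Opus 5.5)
The paper does not prove this theorem itself; it only cites Ursini and Litak. So your argument has to stand on its own. The overall plan is fine, but two steps fail as written.

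The plan is: reduce soundness to $\beta\Model{F}$ via Proposition~\ref{prop:condensation}, then prove completeness for $\Logic{iGL}_\Box$ with the finite ``new box'' model on $\Sigma$-prime theories. That model is $\Box$-condensed and $\Box$-brilliant (so $\MRel_\beta = \MRel$), transitive and irreflexive, and its $\Box$-case of the truth lemma is secured by $\name{4}_\Box$ and $\name{L}_\Box$. That part is correct.

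\textbf{Soundness of $\name{L}_\Box$.} Noetherianity applies only to $\IRel$-upsets. Your $A'$ is the union of the upset of $\varphi$-worlds with $W \setminus \MRel_\beta[x]$, where $\MRel_\beta[x] = \{ w : x \MRel_\beta w \}$.
\begin{itemize}
\item That complement is an upset only if $\MRel_\beta[x]$ is an $\IRel$-downset, which is the $\Diamond$-brilliance direction.
\item $\Box$-brilliance makes $\MRel_\beta[x]$ an upset, so its complement is a downset: you have the direction reversed.
\item Concretely, take $W = \{x, w, w'\}$, $\IRel$ the reflexive closure of $\{(w,w')\}$, $\MRel = \{(x,w')\}$, and $\varphi = p$ false everywhere. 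This frame is in the class, yet $w \in A'$, $w \IRel w'$ and $w' \notin A'$.
\end{itemize}
The repair is to use a downward closure instead. Let $B = \{ w : \exists y\,(w \IRel y,\ x \MRel_\beta y,\ y \nVdash^e \varphi) \}$. This is a nonempty $\IRel$-downset, so Noetherianity applied to the upset $W \setminus B$ gives $w \in B$ with no $\MRel_\beta$-successor in $B$. Fix a witness $y$ for $w$, and let $y \MRel_\beta v$.
\begin{itemize}
\item By condensation ($\IRel \cdot \MRel_\beta \subseteq \MRel_\beta$, a consequence of $\Box$-p), $w \MRel_\beta v$, so $v \notin B$.
\item By transitivity, $x \MRel_\beta v$. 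Hence $v \nVdash^e \varphi$ would put $v$ in $B$, witnessed by $v$ itself; so $v \SatE \varphi$.
\end{itemize}
Thus $y \SatE \Box\varphi$. Together with $y \SatE \Box\varphi \to \varphi$ this gives $y \SatE \varphi$, a contradiction.

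\textbf{Strongness for $\Logic{iSL}_\Box$.} In your filtered model, $\name{S}_\Box$ only forces $\theta \in \Delta$ for those $\theta \in \Gamma$ with $\Box\theta \in \Sigma$; nothing constrains the other members of $\Gamma$. For $\varphi = q \to \Box r$, the $\Sigma$-prime theories $\Gamma = \{q\}$ and $\Delta = \{\Box r,\ q \to \Box r\}$ satisfy $\Gamma \MRel \Delta$: $\Gamma$ contains no boxes, and $\Box r$ is new. But $\Gamma \not\subseteq \Delta$, so the model is not $\MRel_\beta$-strong.

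The fix is to add the clause $\Gamma \subseteq \Delta$ to the definition of $\MRel$ in the $\Logic{iSL}_\Box$ case. Condensation, brilliance, transitivity and irreflexivity are unaffected. The $\Box$-case of the truth lemma also survives. Suppose $\Gamma_0, \{\chi_i, \Box\chi_i\}_i, \Box\psi \vdash \psi$ with $\Gamma_0 \subseteq \Gamma$ finite and each $\Box\chi_i \in \Gamma$.
\begin{itemize}
\item $\name{M}_\Box$ and $\name{L}_\Box$ give $\Box\Gamma_0, \{\Box\chi_i, \Box\Box\chi_i\}_i \vdash \Box\psi$.
\item $\name{S}_\Box$ boxes $\Gamma_0$ and $\name{4}_\Box$ supplies $\Box\Box\chi_i$, so $\Gamma \vdash \Box\psi$.
\item Since $\Box\psi \in \Sigma$, this gives $\Box\psi \in \Gamma$, the required contradiction.
\end{itemize}
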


By Proposition~\ref{prop:sbox-frame-condition}, the $\MRel_\beta$-strongness condition in the second clause is precisely
the $\MRel$-strongness condition corresponding to $\name{S}_\Box$.

It is well known that classical $\Logic{GL}$ enjoys all of CIP, LIP, and UIP (cf.\ \cite{kurahashi_uniform_2020}),
which motivates the study of the corresponding properties for its intuitionistic variants.
As in the other $\Diamond$-free logics, proof-theoretic methods have been particularly useful for
$\Logic{iGL}_\Box$ and $\Logic{iSL}_\Box$.

\begin{theorem}[\cite{giessen_sequent_2021,shillito_new_2023,feree_mechanised_2024,giessen_igl_uniform_2026}] \label{thm:iglbox-islbox-proof-theory}
  $\Logic{iGL}_\Box$ and $\Logic{iSL}_\Box$ have cut-admissible Gentzen-style sequent calculi.
  Also, both logics enjoy CIP and UIP.
\end{theorem}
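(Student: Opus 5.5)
The plan is to handle both logics with one proof-theoretic pipeline: first give a cut-free Gentzen-style calculus, then read CIP off it by Maehara's method, then obtain UIP by a Pitts-style construction on a terminating variant of the calculus.

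\emph{The calculi.} For $\Logic{iGL}_\Box$ I would take a contraction-free G3-style version of $\Logic{LJ}$ (as used for $\Logic{iK}_\Box$ in Appendix~\ref{appendix:lip}) and replace the $\Box$-rule by the intuitionistic L\"ob rule
\[
  \frac{\Box\Gamma, \Gamma, \Box\varphi \imp \varphi}{\Sigma, \Box\Gamma \imp \Box\varphi}.
\]
For $\Logic{iSL}_\Box$ the whole antecedent is retained, in line with $\name{S}_\Box$:
\[
  \frac{\Gamma, \Box\varphi \imp \varphi}{\Gamma \imp \Box\varphi}.
\]
Equivalence with the Hilbert systems is routine. $\name{L}_\Box$ and $\name{S}_\Box$ are derivable. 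In the other direction, each rule is sound for the frames of Theorem~\ref{thm:iglbox-completeness}: the $\MRel_\beta$-Noetherian condition justifies the diagonal premise $\Box\varphi$, and for $\Logic{iSL}_\Box$ the $\MRel$-strongness (Proposition~\ref{prop:sbox-frame-condition}) justifies keeping $\Gamma$. The structural rules (weakening, contraction) are shown admissible in the usual way.

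\emph{Cut admissibility.} This is the main obstacle. The standard induction on the cut-formula size and the sum of heights breaks down in the principal case $\Box\varphi$ against the L\"ob rule. There, cutting against the diagonal premise reintroduces a cut on $\Box\varphi$ whose height does not decrease. I would first try Valentini's strategy. This uses a triple induction on the size of the cut formula, then the \emph{width} (the number of boxed formulas in the conclusion of the L\"ob rule that are not yet ``used''), then the height. In the principal case one first cuts $\Box\varphi$ against the left premise at smaller width, and only then cuts $\varphi$ at smaller size. If that bookkeeping fails in the intuitionistic single-succedent setting, the fallback is the approach of the mechanised proofs \cite{giessen_sequent_2021,feree_mechanised_2024}. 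There one proves height-preserving invertibility first and uses a well-founded measure combining formula size with a multiset of boxed subformulas. For $\Logic{iSL}_\Box$ the same scheme works and is simpler, because the context is not boxed-away.

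\emph{CIP and UIP.} For CIP I would use Maehara's method on the cut-free calculus, by induction on derivations of partitioned sequents $\Gamma_1 ; \Gamma_2 \imp \Delta$, exactly as in the $\Logic{iK}_\Box$ case. In the modal rule the interpolant $\chi$ of the premise is transformed into $\Box\chi$, since the diagonal formula $\Box\varphi$ goes to the side containing $\Delta$. For UIP I would follow Pitts and Iemhoff \cite{iemhoff_uniform_2019}. This needs a terminating G4-style calculus, obtained by refining left implication with Dyckhoff's rules and adding the L\"ob/strong rule. Termination is shown by a well-founded order on sequents, in which each modal step strictly decreases the number of unboxed-but-boxable subformulas. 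Admissibility of cut and contraction is then transferred to it. The propositional quantifiers $\exists p$ and $\forall p$ are defined by recursion along this order, clause by clause following the rules. The correctness proof, by induction on that order, then yields UIP, and CIP follows as already noted after Definition~\ref{def:ulip-uip}.
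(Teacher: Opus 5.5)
The paper gives no proof of this theorem; it only cites the literature. Your overall pipeline matches those works: a cut-free G3-style calculus, Maehara's method for CIP, and a Pitts-style construction on a terminating G4-style calculus for UIP. Your L\"ob rule and its Maehara case (interpolant $\Box\chi$) are also fine.

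\textbf{The $\Logic{iSL}_\Box$ calculus.} The calculus you give for $\Logic{iSL}_\Box$ is wrong. Suppose $\frac{\Gamma, \Box\varphi \imp \varphi}{\Gamma \imp \Box\varphi}$ is the only modal rule. Then boxed formulas in the antecedent are never decomposed. Consequently, a sequent whose antecedent consists only of boxed formulas and whose succedent is an atom has no cut-free derivation.
\begin{itemize}
  \item $\Box(p \to q), \Box p \imp \Box q$, which follows from $\name{M}_\Box$ and $\name{C}_\Box$, has no cut-free proof. The last rule must be the modal rule, and its premise $\Box(p\to q), \Box p, \Box q \imp q$ has exactly that shape.
  \item Likewise $\Box(\Box p \to p) \imp \Box p$, i.e.\ $\name{L}_\Box$, which you claim is derivable, has no cut-free proof: its premise is $\Box(\Box p \to p), \Box p \imp p$.
\end{itemize}
So this calculus cannot be both cut-admissible and complete for $\Logic{iSL}_\Box$.

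Keeping your L\"ob rule alongside the strong rule does not help either. The sequent $p, \Box(p \to q) \imp \Box q$ holds in $\Logic{iSL}_\Box$ and is derivable by a cut on $\Box p$. It has no cut-free proof, because the L\"ob rule discards $p$ and the strong rule cannot unbox $\Box(p \to q)$.

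The fix is that the premise of the $\Logic{iSL}_\Box$ rule must keep the context and also make its boxed part available unboxed, e.g.\ $\Gamma, \Box^{-1}\Gamma, \Box\varphi \imp \varphi$. With such a rule your Maehara step still works with the interpolant $\Box\chi$. However, your remark that cut admissibility is ``simpler'' for $\Logic{iSL}_\Box$ is unfounded. The diagonal formula $\Box\varphi$ in the premise produces the same non-decreasing cut on $\Box\varphi$ as in the L\"ob case, so it needs the same kind of extra measure (cf.\ \cite{shillito_new_2023}).

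\textbf{The UIP step.} Your UIP paragraph only restates Pitts' template and skips the places where the work lies.
\begin{itemize}
  \item A Dyckhoff-style calculus for these logics needs a dedicated left rule for implications $\Box\psi \to \theta$, which Dyckhoff's rules do not provide. Treating $\Box\psi$ as an atom is incomplete: the $\Logic{iK}_\Box$-theorem $\Box p, \Box(p \lor q) \to r \imp r$ then becomes underivable.
  \item The clauses defining the uniform interpolants for the modal rules must handle the diagonal formula so that the correctness induction goes through.
\end{itemize}
For $\Logic{iSL}_\Box$ this was carried out, mechanised, in \cite{feree_mechanised_2024}. For $\Logic{iGL}_\Box$ it is the subject of the recent \cite{giessen_igl_uniform_2026}. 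Saying that the recursion and its correctness proof ``then'' yield UIP does not supply that argument. To a lesser extent, your cut-admissibility paragraph has the same problem: it names two candidate strategies without carrying out either.
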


\begin{problem} \label{prob:iglbox-islbox-lip}
  Do $\Logic{iGL}_\Box$ and $\Logic{iSL}_\Box$ enjoy LIP?
\end{problem}

As in classical $\Logic{GL}$, the $\name{4}_\Box$ axiom follows from L\"ob's principle.

\begin{proposition} \label{prop:iglbox-proves-four}
  $\Logic{iK}_\Box + \name{L}_\Box \vdash \name{4}_\Box$.
  Consequently,
  $\Logic{iGL}_\Box = \Logic{iK}_\Box + \name{L}_\Box$
  and $\Logic{iK4}_\Box \subsetneq \Logic{iGL}_\Box$.
\end{proposition}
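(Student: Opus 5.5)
The plan is to imitate the classical derivation of $\name{4}_\Box$ from L\"ob's axiom, which instantiates $\name{L}_\Box$ with $\chi \defeq \varphi \land \Box \varphi$. We then need to check that every step is intuitionistically admissible, that is, that it uses only $\Int$ together with $\name{M}_\Box$, $\name{N}_\Box$ and $\name{C}_\Box$. No semantic argument should be needed for the main claim.

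First, from the $\Int$-theorem $(\varphi \land \Box\varphi) \to \Box\varphi$, the rule $\name{M}_\Box$ gives $\Box\chi \to \Box\Box\varphi$. So it suffices to derive $\Box\varphi \to \Box\chi$.

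Second, from the $\Int$-theorem $\chi \to \varphi$, $\name{M}_\Box$ gives $\Box\chi \to \Box\varphi$. Combining this with the tautology $\varphi \to (\Box\varphi \to \varphi \land \Box\varphi)$ by purely propositional reasoning, we obtain
\[
  \varphi \to (\Box\chi \to \chi),
\]
which is provable in $\Logic{iK}_\Box$. Applying $\name{M}_\Box$ to it yields $\Box\varphi \to \Box(\Box\chi \to \chi)$. The instance $\Box(\Box\chi \to \chi) \to \Box\chi$ of $\name{L}_\Box$ and syllogism then give $\Box\varphi \to \Box\chi$. A final syllogism with the first step gives $\Box\varphi \to \Box\Box\varphi$.

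The only point needing care is to confirm that no classical principle sneaks in, for instance in the propositional steps. They are all valid in $\Int$ (currying and conjunction introduction), so this is routine. This shows $\name{4}_\Box \in \Logic{iK}_\Box + \name{L}_\Box$, and hence $\Logic{iGL}_\Box = \Logic{iK4}_\Box + \name{L}_\Box = \Logic{iK}_\Box + \name{L}_\Box$.

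For the strict inclusion, $\Logic{iK4}_\Box \subseteq \Logic{iGL}_\Box$ is immediate. For non-equality I would observe that every axiom and rule of $\Logic{iK4}_\Box$ is classically valid, so $\Logic{iK4}_\Box \subseteq \Logic{K4}$. It is well known that $\Logic{K4} \nvdash \name{L}_\Box$: the single $\MRel$-reflexive point is transitive and falsifies $\Box(\Box p \to p) \to \Box p$ when $p$ is false. Alternatively, this one-point model with trivial $\IRel$ is a $\Box$-p f-model that is $\MRel_\beta$-transitive, so by Theorem \ref{thm:ik-modal-square-completeness} it is sound for $\Logic{iK4}_\Box$ and refutes $\name{L}_\Box$. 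Either route gives $\name{L}_\Box \notin \Logic{iK4}_\Box$.
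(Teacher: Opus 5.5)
Your proof is correct. It is the standard classical derivation, instantiating $\name{L}_\Box$ at $\varphi \land \Box\varphi$, which the paper invokes with ``as in classical $\Logic{GL}$'' but does not write out; you also rightly check that $\name{M}_\Box$ is only ever applied to theorems of $\Logic{iK}_\Box$ itself, so nothing classical enters. Your strict inclusion via the one-point $\MRel$-reflexive frame also works, whether you go through $\Logic{iK4}_\Box \subseteq \Logic{K4}$ or treat it as a $\Box$-p, $\MRel_\beta$-transitive f-frame.
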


There is, however, an important difference:
although $\Logic{GL}$ is the provability logic of the standard provability predicate for $\Logic{PA}$,
$\Logic{iGL}_\Box$ is not the provability logic of the standard provability predicate for Heyting arithmetic $\Logic{HA}$,
the intuitionistic analogue of $\Logic{PA}$;
the latter logic contains Leivant's principle
$\Box(\varphi \lor \psi) \to \Box(\Box \varphi \lor \psi)$,
which is not derivable in $\Logic{iGL}_\Box$ \cite{litak_lewis_2018}.
Still, $\Logic{iGL}_\Box$ is regarded as the basic logic for provability predicates in intuitionistic arithmetic.
$\Logic{iSL}_\Box$ and its extensions are used to investigate the completeness principle (e.g.\ \cite{visser_completenes_1982})
or $\Sigma_1$-provability (e.g.\ \cite{mojtahedi_sigma_2024}).
Also, Mojtahedi recently announced in his preprint \cite{mojtahedi_provability_2026}
that a certain extension $\Logic{iGLH}$ of $\Logic{iGL}_\Box$
is the provability logic of the standard provability predicate for $\Logic{HA}$.
The reader may consult \cite[Section 4]{visser_problems_2006} for open problems on the provability logics of intuitionistic arithmetic.

Variants of $\Logic{iGL}_\Box$ and $\Logic{iSL}_\Box$ have also been adopted in type theory.
Nakano \cite{nakano_modality_2000} introduces a \emph{modality for recursion} in a modal typing system with recursive types.
Birkedal, Møgelberg, Schwinghammer, and Støvring \cite{birkedal_first_2012} use a \emph{later operator}
to guard recursive definitions of terms, predicates, and types.
In these approaches, the modality ensures the existence and uniqueness of fixed points for suitably guarded type expressions.
Litak \cite[Author's Cut, Sec.\,3]{litak_constructive_2014} and Litak and Visser \cite[Sec.\,7.2]{litak_lewis_2018} provide an entry point to this literature
and relate these applications to intuitionistic provability logics.

The logics discussed above are formulated in $\MFb$ because $\Box$ is primitive
under the provability interpretation.
One may also ask for an intuitionistic version of classical $\Logic{GL}$
in which both $\Box$ and $\Diamond$ are primitive.
The behavior of $\Diamond$ is not determined by that of $\Box$,
since their classical duality is not intuitionistically valid.
Das, van der Giessen, and Marin \cite{das_intuitionistic_2024} approach this question
by applying Simpson's methodology for IMLs to classical $\Logic{GL}$.
Recall that the standard translation regards modal logic as a fragment of first-order logic,
with $\Box$ and $\Diamond$ corresponding to universal and existential quantification over the accessibility relation.
Simpson's approach in \cite[Sections 2.2 and 5.2]{simpson_proof_1994} evaluates the resulting first-order formulas in what he calls \emph{IL-models},
that is, Kripke models for intuitionistic first-order logic.
For the language of the standard translation, such a model assigns to each intuitionistic world $w$
an ordinary first-order structure
\[
  \mathscr{M}_w = (D_w, \MRel_w, (V_{p,w})_{p \in \PropVar})
\]
so that, for any $w \IRel w'$,
\[
  D_w \subseteq D_{w'}, \qquad
  {\MRel_w} \subseteq {\MRel_{w'}}, \qquad
  V_{p,w} \subseteq V_{p,w'} \quad (p \in \PropVar).
\]
Thus, if $\rho$ is an assignment into $D_w$ and $y$ is fresh,
the first-order forcing clauses applied to the standard translation give
\begin{align*}
  w, \rho \Vdash \mathrm{ST}_x(\Box \varphi)
  &\tiff
    \forall w' \IRelRev w\, \forall d \in D_{w'}\,
    \bigl(\rho(x) \MRel_{w'} d \imp
    (w', \rho[y \mapsto d]) \Vdash \mathrm{ST}_y(\varphi)\bigr), \\
  w, \rho \Vdash \mathrm{ST}_x(\Diamond \varphi)
  &\tiff
    \exists d \in D_w\,
    \bigl(\rho(x) \MRel_w d \tand
    (w, \rho[y \mapsto d]) \Vdash \mathrm{ST}_y(\varphi)\bigr).
\end{align*}
The clause for $\Box$ therefore considers every later world and its enlarged domain,
whereas the witness for $\Diamond$ belongs to the domain of the current world.

Das, van der Giessen, and Marin \cite[Section 6]{das_intuitionistic_2024}
develop the Kripke predicate semantics for $\Logic{IGL}$
by requiring every $\MRel_w$ to be transitive.
Moreover, there must be no infinite path alternating between
retaining a domain element while moving to a later world
and taking an $\MRel_w$-step within the current domain, as illustrated in Figure \ref{fig:igl-il-model}.

\begin{figure}[h]
  \centering
  \begin{tikzpicture}[x=1.8cm, y=1.35cm]
    \draw[rounded corners] (-0.5,-0.35) rectangle (2.5,0.35);
    \draw (-0.5,0) node[left]{$D_{w_0}$};
    \fill (0,0) circle (2pt);
    \draw (0,0) node[below]{$d_0$};

    \draw[rounded corners] (-0.5,0.65) rectangle (2.5,1.35);
    \draw (-0.5,1) node[left]{$D_{w_1}$};
    \fill (0,1) circle (2pt);
    \draw (0,1) node[above left]{$d_0$};
    \fill (2,1) circle (2pt);
    \draw (2,1) node[above right]{$d_1$};

    \draw[rounded corners] (-0.5,1.65) rectangle (2.5,2.35);
    \draw (-0.5,2) node[left]{$D_{w_2}$};
    \fill (0,2) circle (2pt);
    \draw (0,2) node[above left]{$d_0$};
    \fill (2,2) circle (2pt);
    \draw (2,2) node[above right]{$d_1$};

    \draw[thick, ->, rel] (0,0) to (0,1);
    \draw (-0.05,0.5) node[left]{$\IRel$};
    \draw[thick, ->, rel, squig] (0,1) to (2,1);
    \draw (1,1) node[below]{$\MRel_{w_1}$};
    \draw[thick, ->, rel] (2,1) to (2,2);
    \draw (2.05,1.5) node[right]{$\IRel$};
  \end{tikzpicture}
  \caption{The beginning of an alternating path in an IL-model, where $w_0 \IRel w_1 \IRel w_2$.}
  \label{fig:igl-il-model}
\end{figure}

In the corresponding birelational model, each pair $(w,d)$ is regarded as a world,
and the two kinds of steps above become the intuitionistic and modal relations, respectively.
Thus, the condition in the birelational semantics is that
the composite of the intuitionistic and modal relations is converse well-founded
\cite[Sections 4 and 6]{das_intuitionistic_2024}.

\begin{definition}[{\cite[Def.\,4.7]{das_intuitionistic_2024}}] \label{def:igl-via-semantics}
  We say an IK-frame $\Model{F} = (W, \IRel, \MRel)$ is \emph{composite Noetherian}
  if $({\IRel} \cdot {\MRel})$ is converse well-founded,
  that is, if there are no sequences $(x_n)_{n < \omega}$ and $(y_n)_{n < \omega}$ in $W$
  such that $x_n \IRel y_n \MRel x_{n+1}$ for every $n < \omega$.
  Since $\IRel$ is reflexive, $\MRel$ is also converse well-founded on every composite Noetherian frame.
  An IK-frame is said to be an \emph{IGL-frame} if it is composite Noetherian and $\MRel$-transitive.
  We let $\mathcal{B}\Logic{IGL}$ be the set of all formulas in $\MF$ that are h-valid on every IGL-frame.
\end{definition}

The frame condition above is not first-order definable,
so the usual method of adding a first-order relational rule to a labelled calculus does not apply.
Das, van der Giessen, and Marin \cite[Sections 3 and 4]{das_intuitionistic_2024}
instead use a non-well-founded labelled calculus.
Their construction follows Shamkanov's non-well-founded proof-theoretic treatment of $\Logic{GL}$
\cite{shamkanov_circular_2014},
while its trace-based progress condition comes from Simpson's work on cyclic arithmetic
\cite{simpson_cyclic_2017}.
They first apply this method to the standard labelled calculus
$\ell\Logic{K4}$ for $\Logic{K4}$.
An \emph{$\infty$-proof} in $\ell\Logic{K4}$ is a possibly non-well-founded derivation
satisfying the trace-based progress condition:
every infinite branch must have a trace that follows relational atoms infinitely often.
They prove that, for every $\varphi \in \MF$,
$\Logic{GL} \vdash \varphi$ iff
there is an $\infty$-proof in $\ell\Logic{K4}$ of $\imp x : \varphi$ for some label $x$
\cite[Props.\,3.6 and 3.7]{das_intuitionistic_2024}.
The intuitionistic system is then obtained by restricting the succedent.

\begin{definition}[{\cite[Defs.\,4.1 and 4.2]{das_intuitionistic_2024}}]
  \label{def:igl-via-non-well-founded-calculus}
  Let $\ell\Logic{IK4}$ be the restriction of $\ell\Logic{K4}$
  to sequents with exactly one formula in the succedent.
  We let $\ell\Logic{IGL}$ be the class of all $\infty$-proofs in $\ell\Logic{IK4}$,
  and write $\ell\Logic{IGL} \vdash \varphi$ if there is an $\infty$-proof
  of the labelled sequent $\imp x : \varphi$ for some label $x$.
\end{definition}

\begin{theorem}[{\cite[Cor.\,5.6 and Thm.\,8.9]{das_intuitionistic_2024}}]
  \label{thm:igl-calculus-semantics}
  For every $\varphi \in \MF$,
  $\ell\Logic{IGL} \vdash \varphi$ iff $\varphi \in \mathcal{B}\Logic{IGL}$.
\end{theorem}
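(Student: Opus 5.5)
The plan is to prove the two directions separately: soundness of $\infty$-proofs with respect to IGL-frames, then completeness via a countermodel built from failed proof search. Throughout, I interpret labelled sequents in IGL-models in the style of Simpson's soundness proof for the labelled calculus for $\Logic{IK}$. An interpretation assigns to each label $x$ a world $\sigma(x)$. A relational atom $x \MRel y$ is satisfied when $\sigma(x) \MRel \sigma(y)$, and a labelled formula $x:\psi$ is satisfied when $\sigma(x) \SatIK \psi$. A sequent fails if some interpretation satisfies every antecedent atom and formula but not the succedent.

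\emph{Soundness} ($\Rightarrow$). First I check that every rule of $\ell\Logic{IK4}$ is \emph{locally} sound on IGL-frames, in the following sense. If the conclusion fails at an interpretation $\sigma$, then some premise fails at an interpretation $\sigma'$ that agrees with $\sigma$ up to moving along $\IRel$.
\begin{itemize}
\item For the right rules of $\to$ and $\Box$, the new witnesses come from the forcing clauses: a later world $\sigma(x) \IRel x'$, and for $\Box$ also an $\MRel$-successor $y'$ of $x'$. All earlier labels are reinterpreted by pushing them upward, which is possible because $\SatIK$ is persistent on $\Diamond$-p models (Proposition~\ref{prop:hybrid-persistency}).
\item Relational atoms must survive this pushing. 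This uses the $\Diamond$-p condition for moving the source of an atom upward and the FS2 condition for moving the target upward.
\item The rule for $\name{4}$ is sound by $\MRel$-transitivity.
\end{itemize}
Now suppose an $\infty$-proof of $\imp x:\varphi$ exists but $\varphi$ fails somewhere. Repeated use of local soundness yields an infinite branch of failing sequents together with interpretations $\sigma_0, \sigma_1, \dots$, in which each $\sigma_{n+1}$ is an $\IRel$-successor of $\sigma_n$ pointwise. The progress condition gives a trace on this branch following relational atoms infinitely often. Reading off the worlds along the trace produces a sequence $x_n \IRel y_n \MRel x_{n+1}$, with $\MRel$-steps occurring infinitely often and the intervening steps absorbed into $\IRel$ by transitivity. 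This contradicts composite Noetherianity. The delicate point is to set up the bookkeeping so that the worlds visited along a trace really form such an alternating chain, rather than merely a sequence related by the relational atoms at different stages.

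\emph{Completeness} ($\Leftarrow$). I argue contrapositively: assume there is no $\infty$-proof of $\imp x:\varphi$ and build an IGL-countermodel.
\begin{enumerate}
\item Run a fair proof search in $\ell\Logic{IK4}$. The search is an alternating game: the invertible rules are applied systematically, while the non-invertible choices are existential, namely which antecedent implication to attack and what to place in the single succedent.
\item Since the progress condition is $\omega$-regular, the game is determined. Absence of an $\infty$-proof therefore yields a refuting strategy. Every play consistent with this strategy is either a saturated, non-axiomatic finite sequent or an infinite branch with no progressing trace.
\item Following Simpson, take as worlds the pairs consisting of a position in this strategy and a label occurring there. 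Set $(s,x) \IRel (s',x)$ when $s'$ extends $s$. Set $(s,x) \MRel (s,y)$ when $x \MRel y$ occurs in $s$. Let $V(p)$ be the set of $(s,x)$ such that $x:p$ occurs in the antecedent of $s$.
\item Saturation is then expected to give the $\Diamond$-p and FS2 conditions, and closure under the $\name{4}$ rule gives $\MRel$-transitivity.
\item A truth lemma by induction on formulas shows that antecedent formulas are forced and the succedent is not. The proof follows the IK case.
\end{enumerate}

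The step I expect to be the main obstacle is verifying composite Noetherianity of the countermodel. An infinite chain $x_n \IRel y_n \MRel x_{n+1}$ in the model corresponds to a sequence of positions and labels that moves forward in the play and follows relational atoms. This sequence must be turned into a progressing trace along a single play of the refuting strategy, which is impossible by construction. To do this, I would first restrict the model to positions along plays and use that $\IRel$ only extends plays, so that an infinite $\IRel\cdot\MRel$ chain lies inside one infinite play. Then I would use K\"onig's lemma on the finitely branching trace graph to extract a genuine trace that follows relational atoms infinitely often.
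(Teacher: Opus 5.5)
The soundness half is the standard argument and is fine. One detail needs adjusting. Because the $\name{4}$ rule makes the graph of relational atoms a DAG rather than a tree, push labels upward only along the creation tree, and recover the remaining atoms by $\MRel$-transitivity of the frame. With that, the trace bookkeeping is routine: at each step $\sigma_n(l_n) \IRel \sigma_{n+1}(l_n)$, followed by $\sigma_{n+1}(l_n) \MRel \sigma_{n+1}(l_{n+1})$ whenever the trace follows an atom.

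\textbf{Step 2 is a genuine gap.} The completeness half rests on it, and it is unjustified. There is no finite alphabet here. $\Box$R and $\Diamond$L keep introducing fresh labels, and one label can acquire infinitely many fresh successors along a single play (e.g.\ each time $x:\Box A$ recurs as succedent after another attack on $x:\Box A \to B$). So the set of Prover-winning plays is not $\omega$-regular. Roughly, these are the plays whose accumulated relational graph contains an infinite path. That is an existential quantification over infinite label sequences, so the set is not evidently Borel either. Neither B\"uchi--Landweber nor Martin's theorem applies off the shelf.

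Without determinacy, the absence of an $\infty$-proof only lets Refuter keep every sequent unprovable. An infinite play through unprovable sequents can still carry a progressing trace. Worse, determinacy of this game is a \emph{consequence} of the theorem: a countermodel yields a refuting strategy by exactly your soundness argument (keep an interpretation falsifying the current sequent). So step 2 presupposes the core of what is to be proved. The paper only quotes the result. The cited source proves soundness by the standard argument you give, but describes completeness as needing a detour through several intermediate characterisations of $\Logic{IGL}$, not a direct refutation search in $\ell\Logic{IK4}$.

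\textbf{The truth lemma fails for your model.} Even granting a refuting strategy, the model of step 3 fails it, because finite positions are not saturated:
\begin{enumerate}
\item If $x:\Box p$ and $x \MRel y$ occur in $s$ but $\Box$L has not yet been applied, then $(s,y) \notin V(p)$. Hence $(s,x) \not\SatIK \Box p$, although $x:\Box p$ is in the antecedent.
\item If the strategy answers an attack on $x:\varphi \to \psi$ with the right premise, $x:\psi$ appears only at a later position. The usual argument then yields $\psi$ there, not at $(s,x)$.
\item $\MRel$ is not transitive before the $\name{4}$ rule fires.
\end{enumerate}
Worlds must be restricted to positions saturated under the invertible left rules, with $x:\psi$ present unless the strategy answers the attack on $x:\varphi\to\psi$ with the left premise. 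You then need to show that such saturation phases terminate along plays of the strategy. One route: an infinite phase with non-redundant moves creates infinitely many labels via $\Diamond$L. Each label carries finitely many $\Diamond$-formulas, so K\"onig gives an infinite creation chain, hence a progressing trace.

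By contrast, the step you single out as the main obstacle is easy in your setup. $\IRel$ only extends positions and atoms persist. So an infinite $\IRel \cdot \MRel$ chain either stays inside one finite, acyclic sequent, which is impossible, or runs along a single play, where it already is a progressing trace.
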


\begin{definition}[{\cite[Def.\,9.1]{das_intuitionistic_2024}}] \label{def:igl}
  We define $\Logic{IGL}$ to be the common logic in Theorem~\ref{thm:igl-calculus-semantics}, that is,
  \[
    \Logic{IGL} \defeq \mathcal{B}\Logic{IGL}
    = \set{ \varphi \in \MF; \ell\Logic{IGL} \vdash \varphi }.
  \]
\end{definition}

The analogue of Theorem \ref{thm:modal-negative-embedding} also holds for $\Logic{IGL}$.

\begin{proposition}[{\cite[p.\,22:16]{das_intuitionistic_2024}}]
  For any $\varphi \in \MF$, $\Logic{GL} \vdash \varphi$ iff $\varphi^N \in \Logic{IGL}$.
\end{proposition}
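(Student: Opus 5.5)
The plan follows the proof of Theorem \ref{thm:modal-negative-embedding}, with semantic arguments on IGL-frames replacing derivations in $\Logic{IK}$. $\Logic{IGL}$ is defined semantically as $\mathcal{B}\Logic{IGL}$, so I would work with h-validity on IGL-frames throughout.

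First, I would record the structural facts about $\mathcal{B}\Logic{IGL}$.

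\begin{itemize}
  \item It is closed under modus ponens and uniform substitution, since frame validity is.
  \item It contains $\Logic{IK}$. Every IGL-frame is an IK-frame, so soundness of $\Logic{IK}$ (Proposition \ref{prop:ik-model-validates-cdia-and-fs2} and the routine cases) applies.
  \item It is closed under $\name{M}_\Box$.
  \item h-satisfaction is persistent on IGL-frames, by Proposition \ref{prop:hybrid-persistency}.
  \item $\neg\neg\Box\varphi \to \Box\neg\neg\varphi$ is h-valid on IGL-frames. The semantic proof of Proposition \ref{prop:ik-nnbox-to-boxnn} uses only the FS2 and $\Diamond$-p conditions, so it transfers verbatim.
\end{itemize}

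From these facts, an induction on $\psi$ gives $\neg\neg\psi^N \to \psi^N \in \Logic{IGL}$ exactly as in the sketch of Theorem \ref{thm:modal-negative-embedding}. The $\Diamond$ case is automatic, since $(\Diamond\varphi)^N$ is a negation.

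For the forward direction, I would take $\Logic{GL} = \Logic{K}_\Box + \name{Dual}_\Box + \name{L}_\Box$; this suffices because $\name{4}_\Box$ follows from $\name{L}_\Box$. I would then induct on a $\Logic{GL}$-derivation of $\varphi$ and show $\varphi^N \in \Logic{IGL}$.

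\begin{itemize}
  \item The propositional axioms, $\name{M}_\Box$, $\name{N}_\Box$, $\name{C}_\Box$ and modus ponens are handled as for $\Logic{K}$ and $\Logic{IK}$, using $\neg\neg$-stability where the classical reasoning needs it.
  \item $(\name{Dual}_\Box)^N$ is $\neg\Box\neg\varphi^N \leftrightarrow \neg\Box\neg\neg\neg\varphi^N$. This is already in $\Logic{IK}$, via $\name{M}_\Box$ and $\neg\varphi \leftrightarrow \neg\neg\neg\varphi$.
  \item The new case is $(\name{L}_\Box)^N = \Box(\Box\psi \to \psi) \to \Box\psi$ with $\psi = \varphi^N$. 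I expect this to be the main obstacle.
\end{itemize}

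For the L\"ob case, I would argue semantically. Fix an IGL-model and a world $w$ with $w \SatIK \Box(\Box\psi \to \psi)$, and suppose for contradiction that $w \not\SatIK \Box\psi$.

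\begin{enumerate}
  \item Let $S$ be the set of worlds $y$ with $w \IRel x \MRel y$ for some $x$ and $y \not\SatIK \psi$. By assumption $S$ is nonempty.
  \item By converse well-foundedness of $(\IRel\cdot\MRel)$, choose $y_0 \in S$ with no $(\IRel\cdot\MRel)$-successor in $S$.
  \item I claim every $z$ with $y_0 \IRel x' \MRel z$ is again one composite step from $w$. From $x \MRel y_0 \IRel x'$, FS2 gives $x''$ with $x \IRel x'' \MRel x'$. Transitivity of $\MRel$ then gives $x'' \MRel z$, and transitivity of $\IRel$ gives $w \IRel x''$.
  \item By maximality of $y_0$, each such $z$ satisfies $\psi$, so $y_0 \SatIK \Box\psi$.
  \item The hypothesis on $w$ gives $y_0 \SatIK \Box\psi \to \psi$, so $y_0 \SatIK \psi$, contradicting $y_0 \in S$.
\end{enumerate}

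The points to verify carefully are the use of FS2 together with $\MRel$-transitivity in step 3. No persistency of $\psi$ is needed there beyond what the clause for $\Box$ already provides.

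For the converse, suppose $\varphi^N \in \Logic{IGL}$. Any classical frame with $\MRel$ transitive and conversely well-founded becomes an IGL-frame by taking $\IRel$ to be the identity. On such frames $\SatIK$ coincides with classical satisfaction, so $\varphi^N$ is valid on all finite transitive irreflexive frames. Completeness of $\Logic{GL}$ then gives $\Logic{GL} \vdash \varphi^N$. Since $\Logic{GL} \vdash \varphi^N \leftrightarrow \varphi$ classically, we conclude $\Logic{GL} \vdash \varphi$.
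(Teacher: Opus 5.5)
Your proof is correct and follows essentially the route the paper intends: the paper gives no proof of its own, only citing Das, van der Giessen and Marin and presenting the result as the analogue of Theorem~\ref{thm:modal-negative-embedding}, and you transfer exactly that argument while correctly supplying the two $\Logic{IGL}$-specific facts: h-validity of $\name{L}_\Box$ on IGL-frames (via FS2, $\MRel$-transitivity and converse well-foundedness of ${\IRel}\cdot{\MRel}$), and $\Logic{IGL} \subseteq \Logic{GL}$ (via frames whose $\IRel$ is the identity). The only slip is cosmetic: since $(\neg\chi)^N = \neg\chi^N$, the translation $(\name{Dual}_\Box)^N$ is literally $\neg\Box\neg\varphi^N \leftrightarrow \neg\Box\neg\varphi^N$, so that case is trivial and does not need $\neg\varphi \leftrightarrow \neg\neg\neg\varphi$.
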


Unlike in classical $\Logic{GL}$, L\"ob's axiom and its classically equivalent diamond formulation are not equivalent in $\Logic{IGL}$:
$\Logic{IGL}$ has $\Box(\Box p \to p) \to \Box p$ but not
$\Diamond p \to \Diamond(p \land \Box\neg p)$
\cite[Examples 4.3 and 4.8]{das_intuitionistic_2024}.
Also, Almeida, Bezhanishvili, and Lemal proved in a recent preprint
that $\Logic{IGL}$ does not enjoy CIP, and, consequently, neither LIP nor UIP
\cite[Thm.\,4.6]{almeida_fischer-servi_2026}.

Aguilera and Pacheco \cite{aguilera_intuitionistic_2025} introduced a cyclic labelled calculus for $\Logic{IGL}$,
whose proofs are finite trees equipped with progressing back-links.
They proved that this calculus determines the same logic as $\Logic{IGL}$ defined above
\cite[Thm.\,1]{aguilera_intuitionistic_2025}.
Since its proofs are finite objects, this also shows that $\Logic{IGL}$ is recursively enumerable
\cite[Thm.\,2]{aguilera_intuitionistic_2025}.
However, the following problem remains open.

\begin{problem}[{\cite[p.\,22:16]{das_intuitionistic_2024}; \cite[Question~3]{aguilera_intuitionistic_2025}}]
  \label{prob:igl-finite-axiomatization}
  Does $\Logic{IGL}$ have a finite Hilbert-style axiomatization?
  In particular, is $\Logic{IGL} = \Logic{IK4} + \name{L}_\Box$,
  or are further axioms involving $\Diamond$ required?
\end{problem}

\subsection{Heyting--Lewis logics}

The \emph{Lewis arrow} $\lewis$, also called \emph{strict implication},
is a binary modal connective introduced by Lewis.
The formula $\varphi \lewis \psi$ expresses that
it is impossible for $\varphi$ to be true while $\psi$ is false.
Historically, $\lewis$ predates the unary modal operator $\Box$,
but it has largely given way to the more convenient unary operator in classical modal logic,
where $\varphi \lewis \psi$ can be expressed as $\Box(\varphi \to \psi)$.
In an intuitionistic setting, however,
the satisfaction clause for $\varphi \lewis \psi$ compares $\varphi$ and $\psi$ at the modal successors themselves,
whereas $\Box(\varphi \to \psi)$ also quantifies over their intuitionistic successors;
thus, the two formulas need not be equivalent.
On the other hand, $\Box \varphi$ can still be defined as $\top \lewis \varphi$ even in an intuitionistic setting.
An intuitionistic logic with $\lewis$ as a primitive modal connective is often called \emph{Heyting--Lewis logic}.

\begin{definition} \label{def:lewis-language}
  Let $\Lang_\lewis$ be the language obtained from $\PF$ by adding the binary connective $\lewis$,
  and let $\Box \varphi$ abbreviate $\top \lewis \varphi$.
  We thereby identify $\MFb$ with a sublanguage of $\Lang_\lewis$.
\end{definition}

One important interpretation of the Lewis arrow is \emph{preservativity},
a generalization of the provability interpretation from the previous subsection.
Under the preservativity interpretation, $\varphi \lewis \psi$ says that every sufficiently simple arithmetical assumption
from which $T$ proves the realization of $\varphi$ also allows $T$ to prove the realization of $\psi$.
Here, the ``sufficiently simple'' assumptions are the $\Sigma_1$-sentences,
which may roughly be thought of as existential statements whose witnesses can be checked by a finite computation.
The modal-logical treatment of preservativity is already present in Visser \cite{visser_substitutions_2002},
while the name \emph{preservativity logic} was subsequently introduced by Iemhoff \cite{iemhoff_preservativity_2003}.

\begin{definition}[{e.g.\ \cite[Sec.\,5]{litak_lewis_2018}}] \label{def:sigma1-preservativity}
  Let $T$ be an arithmetical theory with a computable set of axioms,
  and let $A$ and $B$ be arithmetical sentences.
  We write $A \lewis_T B$ if, for every $\Sigma_1$-sentence $S$,
  \[
    T \vdash S \to A \quad \Longrightarrow \quad T \vdash S \to B.
  \]
  Let $\operatorname{Pres}_T(x,y)$ be an arithmetical formula formalizing this relation.
  An \emph{arithmetical realization} $*$ of $\Lang_\lewis$ maps each propositional variable to an arithmetical sentence,
  commutes with the propositional connectives, and satisfies
  \[
    (\varphi \lewis \psi)^*
    \defeq
    \operatorname{Pres}_T(\ulcorner \varphi^* \urcorner,\ulcorner \psi^* \urcorner).
  \]
  The \emph{($\Sigma_1$-)preservativity logic of $\operatorname{Pres}_T(x,y)$}
  consists of the formulas $\varphi \in \Lang_\lewis$
  such that $T \vdash \varphi^*$ for every arithmetical realization $*$.
\end{definition}

It follows immediately from Definition~\ref{def:sigma1-preservativity} that
\[
  \top \lewis_T A \quad \Longleftrightarrow \quad T \vdash A.
\]
Litak and Visser show that, under their arithmetization for $\Logic{HA}$,
the derived modality $\Box \varphi = \top \lewis \varphi$ recovers the ordinary provability predicate
\cite[Sec.\,5.4.1]{litak_lewis_2018}.
Moreover, $T \vdash A \to B$ implies $A \lewis_T B$,
and $A \lewis_T B$ together with $B \lewis_T C$ implies $A \lewis_T C$.
These properties correspond to the rule $\name{N}_a$ and the axiom $\name{Tr}$ below.

\begin{definition}[{\cite[Sec.\,4.1]{litak_lewis_2018}}] \label{def:basic-lewis-logics}
  Consider the following rule and axiom schemes:
  \begin{align*}
    \name{N}_a  &: \quad \frac{\varphi \to \psi}{\varphi \lewis \psi}, \\
    \name{Tr}   &: \quad (\varphi \lewis \psi) \to ((\psi \lewis \chi) \to (\varphi \lewis \chi)), \\
    \name{K}_a  &: \quad (\varphi \lewis \psi) \to ((\varphi \lewis \chi) \to (\varphi \lewis (\psi \land \chi))), \\
    \name{Di}   &: \quad (\varphi \lewis \chi) \to ((\psi \lewis \chi) \to ((\varphi \lor \psi) \lewis \chi)), \\
    \name{Box}  &: \quad (\varphi \lewis \psi) \to \Box(\varphi \to \psi).
  \end{align*}
  We define:
  \begin{align*}
    \Logic{HLC}^\flat  &\defeq \Int + \name{N}_a + \name{Tr} + \name{K}_a, &
    \Logic{HLC}^\sharp &\defeq \Logic{HLC}^\flat + \name{Di}, \quad \\
    \Logic{i-Box}      &\defeq \Logic{HLC}^\sharp + \name{Box}.
  \end{align*}
  Following the notation in \cite{visser_lewis_2024,groot_relational_2026},
  a $\name{Di}$-free logic is generally marked with a superscript $\flat$,
  and its extension by $\name{Di}$ with a superscript $\sharp$.\footnote{
    Litak and Visser \cite{litak_lewis_2018} denote $\Logic{HLC}^\flat$ and $\Logic{HLC}^\sharp$
    by $\Logic{iA}^-$ and $\Logic{iA}$, respectively.
    Accordingly, their flat versions of the extensions below carry a superscript minus,
    while their sharp versions carry no superscript.}
\end{definition}

The axiom $\name{Di}$ states that if both $\varphi$ and $\psi$ preserve $\chi$,
then $\varphi \lor \psi$ also preserves $\chi$.
Its arithmetical validity uses a strengthened form of the disjunction property under $\Sigma_1$-assumptions.
In particular, $\name{Di}$ is valid under the preservativity interpretation over $\Logic{HA}$
but not over $\Logic{PA}$
\cite[Secs.\,4.1.2 and 5.4.1]{litak_lewis_2018}.

\begin{proposition}[{\cite[Lem.\,4.1, Fact~4.2, Lem.\,4.4, and Cor.\,4.8]{litak_lewis_2018}}] \label{prop:basic-lewis-logics} \leavevmode
  \begin{enumerate}[label=(\arabic*)]
    \item $\Logic{HLC}^\flat$ proves
          $\name{BL}: \Box(\varphi \to \psi) \to (\varphi \lewis \psi)$
          and
          $\name{LB}: (\varphi \lewis \psi) \to (\Box \varphi \to \Box \psi)$.
    \item $\Logic{HLC}^\flat \dashv\vdash \Logic{iK}_\Box + \name{Tr} + \name{K}_a$.
    \item $\Logic{HLC}^\flat + \name{Box} \vdash \name{Di}$.
    \item $\Logic{HLC}^\sharp + \varphi \lor \neg \varphi \vdash \name{Box}$.
  \end{enumerate}
\end{proposition}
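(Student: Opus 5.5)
The plan is to prove all four items by short Hilbert-style derivations, chaining $\lewis$-formulas with $\name{Tr}$ as the basic composition step.
Throughout, the rule $\name{N}_a$ gives reflexive instances such as $\varphi \lewis \varphi$ and $\varphi \lewis \top$.

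For (1), $\name{LB}$ is immediate: from $\top \lewis \varphi$ and $\varphi \lewis \psi$, $\name{Tr}$ yields $\top \lewis \psi$, i.e.\ $\Box \psi$.
For $\name{BL}$, I would proceed in four steps:
\begin{enumerate*}[label=(\roman*)]
  \item $\name{N}_a$ gives $\varphi \lewis \top$;
  \item $\name{Tr}$ with the hypothesis $\top \lewis (\varphi \to \psi)$ gives $\varphi \lewis (\varphi \to \psi)$;
  \item $\name{K}_a$ together with $\varphi \lewis \varphi$ (by $\name{N}_a$) gives $\varphi \lewis (\varphi \land (\varphi \to \psi))$;
  \item $\name{N}_a$ applied to the tautology $(\varphi \land (\varphi \to \psi)) \to \psi$, followed by $\name{Tr}$, gives $\varphi \lewis \psi$.
\end{enumerate*}

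For (2), the direction $\Logic{HLC}^\flat \vdash \Logic{iK}_\Box$ goes axiom by axiom.
$\name{N}_\Box$ is $\top \lewis \top$, which is an instance of $\name{N}_a$.
$\name{C}_\Box$ is $\name{K}_a$ with $\varphi := \top$.
$\name{M}_\Box$ is obtained by first using $\name{N}_a$ to pass from $\varphi \to \psi$ to $\varphi \lewis \psi$, and then applying $\name{LB}$ from (1).
For the converse direction, I must recover $\name{N}_a$ inside $\Logic{iK}_\Box + \name{Tr} + \name{K}_a$.
From $\vdash \varphi \to \psi$, the rules $\name{M}_\Box$ and $\name{N}_\Box$ give $\Box(\varphi \to \psi)$, so it suffices to have $\name{BL}$.
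This is the step I expect to be the main obstacle, because the derivation of $\name{BL}$ in (1) used $\name{N}_a$ for $\varphi \lewis \top$ and $\varphi \lewis \varphi$.
My first attempt will be to obtain these reflexivity instances from $\name{Tr}$ and $\name{K}_a$ applied to $\Box$-formulas, using $\top \lewis \chi$ for provable $\chi$.
If that fails, I will check Litak--Visser's presentation of Lem.\,4.4 to see whether $\name{BL}$, or $\Box$ as a primitive linked to $\lewis$, is part of the intended axiom base.
Once $\name{BL}$ is available, (2) follows directly.

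For (3), assume $\varphi \lewis \chi$ and $\psi \lewis \chi$.
$\name{Box}$ converts these to $\Box(\varphi \to \chi)$ and $\Box(\psi \to \chi)$.
$\name{C}_\Box$ combines them, and $\name{M}_\Box$ then gives $\Box((\varphi \lor \psi) \to \chi)$.
Finally, $\name{BL}$ from (1) yields $(\varphi \lor \psi) \lewis \chi$.

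For (4), assume $\varphi \lewis \psi$; the target is $\top \lewis (\varphi \to \psi)$.
By $\name{N}_a$ we have $\neg \varphi \lewis (\varphi \to \psi)$ and $\psi \lewis (\varphi \to \psi)$.
$\name{Tr}$ applied to the hypothesis and the second of these gives $\varphi \lewis (\varphi \to \psi)$.
$\name{Di}$ then yields $(\varphi \lor \neg \varphi) \lewis (\varphi \to \psi)$.
Excluded middle together with $\name{N}_a$ gives $\top \lewis (\varphi \lor \neg \varphi)$, and a final application of $\name{Tr}$ gives $\Box(\varphi \to \psi)$.
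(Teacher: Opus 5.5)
Your derivations for (1), (3), (4) and for the inclusion $\Logic{iK}_\Box + \name{Tr} + \name{K}_a \subseteq \Logic{HLC}^\flat$ are correct. The paper gives no proof here and only cites Litak--Visser, so I am judging the proposal on its own merits.

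The gap is the converse inclusion in (2). You correctly reduce it to $\name{BL}$, but your planned first attempt cannot work: $\varphi \lewis \varphi$ cannot be obtained from $\name{Tr}$, $\name{K}_a$ and $\Box$-formulas. To see this, let $t \colon \Lang_\lewis \to \PF$ fix variables and $\bot$, commute with $\land, \lor, \to$, and set $t(\varphi \lewis \psi) \defeq t(\psi)$.
\begin{itemize}
  \item $t$ commutes with substitution.
  \item The set $\{\varphi : \Int \vdash t(\varphi)\}$ is closed under modus ponens, substitution and $\name{M}_\Box$, since the premise and conclusion of $\name{M}_\Box$ have the same translation.
  \item This set contains all instances of $\Int$, $\name{N}_\Box$, $\name{C}_\Box$, $\name{Tr}$ and $\name{K}_a$. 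Their translations are $\top$, $(t\varphi \land t\psi) \to (t\varphi \land t\psi)$, $t\psi \to (t\chi \to t\chi)$ and $t\psi \to (t\chi \to (t\psi \land t\chi))$.
  \item However, $t(p \lewis p) = p$ and $t(\name{BL}) = (p \to q) \to q$, and neither is in $\Int$.
\end{itemize}
Hence $\Logic{iK}_\Box + \name{Tr} + \name{K}_a$ proves neither $p \lewis p$ nor $\name{BL}$, and $\name{N}_a$ is not even admissible in it. The reason is that $\name{Tr}$ and $\name{K}_a$ only recombine strict implications that are already present, and $\Box$-reasoning only supplies ones with antecedent $\top$.

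So (2), read literally, is not provable, and your fallback suspicion is the right one: the right-hand side must include $\name{BL}$ (or $\name{N}_a$ itself). With $\name{BL}$ added, your argument closes the direction immediately. From $\vdash \varphi \to \psi$, apply $\name{M}_\Box$ to $\top \to (\varphi \to \psi)$ and use $\name{N}_\Box$ to get $\Box(\varphi \to \psi)$; then $\name{BL}$ gives $\varphi \lewis \psi$. The other inclusion now also needs $\Logic{HLC}^\flat \vdash \name{BL}$, which is exactly your (1). You should state this corrected form of (2) explicitly rather than leave the direction open pending a look at the source.
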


It follows that $\varphi \lewis \psi$ and $\Box(\varphi \to \psi)$ are interdefinable in the presence of $\name{Box}$,
and in particular over the classical extension of $\Logic{HLC}^\sharp$.
Thus, the Lewis arrow is interdefinable with $\Box(\varphi \to \psi)$ over $\Logic{i-Box}$
and the classical extension of $\Logic{HLC}^\sharp$,
but not over $\Logic{HLC}^\sharp$ itself.
Moreover, adding $\name{Box}$ to $\Logic{HLC}^\flat$ already yields $\name{Di}$,
so flat logics collapse into sharp ones in the presence of $\name{Box}$.

\begin{definition}[{\cite[Eq.\,(4)]{litak_lewis_2018}}] \label{def:lewis-semantics}
  Let $\Model{M} = (W, \IRel, \MRel, V)$ be an f-model.
  We define a \emph{Lewisian satisfaction relation} $\Vdash^l_\Model{M}$ on $\Lang_\lewis$
  by the usual intuitionistic clauses for the propositional connectives and the clause
  \[
    x \Vdash^l_\Model{M} \varphi \lewis \psi
    \quad \Longleftrightarrow \quad
    \forall y\,(x \MRel y \imp (y \Vdash^l_\Model{M} \varphi \imp y \Vdash^l_\Model{M} \psi)).
  \]
  We write $\Model{M} \vDash^l \varphi$ if $x \Vdash^l_\Model{M} \varphi$ for every $x \in W$,
  and $\Model{F} \vDash^l \varphi$ if $\Model{M} \vDash^l \varphi$ for every f-model $\Model{M}$ based on $\Model{F}$.
  We say that $\Vdash^l_\Model{M}$ is \emph{persistent} if, for every $\varphi \in \Lang_\lewis$ and $x,x' \in W$,
  \[
    x \Vdash^l_\Model{M} \varphi \ \tand\ x \IRel x'
    \quad \Longrightarrow \quad
    x' \Vdash^l_\Model{M} \varphi.
  \]
\end{definition}

The $\Box$-condensed condition is also called the \emph{$\lewis$-p condition}.
Accordingly, a $\Box$-condensed f-frame is often called a \emph{$\lewis$-frame} (e.g.\ \cite{litak_lewis_2018}).

\begin{proposition}[{\cite[Facts~3.2 and 3.4]{litak_lewis_2018}}] \label{prop:lewis-frame-conditions}
  Let $\Model{F} = (W, \IRel, \MRel)$ be an f-frame.
  \begin{enumerate}[label=(\arabic*)]
    \item $\Model{F} \in \FrameClass{\Box\text{-con}}$
          iff $\Vdash^l_\Model{M}$ is persistent for every f-model $\Model{M}$ based on $\Model{F}$.
    \item If $\Model{F} \in \FrameClass{\Box\text{-con}}$, then every instance of $\name{Box}$ is valid on $\Model{F}$
          iff $\Model{F} \in \FrameClass{\Box\text{-bri}}$.
  \end{enumerate}
\end{proposition}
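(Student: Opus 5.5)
The plan is to handle (1) and (2) separately. In each, the easy direction is a direct check of the $\lewis$-clause. The hard direction builds a valuation from a witness to the failure of the frame condition.

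For (1, $\Rightarrow$), assume $\Model{F}$ is $\Box$-condensed and argue by induction on $\varphi \in \Lang_\lewis$. The propositional cases are as in $\Int$. The only new case is $\varphi_1 \lewis \varphi_2$, and it does not need the induction hypothesis. If $x \IRel x'$, $x \Vdash^l \varphi_1 \lewis \varphi_2$ and $x' \MRel y$, then $(\IRel \cdot \MRel) \subseteq {\MRel}$ gives $x \MRel y$. So $y \Vdash^l \varphi_1$ implies $y \Vdash^l \varphi_2$, and hence $x' \Vdash^l \varphi_1 \lewis \varphi_2$.

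For (1, $\Leftarrow$), suppose $x \IRel x' \MRel y$ but $x \not\MRel y$. I will mimic Proposition~\ref{prop:extrinsic-persistency} (1 $\Leftarrow$) and aim for a formula $p \lewis q$ that holds at $x$ and fails at $x'$. Set $V(p) \defeq \set{ w; y \IRel w }$ and $V(q) \defeq \set{ w; \exists v\,(x \MRel v \IRel w) }$; both are upward closed. Then every $\MRel$-successor of $x$ forces $q$, so $x \Vdash^l p \lewis q$. Also $x' \MRel y \Vdash^l p$, so it remains to see $y \nVdash^l q$, i.e.\ that there is no $v$ with $x \MRel v \IRel y$.

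This is the main obstacle. The failure of condensation only gives $x \not\MRel y$, not the absence of such a $v$.

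\begin{itemize}
  \item First try: shrink $V(p)$ and $V(q)$, or use a formula of the shape $p \lewis (r \lor q)$, so that the witnesses $v \IRel y$ are separated from $y$. This works whenever $y \not\IRel v$: put $r$ true exactly on the set $\set{ w; \exists v\,(x \MRel v \IRel w \tand y \not\IRel v) }$, which is an upset.
  \item Remaining case: some $v$ with $x \MRel v$ lies in the same $\IRel$-cluster as $y$. Such $v$ and $y$ cannot be told apart by any upward-closed valuation. I expect to need either antisymmetry of $\IRel$ (as in \cite{litak_lewis_2018}) or the convention that frames are compared up to clusters. I will state explicitly which assumption the cited Fact~3.2 uses.
\end{itemize}

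For (2), assume $\Model{F}$ is $\Box$-condensed; by (1), persistency is then available. Recall that $\Box(\varphi \to \psi)$ abbreviates $\top \lewis (\varphi \to \psi)$.

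For ($\Leftarrow$), let $x \Vdash^l \varphi \lewis \psi$, $x \MRel y$, and $y \IRel y' \Vdash^l \varphi$. Brilliance gives $x \MRel y'$, so $y' \Vdash^l \psi$. Hence $y \Vdash^l \varphi \to \psi$, and thus $x \Vdash^l \name{Box}$.

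For ($\Rightarrow$), take $x \MRel y \IRel y'$ with $x \not\MRel y'$. Use the same valuations as in (1), with $y'$ in place of $y$:
\begin{itemize}
  \item $V(p) \defeq \set{ w; y' \IRel w }$ and $V(q) \defeq \set{ w; \exists v\,(x \MRel v \IRel w) }$.
  \item Then $x \Vdash^l p \lewis q$ holds.
  \item But $y \nVdash^l p \to q$ as long as $y' \notin V(q)$, so $\Box(p \to q)$ fails at $x$.
\end{itemize}
The same cluster issue arises here and will be resolved identically to the main obstacle in (1).
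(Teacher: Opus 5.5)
The paper gives no proof of this proposition; it only cites Litak and Visser. Your argument therefore has to stand on its own.

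Your easy directions, (1,$\Rightarrow$) and (2,$\Leftarrow$), are correct. The hard directions are not established, and the valuation you choose for $q$ is the wrong one.

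In (2,$\Rightarrow$) it fails in every instance. Since $x \MRel y \IRel y'$, the world $y'$ lies in $\set{ w; \exists v\,(x \MRel v \IRel w) }$ with witness $v = y$. So the condition $y' \notin V(q)$ is never met, and in fact $x \Vdash^l \Box(p \to q)$ holds.

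In (1,$\Leftarrow$) it fails whenever some $\MRel$-successor of $x$ lies below $y$ outside $y$'s cluster, which can happen even when $\IRel$ is a partial order. Your patch with $r$ does not repair this. In exactly that situation the offending $v$ puts $y$ into $V(r)$, so $y \Vdash^l r \lor q$.

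The missing idea is that $q$ only has to hold at those successors of $x$ that force $p$, i.e.\ that lie above $y$. Take $V(p) \defeq \set{ w; y \IRel w }$ and $V(q) \defeq \set{ w; w \not\IRel y }$; both are upsets. Then $y \Vdash^l p$ and $y \nVdash^l q$. Any $v$ with $x \MRel v$ and $v \Vdash^l p$ forces $q$ unless $y \IRel v \IRel y$. Under antisymmetry that exception means $v = y$, which $x \not\MRel y$ excludes. Hence $x \Vdash^l p \lewis q$ and $x' \nVdash^l p \lewis q$.

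For (2), replace $y$ by $y'$. The same valuation gives $x \Vdash^l p \lewis q$ and $y \nVdash^l p \to q$. So $x \nVdash^l \Box(p \to q)$, and by reflexivity $x \nVdash^l (p \lewis q) \to \Box(p \to q)$.

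Your remaining cluster case is not something a cleverer valuation could remove. Over preorders, which is how the paper defines f-frames, both hard directions are false.

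For (1), let $W = \set{x,x',y,v}$, let $\IRel$ be the reflexive closure of $\set{(x,x'),(y,v),(v,y)}$, and let ${\MRel} = \set{(x',y),(x,v)}$. This frame is not $\Box$-condensed. However, $y$ and $v$ have the same $\IRel$-successors and no $\MRel$-successors, so they force the same formulas in every model. Hence $x \Vdash^l \varphi \lewis \psi$ iff $x' \Vdash^l \varphi \lewis \psi$, and $\Vdash^l$ is persistent.

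For (2), take $W = \set{x,y,y'}$ with $y \IRel y' \IRel y$ and ${\MRel} = \set{(x,y)}$. This frame is $\Box$-condensed and not $\Box$-brilliant, yet it validates every instance of $\name{Box}$.

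So you must either assume $\IRel$ is antisymmetric, or weaken the conditions to $(\IRel \cdot \MRel) \subseteq (\MRel \cdot {\equiv})$ and $(\MRel \cdot \IRel) \subseteq (\MRel \cdot {\equiv})$, where ${\equiv} = {\IRel} \cap {\IRelRev}$. The valuation above shows these weakened conditions are necessary in general. Your easy-direction arguments show they are sufficient, once you use the induction hypothesis (persistency) to move between $\equiv$-equivalent worlds.
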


\begin{theorem}[{\cite[Thm.\,6.4(a)--(b)]{litak_lewis_2018}}] \label{thm:basic-lewis-completeness} \leavevmode
  \begin{enumerate}[label=(\arabic*)]
    \item $\Logic{HLC}^\sharp = \set{ \varphi \in \Lang_\lewis; \forall \Model{F} \in \FrameClass{\Box\text{-con}}\,(\Model{F} \vDash^l \varphi) }$.
    \item $\Logic{i-Box} = \set{ \varphi \in \Lang_\lewis; \forall \Model{F} \in \FrameClass{\Box\text{-con}} \cap \FrameClass{\Box\text{-bri}}\,(\Model{F} \vDash^l \varphi) }$.
  \end{enumerate}
\end{theorem}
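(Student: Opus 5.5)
The plan is to prove soundness and completeness separately for both clauses. Completeness will go through a canonical model built from prime theories, with the modal relation defined via Lewis arrows.

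\emph{Soundness.} Fix $\Model{F} \in \FrameClass{\Box\text{-con}}$. By Proposition \ref{prop:lewis-frame-conditions} (1), $\Vdash^l$ is persistent on every model based on $\Model{F}$. The intuitionistic axioms and modus ponens are therefore sound exactly as in $\Int$. The modal principles can be checked pointwise at the $\MRel$-successors $y$ of a world $x$:
\begin{itemize}
  \item $\name{Tr}$ and $\name{K}_a$ are immediate from the clause for $\lewis$.
  \item $\name{Di}$ holds because $y \Vdash^l \varphi \lor \psi$ means that $y$ forces one of the disjuncts.
  \item For $\name{N}_a$, if $\varphi \to \psi$ is valid, then at $y$ it is in particular forced at $y$ itself, by reflexivity of $\IRel$.
\end{itemize}
For the second clause, $\name{Box}$ is valid on $\Box$-brilliant, $\Box$-condensed frames by Proposition \ref{prop:lewis-frame-conditions} (2).

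\emph{Canonical model.} Let $L \in \set{\Logic{HLC}^\sharp, \Logic{i-Box}}$. Take $W$ to be the set of prime $L$-theories, i.e.\ theories that are consistent, deductively closed, contain $L$, and have the disjunction property. Let $\IRel$ be inclusion and $V(p) = \set{x; p \in x}$. Define $x \MRel y$ iff for all $\alpha, \beta$ with $\alpha \lewis \beta \in x$ and $\alpha \in y$ we have $\beta \in y$. The frame is $\Box$-condensed: if $x \subseteq x' \MRel y$, every arrow in $x$ also lies in $x'$, so $x \MRel y$.

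\emph{Truth lemma.} The propositional cases are standard (Lindenbaum with prime extension). For $\lewis$, the direction ``$\varphi \lewis \psi \in x$ implies forcing'' follows directly from the definition of $\MRel$ and the induction hypothesis.

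The main obstacle is the converse: given $\varphi \lewis \psi \notin x$, produce a prime $y$ with $x \MRel y$, $\varphi \in y$ and $\psi \notin y$. My plan is as follows.
\begin{enumerate}
  \item Let $B = \set{\beta; \beta \lewis \psi \in x}$. Using $\name{Di}$, $B$ is closed under $\lor$. Using $\name{N}_a$ with $\name{Tr}$, it is downward closed under provable implication. Also $\psi \in B$ and $\varphi \notin B$.
  \item By Zorn's lemma, take a theory $y \ni \varphi$ that is maximal among those disjoint from $B$. Primality of $y$ follows from maximality together with the closure of $B$ under $\lor$, in the usual prime-filter fashion.
  \item To show $x \MRel y$, suppose $\alpha \lewis \beta \in x$, $\alpha \in y$ and $\beta \notin y$. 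By maximality there are $\gamma \in y$ and $\delta \in B$ with $\vdash \gamma \land \beta \to \delta$.
  \item From $\name{N}_a$, $\name{K}_a$ and $\name{Tr}$ we derive $(\alpha \land \gamma) \lewis (\beta \land \gamma)$, then $(\beta \land \gamma) \lewis \delta$, and finally $\delta \lewis \psi$. Chaining these gives $(\alpha \land \gamma) \lewis \psi \in x$.
  \item Hence $\alpha \land \gamma \in B \cap y$, a contradiction.
\end{enumerate}
Since $\psi \in B$, we also get $\psi \notin y$. This yields completeness for $\Logic{HLC}^\sharp$.

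\emph{Second clause.} It remains to show the canonical frame for $\Logic{i-Box}$ is $\Box$-brilliant. Suppose $x \MRel y \subseteq y'$, $\alpha \lewis \beta \in x$ and $\alpha \in y'$. By $\name{Box}$, $\top \lewis (\alpha \to \beta) \in x$. Since $\top \in y$ and $x \MRel y$, we get $\alpha \to \beta \in y \subseteq y'$. Therefore $\beta \in y'$, i.e.\ $x \MRel y'$.
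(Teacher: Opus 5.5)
Your proof is correct. The prime-theory canonical model, with $x \MRel y$ defined by closure of $y$ under the strict implications in $x$, is $\Box$-condensed; extending $\varphi$ to a prime theory that avoids the ideal $\set{\beta; \beta \lewis \psi \in x}$ (an ideal exactly because of $\name{Di}$, together with $\name{N}_a$ and $\name{Tr}$) gives the hard direction of the truth lemma; and $\name{Box}$ makes the canonical frame for $\Logic{i-Box}$ brilliant. The paper does not prove this theorem itself but cites Litak and Visser, so there is no in-paper proof to compare against, and what you have written is the standard canonical-model argument for this result.
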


\begin{corollary} \label{cor:lewis-box-fragment}
  $\Logic{HLC}^\sharp \cap \MFb = \Logic{i-Box} \cap \MFb = \Logic{iK}_\Box$.
  Also, let $\tau: \Lang_\lewis \to \MFb$ be the translation that commutes with the propositional connectives and satisfies
  $\tau(\psi_1 \lewis \psi_2) = \Box(\tau(\psi_1) \to \tau(\psi_2))$.
  Then, for every $\varphi \in \Lang_\lewis$,
  \[
    \Logic{i-Box} \vdash \varphi
    \quad \Longleftrightarrow \quad
    \Logic{iK}_\Box \vdash \tau(\varphi).
  \]
  \begin{proof}
    The first claim follows from Theorem~\ref{thm:basic-lewis-completeness}
    and Theorem~\ref{cor:condensed-completeness},
    since the restriction of $\Vdash^l$ to $\MFb$ is the extrinsic satisfaction relation $\SatE$.
    For the second claim, $\name{BL}$ and $\name{Box}$ yield
    $\Logic{i-Box} \vdash (\psi_1 \lewis \psi_2) \leftrightarrow \Box(\psi_1 \to \psi_2)$.
    An easy induction on $\varphi$ therefore gives
    $\Logic{i-Box} \vdash \varphi \leftrightarrow \tau(\varphi)$.
    Since $\tau(\varphi) \in \MFb$, the result follows from the first claim.
  \end{proof}
\end{corollary}

Several extensions of the basic Lewis logics arise from their arithmetical interpretation.

\begin{definition}[{\cite[Sec.\,4.1.2]{litak_lewis_2018}}] \label{def:preservativity-logics}
  Consider the following axiom schemes:
  \begin{align*}
    \name{L}_a &: \quad (\Box \varphi \to \varphi) \lewis \varphi, \\
    \name{W}_a &: \quad ((\varphi \land \Box \psi) \lewis \psi) \to (\varphi \lewis \psi), \\
    \name{M}_a &: \quad (\varphi \lewis \psi) \to ((\Box \chi \to \varphi) \lewis (\Box \chi \to \psi)).
  \end{align*}
  We define:
  \begin{align*}
    \Logic{iGL}_a^\flat   &\defeq \Logic{HLC}^\flat + \name{L}_a, &
    \Logic{iGL}_a^\sharp  &\defeq \Logic{HLC}^\sharp + \name{L}_a, \\
    \Logic{iGW}_a^\flat   &\defeq \Logic{HLC}^\flat + \name{W}_a, &
    \Logic{iGW}_a^\sharp  &\defeq \Logic{HLC}^\sharp + \name{W}_a, \\
    \Logic{iPreL}^\flat   &\defeq \Logic{iGW}_a^\flat + \name{M}_a, &
    \Logic{iPreL}^\sharp  &\defeq \Logic{iGW}_a^\sharp + \name{M}_a.
  \end{align*}
\end{definition}

These systems form three pairs.
The logics $\Logic{iGL}_a^\flat$ and $\Logic{iGL}_a^\sharp$ add the Lewisian version $\name{L}_a$ of L\"ob's principle,
the logics $\Logic{iGW}_a^\flat$ and $\Logic{iGW}_a^\sharp$ use the stronger principle $\name{W}_a$,
and the logics $\Logic{iPreL}^\flat$ and $\Logic{iPreL}^\sharp$ add $\name{M}_a$ as well.
The flat--sharp distinction remains important for these extensions.
Litak and Visser prove arithmetical soundness results for both variants;
in particular, $\Logic{iPreL}^\sharp$ is arithmetically valid over $\Logic{HA}$ and several related intuitionistic theories
\cite[Sec.\,5.4.1]{litak_lewis_2018}.
They also characterize the preservativity logic of a broad class of classical theories as
$\Logic{iPreL}^\flat$ plus excluded middle \cite[Thm.\,5.10]{litak_lewis_2018}.
The intuitionistic case is less settled: $\Logic{iPreL}^\sharp$ does not capture every preservativity principle valid in $\Logic{HA}$,
and the exact preservativity logics of $\Logic{HA}$ and several of its variants remain open \cite[Open Question~5.9 (VII)--(IX)]{litak_lewis_2018}.

Litak and Visser \cite{litak_lewis_2018} provide a broad overview of Heyting--Lewis logics.
They study explicit fixed points in this setting \cite{litak_lewisian_2024},
and also discuss the Lewisian variant $\Logic{iSL}_a$ of $\Logic{iSL}_\Box$
in their preprint \cite{visser_lewis_2024}.
Iemhoff, de Jongh, and Zhou \cite{iemhoff_properties_2005} also study the fixed-point and Beth properties of related provability and preservativity logics.
For a related recent development, Mojtahedi announces axiomatizations of preservativity and relative admissibility
for several modal logics extending $\Logic{iK4}$ in his preprint \cite{mojtahedi_provability_2026}.
De Groot and Litak \cite{groot_relational_2026} give a Kripke-style relational semantics for $\Logic{HLC}^\flat$
and prove its strong completeness and finite model property.

\subsection{Epistemic and propositional lax logics}

The strongness axiom $\name{S}_\Box: \varphi \to \Box \varphi$ also occurs in intuitionistic epistemic logic.
Here, $\Box \varphi$ is read, roughly, as ``$\varphi$ has been verified,'' or more precisely, as ``it has been verified that $\varphi$ has a proof.''
Since every intuitionistically true proposition $\varphi$ has a proof, and every proof is also a verification,
Artemov and Protopopescu \cite{artemov_intuitionistic_2016} accept
$\name{S}_\Box$, which they call the principle of the \emph{constructivity of truth} or \emph{coreflection}.

\begin{definition}[{\cite[Sec.\,3]{artemov_intuitionistic_2016}}] \label{def:iel-logics}
  We define
  \[
    \Logic{IEL}^{-} \defeq \Logic{iK}_\Box + \name{S}_\Box
    \qquad \text{and} \qquad
    \Logic{IEL} \defeq \Logic{IEL}^{-} + (\Box \varphi \to \neg \neg \varphi).
  \]
\end{definition}

The logic $\Logic{IEL}^{-}$ is intended as a basic logic of \emph{intuitionistic belief},
in which verifications of false propositions\footnote{
  Artemov and Protopopescu \cite[p.276]{artemov_intuitionistic_2016} give the following example:
  ``before the European discovery of Australia, all available evidence supported the proposition `all swans are white'; this turned out to be false and can be taken as an instance of verification-based belief which may be false.''
} are not ruled out,
whereas $\Logic{IEL}$ is intended as a logic of \emph{intuitionistic knowledge}.
The additional principle $\Box \varphi \to \neg \neg \varphi$ expresses \emph{weak factivity}:
a proposition known in this verification-based sense cannot be false.
Over $\Logic{IEL}^{-}$, it is equivalent to $\name{D}_\Box: \neg \Box \bot$,
which is also known as the \emph{consistency axiom} in classical doxastic logic.
In this sense, $\Logic{IEL}^{-}$ does not distinguish intuitionistic knowledge from \emph{provably consistent intuitionistic belief}.
If the propositional base is made classical, however, weak factivity implies ordinary reflection $\name{T}_\Box: \Box \varphi \to \varphi$;
together with $\name{S}_\Box$, this makes $\varphi$ and $\Box \varphi$ equivalent.
Artemov and Protopopescu also explicitly reject $\name{T}_\Box$ as a general principle of intuitionistic knowledge,
as a verification of $\varphi$ does not necessarily produce an explicit proof of $\varphi$.

\begin{proposition}[{\cite[Thms.\,3.5 and 4.7]{artemov_intuitionistic_2016}}] \label{prop:iel-properties} \leavevmode
  \begin{enumerate}[label=(\arabic*)]
    \item $\Logic{IEL} \dashv \vdash \Logic{IEL}^{-} + \name{D}_\Box$.
    \item Neither $\Logic{IEL}^{-}$ nor $\Logic{IEL}$ proves $\name{T}_\Box$.
  \end{enumerate}
\end{proposition}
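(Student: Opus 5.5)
We prove the two clauses of Proposition \ref{prop:iel-properties} separately. Clause (1) is handled syntactically over $\Logic{iK}_\Box$; clause (2) is handled semantically via Theorem \ref{thm:iglbox-completeness}-style soundness for f-frames.

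\textbf{Clause (1).} We show the two axioms are interderivable over $\Logic{IEL}^-$.

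For the direction from weak factivity to $\name{D}_\Box$, instantiate $\Box\varphi \to \neg\neg\varphi$ with $\varphi = \bot$. This gives $\Box\bot \to \neg\neg\bot$. Intuitionistically $\neg\neg\bot \leftrightarrow \bot$, so we obtain $\neg\Box\bot$.

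For the converse, assume $\name{D}_\Box$ and argue under the hypotheses $\Box\varphi$ and $\neg\varphi$.
\begin{itemize}
  \item By $\name{S}_\Box$, $\neg\varphi$ yields $\Box\neg\varphi$.
  \item By $\name{C}_\Box$, $\Box\varphi$ and $\Box\neg\varphi$ yield $\Box(\varphi\land\neg\varphi)$.
  \item By $\name{M}_\Box$ applied to $(\varphi\land\neg\varphi)\to\bot$, this yields $\Box\bot$, which contradicts $\name{D}_\Box$.
\end{itemize}
Discharging $\neg\varphi$ gives $\Box\varphi\to\neg\neg\varphi$. Every step is a Hilbert-style derivation in $\Int$ plus the modal rules, so the deduction theorem for $\Int$ is only used on non-modal hypotheses. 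Alternatively, the argument can be written as a single chain: $\neg\varphi\to\Box\neg\varphi$ by $\name{S}_\Box$, and $\Box\neg\varphi\to(\Box\varphi\to\Box\bot)$ by $\name{M}_\Box$ and $\name{C}_\Box$.

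\textbf{Clause (2).} Since $\Logic{IEL}^-\subseteq\Logic{IEL}$, it suffices to refute $\name{T}_\Box$ in $\Logic{IEL}$. We build a $\Box$-p f-frame that is $\MRel$-strong and serial, and in which $\name{T}_\Box$ fails.
\begin{itemize}
  \item \emph{Frame.} Take $W=\{x,y\}$, let $\IRel$ be the reflexive closure of $\{(x,y)\}$, and let $\MRel=\{(x,y),(y,y)\}$.
  \item \emph{Strongness.} We have $\MRel\subseteq\IRel$, so the frame is $\MRel$-strong.
  \item \emph{$\Box$-p condition.} Suppose $x\IRel x'\MRel y'$. Then $y'=y$, and $x\MRel y\IRel y$ supplies the required witness.
  \item \emph{Countermodel.} Let $V(p)=\{y\}$, which is upward closed. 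Then $x\SatE\Box p$, because the only $\MRel$-successor of $x$ is $y$. But $x\nVdash^e p$, so $\name{T}_\Box$ fails at $x$.
\end{itemize}

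It remains to check soundness of this model for the axioms.
\begin{itemize}
  \item $\Logic{iK}_\Box$ holds by Theorem \ref{thm:ikb-ikd-completeness}.
  \item $\name{S}_\Box$ holds by Proposition \ref{prop:sbox-frame-condition}(2).
  \item $\name{D}_\Box$ holds because $\MRel$ is serial, so $\Box\bot$ is false everywhere.
\end{itemize}
Finally, since $\Logic{IEL}$ is closed only under modus ponens and $\name{M}_\Box$, and e-validity on $\Box$-p frames is preserved by both (Proposition \ref{prop:soundness-basis}), the model validates $\Logic{IEL}$ while refuting $\name{T}_\Box$.

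The main obstacle is exactly this soundness bookkeeping: we must confirm that validity on the frame is preserved under the rules. Proposition \ref{prop:soundness-basis} supplies this, so the remaining risk is small.
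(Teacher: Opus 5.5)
Your proof is correct. The paper gives no argument of its own here (it only cites Artemov and Protopopescu), and your route is the standard one: interderive $\name{D}_\Box$ and weak factivity syntactically, then refute $\name{T}_\Box$ on the two-world frame.

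Two cosmetic points. First, your frame is actually $\Box$-condensed, hence an $\Logic{IEL}$-frame in the sense of Definition \ref{def:iel-frames}, so the soundness half of Theorem \ref{thm:iel-completeness} could replace your bookkeeping. (Proposition \ref{prop:soundness-basis} is about $\Theory{V}^e_\Box$, not a single frame, though single-frame closure under modus ponens, $\name{M}_\Box$ and substitution is immediate.) Second, the deduction-theorem remark is misworded, since the hypothesis $\Box\varphi$ is itself modal; what justifies it is that $\name{M}_\Box$ is applied only to the theorem $(\varphi\land\neg\varphi)\to\bot$.
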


Artemov and Protopopescu claim that $\Logic{IEL}$ largely avoids common criticisms of classical epistemic logic.
Classical epistemic logic is based on normal classical modal logic,
which has $\name{Nec}_\Box: \frac{\varphi}{\Box \varphi}$ as a rule.
Together with the axiom $\name{K}_\Box$, this leads to the familiar phenomenon of
\emph{logical omniscience}.
Under the classical epistemic reading of $\Box \varphi$ as ``$\varphi$ is known,''
$\name{Nec}_\Box$ entails, in particular, that all classical tautologies are known.
The principle $\name{S}_\Box$, read classically, represents an even stronger form
of logical omniscience: ``if $\varphi$ is true, then $\varphi$ is known.''
Artemov and Protopopescu assert that $\name{S}_\Box$ has a very different meaning
under their notion of intuitionistic knowledge, namely,
``constructive truth, i.e., proof, yields verification/knowledge''
\cite[Sec.\,5.1.1]{artemov_intuitionistic_2016}.

They also respond to \emph{Fitch's paradox of knowability}.
The Church--Fitch argument shows that, when classical epistemic logic is extended with a possibility modality $\blacklozenge$,
the seemingly innocent principle
$\varphi \to \blacklozenge \Box \varphi$
(``if $\varphi$ is true, then $\varphi$ is knowable'')
entails the omniscience principle $\name{S}_\Box$.
Artemov and Protopopescu argue that not only is this conclusion unproblematic
when read intuitionistically, but also
$\varphi \to \blacklozenge \Box \varphi$, when formulated in classical logic,
fails to adequately capture the intuitionistic relation between truth and knowledge.
For them, constructivity, rather than knowability, is the defining feature of
intuitionistic truth, and $\name{S}_\Box$ directly expresses this constructivity;
various principles of knowability, such as $\varphi \to \blacklozenge \Box \varphi$, may instead be regarded as consequences of it.
They therefore conclude that the knowability paradox is
``a pseudoproblem which holds only from a classical standpoint''
\cite[Sec.\,5]{artemov_intuitionistic_2016}.

As with the other $\Diamond$-free logics, the relational semantics of these logics is a special case of the extrinsic semantics.

\begin{definition}[{\cite[Defs.\,4.1 and 4.2]{artemov_intuitionistic_2016}}] \label{def:iel-frames}
  An f-frame $\Model{F} = (W, \IRel, \MRel)$ is an \emph{$\Logic{IEL}^{-}$-frame}
  if it is $\Box$-condensed and $\MRel$-strong.
  It is an \emph{$\Logic{IEL}$-frame} if, in addition, ${\MRel}$ is serial.
  An $L$-model, for $L \in \set{ \Logic{IEL}^{-}, \Logic{IEL} }$, is an f-model based on an $L$-frame.
\end{definition}

By Proposition~\ref{prop:sbox-frame-condition}, $\MRel$-strongness is equivalent to the
$\MRel_\beta$-strongness condition used for $\Logic{iSL}_\Box$, and it corresponds to coreflection.
Seriality validates $\name{D}_\Box$.
Also, the $\Box$-condensation condition ensures the $\Box$-persistency.

\begin{theorem}[{\cite[Thms.\,4.4 and 4.6]{artemov_intuitionistic_2016}}] \label{thm:iel-completeness}
  Let $L \in \set{ \Logic{IEL}^{-}, \Logic{IEL} }$ and $\varphi \in \MFb$.
  Then $L \vdash \varphi$ iff $\Model{F} \ValidE \varphi$ for every $L$-frame $\Model{F}$.
\end{theorem}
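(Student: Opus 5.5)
We prove Theorem \ref{thm:iel-completeness} as a soundness direction plus a completeness direction.

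For soundness, the $\Box$-condensed condition already implies the $\Box$-p condition, so by Proposition \ref{prop:extrinsic-persistency}~(1) the relation $\SatE$ is $\Box$-persistent on every $\Logic{IEL}^{-}$-frame. Proposition \ref{prop:soundness-basis} then gives validity of the $\Logic{iK}_\Box$ axioms and closure under its rules. Proposition \ref{prop:sbox-frame-condition}~(2) gives validity of $\name{S}_\Box$ on $\MRel$-strong $\Box$-p frames. For $\Logic{IEL}$ it suffices, by Proposition \ref{prop:iel-properties}~(1), to validate $\name{D}_\Box$. On a serial frame every $x$ has some $y$ with $x \MRel y$, so $x \not\SatE \Box\bot$ at every world; as $\bot$ is never forced, this gives $x \SatE \neg\Box\bot$.

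For completeness, I would not start from scratch but reduce to the known completeness of $\Logic{iK}_\Box$ in the canonical form. Take the canonical f-model of $L$: worlds are prime $L$-theories, $\Gamma \IRel \Delta$ iff $\Gamma \subseteq \Delta$, and $\Gamma \MRel \Delta$ iff $\Box^{-1}\Gamma \subseteq \Delta$, where $\Box^{-1}\Gamma = \set{ \varphi ; \Box\varphi \in \Gamma }$. The truth lemma for $\SatE$ is the standard one used in Theorem \ref{thm:ikb-ikd-completeness}. Its $\Box$ case needs, for $\Box\varphi \notin \Gamma$, a prime $\Delta \supseteq \Box^{-1}\Gamma$ with $\varphi \notin \Delta$. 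This follows from prime extension, since $\Box^{-1}\Gamma \nvdash \varphi$ by $\name{M}_\Box$ and $\name{C}_\Box$. With this definition of $\MRel$ the frame is automatically $\Box$-condensed: if $\Gamma \subseteq \Gamma' \MRel \Delta$, then $\Box^{-1}\Gamma \subseteq \Box^{-1}\Gamma' \subseteq \Delta$, so $\Gamma \MRel \Delta$.

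The frame conditions are then checked directly. $\MRel$-strongness means showing $\Gamma \subseteq \Delta$ whenever $\Box^{-1}\Gamma \subseteq \Delta$; this holds because $\varphi \in \Gamma$ gives $\Box\varphi \in \Gamma$ by $\name{S}_\Box$, so $\varphi \in \Box^{-1}\Gamma \subseteq \Delta$. Seriality for $\Logic{IEL}$ uses $\neg\Box\bot \in \Gamma$, so $\Box\bot \notin \Gamma$ and hence $\bot \notin \Box^{-1}\Gamma$. Thus $\Box^{-1}\Gamma$ is consistent and extends to a prime theory $\Delta$ with $\Gamma \MRel \Delta$. Finally, a non-theorem is omitted by some prime theory, and the truth lemma refutes it there.

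The main obstacle is the truth lemma's $\Box$ case, which needs the prime-extension step carried out with the disjunction-sensitive Lindenbaum lemma. That argument is the same as for $\Logic{iK}_\Box$ and adding axioms does not affect it. Two smaller checks remain: that $\Box^{-1}\Gamma$ is closed under $L$-consequence, and, since the canonical model here is constructed directly, that it is a genuine f-model, with $\IRel$ a preorder and $V$ upward closed.
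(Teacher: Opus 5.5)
Your proposal is correct. The soundness checks are right, and so is the canonical model on prime theories with $\Gamma \MRel \Delta$ iff $\Box^{-1}\Gamma \subseteq \Delta$: it is automatically $\Box$-condensed, $\MRel$-strong by $\name{S}_\Box$, and serial by $\neg\Box\bot$. This is the standard canonical-model argument of the cited work of Artemov and Protopopescu; the paper only cites the result and gives no proof of its own.

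One small detail to add: closure of $\Box^{-1}\Gamma$ under $L$-consequence, for the empty set of premises, also needs $\name{N}_\Box$, not just $\name{M}_\Box$ and $\name{C}_\Box$.
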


Krupski and Yatmanov \cite{krupski_sequent_2016} give a cut-free sequent calculus for $\Logic{IEL}$ and prove that its decision problem is
$\textsc{PSPACE}$-complete.
Hagemeier and Kirst \cite{hagemeier_constructive_2022} give constructive proofs, mechanized in Coq, of completeness, decidability,
semantic cut elimination, and the finite model property for $\Logic{IEL}$.
Fiorino \cite{fiorino_linear_2023} later gives sequent calculi with the subformula property whose proof search terminates in linear depth;
failed proof search also yields countermodels of minimal depth.

Van der Giessen studies admissible rules for $\Logic{IEL}^{-}$, under the name $\Logic{iCK4}$,
and projectivity for $\Logic{IEL}$, but does not establish interpolation for either logic
\cite[Sec.\,1.3.1 and Chs.\,6--7]{giessen_uniform_admissible_2022}.
The author is not aware of any interpolation results for these logics.

\begin{problem} \label{prob:iel-interpolation}
  Do $\Logic{IEL}^{-}$ and $\Logic{IEL}$ enjoy CIP, LIP, and UIP?
\end{problem}

$\name{S}_\Box$ has a different interpretation in \emph{propositional lax logic}.
The literature generally writes the modality as $\bigcirc$ \cite{fairtlough_propositional_1997,iemhoff_proof_2024}.
This notation distinguishes it from an ordinary box or diamond, since it has both $\Box$-like and $\Diamond$-like features.

\begin{definition}[{\cite{fairtlough_propositional_1997,iemhoff_proof_2024}}] \label{def:pll}
  Let $\Lang_{\bigcirc}$ and $\Logic{iK}_{\bigcirc}$ be obtained from $\MFb$ and $\Logic{iK}_\Box$,
  respectively, by replacing $\Box$ with $\bigcirc$.
  We define
  \[
    \Logic{PLL} \defeq
    \Logic{iK}_{\bigcirc}
      + (\varphi \to \bigcirc \varphi)
      + (\bigcirc \bigcirc \varphi \to \bigcirc \varphi).
  \]
\end{definition}

In particular, the lax modality satisfies
\[
  \varphi \to \bigcirc \varphi,
  \qquad
  \bigcirc \bigcirc \varphi \to \bigcirc \varphi,
  \qquad
  \bigcirc \varphi \land \bigcirc \psi \to \bigcirc (\varphi \land \psi).
\]
The first two principles make $\bigcirc$ inflationary and idempotent.
Together with the monotonicity and preservation of $\top$ inherited from $\Logic{iK}_{\bigcirc}$,
the third makes $\bigcirc$ preserve finite meets;
algebraically, $\bigcirc$ is therefore a nucleus.
Fairtlough and Mendler \cite{fairtlough_propositional_1997} use $\bigcirc \varphi$ to express that $\varphi$ holds subject to a constraint,
with applications to hardware verification.
For example, in the timing analysis of combinational circuits, the constraint is a bound on signal-propagation delay:
a wire contributes zero delay, sequential composition adds delays, and parallel composition takes their maximum
\cite[Sec.\,1]{fairtlough_propositional_1997}.
They also proposed a sound and complete semantics for $\Logic{PLL}$ called \emph{Kripke constraint models}.
After identifying $\bigcirc$ with the constructive diamond,
these models are precisely the $\MRel$-preordered and $\MRel$-strong CK-models.

\begin{definition}[{\cite[Defs.\,3.1, 3.2]{fairtlough_propositional_1997}}] \label{def:pll-constraint-model}
  $\Model{M} = (W, W_\bot, \IRel, \MRel, V)$ is a \emph{Kripke constraint model}
  if $(W, \IRel, \MRel, V)$ is a $\MRel$-preordered and $\MRel$-strong f-model,
  $W_\bot \subseteq W$ is upward closed with respect to $\IRel$,
  and $W_\bot \subseteq V(p)$ for every $p \in \PropVar$.
  A satisfaction relation ${\Vdash_\Model{M}} \subseteq W \times \Lang_{\bigcirc}$ for $\Model{M}$ is defined as follows:
  \begin{itemize}
    \item $x \Vdash_\Model{M} \bot \defiff x \in W_\bot$;
    \item $x \Vdash_\Model{M} \psi_1 \to \psi_2
      \defiff \forall x' \IRelRev x\, (x' \Vdash_\Model{M} \psi_1 \imp x' \Vdash_\Model{M} \psi_2)$;
    \item $x \Vdash_\Model{M} \bigcirc \varphi
      \defiff \forall x' \IRelRev x\, \exists y' \MRelRev x'\, (y' \Vdash_\Model{M} \varphi)$;
    \item The other cases are identical to the classical one.
  \end{itemize}
  We write $\Model{M} \vDash \varphi$ iff $x \Vdash_\Model{M} \varphi$ for every $x \in W$.
\end{definition}

\begin{theorem}[{\cite[Thms.\,3.3, 4.4]{fairtlough_propositional_1997}}] \label{thm:pll-constraint-completeness}
  Let $\varphi \in \Lang_{\bigcirc}$.
  $\Logic{PLL} \vdash \varphi$ iff $\Model{M} \vDash \varphi$ for every Kripke constraint model $\Model{M}$.
\end{theorem}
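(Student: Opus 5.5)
This is a soundness-and-completeness statement for $\Logic{PLL}$ over Kripke constraint models, so the proof splits into two halves.

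\emph{Soundness.} First establish persistency and ``all is fair'' for constraint models, exactly as in Propositions \ref{prop:all-is-fair} and \ref{prop:constructive-persistency}. The only delicate case of ``all is fair'' is $\bigcirc$. Here $\MRel$-seriality on $W_\bot$ is not assumed, but $\MRel$ is reflexive, so every fallible $x'$ sees itself. Hence $x' \Vdash \bigcirc\varphi$ follows from the induction hypothesis at $x'$; this is the point where the defect discussed in Remark \ref{rem:original-paper-problem} does not arise.

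Then check the axioms. $\name{M}_\bigcirc$ and $\name{N}_\bigcirc$ are immediate.

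For $\name{C}_\bigcirc$, use the strongness ${\MRel}\subseteq{\IRel}$. Suppose $x \Vdash \bigcirc\varphi\land\bigcirc\psi$ and $x \IRel x'$. Pick $x' \MRel y$ with $y\Vdash\varphi$. Then $x\IRel x' \IRel y$, so $y \Vdash \bigcirc\psi$, and there is $y\MRel z$ with $z \Vdash \psi$. By strongness $y \IRel z$, so $z\Vdash\varphi$ by persistency. By transitivity of $\MRel$ we get $x'\MRel z$, which witnesses $\bigcirc(\varphi\land\psi)$.

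$\varphi\to\bigcirc\varphi$ follows from $\MRel$-reflexivity and persistency. $\bigcirc\bigcirc\varphi\to\bigcirc\varphi$ follows from $\MRel$-transitivity: given $x'\MRel y$ with $y\Vdash\bigcirc\varphi$, reflexivity of $\IRel$ gives $y\MRel z$ with $z \Vdash \varphi$, and $x'\MRel z$.

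\emph{Completeness.} Use a canonical model. Take $W$ to be the set of all prime $\Logic{PLL}$-theories $\Gamma$, where $\Gamma$ is closed under derivability and has the disjunction property, and we allow the inconsistent theory $\Lang_\bigcirc$. Set $W_\bot=\set{\Lang_\bigcirc}$ and let $\IRel$ be inclusion. Define
\[
  \Gamma \MRel \Delta \defiff \Gamma\subseteq\Delta \tand \set{\varphi; \bigcirc\varphi\in\Gamma}\subseteq\Delta,
\]
with $V(p)=\set{\Gamma; p\in\Gamma}$.

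$\IRel$ is a preorder. $\MRel\subseteq\IRel$ holds by definition. $\MRel$ is reflexive because $\bigcirc\varphi\in\Gamma$ implies $\varphi\in\Gamma$ by $\bigcirc\bigcirc\varphi\to\bigcirc\varphi$ together with $\varphi \to \bigcirc \varphi$; concretely, we need $\bigcirc\varphi\to\varphi$ to fail, so reflexivity must instead be arranged. I will therefore refine the definition to $\Gamma\MRel\Delta$ iff $\Gamma\subseteq\Delta$ and, for all $\varphi$, $\bigcirc\varphi\in\Delta$ implies $\ldots$. The standard Fairtlough–Mendler choice is
\[
  \Gamma\MRel\Delta \defiff \Gamma\subseteq\Delta \tand \forall\varphi\,(\bigcirc\varphi\in\Gamma \imp \ldots).
\]
Concretely I would take $\Gamma \MRel\Delta$ iff $\Gamma\subseteq\Delta$ and $\Delta$ is ``$\bigcirc$-saturated relative to $\Gamma$'': no $\varphi$ with $\bigcirc\varphi\notin\Gamma$ lies in $\Delta$. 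Equivalently, $\varphi\in\Delta$ implies $\bigcirc\varphi\in\Gamma$. This relation is reflexive by $\varphi\to\bigcirc\varphi$, transitive by the multiplication axiom and monotonicity, and contained in $\IRel$.

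The main steps are then the truth lemma cases.

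For $\to$, use the usual prime extension argument (Lindenbaum).

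For $\bigcirc$, the ($\Rightarrow$) direction is as follows. Suppose $\bigcirc\varphi\in\Gamma$ and $\Gamma\subseteq\Gamma'$. Consider $\Delta_0 = \Gamma'\cup\set{\varphi}$ and close it under consequence. Every $\chi\in\Delta_0$ satisfies $\bigcirc\chi\in\Gamma'$, using $\name{C}_\bigcirc$, unit, and $\name{M}_\bigcirc$. Extend $\Delta_0$ to a prime theory $\Delta$ while keeping the set $\set{\chi;\bigcirc\chi\notin\Gamma'}$ disjoint from it. That set is closed under disjunction in the right sense only if $\bigcirc$ respects primality, and this is exactly the obstacle: $\bigcirc(\chi_1\lor\chi_2)\in\Gamma'$ need not give either disjunct.

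So the approach has to change. I expect to follow Fairtlough–Mendler and allow the fallible world as a target: if the extension fails, let $\Gamma'\MRel\Lang_\bigcirc$, which is sound because $\bot$ holds there. For the ($\Leftarrow$) direction, if $\bigcirc\varphi\notin\Gamma$, take $\Gamma'=\Gamma$. Every $\Delta$ with $\Gamma\MRel\Delta$ satisfies $\varphi\notin\Delta$, since otherwise $\bigcirc\varphi\in\Gamma$. Making the forward construction yield genuinely prime successors is the key difficulty. If prime extension fails I would switch to a model whose worlds are pairs, with $\Gamma$ a prime theory and $\Theta$ the set $\set{\chi;\bigcirc\chi\in\Gamma}$ as an auxiliary nucleus-theory, and check primality only where it is needed.
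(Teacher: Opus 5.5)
Your soundness half is correct: reflexivity of $\MRel$ does the job that seriality on $W_\bot$ does for $\Logic{CK}$, and your checks of $\name{C}_\bigcirc$, $\varphi\to\bigcirc\varphi$ and $\bigcirc\bigcirc\varphi\to\bigcirc\varphi$ are right. The paper gives no proof of this theorem; it only cites Fairtlough and Mendler.

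The completeness half has a genuine gap. With worlds the prime theories and $\Gamma\MRel\Delta$ iff $\Gamma\subseteq\Delta\subseteq\{\chi : \bigcirc\chi\in\Gamma\}$, the truth lemma is false. Take the constraint model with three worlds $w,u_1,u_2$, where $w\IRel u_i$ and $w\MRel u_i$ for $i=1,2$, $u_1\Vdash p$ and $u_2\Vdash q$. It shows $\Logic{PLL}\nvdash\bigcirc(p\lor q)\to\bigcirc p\lor\bigcirc q$. So some prime $\Gamma'$ contains $\bigcirc(p\lor q)$ but neither $\bigcirc p$ nor $\bigcirc q$. Any prime successor of $\Gamma'$ containing $p\lor q$ contains $p$ or $q$, and would therefore force $\bigcirc p$ or $\bigcirc q$ into $\Gamma'$.

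The same obstruction hits every relation that confines the successors of $\Gamma'$ to $\{\chi : \bigcirc\chi\in\Gamma'\}$. In particular it hits your pairs with $\Theta=\{\chi : \bigcirc\chi\in\Gamma\}$: that $\Theta$ is a function of $\Gamma$, so the pair model is the same model.

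Your other fallback, letting such $\Gamma'$ see $\Lang_\bigcirc$, contradicts your treatment of the converse direction. There you take $\Gamma'=\Gamma$ when $\bigcirc\varphi\notin\Gamma$, but a defective $\Gamma$ now sees a world containing $\varphi$. Rescuing that route would need three things, none of which is in the proposal:
\begin{enumerate}
\item a uniform definition of the defect (say, $\{\chi : \bigcirc\chi\in\Gamma\}$ not prime);
\item a new check of transitivity;
\item a proof that every prime $\Gamma$ with $\bigcirc\varphi\notin\Gamma$ extends to a prime $\Gamma'$ with $\bigcirc\varphi\notin\Gamma'$ and $\{\chi : \bigcirc\chi\in\Gamma'\}$ prime.
\end{enumerate}

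The missing idea is that the second component of a world must be \emph{independent} data recording what its modal successors must avoid. This is in the spirit of the pairing construction for $\Logic{CK}$ mentioned in Remark~\ref{rem:cml-canonical-models}. Here is one construction that works.
\begin{itemize}
\item Worlds are pairs $(\Gamma,\Theta)$ with $\Gamma$ a prime theory (possibly $\Lang_\bigcirc$) and $\Theta$ any set of formulas such that $\bigcirc(\theta_1\lor\cdots\lor\theta_n)\notin\Gamma$ for all $n\geq 1$ and $\theta_i\in\Theta$.
\item $W_\bot=\{(\Lang_\bigcirc,\emptyset)\}$; this is the only world over $\Lang_\bigcirc$, so it is $\IRel$-upward closed.
\item $(\Gamma,\Theta)\IRel(\Gamma',\Theta')$ iff $\Gamma\subseteq\Gamma'$.
\item $(\Gamma,\Theta)\MRel(\Delta,\Theta')$ iff $\Gamma\subseteq\Delta$, $\Theta\subseteq\Theta'$ and $\Delta\cap\Theta=\emptyset$.
\end{itemize}
$\MRel$ is reflexive because $\theta\in\Gamma$ would give $\bigcirc\theta\in\Gamma$. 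Transitivity and $\MRel\subseteq\IRel$ are immediate. The $\to$ case uses worlds of the form $(\Gamma',\emptyset)$.

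For the $\bigcirc$ case:
\begin{itemize}
\item If $\bigcirc\varphi\notin\Gamma$, pass to the $\IRel$-successor $(\Gamma,\{\varphi\})$. None of its $\MRel$-successors contains $\varphi$.
\item If $\bigcirc\varphi\in\Gamma'$ for a world $(\Gamma',\Theta')$, extend $\Gamma'\cup\{\varphi\}$ to a prime $\Delta$ avoiding every $\bigcirc\bigvee F$ with $F\subseteq\Theta'$ finite and nonempty. It suffices to avoid these one at a time, since by $\name{M}_\bigcirc$ a disjunction of such formulas implies another one of them.
\end{itemize}
The extension exists because $\Gamma'\vdash\varphi\to\bigcirc\bigvee F$ together with $\bigcirc\varphi\in\Gamma'$ gives $\bigcirc\bigcirc\bigvee F\in\Gamma'$. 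To see this, carry the side assumptions inside $\bigcirc$ using $\name{C}_\bigcirc$ and $\psi\to\bigcirc\psi$. Hence $\bigcirc\bigvee F\in\Gamma'$, contradicting the condition on $\Theta'$. Then $(\Delta,\Theta')$ is a world and $(\Gamma',\Theta')\MRel(\Delta,\Theta')$.
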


Iemhoff \cite{iemhoff_proof_2024} introduced a cut-admissible Gentzen-style sequent calculus and a terminating Gentzen-style calculus for $\Logic{PLL}$.
She also claimed in the same paper that $\Logic{PLL}$ enjoys UIP,
but an updated preprint\footnote{\url{https://arxiv.org/abs/2209.08976v2}} notes a mistake in the proof,
so the claimed result remains unproved.

\begin{problem} \label{prob:pll-interpolation}
  Does $\Logic{PLL}$ enjoy CIP, LIP, and UIP?
\end{problem}

Benton, Bierman, and de Paiva \cite[Secs.\,3.1 and 3.3]{benton_computational_1998}
derive their propositional CL-logic from Moggi's computational lambda calculus via the Curry--Howard correspondence
and observe that an equivalent axiomatization had independently been proposed by Fairtlough and Mendler under the name $\Logic{PLL}$.
Valliappan \cite{valliappan_lax_2026} also develops typed lambda calculi for sublogics of $\Logic{PLL}$,
together with possible-world and categorical semantics and constructive results on normalization and equational completeness.

Up to the change of modal symbol, $\Logic{IEL}^{-}$ is the common base of $\Logic{IEL}$ and $\Logic{PLL}$.
The former adds weak factivity, whereas the latter adds $\bigcirc \bigcirc \varphi \to \bigcirc \varphi$.

\begin{definition} \label{def:pll-extrinsic-frames}
  An f-frame $\Model{F} = (W, \IRel, \MRel)$ is \emph{weakly $\MRel$-dense} if
  ${\MRel} \subseteq (\MRel \cdot \MRel \cdot \IRel)$,
  that is, for any $x \MRel y$, there are $u, z \in W$ such that
  $x \MRel u \MRel z \IRel y$.
  We say $\Model{F}$ is a \emph{$\Logic{PLL}$-frame} if it is $\Box$-p, $\MRel$-strong,
  and weakly $\MRel$-dense.
\end{definition}

\begin{proposition} \label{prop:pll-converse-four-frame-condition}
  Let $\Model{F} = (W, \IRel, \MRel)$ be a $\Box$-p f-frame.
  Then $\Model{F} \ValidE \Box \Box \varphi \to \Box \varphi$ for every $\varphi \in \MFb$
  iff $\Model{F}$ is weakly $\MRel$-dense.
  \begin{proof}
    Suppose first that $\Model{F}$ is weakly $\MRel$-dense,
    and let $\Model{M}$ be an f-model based on $\Model{F}$.
    Fix $x,x' \in W$ and $\varphi \in \MFb$ such that
    $x \IRel x'$ and $x' \SatE_\Model{M} \Box \Box \varphi$.
    To show that $x' \SatE_\Model{M} \Box \varphi$, fix $y \in W$ such that $x' \MRel y$.
    By weak $\MRel$-density, there are $u,z \in W$ such that
    $x' \MRel u \MRel z \IRel y$.
    Since $x' \SatE_\Model{M} \Box \Box \varphi$, we have
    $u \SatE_\Model{M} \Box \varphi$, and hence $z \SatE_\Model{M} \varphi$.
    The extrinsic satisfaction relation is $\Box$-persistent on $\Model{M}$
    by Proposition~\ref{prop:extrinsic-persistency}, so $z \IRel y$ gives
    $y \SatE_\Model{M} \varphi$.
    Thus $x' \SatE_\Model{M} \Box \varphi$, and therefore
    $x \SatE_\Model{M} \Box \Box \varphi \to \Box \varphi$.

    Conversely, suppose that $\Model{F}$ is not weakly $\MRel$-dense.
    Fix $x,y \in W$ such that $x \MRel y$ and there are no $u,z \in W$ satisfying
    $x \MRel u \MRel z \IRel y$, and fix $p \in \PropVar$.
    Define a valuation $V$ by
    \[
      V(p) \defeq
      \set*{ w \in W; \exists u,z \in W\, (x \MRel u \MRel z \IRel w) },
      \qquad
      V(q) \defeq \emptyset \quad (q \ne p).
    \]
    Each $V(q)$ is upward closed with respect to $\IRel$, so
    $\Model{M} = (\Model{F},V)$ is an f-model.
    The definition of $V(p)$ gives $x \SatE_\Model{M} \Box \Box p$,
    while the choice of $x,y$ gives $y \nVdash^e_\Model{M} p$ and hence
    $x \nVdash^e_\Model{M} \Box p$.
    By the reflexivity of $\IRel$,
    $x \nVdash^e_\Model{M} \Box \Box p \to \Box p$.
    Therefore, $\Model{F} \not\ValidE \Box \Box p \to \Box p$.
  \end{proof}
\end{proposition}

By Theorem~\ref{thm:ikb-ikd-completeness}, Proposition~\ref{prop:sbox-frame-condition},
and Proposition~\ref{prop:pll-converse-four-frame-condition},
$\Logic{IEL}^{-} + (\Box \Box \psi \to \Box \psi)$ is sound with respect to
$\Logic{PLL}$-frames.

\begin{problem} \label{prob:pll-extrinsic-completeness}
  Is this semantics complete?
  More precisely, for every $\varphi \in \MFb$, is
  $\Logic{IEL}^{-} + (\Box \Box \psi \to \Box \psi) \vdash \varphi$
  iff 
  $\Model{F} \ValidE \varphi$
  for every $\Logic{PLL}$-frame $\Model{F}$?
  An affirmative answer would give an f-frame semantics for $\Logic{PLL}$ after replacing
  $\bigcirc$ with $\Box$.
\end{problem}

There are also different proposals for an epistemic possibility connective.

\begin{definition}[{\cite{pacheco_epistemic_2024}}] \label{def:iel-possibility}
  On an $\Logic{IEL}$-model $\Model{M}$, Pacheco interprets $\widehat K$ by
  \[
    x \SatE_\Model{M} \widehat K \varphi
    \defiff
    \forall x' \IRelRev x \, \exists y \,
      (x' \MRel y \tand y \SatE_\Model{M} \varphi).
  \]
\end{definition}

This is the intrinsic, or Wijesekera-style, diamond clause.
Pacheco proves that $\widehat K \varphi$ and $\neg \neg \varphi$ have the same truth value at every world of every
$\Logic{IEL}$-model \cite[Thm.\,5]{pacheco_epistemic_2024}; thus this possibility connective is not independent.

\begin{definition}[{\cite[p.\,49]{simpson_proof_1994}}] \label{def:prenosil-diamond}
  On an f-model $\Model{M} = (W, \IRel, \MRel, V)$, consider the diamond clause
  \[
    x \Vdash^{\mathrm p}_\Model{M} \Diamond \varphi
    \defiff
    \exists x', y \in W \,
      (x' \IRel x \tand x' \MRel y \tand y \Vdash^{\mathrm p}_\Model{M} \varphi).
  \]
  Thus $\Diamond \varphi$ holds at $x$ when $\varphi$ holds at an ${\MRel}$-successor
  of some ${\IRel}$-predecessor of $x$.
  Simpson attributes this clause to unpublished work of Plotkin and Stirling.
  P\v{r}enosil later developed the corresponding operation on upsets \cite[p.\,428]{prenosil_duality_2014}.
\end{definition}

Using this clause, Balbiani \cite{balbiani_intuitionistic_2025} obtains complete two-modal doxastic and epistemic logics
$\Logic{L}_{\mathrm{dox}}$ and $\Logic{L}_{\mathrm{epi}}$, in which the box and diamond are independent.

For a different two-modal approach, Kurz and Palmigiano \cite{kurz_epistemic_2013}
develop an $\Logic{IK}$-based dynamic multimodal epistemic logic.

\section*{Acknowledgements}

I would like to express my deep gratitude to
Taishi Kurahashi, my supervisor, for their extensive support and advice throughout this research,
and to Leonardo Pacheco for keeping me informed of the latest developments in intuitionistic and constructive modal logic,
including some of his own recent results,
and for providing thoughtful feedback on earlier versions of this paper.
I would like to thank Anupam Das and Ian Shillito for
helpful discussions at \emph{Logic Colloquium 2025},
particularly concerning some of the observations made in this survey, and for telling me about their ongoing work and emerging results.
I would also like to thank Mashu Noguchi, my colleague, for various feedback.

During the preparation of this manuscript, the author used OpenAI Codex to assist with language proofreading and to check mathematical arguments for potential errors.
All suggestions made by the tool were independently reviewed and verified by the author, who takes full responsibility for the content of the manuscript.

\appendix

\section{Useful lists, tables, and figures} \label{appendix:tables}

\begin{table}[H]
  \centering
  \setlength{\extrarowheight}{3pt}
  \caption{List of modal axioms.}
  \label{tab:axioms}
  \begin{tabular}{rl|rl}
    $\name{M}_\Box$     & \AxiomC{$\varphi \to \psi$} \UnaryInfC{$\Box \varphi \to \Box \psi$} \DisplayProof &
    $\name{M}_\Diamond$ & \AxiomC{$\varphi \to \psi$} \UnaryInfC{$\Diamond \varphi \to \Diamond \psi$} \DisplayProof \\
    $\name{N}_\Box$      & $\Box \top$ &
    $\name{N}_\Diamond$ & $\neg \Diamond \bot$ \\
    $\name{C}_\Box$     & $(\Box \varphi \land \Box \psi) \to \Box (\varphi \land \psi)$ &
    $\name{C}_\Diamond$ & $\Diamond (\varphi \lor \psi) \to (\Diamond \varphi \lor \Diamond \psi)$ \\
    $\name{K}_\Diamond$ & $\Box (\varphi \to \psi) \to (\Diamond \varphi \to \Diamond \psi)$ & & \\
    $\name{FS1}$ & $\Diamond (\varphi \to \psi) \to (\Box \varphi \to \Diamond \psi)$ &
    $\name{FS2}$ & $(\Diamond \varphi \to \Box \psi) \to \Box (\varphi \to \psi)$ \\
    & & $\name{wCD}$ & $\Box (\varphi \lor \psi) \to ((\Diamond \varphi \to \Box \psi) \to \Box \psi)$ \\
    & & $\name{CD}$ & $\Box (\varphi \lor \psi) \to (\Diamond \varphi \lor \Box \psi)$ \\
  \end{tabular}%
  \vspace{2ex}
  \begin{tabular}{rl|rl|rl}
    $\name{D}_\Box$ & $\neg \Box \bot$ & $\name{D}_\Diamond$ & $\Diamond \top$ & $\name{D}$ & $\Box \varphi \to \Diamond \varphi$ \\
    $\name{T}_\Box$ & $\Box \varphi \to \varphi$ & $\name{T}_\Diamond$ & $\varphi \to \Diamond \varphi$ & $\name{T}$ & $\name{T}_\Box \land \name{T}_\Diamond$ \\
    $\name{B}_\Box$ & $\Diamond \Box \varphi \to \varphi$ & $\name{B}_\Diamond$ & $\varphi \to \Box \Diamond \varphi$ & $\name{B}$ & $\name{B}_\Box \land \name{B}_\Diamond$ \\
    $\name{4}_\Box$ & $\Box \varphi \to \Box \Box \varphi$ & $\name{4}_\Diamond$ & $\Diamond \Diamond \varphi \to \Diamond \varphi$ & $\name{4}$ & $\name{4}_\Box \land \name{4}_\Diamond$ \\
    $\name{5}_\Box$ & $\Diamond \Box \varphi \to \Box \varphi$ & $\name{5}_\Diamond$ & $\Diamond \varphi \to \Box \Diamond \varphi$ & $\name{5}$ & $\name{5}_\Box \land \name{5}_\Diamond$ \\
    $\name{S}_\Box$ & $\varphi \to \Box \varphi$ \\
    $\name{L}_\Box$ & $\Box (\Box \varphi \to \varphi) \to \Box \varphi$ \\
  \end{tabular}%
\end{table}

\begin{table}[H]
  \caption{List of basic logics.}
  \vspace{-3.0ex}
  \begin{align*}
    \Logic{iK}_\Box     & \defeq  \Logic{Int} + \name{M}_\Box + \name{N}_\Box + \name{C}_\Box \\
    \Logic{iK}_\Diamond & \defeq  \Logic{Int} + \name{M}_\Diamond + \name{N}_\Diamond + \name{C}_\Diamond \\
    \Logic{iK}_{\Box \Diamond} & \defeq \Logic{Int} + \name{M}_\Box + \name{N}_\Box + \name{C}_\Box + \name{M}_\Diamond + \name{N}_\Diamond + \name{C}_\Diamond \\
    \Logic{CK} & \defeq \Logic{iK}_\Box + \name{K}_\Diamond = \Logic{iK}_\Box + \name{M}_\Diamond + \name{FS1} \\
    \Logic{WK} & \defeq \Logic{CK} + \name{N}_\Diamond \\
    \Logic{IK}^- & \defeq \Logic{WK} + \name{C}_\Diamond = \Logic{iK}_{\Box \Diamond} + \name{FS1} \\
    \Logic{FIK} & \defeq \Logic{IK}^- + \name{wCD} \\
    \Logic{KI} & \defeq \Logic{IK}^- + \name{CD} \\
    \Logic{IK} & \defeq \Logic{IK}^- + \name{FS2} \\
    \Logic{IKI} & \defeq \Logic{KI} + \name{FS2} = \Logic{IK} + \name{CD} \\
  \end{align*}
\end{table}

\begin{table}[H]
  \centering
  \caption{Interaction and persistence conditions.}
  \label{tab:principal-frame-conditions}
  \scriptsize
  \setlength{\tabcolsep}{8pt}
  \renewcommand{\arraystretch}{1.5}
  \begin{tabular}{c c c}
    \hline
    Condition & Visualization & Relational form \\
    \hline
    $\Box$-p
      & \begin{tikzpicture}[baseline=(current bounding box.center),scale=.75]
          \fill (0,0) circle (2pt); \node[below] at (0,0) {$x$};
          \fill (0,2) circle (2pt); \node[above] at (0,2) {$x'$};
          \fill (2,0) circle (2pt); \node[below] at (2,0) {$\exists y$};
          \fill (2,2) circle (2pt); \node[above] at (2,2) {$y'$};
          \draw[thick,->,rel] (0,0) -- (0,2); \node[left] at (0,1) {$\IRel$};
          \draw[thick,->,rel,squig,dashed] (0,0) -- (2,0); \node[below] at (1,0) {$\MRel$};
          \draw[thick,->,rel,dashed] (2,0) -- (2,2); \node[right] at (2,1) {$\IRel$};
          \draw[thick,->,rel,squig] (0,2) -- (2,2); \node[above] at (1,2) {$\MRel$};
        \end{tikzpicture}
      & $(\IRel \mathbin{\cdot} \MRel) \subseteq (\MRel \mathbin{\cdot} \IRel)$ \\
    $\Diamond$-p
      & \begin{tikzpicture}[baseline=(current bounding box.center),scale=.75]
          \fill (0,0) circle (2pt); \node[below] at (0,0) {$x$};
          \fill (0,2) circle (2pt); \node[above] at (0,2) {$x'$};
          \fill (2,0) circle (2pt); \node[below] at (2,0) {$y$};
          \fill (2,2) circle (2pt); \node[above] at (2,2) {$\exists y'$};
          \draw[thick,->,rel] (0,0) -- (0,2); \node[left] at (0,1) {$\IRel$};
          \draw[thick,->,rel,squig] (0,0) -- (2,0); \node[below] at (1,0) {$\MRel$};
          \draw[thick,->,rel,dashed] (2,0) -- (2,2); \node[right] at (2,1) {$\IRel$};
          \draw[thick,->,rel,squig,dashed] (0,2) -- (2,2); \node[above] at (1,2) {$\MRel$};
        \end{tikzpicture}
      & $(\IRel^{-1} \mathbin{\cdot} \MRel) \subseteq (\MRel \mathbin{\cdot} \IRel^{-1})$ \\
    FS2
      & \begin{tikzpicture}[baseline=(current bounding box.center),scale=.75]
          \fill (0,0) circle (2pt); \node[below] at (0,0) {$x$};
          \fill (0,2) circle (2pt); \node[above] at (0,2) {$\exists x'$};
          \fill (2,0) circle (2pt); \node[below] at (2,0) {$y$};
          \fill (2,2) circle (2pt); \node[above] at (2,2) {$y'$};
          \draw[thick,->,rel,dashed] (0,0) -- (0,2); \node[left] at (0,1) {$\IRel$};
          \draw[thick,->,rel,squig] (0,0) -- (2,0); \node[below] at (1,0) {$\MRel$};
          \draw[thick,->,rel] (2,0) -- (2,2); \node[right] at (2,1) {$\IRel$};
          \draw[thick,->,rel,squig,dashed] (0,2) -- (2,2); \node[above] at (1,2) {$\MRel$};
        \end{tikzpicture}
      & $(\MRel \mathbin{\cdot} \IRel) \subseteq (\IRel \mathbin{\cdot} \MRel)$ \\
    \hline
  \end{tabular}
\end{table}

\begin{table}[H]
  \centering
  \caption{Condensation and brilliance conditions.}
  \label{tab:condensation-frame-conditions}
  \scriptsize
  \setlength{\tabcolsep}{8pt}
  \renewcommand{\arraystretch}{1.5}
  \begin{tabular}{c c c}
    \hline
    Condition & Visualization & Relational form \\
    \hline
    $\Box$-condensed
      & \begin{tikzpicture}[baseline=(current bounding box.center),scale=.75]
          \fill (0,0) circle (2pt); \node[left] at (0,0) {$x$};
          \fill (0,2) circle (2pt); \node[left] at (0,2) {$x'$};
          \fill (2,2) circle (2pt); \node[right] at (2,2) {$y$};
          \draw[thick,->,rel] (0,0) -- (0,2); \node[left] at (0,1) {$\IRel$};
          \draw[thick,->,rel,squig] (0,2) -- (2,2); \node[below right] at (1,1) {$\MRel$};
          \draw[thick,->,rel,squig,dashed] (0,0) -- (2,2); \node[above] at (1,2) {$\MRel$};
        \end{tikzpicture}
      & $(\IRel \mathbin{\cdot} \MRel) \subseteq {\MRel}$ \\
    $\Box$-brilliant
      & \begin{tikzpicture}[baseline=(current bounding box.center),scale=.75]
          \fill (0,0) circle (2pt); \node[left] at (0,0) {$x$};
          \fill (2,0) circle (2pt); \node[right] at (2,0) {$y$};
          \fill (2,2) circle (2pt); \node[right] at (2,2) {$y'$};
          \draw[thick,->,rel,squig] (0,0) -- (2,0); \node[below] at (1,0) {$\MRel$};
          \draw[thick,->,rel] (2,0) -- (2,2); \node[right] at (2,1) {$\IRel$};
          \draw[thick,->,rel,squig,dashed] (0,0) -- (2,2); \node[above left] at (1,1) {$\MRel$};
        \end{tikzpicture}
      & $(\MRel \mathbin{\cdot} \IRel) \subseteq {\MRel}$ \\
    $\Diamond$-condensed
      & \begin{tikzpicture}[baseline=(current bounding box.center),scale=.75]
          \fill (0,0) circle (2pt); \node[left] at (0,0) {$x$};
          \fill (0,2) circle (2pt); \node[left] at (0,2) {$x'$};
          \fill (2,0) circle (2pt); \node[right] at (2,0) {$y$};
          \draw[thick,->,rel] (0,0) -- (0,2); \node[left] at (0,1) {$\IRel$};
          \draw[thick,->,rel,squig,dashed] (0,2) -- (2,0); \node[above] at (1,1) {$\MRel$};
          \draw[thick,->,rel,squig] (0,0) -- (2,0); \node[below] at (1,0) {$\MRel$};
        \end{tikzpicture}
      & $(\IRel^{-1} \mathbin{\cdot} \MRel) \subseteq {\MRel}$ \\
    $\Diamond$-brilliant
      & \begin{tikzpicture}[baseline=(current bounding box.center),scale=.75]
          \fill (0,2) circle (2pt); \node[left] at (0,2) {$x'$};
          \fill (2,0) circle (2pt); \node[right] at (2,0) {$y$};
          \fill (2,2) circle (2pt); \node[right] at (2,2) {$y'$};
          \draw[thick,->,rel,squig,dashed] (0,2) -- (2,0); \node[below left] at (1,1) {$\MRel$};
          \draw[thick,->,rel] (2,0) -- (2,2); \node[right] at (2,1) {$\IRel$};
          \draw[thick,->,rel,squig] (0,2) -- (2,2); \node[above] at (1,2) {$\MRel$};
        \end{tikzpicture}
      & $(\MRel \mathbin{\cdot} \IRel^{-1}) \subseteq {\MRel}$ \\
    \hline
  \end{tabular}
\end{table}

\begin{table}[H]
  \centering
  \caption{Interpolation status of the logics discussed in the survey.}
  \label{tab:interpolation-status}
  \renewcommand{\arraystretch}{1.1}
  \begin{tabular*}{.9\textwidth}{@{\extracolsep{\fill}}lccc}
    \hline
    Logic & CIP & LIP & UIP \\
    \hline
    $\Logic{iK}_\Box$
      & $\checkmark$ \cite{iemhoff_uniform_2019}
      & $\checkmark^{\dagger}$
      & $\checkmark$ \cite{iemhoff_uniform_2019} \\
    $\Logic{iK}_\Diamond$
      & $?$ & $?$ & $?$ \\
    $\Logic{CK},\ \Logic{WK}$
      & $\checkmark$ \cite{wijesekera_constructive_1990,giessen_uniform_2026}
      & $\checkmark^{\dagger}$
      & $\checkmark$ \cite{giessen_uniform_2026} \\
    $\Logic{IK}$
      & $\times$ \cite{almeida_fischer-servi_2026} & $\times$ & $\times$ \\
    $\Logic{IK}^-,\ \Logic{FIK},\ \Logic{KI},\ \Logic{IKI}$
      & $?$ & $?$ & $?$ \\
    \hline
    $\Logic{WKT},\ \Logic{WKD}$
      & $\checkmark$ \cite{dalmonte_wijesekera-style_2022} & $?$ & $?$ \\
    $\Logic{iKD}_\Box$
      & $\checkmark$ & $?$ & $\checkmark$ \cite{iemhoff_uniform_2019} \\
    $\Logic{iK4}_\Box,\ \Logic{iS4}_\Box$
      & $?$ & $?$ & $\times$ \cite{giessen_uniform_admissible_2022} \\
    $\Logic{iGL}_\Box,\ \Logic{iSL}_\Box$
      & $\checkmark$ \cite{giessen_uniform_admissible_2022}
      & $?$
      & $\checkmark$ \cite{feree_mechanised_2024,giessen_igl_uniform_2026} \\
    $\Logic{IGL}$
      & $\times$ \cite{almeida_fischer-servi_2026} & $\times$ & $\times$ \\
    $\Logic{IEL}^{-},\ \Logic{IEL}$
      & $?$ & $?$ & $?$ \\
    $\Logic{PLL}$
      & $?$ & $?$ & $?$ \cite{iemhoff_proof_2024} \\
    \hline
  \end{tabular*}
  \par \smallskip
  \begin{minipage}{.9\textwidth}
    \footnotesize
    $\checkmark$: established; $\times$: fails; $?$: presumably open;
    $\dagger$: proved in this paper.
    For $\Logic{PLL}$, the UIP proof was retracted in the updated preprint.
  \end{minipage}
\end{table}

\section{Lyndon interpolation for $\Logic{CK}$ and $\Logic{WK}$} \label{appendix:lip}

\def\fCenter{\ \imp\ }

In this appendix, we will prove that $\Logic{CK}$ and $\Logic{WK}$ enjoy CIP and LIP.
Note that, although some of the results here do not seem to have been stated explicitly in the literature
(at least until recently, for CIP of $\Logic{CK}$ in the preprint of van der Giessen and Shillito \cite{giessen_uniform_2026}),
they follow by the same arguments as in \cite{wijesekera_constructive_1990}.

\begin{definition}[{\cite{wijesekera_constructive_1990}, \cite{dalmonte_minimal_2025}}]
  $\mathrm{G}_\Logic{CK}$ is obtained from $\Logic{LJ}$ by adding the following rules:
  \begin{center}
    \Axiom$\Gamma \fCenter \varphi$
    \RightLabel{($\Box$)}
    \UnaryInf$\Box \Gamma \fCenter \Box \varphi$
    \DisplayProof
    \quad
    \Axiom$\Gamma, \varphi \fCenter \psi$
    \RightLabel{($\Diamond_1$)}
    \UnaryInf$\Box \Gamma, \Diamond \varphi \fCenter \Diamond \psi$
    \DisplayProof
  \end{center}

  $\mathrm{G}_\Logic{WK}$ is obtained from $\mathrm{G}_\Logic{CK}$ by adding the following rule:
  \begin{center}
    \Axiom$\Gamma, \varphi \fCenter {}$
    \RightLabel{($\Diamond_0$)}
    \UnaryInf$\Box \Gamma, \Diamond \varphi \fCenter {}$
    \DisplayProof
  \end{center}
\end{definition}

\begin{proposition}[{\cite{wijesekera_constructive_1990}, \cite{dalmonte_minimal_2025}}]
  For $L \in \set{\Logic{CK}, \Logic{WK}}$,
  $\mathrm{G}_L$ admits cut elimination, and
  $\mathrm{G}_L \vdash \Gamma \imp \varphi$ iff $L \vdash \bigwedge \Gamma \to \varphi$.
\end{proposition}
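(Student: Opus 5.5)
The proof has two independent halves: the equivalence between $\mathrm{G}_L$ and the Hilbert system $L$, and cut elimination. Throughout, $\mathrm{G}_L$ is read as $\Logic{LJ}$ with the cut rule present, and a sequent $\Gamma \imp {}$ with empty succedent is interpreted as $\bigwedge \Gamma \to \bot$.

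\emph{Equivalence with the Hilbert system.}

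For the direction from $\mathrm{G}_L$ to $L$, we induct on derivations. The $\Logic{LJ}$ rules and cut are sound because $L \supseteq \Int$. For $(\Box)$, from $\bigwedge \Gamma \to \varphi$ we get $\Box \bigwedge \Gamma \to \Box \varphi$ by $\name{M}_\Box$. Then $\name{C}_\Box$ (and $\name{N}_\Box$ when $\Gamma = \emptyset$) gives $\bigwedge \Box \Gamma \to \Box \varphi$. For $(\Diamond_1)$, from $\bigwedge \Gamma \to (\varphi \to \psi)$ we obtain $\bigwedge \Box \Gamma \to \Box(\varphi \to \psi)$ in the same way, and then $\name{K}_\Diamond$ yields $\bigwedge \Box\Gamma \to (\Diamond\varphi \to \Diamond\psi)$. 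For $(\Diamond_0)$ in $\Logic{WK}$, the same argument with $\psi = \bot$ produces $\Diamond \bot$, which $\name{N}_\Diamond$ turns into $\bot$.

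For the converse, it suffices to show that $\mathrm{G}_L$ with cut derives every axiom of $L$ and is closed under its rules. Modus ponens is simulated by cut. The rule $\name{M}_\Box$ is an instance of $(\Box)$ with a singleton context. The axioms $\name{N}_\Box$ and $\name{C}_\Box$ come from $(\Box)$ applied to $\imp \top$ and to $\varphi, \psi \imp \varphi \land \psi$, followed by $\Logic{LJ}$ steps. $\name{K}_\Diamond$ comes from $(\Diamond_1)$ applied to $\varphi \to \psi, \varphi \imp \psi$. $\name{N}_\Diamond$ comes from $(\Diamond_0)$ applied to $\bot \imp {}$. Closure under uniform substitution is immediate, because every rule is schematic.

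\emph{Cut elimination.}

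We follow the standard Gentzen argument: eliminate a topmost cut by a double induction on the cut formula's complexity and the sum of the heights of the two premises. The non-modal cases are exactly those of $\Logic{LJ}$. When the cut formula is not principal on one side, we permute the cut upward. The modal rules block this permutation only when the cut formula is a side formula of a modal rule on the left, and that side formula is then necessarily $\Box$- or $\Diamond$-headed and principal. So the genuinely new cases are the principal–principal cases, which I expect to be the main work.

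The principal–principal cases are as follows.
\begin{itemize}
\item $(\Box)$ against $(\Box)$: the premises $\Gamma \imp \chi$ and $\Delta, \chi \imp \theta$ are cut on $\chi$, and $(\Box)$ is reapplied.
\item $(\Box)$ against $(\Diamond_1)$ or $(\Diamond_0)$, with $\Box\chi$ in the boxed context: the same reduction works.
\item $(\Diamond_1)$ on the left, with principal $\Diamond\psi$, against $(\Diamond_1)$ or $(\Diamond_0)$ on the right, with principal $\Diamond\psi$: we cut $\Gamma, \varphi \imp \psi$ with $\Delta, \psi \imp \theta$ (or $\Delta, \psi \imp {}$), and reapply the same right-hand $\Diamond$ rule to the merged boxed context.
\end{itemize}
In each case the new cut is on a strictly smaller formula. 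The key check is that the reapplied rule is still available in $\mathrm{G}_L$. For $\mathrm{G}_\Logic{CK}$, an empty succedent can only arise from $(\Diamond_0)$, which is absent. So in that calculus the cut against $(\Diamond_1)$ always produces the $(\Diamond_1)$ shape, and $\mathrm{G}_\Logic{CK}$ stays closed under the reduction.

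Finally, weakening and contraction interact with the boxed contexts. I would handle this either by proving height-preserving admissibility of weakening and contraction in advance, or by building weakening into the modal rules in the form $\Sigma, \Box\Gamma \imp \dots$. Whichever formulation is chosen must match Wijesekera's presentation so that the permutation cases go through.
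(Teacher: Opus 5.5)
Your proposal is correct and takes the same route the paper relies on. The paper gives no proof of its own: it cites Wijesekera and Dalmonte et al.\ and remarks that cut elimination goes through just as for $\mathrm{G}_{\Logic{K}}$, which is exactly your Gentzen-style argument with the new principal--principal modal reductions, plus the routine simulation between the Hilbert system and the calculus.

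One small correction: $\mathrm{G}_{\Logic{CK}}$ does have empty succedents inherited from $\Logic{LJ}$ (e.g.\ $\bot \imp {}$). The fact you actually need is only that a principal antecedent $\Diamond\psi$ in $\mathrm{G}_{\Logic{CK}}$ can only come from $(\Diamond_1)$, whose premise has a nonempty succedent.
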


\begin{definition}
  We say $\Gamma_1; \Gamma_2$ is a \emph{partition} of $\Gamma$ if $\Gamma_1 \cup \Gamma_2 = \Gamma$ and $\Gamma_1 \cap \Gamma_2 = \emptyset$.
\end{definition}

\begin{lemma}[{\cite[Lem.\,2.1.5]{wijesekera_constructive_1990}}] \label{lemma:maehara}
  Let $L \in \set{\Logic{CK}, \Logic{WK}}$.
  Let $\Gamma \imp \Delta$ be provable in $\mathrm{G}_L$,
  then for any partition $\Gamma_1; \Gamma_2$ of $\Gamma$, there is $\chi \in \MF$ such that:
  \begin{enumerate}[label=(\alph*)]
    \item $\Gamma_1 \imp \chi$ and $\chi, \Gamma_2 \imp \Delta$ are provable in $\mathrm{G}_L$;
    \item $\VarAny(\chi) \subseteq \VarAny(\Gamma_1) \cap (\VarAnother(\Gamma_2) \cup \VarAny(\Delta))$,
          for each $(\bullet, \circ) \in \set{ (+, -), (-, +) }$.
  \end{enumerate}
  \begin{proof}
    Induction on the height of the cut-free proof of $\Gamma \imp \Delta$.

    Take any partition $\Gamma_1; \Gamma_2$ of $\Gamma$.
    If $\Gamma \imp \Delta$ is an initial sequent or is obtained by a rule of $\Logic{LJ}$,
    then it is proven by the same way one would apply Maehara's method to $\Logic{LJ}$ (e.g. \cite{ono_proof_2019}).
    So we shall only consider the cases where $\Gamma \imp \Delta$ is obtained by ($\Box$), ($\Diamond_1$), or ($\Diamond_0$).

    Suppose that $(\Gamma \imp \Delta) = (\Box \Pi \imp \Box \psi)$ is obtained from $\Pi \imp \psi$ by ($\Box$).
    Let $\Pi_1 = \Box^{-1} \Gamma_1$ and $\Pi_2 = \Box^{-1} \Gamma_2$, then $\Pi_1; \Pi_2$ is a partition of $\Pi$.
    By the induction hypothesis, there is $\chi$ that satisfies (a) and (b) for $\Pi_1; \Pi_2$.
    We shall show that $\Box \chi$ satisfies (a) and (b) for $\Gamma_1;\Gamma_2 = \Box \Pi_1; \Box \Pi_2$:
    \begin{enumerate}[label=(\alph*)]
      \item From $\Pi_1 \imp \chi$, we obtain $\Box \Pi_1 \imp \Box \chi$ by ($\Box$).
            Also from $\chi, \Pi_2 \imp \psi$, we obtain $\Box \chi, \Box \Pi_2 \imp \Box \psi$ by ($\Box$).
      \item By definition, we have $\VarAny(\Box \chi) = \VarAny(\chi)$, $\VarAny(\Box \Pi_i) = \VarAny(\Pi_i)$ ($i = 1,2$), and $\VarAny(\Box \psi) = \VarAny(\psi)$,
            for $\bullet \in \set{+, -}$.
            So it follows from $\chi$ satisfying (b).
    \end{enumerate}

    Suppose that $(\Gamma \imp \Delta) = (\Box \Pi, \Diamond \psi_1 \imp \Diamond \psi_2)$ is obtained from $\Pi, \psi_1 \imp \psi_2$ by ($\Diamond_1$).
    Here, for some separation $\Pi_1;\Pi_2$ of $\Pi$,
    either $\Gamma_1; \Gamma_2 = \Box \Pi_1, \Diamond \psi_1; \Box \Pi_2$ 
    or $\Gamma_1; \Gamma_2 = \Box \Pi_1; \Box \Pi_2, \Diamond \psi_1$.
    \begin{itemize}
      \item Suppose the former, then $\Pi_1, \psi_1; \Pi_2$ is a separation of $\Pi, \psi_1$,
            so by the induction hypothesis, there is $\chi$ that satisfies (a) and (b) for $\Pi_1, \psi_1; \Pi_2$.
            We shall show that $\Diamond \chi$ satisfies (a) and (b) for $\Gamma_1; \Gamma_2 = \Box \Pi_1, \Diamond \psi_1; \Box \Pi_2$:
            \begin{enumerate}[label=(\alph*)]
              \item From $\Pi_1, \psi_1 \imp \chi$, we obtain $\Box \Pi_1, \Diamond \psi_1 \imp \Diamond \chi$ by ($\Diamond_1$).
                    Also from $\Pi_2, \chi \imp \psi_2$, we obtain $\Box \Pi_2, \Diamond \chi \imp \Diamond \psi_2$ by ($\Diamond_1$).
              \item Follows from $\chi$ satisfying (b).
            \end{enumerate}

      \item Suppose the latter, then $\Pi_1; \Pi_2, \psi_1$ is a separation of $\Pi, \psi_1$,
            so by the induction hypothesis, there is $\chi$ that satisfies (a) and (b) for $\Pi_1; \Pi_2, \psi_1$.
            We shall show that $\Box \chi$ satisfies (a) and (b) for $\Gamma_1; \Gamma_2 = \Box \Pi_1; \Box \Pi_2, \Diamond \psi_1$:
            \begin{enumerate}[label=(\alph*)]
              \item From $\Pi_1 \imp \chi$, we obtain $\Box \Pi_1 \imp \Box \chi$ by ($\Box$).
                    Also from $\Pi_2, \chi, \psi_1 \imp \psi_2$, we obtain $\Box \Pi_2, \Box \chi, \Diamond \psi_1 \imp \Diamond \psi_2$ by ($\Diamond_1$).
              \item Follows from $\chi$ satisfying (b).
            \end{enumerate}
    \end{itemize}

    Suppose that $(\Gamma \imp \Delta) = (\Box \Pi, \Diamond \psi \imp {})$ is obtained from $\Pi, \psi \imp {}$ by ($\Diamond_0$),
    then the rest of the proof is similar to the previous case.
  \end{proof}
\end{lemma}

\begin{theorem} \label{thm:ck-and-wk-enjoy-lip}
  Both $\Logic{CK}$ and $\Logic{WK}$ enjoy LIP.
  \begin{proof}
    Let $L \in \set{\Logic{CK}, \Logic{WK}}$ and suppose $L \vdash \varphi \to \psi$,
    then $\varphi \imp \psi$ is provable in $\mathrm{G}_L$.
    Consider the partition $\varphi;\emptyset$ of $\varphi$,
    then by Lemma \ref{lemma:maehara}, there is $\chi$ such that
    $\varphi \imp \chi$ and $\chi \imp \psi$ are provable in $\mathrm{G}_L$,
    which imply $L \vdash \varphi \to \chi$ and $L \vdash \chi \to \psi$, respectively,
    and $\VarAny(\chi) \subseteq \VarAny(\varphi) \cap \VarAny(\psi)$ for each $\bullet \in \set{+, -}$.
  \end{proof}
\end{theorem}

\begin{corollary}[{\cite{wijesekera_constructive_1990}, \cite{giessen_uniform_2026}}]
  Both $\Logic{CK}$ and $\Logic{WK}$ enjoy CIP.
\end{corollary}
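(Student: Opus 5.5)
The corollary follows directly from Theorem~\ref{thm:ck-and-wk-enjoy-lip}. As remarked after Definition~\ref{def:ulip-uip}, LIP implies CIP, and I would make that implication explicit here. Let $L \in \set{\Logic{CK}, \Logic{WK}}$ and suppose $L \vdash \varphi \to \psi$. Theorem~\ref{thm:ck-and-wk-enjoy-lip} then gives $\chi \in \MF$ with $L \vdash \varphi \to \chi$, $L \vdash \chi \to \psi$, and $\VarAny(\chi) \subseteq \VarAny(\varphi) \cap \VarAny(\psi)$ for each $\bullet \in \set{+,-}$.

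It remains to check the variable condition of CIP. Taking the union over the two polarities gives
\[
  \Var(\chi) = \VarPos(\chi) \cup \VarNeg(\chi) \subseteq \bigl(\VarPos(\varphi) \cap \VarPos(\psi)\bigr) \cup \bigl(\VarNeg(\varphi) \cap \VarNeg(\psi)\bigr).
\]
Each intersection on the right lies inside $\Var(\varphi) \cap \Var(\psi)$, so $\Var(\chi) \subseteq \Var(\varphi) \cap \Var(\psi)$. Hence $\chi$ is a Craig interpolant in the sense of Definition~\ref{def:cip-lip}.

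One could also argue via Lemma~\ref{lemma:maehara} applied to the partition $\varphi;\emptyset$, forgetting polarities, but routing through the theorem is shorter. There is no real obstacle here. The substantive content, namely cut elimination for $\mathrm{G}_L$ and the Maehara-style induction, is already contained in the preceding lemma and theorem.
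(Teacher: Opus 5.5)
Your proposal is correct and follows the paper's route. The corollary is placed directly after Theorem~\ref{thm:ck-and-wk-enjoy-lip} and is obtained from it via the remark, stated after Definition~\ref{def:ulip-uip}, that LIP implies CIP; you have simply spelled out that variable-set inclusion explicitly.
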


\begin{corollary} \label{cor:ikb-enjoys-lip}
  $\Logic{iK}_\Box$ enjoys LIP.
  \begin{proof}[Proof (sketch)]
    By the admissibility of cut in $\mathrm{G}_\Logic{CK}$ and $\Logic{iK}_\Box = \Logic{CK} \cap \MFb$,
    we can show that $\Logic{LJ} + (\Box)$ is exactly a sequent calculus for $\Logic{iK}_\Box$.
    Then repeat the above proof for $\Logic{iK}_\Box$ and $\Logic{LJ} + (\Box)$.
  \end{proof}
\end{corollary}

\begin{problem}
  Does $\Logic{iK}_\Diamond$ enjoy CIP, LIP, and UIP?
  Note that we can't use the same technique as in $\Logic{iK}_\Box$ since $\Logic{WK} \cap \MFd \subsetneq \Logic{iK}_\Diamond$.
\end{problem}

\section{Completeness of $\Logic{FIK}$, $\Logic{KI}$, $\Logic{IK}$, and $\Logic{IKI}$} \label{appendix:completeness}

In this appendix, we will prove the completeness of logics $\Logic{FIK}$, $\Logic{KI}$, $\Logic{IK}$, and $\Logic{IKI} \defeq \Logic{IK} + \name{CD}$.
We first show the completeness of $\Logic{FIK}$ following the original proof by Balbiani et al.\ \cite{balbiani_natural_2024},
and we will extend it to show the completeness of the remaining ones.

\begin{definition}
  Let $L \supseteq \Logic{FIK}$ and $\Gamma, \Delta \subseteq \MF$.
  \begin{itemize}
    \item We let $\Box \Gamma \defeq \set{ \Box \varphi; \varphi \in \Gamma }$ and $\Diamond \Gamma \defeq \set{ \Diamond \varphi; \varphi \in \Gamma }$.\\
          We also let $\Box^{-1} \Gamma \defeq \set{ \varphi; \Box \varphi \in \Gamma }$ and $\Diamond^{-1} \Gamma \defeq \set{ \varphi; \Diamond \varphi \in \Gamma }$.
    \item A tuple $(\Gamma, \Delta)$ is said to be a \emph{theory}.
    \item A theory $(\Gamma, \Delta)$ is said to be \emph{maximal} if $\Gamma \cup \Delta = \MF$.
    \item A theory $(\Gamma, \Delta)$ is said to be \emph{$L$-consistent} if $L \nvdash \bigwedge \Gamma_0 \to \bigvee \Delta_0$ for any finite $\Gamma_0 \subseteq \Gamma$ and any finite $\Delta_0 \subseteq \Delta$.
          We note that $\bigwedge \emptyset = \top$ and $\bigvee \emptyset = \bot$.
    \item A theory $(\Gamma, \Delta)$ is said to be \emph{$L$-final} if it is $L$-consistent and
          $(\Gamma \cup \Pi, \Delta \setminus \Pi)$ is not $L$-consistent for any nonempty $\Pi \subseteq \Delta$.
  \end{itemize}
\end{definition}

\begin{lemma}[Lindenbaum]
  Let $L \supseteq \Logic{FIK}$.
  For any $L$-consistent theory $(\Gamma_0, \Delta_0)$, there is a maximal and $L$-consistent theory $(\Gamma_\omega, \Delta_\omega)$ such that
  $(\Gamma_0, \Delta_0) \subseteq (\Gamma_\omega, \Delta_\omega)$, that is, $\Gamma_0 \subseteq \Gamma_\omega$ and $\Delta_0 \subseteq \Delta_\omega$.
  \begin{proof}
    You can easily prove that for any $L$-consistent theory $(\Gamma_i, \Delta_i)$ and any $\varphi_i \in \MF \setminus \Gamma_i \cup \Delta_i$,
    at least one of $(\Gamma_i \cup \set{\varphi_i}, \Delta_i)$, $(\Gamma_i, \Delta_i \cup \set{\varphi_i})$ is $L$-consistent.
    For $\varphi_1, \varphi_2, \varphi_3 \ldots = \MF \setminus \Gamma_0 \cup \Delta_0$,
    use the above to obtain $(\Gamma_{i+1}, \Delta_{i+1})$ from $(\Gamma_i, \Delta_i)$,
    then let $\Gamma_\omega = \bigcup_{i < \omega} \Gamma_i$ and $\Delta_\omega = \bigcup_{i < \omega} \Delta_i$.
    It is easy to see that $(\Gamma_\omega, \Delta_\omega)$ is maximal $L$-consistent.
  \end{proof}
\end{lemma}

\begin{corollary} \label{cor:final-theory}
  Let $L \supseteq \Logic{FIK}$.
  For any $L$-consistent theory $(\Gamma, \Delta)$
  there is a maximal and $L$-final theory $(\Gamma^*, \Delta^*)$ such that
  $(\Gamma, \Delta) \subseteq (\Gamma^*, \Delta^*)$.
  \begin{proof}
    In the proof of Lindenbaum lemma, choose $(\Gamma_i, \Delta_i \cup \set{\varphi_i})$ if and only if $(\Gamma_i \cup \set{\varphi_i}, \Delta_i)$ is not $L$-consistent.
    Then it is easy to see that $(\Gamma_\omega \cup \Pi, \Delta_\omega \setminus \Pi)$ is not $L$-consistent for any nonempty $\Pi \subseteq \Delta_\omega$.
  \end{proof}
\end{corollary}

\begin{lemma}[Saturation] \label{lem:saturation}
  Let $L \supseteq \Logic{FIK}$ and let $(\Gamma, \Delta)$ be a maximal $L$-consistent theory.
  \begin{enumerate}[label=(\arabic*)]
    \item $\varphi \in \Gamma$ if there is a finite $\Gamma_0 \subseteq \Gamma$ such that $L \vdash \bigwedge \Gamma_0 \to \varphi$.
    \item If $\varphi \lor \psi \in \Gamma$, then either $\varphi \in \Gamma$ or $\psi \in \Gamma$.
  \end{enumerate}
  \begin{proof}
    (1) Suppose towards a contradiction that there is such $\Gamma_0 \subseteq \Gamma$ but $\varphi \notin \Gamma$, then $\varphi \in \Delta$ by maximality, so $(\Gamma, \Delta)$ is $L$-inconsistent.
    (2) Suppose $\varphi \lor \psi \in \Gamma$, then $L \vdash (\varphi \lor \psi) \to (\varphi \lor \psi)$ implies either $\varphi \notin \Delta$ or $\psi \notin \Delta$ by $L$-consistency. By maximality, this means either $\varphi \in \Gamma$ or $\psi \in \Gamma$.
  \end{proof}
\end{lemma}

\begin{corollary} \label{cor:saturation}
  Let $L \supseteq \Logic{FIK}$ and let $(\Gamma, \Delta)$ be a maximal $L$-consistent theory.
  \begin{itemize}
    \item If $L \vdash \varphi$, then $\varphi \in \Gamma$.
    \item If $\varphi \in \Gamma$ and $\varphi \to \psi \in \Gamma$, then $\psi \in \Gamma$.
    \item $\bot \notin \Gamma$.
    \item $\varphi \land \psi \in \Gamma$ iff $(\varphi \in \Gamma \tand \psi \in \Gamma)$.
    \item $\varphi \lor \psi \in \Gamma$ iff $(\varphi \in \Gamma \tor \psi \in \Gamma)$.
  \end{itemize}
\end{corollary}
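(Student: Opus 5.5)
Each item of Corollary \ref{cor:saturation} is reduced to Lemma \ref{lem:saturation} (1), i.e.\ deductive closure of $\Gamma$ under $L$, together with maximality and $L$-consistency. Disjunction needs in addition Lemma \ref{lem:saturation} (2). The argument is purely propositional, so nothing specific to $\Logic{FIK}$ beyond containing $\Int$ is used.

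First, if $L \vdash \varphi$, take $\Gamma_0 = \emptyset$. Since $\bigwedge \emptyset = \top$ and $L \vdash \top \to \varphi$ by $\Int$, Lemma \ref{lem:saturation} (1) gives $\varphi \in \Gamma$. Second, for modus ponens, take $\Gamma_0 = \set{\varphi, \varphi \to \psi}$. Since $L \vdash (\varphi \land (\varphi \to \psi)) \to \psi$, closure yields $\psi \in \Gamma$. Third, if $\bot \in \Gamma$, then taking $\Gamma_0 = \set{\bot}$ and $\Delta_0 = \emptyset$ gives $L \vdash \bot \to \bot$, where $\bigvee \emptyset = \bot$. This contradicts $L$-consistency, so $\bot \notin \Gamma$.

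For conjunction, the left-to-right direction uses $\Gamma_0 = \set{\varphi \land \psi}$ and the theorems $\varphi \land \psi \to \varphi$ and $\varphi \land \psi \to \psi$. The right-to-left direction uses $\Gamma_0 = \set{\varphi, \psi}$ and $L \vdash \varphi \land \psi \to \varphi \land \psi$. For disjunction, the left-to-right direction is exactly Lemma \ref{lem:saturation} (2). The converse follows by closure from $\varphi \to \varphi \lor \psi$ and $\psi \to \varphi \lor \psi$.

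No step is a real obstacle. The only point that needs care is the empty-set conventions: $\bigwedge \emptyset = \top$ in the first item and $\bigvee \emptyset = \bot$ in the $\bot$ item. Both are fixed explicitly in the definition of $L$-consistency.
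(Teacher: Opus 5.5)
Your proposal is correct and follows the paper's intended route. The paper states this corollary right after Lemma~\ref{lem:saturation} without writing out a proof, and your reduction of each item to Lemma~\ref{lem:saturation}~(1) and~(2) plus $L$-consistency supplies exactly those routine steps, including the conventions $\bigwedge\emptyset=\top$ and $\bigvee\emptyset=\bot$.
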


\begin{definition}
  Let $L \supseteq \Logic{FIK}$, then we define $\Model{M}_L^* = (W_L^*, \IRel^*, \MRel^*, V^*)$, the \emph{canonical model} for $L$, by the following:
  \begin{itemize}
    \item $W_L^*$ is the set of all maximal $L$-consistent theory.
    \item $(\Gamma_x, \Delta_x) \IRel^* (\Gamma_x', \Delta_x')$ iff $\Gamma_x \subseteq \Gamma_x'$.
    \item $(\Gamma_x, \Delta_x) \MRel^* (\Gamma_y, \Delta_y)$ iff $\Box^{-1} \Gamma_x \subseteq \Gamma_y$ and $\Diamond^{-1} \Delta_x \subseteq \Delta_y$.
    \item $V^*(p) = \set{ (\Gamma_x, \Delta_x) \in W_L^*; p \in \Gamma_x }$.
  \end{itemize}
\end{definition}

\begin{remark} \label{rem:cml-canonical-models}
 We note the above construction of canonical model does not work for CMLs,
 and some modifications are needed to prevent $\name{C}_\Diamond$ from being valid.
 One famous way is, as in \cite{wijesekera_constructive_1990,groot_semantical_2025}, to pair a theory with a set of theories, called \emph{segments}, which represent its $\MRel$-successors.
 Another is, as in \cite{mendler_constructive_2005}, to pair a theory with a set of formulas that are not satisfiable in its $\MRel$-successors.
\end{remark}

\begin{proposition} \label{prop:canonical-model-is-dia-p}
  Let $L \supseteq \Logic{FIK}$, then $\Model{M}_L^*$ is a $\Diamond$-p model.
  \begin{proof}
    Take any $x = (\Gamma_x, \Delta_x)$, $x' = (\Gamma_x', \Delta_x')$, and $y = (\Gamma_y, \Delta_y)$ such that $x' \IRelRev_L^* x \MRel^* y$,
    then $\Gamma_x \subseteq \Gamma_x'$, $\Box^{-1} \Gamma_x \subseteq \Gamma_y$, and $\Diamond^{-1} \Delta_x \subseteq \Delta_y$ by definition.
    \begin{claim*}
      $(\Box^{-1} \Gamma_x' \cup \Gamma_y, \Diamond^{-1} \Delta_x')$ is $L$-consistent.
      \begin{subproof}
        Suppose not, then there are finite $A \subseteq \Box^{-1}\Gamma_x'$, $B \subseteq \Gamma_y$, $C \subseteq \Diamond^{-1}\Delta_x'$ such that
        $L \vdash \left(\bigwedge A \land \bigwedge B\right) \to \bigvee C$, then:
        \begin{center}
          \AxiomC{$L \vdash \left(\bigwedge A \land \bigwedge B\right) \to \bigvee C$}
          \UnaryInfC{$L \vdash \bigwedge B \to \left(\bigwedge A \to \bigvee C\right)$}
          \RightLabel{\scriptsize{$\name{M}_\Diamond$}}
          \UnaryInfC{$L \vdash \Diamond \bigwedge B \to \Diamond \left(\bigwedge A \to \bigvee C\right)$}
          \RightLabel{\scriptsize{$\name{FS1}$}}
          \UnaryInfC{$L \vdash \Diamond \bigwedge B \to \left(\Box \bigwedge A \to \Diamond \bigvee C\right)$}
          \RightLabel{\scriptsize{$\name{C}_\Box$, $\name{C}_\Diamond$}}
          \UnaryInfC{$L \vdash \Diamond \bigwedge B \to \left(\bigwedge \Box A \to \bigvee \Diamond C\right)$}
          \UnaryInfC{$L \vdash \left(\bigwedge \Box A \land \Diamond \bigwedge B\right) \to \bigvee \Diamond C$}
          \DisplayProof
        \end{center}
        Then $\Diamond \bigwedge B \notin \Gamma_x'$ by $L$-consistency of $x'$ (with $\Box A \subseteq \Gamma_x'$ and $\Diamond C \subseteq \Delta_x'$),
        then $\Diamond \bigwedge B \notin \Gamma_x$ by $\Gamma_x \subseteq \Gamma_x'$,
        then $\Diamond \bigwedge B \in \Delta_x$ by maximality of $x$,
        so $\bigwedge B \in \Delta_y$ by $\Diamond^{-1} \Delta_x \subseteq \Delta_y$,
        which is a contradiction since $B \subseteq \Gamma_y$ implies $\bigwedge B \in \Gamma_y$.
      \end{subproof}
    \end{claim*}
    Now that $(\Box^{-1} \Gamma_x' \cup \Gamma_y, \Diamond^{-1} \Delta_x')$ is $L$-consistent,
    then by Lindenbaum lemma, there is $y' = (\Gamma_y', \Delta_y') \in W_L^*$
    such that $\Gamma_y \subseteq \Gamma_y'$, $\Box^{-1} \Gamma_x' \subseteq \Gamma_y'$, and $\Diamond^{-1} \Delta_x' \subseteq \Delta_y'$,
    which imply $y \IRel^* y'$ and $x' \MRel^* y'$.
  \end{proof}
\end{proposition}

\begin{proposition} \label{prop:truth-lemma-imp-left}
  Let $L \supseteq \Logic{FIK}$, $x = (\Gamma_x, \Delta_x) \in W_L^*$, $\varphi, \psi \in \MF$.
  If $(\varphi \to \psi) \notin \Gamma_x$, then there is $x' = (\Gamma_x', \Delta_x') \in W_L^*$ such that
  $x \IRel^* x'$, $\varphi \in \Gamma_x'$, and $\psi \in \Delta_x'$.
  \begin{proof}
    Suppose $(\varphi \to \psi) \notin \Gamma_x$, then $(\varphi \to \psi) \in \Delta_x$ by maximality.
    Here, $(\Gamma_x \cup \set{\varphi}, \set{\psi})$ is $L$-consistent;
    for otherwise, there is a finite $X \subseteq \Gamma_x$ such that $L \vdash \left(\bigwedge X \land \varphi\right) \to \psi$,
    then $L \vdash \bigwedge X \to (\varphi \to \psi)$, so $(\Gamma, \Delta)$ would be $L$-inconsistent since $X \subseteq \Gamma_x$ and $\varphi \to \psi \in \Delta_x$.
    Now by the Lindenbaum lemma, there is $x' = (\Gamma_x', \Delta_x') \in W_L^*$ such that $\Gamma_x \cup \set{\varphi} \subseteq \Gamma_x'$ and $\psi \in \Delta_x'$.
    Here, $x \IRel^* x'$ by the definition of $\IRel^*$.
  \end{proof}
\end{proposition}

\begin{proposition} \label{prop:truth-lemma-box-left}
  Let $L \supseteq \Logic{FIK}$, $x = (\Gamma_x, \Delta_x) \in W_L^*$, $\varphi \in \MF$.
  If $\Box \varphi \notin \Gamma_x$, then there are $x' = (\Gamma_x', \Delta_x') \in W_L^*$ and $y' = (\Gamma_y', \Delta_y') \in W_L^*$ such that
  $x \IRel^* x' \MRel^* y'$ and $\varphi \notin \Gamma_y'$.
  \begin{proof}
    Let $(\Gamma_x', \Delta_x')$ be a maximal $L$-final theory such that $(\Gamma_x, \set{ \Box \varphi }) \subseteq (\Gamma_x', \Delta_x')$, which is obtained from Corollary \ref{cor:final-theory}.
    Then $x' = (\Gamma_x', \Delta_x') \in W_L^*$ and $x \IRel x'$ by definition.
    Let $\Theta = \set{ \varphi \lor \psi; \psi \in \Diamond^{-1} \Delta_x' }$.
    \begin{claim*}
      $(\Box^{-1} \Gamma_x', \Theta \cup \set{ \varphi })$ is $L$-consistent.
      \begin{subproof}
        Suppose not, then there are finite $A \subseteq \Box^{-1} \Gamma_x'$ and $\psi_1, \psi_2, \ldots, \psi_n \in \Diamond^{-1} \Delta_x'$
        such that $L \vdash \bigwedge A \to \left(\bigvee_{1 \le i \le n} (\varphi \lor \psi_i)\right) \lor \varphi$, then:
        \begin{center}
          \AxiomC{$L \vdash \bigwedge A \to \left(\bigvee_{1 \le i \le n} (\varphi \lor \psi_i)\right) \lor \varphi$}
          \UnaryInfC{$L \vdash \bigwedge A \to \left( \varphi \lor \bigvee_{1 \le i \le n} \psi_i \right)$}
          \RightLabel{\scriptsize{$\name{M}_\Box$}}
          \UnaryInfC{$L \vdash \Box \bigwedge A \to \Box \left( \varphi \lor \bigvee_{1 \le i \le n} \psi_i \right)$}
          \RightLabel{\scriptsize{$\name{C}_\Box$}}
          \UnaryInfC{$L \vdash \bigwedge \Box A \to \Box \left( \varphi \lor \bigvee_{1 \le i \le n} \psi_i \right)$}
          \RightLabel{\scriptsize{$\name{wCD}$}}
          \UnaryInfC{$L \vdash \bigwedge \Box A \to \left(\left(\Diamond \bigvee_{1 \le i \le n} \psi_i \to \Box \varphi\right) \to \Box \varphi\right)$}
          \UnaryInfC{$L \vdash \left(\bigwedge \Box A \land \left(\Diamond \bigvee_{1 \le i \le n} \psi_i \to \Box \varphi\right)\right) \to \Box \varphi$}
          \DisplayProof
        \end{center}
        Let $\psi = \bigvee_{1 \le i \le n} \psi_i$.
        By the above, we have $\left(\Diamond \psi \to \Box \varphi\right) \notin \Gamma_x'$ by maximality and $L$-consistency.
        By Proposition \ref{prop:truth-lemma-imp-left}, this implies that
        there is $x'' = (\Gamma_x'', \Delta_x'')$ such that $x' \IRel^* x''$, $\Diamond \psi \in \Gamma_x''$, and $\Box \varphi \in \Delta_x''$.
        Here, $x' \IRel^* x''$ implies $\Gamma_x' \subseteq \Gamma_x''$ and $\Delta_x' \supseteq \Delta_x''$,
        so there is $\Pi$ such that $\Gamma_x' \cup \Pi = \Gamma_x''$ and $\Delta_x' = \Delta_x'' \cup \Pi$.
        But by $x'$ being $L$-final, it must be that $\Pi = \emptyset$, so $x'' = x'$, which implies $\Diamond \psi \in \Gamma_x'$.
        Then by Lemma \ref{lem:saturation}, this and $L \vdash \name{C}_\Diamond$ imply $\bigvee_{1 \le i \le n} \Diamond \psi_i \in \Gamma_x'$,
        so by Corollary \ref{cor:saturation}, there is at least one $1 \le k \le n$ such that $\Diamond \psi_k \in \Gamma_x'$.
        This is a contradiction since $\Diamond \psi_k \in \Delta_x'$.
      \end{subproof}
    \end{claim*}

    Now that $(\Box^{-1} \Gamma_x', \Theta \cup \set{\varphi})$ is $L$-consistent,
    then by Lindenbaum lemma, there is $y' = (\Gamma_y', \Delta_y') \in W_L^*$ such that $\Box^{-1} \Gamma_x' \subseteq \Gamma_y'$ and $\Theta \cup \set{\varphi} \subseteq \Delta_y'$.
    Now it remains to show $\Diamond^{-1} \Delta_x' \subseteq \Delta_y'$ to obtain $x' \MRel^* y'$ and $\varphi \notin \Gamma_y'$.
    Take any $\psi \in \Diamond^{-1} \Delta_x'$, then $\varphi \lor \psi \in \Theta$, then $\varphi \lor \psi \in \Delta_y'$, so $\varphi \lor \psi \notin \Gamma_y'$ by maximality.
    By Corollary \ref{cor:saturation}, this implies $\varphi \notin \Gamma_y'$ and $\psi \notin \Gamma_y'$, the latter of which implies $\psi \in \Delta_y'$ by maximality.
  \end{proof}
\end{proposition}

\begin{remark}
  Proposition \ref{prop:truth-lemma-box-left} can more easily be proven if we have $L \vdash \name{FS2}$ i.e.\ $L \supseteq \Logic{IK}$.
  Also, the author believes that it cannot be proven without the $\name{wCD}$ axiom.
  So to prove the completeness of logic without the said axiom (e.g.\ $\Logic{IK}^-$),
  one would need to use a different construction of canonical model, such as the segment-based construction by de Groot et al.\ \cite{groot_semantical_2025}.
\end{remark}

\begin{lemma}[Truth lemma]
  Let $L \supseteq \Logic{FIK}$,
  then for any $x = (\Gamma_x, \Delta_x) \in W_L^*$ and any $\varphi \in \MF$,
  $x \SatIK_{\Model{M}_L^*} \varphi$ iff $\varphi \in \Gamma_x$.
  \begin{proof}
    We use an induction on the construction of $\varphi$.
    We shall only consider the cases for $\to$, $\Box$, and $\Diamond$, as the other cases are trivial.

    If $\varphi = \psi \to \chi$, then we shall prove it in both directions.
    \begin{itemize}
      \item[($\Rightarrow$)]
            Suppose $\psi \to \chi \notin \Gamma_x$, then by Proposition \ref{prop:truth-lemma-imp-left},
            there is $x' = (\Gamma_x', \Delta_x') \in W_L^*$ such that $x \IRel^* x'$, $\psi \in \Gamma_x'$, and $\chi \in \Delta_x'$.
            Here, $x' \SatIK_{\Model{M}_L^*} \psi$ and $x' \not \SatIK_{\Model{M}_L^*} \chi$ by the induction hypothesis.
            Therefore, $x \not \SatIK_{\Model{M}_L^*} \psi \to \chi$.
      \item[($\Leftarrow$)]
            Suppose $\psi \to \chi \in \Gamma_x$ and take any $x' = (\Gamma_x', \Delta_x') \in W_L^*$ such that $x \IRel^* x' \SatIK_{\Model{M}_L^*} \psi$,
            then $\psi \in \Gamma_x'$ by the induction hypothesis, and $\psi \to \chi \in \Gamma_x \subseteq \Gamma_x'$ by the definition of $\IRel^*$,
            so $\chi \in \Gamma_x'$ by Corollary \ref{cor:saturation}.
            Therefore, $x \SatIK_{\Model{M}_L^*} \psi \to \chi$.
    \end{itemize}

    If $\varphi = \Box \psi$, then we shall prove it in both directions.
    \begin{itemize}
      \item[($\Rightarrow$)]
            Suppose $\Box \psi \notin \Gamma_x$, then by Proposition \ref{prop:truth-lemma-box-left},
            there are $x' = (\Gamma_x', \Delta_x') \in W_L^*$ and $y' = (\Gamma_y', \Delta_y') \in W_L^*$
            such that $x \IRel^* x' \MRel^* y'$ and $\psi \notin \Gamma_y'$.
            Here, $y' \not \SatIK_{\Model{M}_L^*} \psi$ by the induction hypothesis.
            Therefore, $x \not \SatIK_{\Model{M}_L^*} \Box \psi$.
      \item[($\Leftarrow$)]
            Suppose $\Box \psi \in \Gamma_x$ and take any $x' = (\Gamma_x', \Delta_x'),\ y' = (\Gamma_y', \Delta_y') \in W_L^*$
            such that $x \IRel^* x' \MRel^* y'$,
            then $\Box \psi \in \Gamma_x \subseteq \Gamma_x'$,
            then $\psi \in \Box^{-1} \Gamma_x' \subseteq \Gamma_y'$,
            so $y' \SatIK_{\Model{M}_L^*} \psi$ by the induction hypothesis.
            Therefore, $x \SatIK_{\Model{M}_L^*} \Box \psi$.
    \end{itemize}

    If $\varphi = \Diamond \psi$, then we shall prove it in both directions.
        \begin{itemize}
      \item[($\Rightarrow$)]
            Suppose $\Diamond \psi \notin \Gamma_x$, then $\Diamond \psi \in \Delta_x$ by maximality.
            Take any $y = (\Gamma_y, \Delta_y)$ such that $x \MRel^* y$,
            then $\psi \in \Diamond^{-1} \Delta_x \subseteq \Delta_y$ by definition,
            then $\psi \notin \Gamma_y$ by maximality,
            then $y \not \SatIK_{\Model{M}_L^*} \psi$ by the induction hypothesis,
            so $x \not \SatIK_{\Model{M}_L^*} \Diamond \psi$.
      \item[($\Leftarrow$)]
            Suppose $\Diamond \psi \in \Gamma_x$.
            Here, $(\Box^{-1} \Gamma_x \cup \set{ \psi }, \Diamond^{-1} \Delta_x)$ is $L$-consistent;
            as otherwise, there are finite $A \subseteq \Box^{-1} \Gamma_x$ and $B \subseteq \Diamond^{-1} \Delta_x$ such that
            $L \vdash \left(\bigwedge A \land \psi\right) \to \bigvee B$, then it contradicts $x$ being $L$-consistent by:
            \begin{center}
              \AxiomC{$L \vdash \left(\bigwedge A \land \psi\right) \to \bigvee B$}
              \UnaryInfC{$L \vdash \psi \to \left(\bigwedge A \to \bigvee B\right)$}
              \RightLabel{\scriptsize{$\name{M}_\Diamond$}}
              \UnaryInfC{$L \vdash \Diamond \psi \to \Diamond \left(\bigwedge A \to \bigvee B\right)$}
              \RightLabel{\scriptsize{$\name{FS1}$}}
              \UnaryInfC{$L \vdash \Diamond \psi \to \left(\Box \bigwedge A \to \Diamond \bigvee B\right)$}
              \RightLabel{\scriptsize{$\name{C}_\Box, \name{C}_\Diamond$}}
              \UnaryInfC{$L \vdash \Diamond \psi \to \left(\bigwedge \Box A \to \bigvee \Diamond B\right)$}
              \UnaryInfC{$L \vdash \left(\bigwedge \Box A \land \Diamond \psi\right) \to \bigvee \Diamond B$}
              \DisplayProof
            \end{center}
            Now by Lindenbaum lemma, there is $y = (\Gamma_y, \Delta_y) \in W_L^*$ such that
            $\Box^{-1} \Gamma_x \cup \set{ \psi } \subseteq \Gamma_y$ and $\Diamond^{-1} \Delta_x \subseteq \Delta_y$,
            which imply $x \MRel^* y$ by definition and $y \SatIK_{\Model{M}_L^*} \psi$ by the induction hypothesis.
            Therefore, $x \SatIK_{\Model{M}_L^*} \Diamond \psi$.
            \qedhere
    \end{itemize}
  \end{proof}
\end{lemma}

\begin{theorem}[{Completeness of $\Logic{FIK}$, \cite{balbiani_natural_2024}}] \label{thm:fik-completeness}
  \begin{equation*}
    \Logic{FIK} = \set{ \varphi \in \MF; \forall \Model{F} \in \FrameClass{\Diamond\text{-p}}\,(\Model{F} \ValidIK \varphi) }.
  \end{equation*}
  \begin{proof}
    ($\subseteq$) By Corollary \ref{cor:wk-completeness} and Proposition \ref{prop:int-ik-coincides}, it suffices to show that $\name{C}_\Diamond$ and $\name{wCD}$ are h-valid on any $\Diamond$-p frame.
    For $\name{C}_\Diamond$, it is proven similarly to Proposition \ref{prop:separation-cdia} (1).
    For $\name{wCD}$, take any $\Diamond$-p model $\Model{M} = (W, \IRel, \MRel, V)$ and any $x_0, x_1, x_2 \in W$
    such that
    $x_0 \IRel x_1 \IRel x_2$,
    $x_1 \SatIK \Box(\varphi \lor \psi)$, and
    $x_2 \SatIK \Diamond \varphi \to \Box \psi$,
    then it suffices to show that $x_2 \SatIK \Box \psi$.
    Take any $x_3, y_3 \in W$ such that $x_2 \IRel x_3 \MRel y_3$,
    then $x_1 \SatIK \Box(\varphi \lor \psi)$ implies $x_2 \SatIK \Box(\varphi \lor \psi)$ by persistency,
    then $y_3 \SatIK \varphi \lor \psi$, so either $y_3 \SatIK \varphi$ or $y_3 \SatIK \psi$.
    Here, the former implies the latter; if $y_3 \SatIK \varphi$,
    then $x_3 \MRel y_3 \SatIK \varphi$ implies $x_3 \SatIK \Diamond \varphi$.
    then we have $x_3 \SatIK \Box \psi$ by $x_2 \IRel x_3 \SatIK \Diamond \varphi \to \Box \psi$ (persistency),
    so we have $y_3 \SatIK \psi$.
    Therefore, $x_2 \SatIK \Box \psi$ holds.

    ($\supseteq$) Follows from Proposition \ref{prop:canonical-model-is-dia-p} and Truth lemma.
  \end{proof}
\end{theorem}

\begin{theorem}[{Completeness of $\Logic{KI}$}] \label{thm:eikm-completeness}
  \begin{equation*}
    \Logic{KI} = \set{ \varphi \in \MF; \forall \Model{F} \in \FrameClass{\Box\text{-p}} \cap \FrameClass{\Diamond\text{-p}}\, (\Model{F} \ValidIK \varphi) }.
  \end{equation*}
  \begin{proof}
    ($\subseteq$) It suffices to show that $\name{CD}$ is h-valid on any $\Box$-p and $\Diamond$-p frame.
    Take any $\Box$-p and $\Diamond$-p model $\Model{M} = (W, \IRel, \MRel, V)$ and any $x_0, x_1 \in W$
    such that $x_0 \IRel x_1 \SatIK \Box (\varphi \lor \psi)$, then we shall show that $x_1 \SatIK \Diamond \varphi \lor \Box \psi$,
    that is, at least one of $x_1 \SatIK \Diamond \varphi$ or $x_1 \SatIK \Box \psi$ holds.
    Suppose $x_1 \not \SatIK \Box \psi$, then there are $x_2, y_2 \in W$ such that $x_1 \IRel x_2 \MRel y_2 \not \SatIK \psi$.
    By the $\Box$-p condition, there is $y_1 \in W$ such that $x_1 \MRel y_1 \IRel y_2$.
    Here, $y_2 \not \SatIK \psi$ implies $y_1 \not \SatIK \psi$ by persistency.
    Also, $x_1 \SatIK \Box (\varphi \lor \psi)$ and $x_1 \IRel x_1 \MRel y_1$ imply $y_1 \SatIK \varphi \lor \psi$.
    So we have $x_1 \MRel y_1 \SatIK \varphi$. Therefore, $x_1 \SatIK \Diamond \varphi$ holds.

    ($\supseteq$) By Proposition \ref{prop:canonical-model-is-dia-p} and Truth lemma,
    it suffices to show that $\Model{M}_{\Logic{KI}}^*$ satisfies the $\Box$-p condition.
    Take any $x = (\Gamma_x, \Delta_x)$, $x' = (\Gamma_x', \Delta_x')$, and $y' = (\Gamma_y', \Delta_y')$ such that $x \IRel^* x' \MRel^* y'$,
    then $\Gamma_x \subseteq \Gamma_x'$, $\Box^{-1} \Gamma_x \subseteq \Box^{-1} \Gamma_x' \subseteq \Gamma_y'$, and $\Diamond^{-1} \Delta_x' \subseteq \Delta_y'$.
    \begin{claim*}
      $(\Box^{-1} \Gamma_x, \Diamond^{-1} \Delta_x \cup \Delta_y')$ is $\Logic{KI}$-consistent.
      \begin{subproof}
        Suppose not, then there are finite $A \subseteq \Box^{-1} \Gamma_x$, $B \subseteq \Diamond^{-1} \Delta_x$, and $C \subseteq \Delta_y'$ such that
        $\Logic{KI} \vdash \bigwedge A \to \left( \bigvee B \lor \bigvee C\right)$. Then:
        \begin{center}
          \AxiomC{$\bigwedge A \to \left( \bigvee B \lor \bigvee C\right)$}
          \RightLabel{\scriptsize{$\name{M}_\Box$}}
          \UnaryInfC{$\Box \bigwedge A \to \Box \left( \bigvee B \lor \bigvee C \right)$}
          \RightLabel{\scriptsize{$\name{C}_\Box$}}
          \UnaryInfC{$\bigwedge \Box A \to \Box \left( \bigvee B \lor \bigvee C \right)$}
          \RightLabel{\scriptsize{$\name{CD}$}}
          \UnaryInfC{$\bigwedge \Box A \to \left( \Diamond \bigvee B \lor \Box \bigvee C \right)$}
          \RightLabel{\scriptsize{$\name{C}_\Diamond$}}
          \UnaryInfC{$\bigwedge \Box A \to \left( \bigvee \Diamond B \lor \Box \bigvee C \right)$}
          \DisplayProof
        \end{center}
        Then $\left(\bigvee \Diamond B \lor \Box \bigvee C\right) \in \Gamma_x$ by $\Logic{KI}$-consistency,
        then $\Box \bigvee C \in \Gamma_x$ by $\Diamond B \subseteq \Delta_x$ and Corollary \ref{cor:saturation},
        then $\bigvee C \in \Box^{-1} \Gamma_x \subseteq \Box^{-1} \Gamma_x' \subseteq \Gamma_y'$,
        so there is at least one $\varphi \in C$ such that $\varphi \in \Gamma_y'$,
        which is a contradiction since $C \subseteq \Delta_y'$.
      \end{subproof}
    \end{claim*}
    Now that $(\Box^{-1} \Gamma_x, \Diamond^{-1} \Delta_x \cup \Delta_y')$ is $\Logic{KI}$-consistent,
    then by Lindenbaum lemma, there is $y = (\Gamma_y, \Delta_y) \in W_{\Logic{KI}}^*$ such that
    $\Box^{-1} \Gamma_x \subseteq \Gamma_y$ and $\Diamond^{-1} \Delta_x \cup \Delta_y' \subseteq \Delta_y$.
    It is clear that $x \MRel^* y$, so it suffices to show that $y \IRel^* y'$.
    Take any $\varphi \in \Gamma_y$, then $\varphi \notin \Delta_y$, then $\varphi \notin \Diamond^{-1} \Delta_x \cup \Delta_y'$,
    then $\varphi \notin \Delta_y'$, so $\varphi \in \Gamma_y'$.
  \end{proof}
\end{theorem}

\begin{theorem}[{Completeness of $\Logic{IK}$, \cite{fischer_axiomatizations_1984, simpson_proof_1994}}] \label{thm:ik-completeness-detailed}
  \begin{equation*}
    \Logic{IK} = \set{ \varphi \in \MF; \forall \Model{F} \in \FrameClass{\Diamond\text{-p}} \cap \FrameClass{FS2}\,(\Model{F} \ValidIK \varphi) }.
  \end{equation*}
  \begin{proof}
    ($\subseteq$) It suffices to show that $\name{FS2}$ is h-valid on any $\Diamond$-p and FS2 frame,
    which is already proven in Proposition \ref{prop:ik-model-validates-cdia-and-fs2}.

    ($\supseteq$) By Proposition \ref{prop:canonical-model-is-dia-p} and Truth lemma,
    it suffices to show that $\Model{M}_{\Logic{IK}}^*$ satisfies the FS2 condition.
    Take any $x = (\Gamma_x, \Delta_x)$, $y = (\Gamma_y, \Delta_y)$, and $y' = (\Gamma_y', \Delta_y')$ such that $x \MRel^* y \IRel^* y'$,
    then $\Box^{-1} \Gamma_x \subseteq \Gamma_y \subseteq \Gamma_y'$
    and $\Diamond^{-1} \Delta_x \subseteq \Delta_y$ by definition.
    \begin{claim*}
      $(\Gamma_x \cup \Diamond \Gamma_y', \Box \Delta_y')$ is $\Logic{IK}$-consistent.
      \begin{subproof}
        Suppose not, then there are finite $A \subseteq \Gamma_x$,
        $B \subseteq \Gamma_y'$,
        and $C \subseteq \Delta_y'$
        such that $\Logic{IK} \vdash \left(\bigwedge A \land \bigwedge \Diamond B \right) \to \bigvee \Box C$, then:
        \begin{center}
          \AxiomC{$\left(\bigwedge A \land \bigwedge \Diamond B \right) \to \bigvee \Box C$}
          \RightLabel{\scriptsize{$\name{M}_\Box$, $\name{M}_\Diamond$}}
          \UnaryInfC{$\left(\bigwedge A \land \Diamond \bigwedge B \right) \to \Box \bigvee C$}
          \UnaryInfC{$\bigwedge A \to \left(\Diamond \bigwedge B \to \Box \bigvee C \right)$}
          \RightLabel{\scriptsize{$\name{FS2}$}}
          \UnaryInfC{$\bigwedge A \to \Box \left(\bigwedge B \to \bigvee C \right)$}
          \DisplayProof
        \end{center}
        Then $\Box \left(\bigwedge B \to \bigvee C\right) \in \Gamma_x$ by $\Logic{IK}$-consistency,
        so $\left(\bigwedge B \to \bigvee C\right) \in \Gamma_y'$ by $\Box^{-1} \Gamma_x \subseteq \Gamma_y'$.
        Here, $\bigwedge B \in \Gamma_y'$ by $B \subseteq \Gamma_y'$,
        then $\bigvee C \in \Gamma_y'$ by the above,
        so there is at least one $\psi \in C$ such that $\psi \in \Gamma_y'$,
        which is a contradiction since $C \subseteq \Delta_y'$.
      \end{subproof}
    \end{claim*}
    Now that $(\Gamma_x \cup \Diamond \Gamma_y', \Box \Delta_y')$ is $\Logic{IK}$-consistent,
    then by Lindenbaum lemma, there is $x' = (\Gamma_x', \Delta_x') \in W_{\Logic{IK}}^*$ such that
    $\Gamma_x \cup \Diamond \Gamma_y' \subseteq \Gamma_x'$ and $\Box \Delta_y' \subseteq \Delta_x'$.
    It is clear that $x \IRel^* x'$, so it remains to show that $x' \MRel^* y'$.
    Take any $\Box \psi \in \Gamma_x'$, then $\Box \psi \notin \Delta_x'$,
    then $\Box \psi \notin \Box \Delta_y'$, then $\psi \notin \Delta_y'$, so $\psi \in \Gamma_y'$.
    Now take any $\Diamond \varphi \in \Delta_x'$, then $\Diamond \varphi \notin \Gamma_x'$,
    then $\Diamond \varphi \notin \Diamond \Gamma_y'$, then $\varphi \notin \Gamma_y'$, so $\varphi \in \Delta_y'$.
  \end{proof}
\end{theorem}

\begin{corollary}[{Completeness of $\Logic{IKI}$}] \label{thm:eik-completeness}
  Let $\Logic{IKI} \defeq \Logic{IK} + \name{CD}$, then:
  \begin{equation*}
    \Logic{IKI} = \set{ \varphi \in \MF; \forall \Model{F} \in \FrameClass{\Box\text{-p}} \cap \FrameClass{\Diamond\text{-p}} \cap \FrameClass{FS2}\,(\Model{F} \ValidIK \varphi) }.
  \end{equation*}
  \begin{proof}
    ($\subseteq$) It suffices to show that $\name{FS2}$ and $\name{CD}$ are h-valid on any frame satisfying the $\Box$-p, $\Diamond$-p, and FS2 conditions,
    which is already proven.

    ($\supseteq$) By Proposition \ref{prop:canonical-model-is-dia-p} and Truth lemma,
    it suffices to show that $\Model{M}_{\Logic{IKI}}^*$ satisfies the $\Box$-p and FS2 conditions,
    which is easy.
  \end{proof}
\end{corollary}

\bibliographystyle{plain}
\bibliography{refs}

\end{document}